\newtheorem{thm}{Theorem}[section]
\newtheorem{lem}[thm]{Lemma}
\newtheorem{defn}[thm]{Definition}
\newtheorem{prop}[thm]{Proposition}
\newtheorem{cor}[thm]{Corollary}
\newtheorem*{rmk}{Remark}
\DeclareMathOperator{\Sp}{Sp}
\newcommand{\A}{\mathcal{A}}
\newcommand{\B}{\mathcal{B}}
\newcommand{\C}{\mathcal{C}}
\newcommand{\D}{\mathcal{D}}
\newcommand{\F}{\mathcal{F}}
\newcommand{\G}{\mathcal{G}}
\newcommand{\I}{\mathcal{I}}
\renewcommand{\L}{\mathcal{L}}
\newcommand{\M}{\mathcal{M}}
\newcommand{\N}{\mathcal{N}}
\renewcommand{\O}{\mathcal{O}}
\newcommand{\Q}{\mathcal{Q}}
\newcommand{\R}{\mathcal{R}}
\newcommand{\T}{\mathcal{T}}
\newcommand{\h}[1]{\widehat{#1}}
\newcommand{\w}[1]{\wideparen{#1}}
\begin{document}
\title[Auslander regularity of $\D_n$]{Auslander regularity of completed rings of $p$-adic differential operators}
\author{Andreas Bode}
\maketitle
\begin{abstract}
We prove that any smooth rigid analytic variety $X$ admits an affinoid covering $\{U_i\}$ such that the Banach algebras involved in the Fr\'echet--Stein presentation of the completed ring of differential operators $\w{\D}_X(U_i)$ are Auslander regular for each $i$. We use this result to prove projection formulae and adjunction results for coadmissible $\w{\D}$-modules.
\end{abstract}

\tableofcontents
\section{Introduction}
The slogan of this paper can be taken to be ``Coadmissible $\w{\D}$-modules are microlocally perfect''. We will devote most of this introduction to explaining what we mean by this.

Throughout, let $K$ be a complete nonarchimedean field of mixed characteristic $(0, p)$, with valuation ring $R$ and residue field $k$. Let $\pi\in R$ with $0<|\pi|<1$. 

Let $X=\Sp A$ be a smooth affinoid rigid analytic $K$-variety, and let $L$ denote the global sections of the tangent sheaf of $X$, i.e.
\begin{equation*}
L=\T_X(X)=\mathrm{Der}_K(A).
\end{equation*} 
In \cite{DcapOne}, Ardakov--Wadsley defined the $K$-algebra $\w{\D}_X(X)$ of rapidly converging differential operators as a Fr\'echet completion of the enveloping algebra $U_A(L)$. We briefly recall the construction here. We choose an $R$-subalgebra $\A$ of $A$ which is topologically of finite presentation and spans $A$ as a $K$-vector space (an \emph{admissible affine formal model}), as well as an $\A$-submodule $\L\subseteq L$ which is closed under the Lie bracket, spans $L$ and preserves $\A$ under the natural action via derivations (an \emph{$(R, \A)$-Lie lattice}). Then we set
\begin{equation*}
\w{D}=\w{\D}_X(X)=\varprojlim_{n\geq 0} \left(\h{U_\A(\pi^n\L)}\otimes_R K\right),
\end{equation*} 
where $U_\A(\pi^n\L)$ denotes the universal enveloping algebra of the $(R, \A)$-Lie--Rinehart algebra $\pi^n\L$ as in \cite{Rinehart}, and $\ \h{}\ $ denotes $\pi$-adic completion. For instance, if $X=\mathbb{D}^1=\Sp K\langle x\rangle$ is the closed unit disc, then
\begin{equation*}
\w{\D}_X(X)=\left\{\sum_{i=0}^\infty f_i \partial^i: \ f_i\in K\langle x\rangle,  \ |\pi^{-in}f_i|\to 0 \ \text{as} \ i\to \infty\ \forall n\geq 0\right\},
\end{equation*}
where $\partial=\frac{\mathrm{d}}{\mathrm{d}x}$.

It was shown in \cite[subsection 6.1]{DcapOne} that this definition does not depend on the choice of $\A$ and $\L$. The Banach algebras $D_n:=\h{U_\A(\pi^n\L)}\otimes K$ of course do depend on these choices, even though we suppress this in the notation.

The theory of $\w{\D}$-modules on rigid analytic spaces then provides a geometric way of studying $p$-adic locally analytic representations and their associated Lie algebra representations, see \cite{Ardakov}.

The algebra $\w{D}$ is a Fr\'echet--Stein algebra in the sense of \cite{ST}, so we can consider the abelian category of coadmissible $\w{D}$-modules. Here, we say that a left $\w{D}$-module $M$ is coadmissible if $M_n:=D_n\otimes_{\w{D}}M$ is a finitely generated $D_n$-module, and $M\cong \varprojlim M_n$ via the natural morphism.

In understanding the ring $\w{D}$ and coadmissible modules over it, it is then common to reduce to $D_n$ via base change and exploit the fact that in many ways, the algebras $D_n$ have better ring-theoretic properties than $\w{D}$: for instance, they are Noetherian (whereas $\w{D}$ is usually not), and it was shown in \cite[Theorem A]{DcapThree} that if $K$ is discretely valued and $L$ is a free $A$-module, then we can choose $\L$ in such a way that $D_n$ is Auslander--Gorenstein for each $n\geq 0$. Comparing with the ring of algebraic (finite-order) differential operators, it is natural to ask if $D_n$ is in fact Auslander regular, i.e. whether $D_n$ has also finite global dimension\footnote{Auslander regularity is a cohomological regularity condition for (usually non-commutative) Noetherian rings. We refer to section 4 for a more detailed discussion.}. We would also like to remove the discreteness assumption in \cite{DcapThree}.

In this paper, we show that this can be achieved when passing to a suitable covering.

\begin{thm}
\label{MainThm}
Let $X$ be a smooth rigid analytic $K$-variety. Then there exists an admisssible affinoid covering $(X_i)$, $X_i=\mathrm{Sp}A_i$, such that for each $i$, there exists an admissible affine formal model $\A_i\subseteq A_i$ and an $(R, \A_i)$-Lie lattice $\L_i\subseteq \T_X(X_i)$ such that
\begin{equation*}
D_{n,i}=\h{U_{\A_i}(\pi^n\L_i)}\otimes_R K
\end{equation*}
is Auslander regular for all $n\geq 0$, with
\begin{equation*}
\mathrm{gl.dim.} D_{n,i}\leq 2\mathrm{dim}X_i+3.
\end{equation*}
\end{thm}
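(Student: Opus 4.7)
The strategy is to work locally in \'etale coordinates, reduce to the free Lie lattice case, and then establish Auslander regularity via a standard filtered / associated-graded argument in the spirit of \cite{DcapThree}.

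First, using smoothness of $X$, I would produce an admissible affinoid covering $\{X_i = \Sp A_i\}$ such that each $X_i$ admits an \'etale morphism to a polydisc of dimension $d_i := \dim X_i$; in particular $\T_{X_i}$ is free on commuting coordinate derivations $\partial_{i,1}, \ldots, \partial_{i, d_i}$. Using Raynaud's theory of formal models together with results of Bosch--L\"utkebohmert on smooth affinoids, I would then further shrink $X_i$ so that it admits an admissible affine formal model $\A_i \subseteq A_i$ which is as regular as possible (e.g.\ rig-smooth, with a regular mod-$\pi$ reduction of dimension $d_i$) and is preserved by the derivations $\partial_{i,j}$. Setting $\L_i := \bigoplus_j \A_i \partial_{i,j}$ then yields a free $(R, \A_i)$-Lie lattice of rank $d_i$ with commuting generators, placing us in the setting analyzed in \cite{DcapThree}.

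With this setup in place, for each $n \geq 0$ I would endow $U_{\A_i}(\pi^n \L_i)$ with the PBW filtration, for which the associated graded is, by Rinehart's PBW theorem, the symmetric algebra $\Sym_{\A_i}(\pi^n \L_i) \cong \A_i[T_1, \ldots, T_{d_i}]$. Combining this with a $\pi$-adic filtration (equivalently, passing through a Rees construction) produces an associated graded of the form $(\A_i/\pi)[T_0, T_1, \ldots, T_{d_i}]$, polynomial over the regular reduction $\A_i/\pi$ of dimension $d_i$ and hence Auslander regular of global dimension $2 d_i + 1$. A standard lifting theorem (Bj\"ork's filtered/graded regularity criterion, or the variant developed in \cite{DcapThree}) then transfers Auslander regularity to the $\pi$-adic completion $\h{U_{\A_i}(\pi^n \L_i)}$, and flat base change along $R \to K$ preserves it, giving Auslander regularity of $D_{n,i}$; the stated bound $2 d_i + 3$ comfortably absorbs the losses from the completion and localization steps.

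The principal obstacle is the removal of the discretely-valued hypothesis on $K$. When $|K^\times|$ is non-discrete, $R$ is a non-Noetherian rank-$1$ valuation ring, $\pi$ need not generate its maximal ideal, and the reduction $\A_i/\pi\A_i$ need not be a well-behaved $k$-algebra. The main technical content of the proof is therefore to show that, after enough shrinking, $\A_i$ can nevertheless be chosen so that its mod-$\pi$ reduction is regular of the expected dimension $d_i$, so that $\gr U_{\A_i}(\pi^n \L_i)$ really is a polynomial ring over a regular base and the subsequent lifting argument applies uniformly. A secondary, more bookkeeping, issue is coordinating the geometric shrinking required to trivialize $\T_X$ via \'etale coordinates with the formal shrinking required for the regularity of $\A_i$, so that both conditions are realized on a single admissible affinoid covering.
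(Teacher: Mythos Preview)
Your approach has a genuine gap in the densely valued case, and it is precisely the step you flag as the ``principal obstacle'': no amount of shrinking will produce a formal model $\A_i$ whose mod-$\pi$ reduction is a regular Noetherian ring of dimension $d_i$. When $|K^\times|$ is dense, $R/\pi R$ is a non-Noetherian ring with nonzero nilpotents (any $a\in R$ with $|\pi|<|a|<1$ maps to a nonzero nilpotent), so $\A_i/\pi\A_i$ is an algebra over this ring and the phrase ``regular of dimension $d_i$'' has no good meaning; in particular your associated graded $(\A_i/\pi)[T_0,\dots,T_{d_i}]$ is not Auslander regular in any useful sense, and Bj\"ork's lifting criterion does not apply. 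This already fails for the polydisc itself, where $\A/\pi=R\langle x\rangle/\pi R\langle x\rangle$ is not a polynomial ring over a field. The densely valued polydisc case is handled in \cite{Bodegl} via almost ring theory, not by a direct filtered argument, and that argument does not extend in any obvious way to arbitrary smooth formal models.

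The paper therefore takes a completely different route: it only runs the filtered/graded (or almost-mathematical) argument on the polydisc, and then \emph{transfers} Auslander regularity to a general smooth $X$ by geometric operations. Using Kiehl's local structure theorem and tubular neighbourhoods, each affinoid of Kiehl standard form is realised as a closed subvariety of a polydisc via a ``weakly standard'' embedding, and Auslander regularity is pulled back through a Kashiwara-type equivalence involving certain intermediate Fr\'echet--Stein rings $D'_{\infty,n}$ (Corollary~\ref{clAuslanderpullback}), together with an analogous argument for the smooth projection $\Sp A\langle Z\rangle\to\Sp A$ (Proposition~\ref{smoothAuslander}). In short: the key missing idea in your proposal is that one should not try to find good formal models for $X$ directly, but rather compare $D_n$-modules on $X$ with $D'_n$-modules on an ambient polydisc via $\iota_+$ and $f^*$ and read off Auslander regularity from there.
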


Recently, Fernando Pe\~na gave the improved bound of $2\mathrm{dim}X_i$ for the global dimension for a very general class of spaces, see \cite[Theorem D]{Pena}. His methods are completely different from ours. It is not clear to us if his methods can also be used to establish Auslander regularity.

In some cases, we are able to show that $\mathrm{gl. dim.}D_{n,i}=\mathrm{dim}X_i$, in analogy with the algebraic case. In general, using Bernstein's inequality for coadmissible modules (see \cite[Theorem A]{DcapThree}, \cite{Bodegl}), this is at least true `in the limit as $n\to \infty$':

\begin{cor}[{see Corollary \ref{diminthelimit}}]
	\label{limitdim}
Let $X$ be a smooth rigid analytic $K$-variety of dimension $d$, and let $\M$, $\N$ be coadmissible $\w{\D}_X$-modules. Then $\mathcal{E}xt^j_{\w{\D}_X}(\M, \N)=0$ for all $j>d$.
\end{cor}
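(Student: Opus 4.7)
The vanishing of $\mathcal{E}xt^j_{\w{\D}_X}(\M,\N)$ is a local property on $X$, so I would first apply Theorem~\ref{MainThm} to cover $X$ by affinoids $X_i = \Sp A_i$ on which each $D_{n,i}$ is Auslander regular with $\mathrm{gl.dim.}(D_{n,i}) \leq 2\dim X_i + 3$, and reduce to proving vanishing on each such $X_i$ separately. Fixing one and writing $X = \Sp A$, $M = \M(X)$, $N = \N(X)$, I would then invoke the coadmissibility formalism of \cite{DcapOne} to identify the sections of $\mathcal{E}xt^j_{\w{\D}_X}(\M,\N)$ over $X$ with $\mathrm{Ext}^j_{\w{D}}(M, N)$ for the associated coadmissible $\w{D}$-modules. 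The problem thus reduces to the algebraic statement that $\mathrm{Ext}^j_{\w{D}}(M, N) = 0$ for all $j > d$.

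To attack this, I would use the Fr\'echet--Stein presentation $\w{D} = \invlim_n D_n$. Since each $D_n$ is Noetherian and $M_n, N_n$ are finitely generated, one can choose compatible finite projective resolutions of the $M_n$ over the $D_n$; passing to $\invlim$ should yield a Milnor-type short exact sequence
\begin{equation*}
0 \to \invlim\nolimits^1_n \mathrm{Ext}^{j-1}_{D_n}(M_n, N_n) \to \mathrm{Ext}^j_{\w{D}}(M, N) \to \invlim_n \mathrm{Ext}^j_{D_n}(M_n, N_n) \to 0.
\end{equation*}
It then suffices to bound the terms of this inverse system for $j > d$.

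Finally, I would apply Bernstein's inequality from \cite[Theorem A]{DcapThree} and \cite{Bodegl}: any nonzero coadmissible $M$ has canonical dimension $d(M) \geq d$. Combined with the grade filtration available in the Auslander regular setting---under which every finitely generated module decomposes into pure subquotients and pure modules of grade $j$ have projective dimension exactly $j$---this should force $\mathrm{Ext}^j_{D_n}(M_n, N_n) = 0$ for $j > d$ asymptotically in $n$. The hardest step will be this ``limit'' sharpening: at any fixed $n$ the crude bound coming from $\mathrm{gl.dim.}(D_n) \leq 2d + 3$ only rules out $\mathrm{Ext}$ in degrees above roughly $d + 3$, so the improvement to $d$ must come from the fact that the canonical dimension is intrinsic to the coadmissible module $M$ and hence compatible with the transition maps $D_{n+1} \to D_n$, so that the per-level obstructions to vanishing collapse in the inverse limit.
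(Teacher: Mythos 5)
Your broad strategy (localize, get a sharper-than-global-dimension degree bound from Bernstein's inequality at finite level, pass to the limit) is in the right spirit, but it differs from the paper's proof and has two points that need fixing.

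The paper proves this by invoking Proposition~\ref{homwithdual}, which gives a natural isomorphism
\begin{equation*}
\mathrm{R}\mathcal{H}om_{\w{\D}_X}(\M, \N)\cong \mathrm{R}\mathcal{H}om_{\w{\D}_X}(\M, \w{\D}_X)\widetilde{\otimes}^\mathbb{L}_{\w{\D}_X}\N,
\end{equation*}
and then simply observes that $\mathrm{R}\mathcal{H}om_{\w{\D}_X}(\M, \w{\D}_X)$ is concentrated in degrees $\leq d$ by Bernstein's inequality (Proposition~\ref{Bernstein}, \cite[Theorem A.(ii)]{DcapThree}), so the derived tensor against a module cannot create cohomology in degree $>d$. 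The limiting procedure is thus handled once and for all inside Proposition~\ref{homwithdual} via Theorem~\ref{tensorlimit}; the corollary itself is then immediate. Your route aims instead for a direct Milnor sequence for $\mathrm{Ext}_{\w{D}}(M,N)$, which can work but is where the gaps lie.

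First, a framing issue: the improvement from the crude bound $\mathrm{gl.dim.}\,D_n\leq 2d+3$ to $d$ is \emph{not} something that happens ``asymptotically'' or ``collapses in the inverse limit''; it holds at every fixed $n$. By Bernstein's inequality together with Lemma~\ref{coadext}, $\mathrm{Ext}^j_{D_n}(M_n,D_n)=0$ for $j>d$, and since $D_n$ is Noetherian of finite global dimension this forces $\mathrm{pd}_{D_n}(M_n)\leq d$ (the grade-filtration argument you gesture at is one way to see it; more directly, for a finitely generated module $M$ of finite projective dimension over a Noetherian ring, $\mathrm{pd}(M)=\max\{j:\mathrm{Ext}^j(M,A)\neq 0\}$). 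Hence $\mathrm{Ext}^j_{D_n}(M_n,N_n)=0$ for $j>d$ and every $n\geq 1$, independently of any limiting procedure.

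Second, the Milnor sequence itself is not free. You need to identify $\mathrm{RHom}_{\w{D}}(M,N)\cong\mathrm{holim}_n\,\mathrm{RHom}_{D_n}(M_n,N_n)$ (via $N\cong\mathrm{holim}\,N_n$ and the adjunction $\mathrm{RHom}_{\w{D}}(M,N_n)\cong\mathrm{RHom}_{D_n}(M_n,N_n)$), and then the $\invlim^1$-term in degree $d$ still threatens nonvanishing of $\mathrm{Ext}^{d+1}_{\w{D}}(M,N)$: your sequence reduces the problem to showing $\invlim^1_n\,\mathrm{Ext}^d_{D_n}(M_n,N_n)=0$. This requires recognizing that the system $\bigl(\mathrm{Ext}^d_{D_n}(M_n,N_n)\bigr)_n$ is coadmissible --- which needs a flat base-change isomorphism for Ext at finite level, valid here because $M_{n+1}$ has a finite resolution by finitely generated projectives --- so that the topological Mittag--Leffler property applies. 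None of this is mentioned in your sketch, and it is precisely the technical work that the paper's route via duality and Theorem~\ref{tensorlimit} is designed to sidestep. So your plan is salvageable, but it does not avoid the limiting subtleties; it merely moves them to a different place, and the missing details are where the substance lies.
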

Here, $\w{\D}_X$ denotes the sheaf associated to the assignment $U\mapsto \w{\D}_U(U)$ for any affinoid subdomain $U$, and we call a $\w{\D}_X$-module coadmissible if it is locally given as the localization (in the sense of \cite[subsection 8.2]{DcapOne}) of a coadmissible $\w{\D}_U(U)$-module for $U$ affinoid. The sheaf $\mathcal{E}xt^j$ is the cohomology of $\mathrm{R}\mathcal{H}om$ calculated in the derived category 
\begin{equation*}
	\mathrm{D}(\w{\D}_X):=\mathrm{D}(\mathrm{Mod}_{\mathrm{Shv}(X, LH(\h{\B}c_K))}(\w{\D}_X))
\end{equation*}
of complete bornological $\w{\D}_X$-modules introduced in \cite{SixOp}.

We can now return to our slogan: ``Coadmissible $\w{\D}$-modules are microlocally perfect''. We remark that we can heuristically regard $\w{\D}_X$ as a non-commutative quantization of the rigid analytic cotangent bundle $T^*X$, with coadmissible $\w{\D}_X$-modules being the corresponding analogue of coherent $\O_{T^*X}$-modules. Note that $T^*X$ is not affinoid, even if $X$ is, but is rather given locally as $U\times \mathbb{A}^{d, \mathrm{an}}_K$, i.e. an increasing union of products $U\times \Sp K\langle \pi^nx_1,\hdots, \pi^nx_d\rangle$ for $U\subseteq X$ and $\Sp K\langle \pi^nx_1, \hdots, \pi^nx_d\rangle$ a polydisc. On the $\D$-module theoretic side, this is reflected by the fact that $\w{\D}(U)$ is written as an inverse limit of Noetherian Banach algebras, which quantize suitable `boxes' $U\times \Sp K\langle \pi^nx_1, \hdots,\pi^nx_d\rangle$. Theorem \ref{MainThm} can then be interpreted as saying that coadmissible $\w{\D}_X$-modules admit locally a finite resolution by finite projective modules, where `locally' refers both to the base space $X$ \emph{and} to the cotangent direction -- the ring $D_{n, i}$ can be regarded as a non-commutative quantization of a `box', an affinoid subspace of the cotangent space $T^*X_i$.  

Since coadmissible modules are not necessarily finitely generated, we point out that it is evidently not true that coadmissible $\w{\D}_X$-modules admit finite resolutions by finite projectives locally on $X$ -- even for finitely presented modules, the non-Noetherianity of $\w{\D}(U)$ makes such resolutions extremely unlikely. Theorem \ref{MainThm} allows us to overcome this by working with suitable resolutions on the Noetherian level, and then taking the limit. 

While we omit the details, we remark that a standard argument (\cite[Corollary 7.3]{SchmidtAuslander}) implies the Auslander regularity of the skew-group rings
\begin{equation*}
	D_n\rtimes_N G
\end{equation*}
appearing in \cite{Ardakov}, whenever $D_n$ is Auslander regular. Here, $G$ is a compact $p$-adic Lie group acting continuously on $X$, and the skew-group rings alluded to above are a key instrument in understanding $G$-equivariant $\D$-modules on rigid analytic spaces, compare \cite{Ardakov}. 

We also use the Theorem above to deduce a number of standard identities for the $\w{\D}$-module operations defined in \cite{SixOp}. In particular, we establish a projection formula and an adjunction relation for the direct and inverse image operations (precise definitions and notations will be recalled in section 6).

\begin{thm}
\label{projandadj}
Let $f: X\to Y$ be a separated morphism of smooth rigid analytic $K$-varieties, satisfying some mild finiteness conditions (see Theorem \ref{adjunctiontheorem}).
\begin{enumerate}[(i)]
\item (Projection formula) Let $\M^\bullet \in \mathrm{D}^b_\C(\w{\D}_X^\mathrm{op})$ such that $f_+\M^\bullet\in \mathrm{D}^b_\C(\w{\D}_Y^\mathrm{op})$, and let $\N^\bullet\in \mathrm{D}^-_\C(\w{\D}_Y)$ such that $f^!\N^\bullet\in \mathrm{D}^-_\C(\w{\D}_X)$. Then there is a natural isomorphism
\begin{equation*}
f_+\M^\bullet\widetilde{\otimes}^\mathbb{L}_{\w{\D}_Y} \N^\bullet \cong \mathrm{R}f_*(\M^\bullet\widetilde{\otimes}^\mathbb{L}_{\w{\D}_X} f^!\N^\bullet)[\mathrm{dim}Y-\mathrm{dim}X]
\end{equation*}
in $\mathrm{D}(\mathrm{Shv}(Y, LH(\h{\B}c_K)))$.
\item Let $\M^\bullet \in \mathrm{D}_\C(\w{\D}_X)$ such that $f_!\M^\bullet \in \mathrm{D}_\C(\w{\D}_Y)$, and let $\N^\bullet \in \mathrm{D}_\C(\w{\D}_Y)$ such that $f^!\N^\bullet \in \mathrm{D}_\C(\w{\D}_X)$. Then there is a natural isomorphism
\begin{equation*}
\mathrm{R}\mathcal{H}om_{\w{\D}_Y}(f_!\M^\bullet, \N^\bullet)\cong \mathrm{R}f_*\mathrm{R}\mathcal{H}om_{\w{\D}_X}(\M^\bullet, f^!\N^\bullet).
\end{equation*}
\item Let $\M^\bullet \in \mathrm{D}_\C(\w{\D}_X)$ such that $f_+\M^\bullet \in \mathrm{D}_\C(\w{\D}_Y)$, and let $\N^\bullet \in \mathrm{D}_\C(\w{\D}_Y)$ such that $f^+\N^\bullet \in \mathrm{D}_\C(\w{\D}_X)$. Then there is a natural isomorphism
\begin{equation*}
\mathrm{R}f_*\mathrm{R}\mathcal{H}om_{\w{\D}_X}(f^+\N^\bullet, \M^\bullet)\cong \mathrm{R}\mathcal{H}om_{\w{\D}_X}(\N^\bullet, f_+\M^\bullet).
\end{equation*}
\end{enumerate}
\end{thm}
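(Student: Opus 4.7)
The plan is to follow the classical template for projection and adjunction formulas in $\D$-module theory, reinterpreted inside the bornological derived category of \cite{SixOp}. I would unfold $f_+$, $f^!$, $f_!$ and $f^+$ in terms of the transfer bimodules $\w{\D}_{X\to Y}$ and $\w{\D}_{Y\leftarrow X}$, and then reduce each identity to a combination of (a) the tensor--Hom adjunction, (b) the standard adjunction $(f^{-1}, \mathrm{R}f_*)$, and (c) a projection formula for $\mathrm{R}f_*$. These three ingredients are, after all, the formal content of any projection or adjunction statement; the only substantive question is whether they remain valid for the derived completed tensor product $\widetilde{\otimes}^\mathbb{L}_{\w{\D}}$ applied to complexes with coadmissible cohomology.

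For part (i), after substituting
\begin{align*}
f^!\N^\bullet &= \w{\D}_{X\to Y}\widetilde{\otimes}^\mathbb{L}_{f^{-1}\w{\D}_Y}f^{-1}\N^\bullet[\mathrm{dim}X-\mathrm{dim}Y],\\
f_+\M^\bullet &= \mathrm{R}f_*\bigl(\w{\D}_{Y\leftarrow X}\widetilde{\otimes}^\mathbb{L}_{\w{\D}_X}\M^\bullet\bigr),
\end{align*}
the identity reduces to an associativity-plus-projection computation that formally reads
\begin{equation*}
\mathrm{R}f_*\bigl(\M^\bullet\widetilde{\otimes}^\mathbb{L}_{\w{\D}_X}\w{\D}_{X\to Y}\bigr)\widetilde{\otimes}^\mathbb{L}_{\w{\D}_Y}\N^\bullet\cong\mathrm{R}f_*\bigl(\M^\bullet\widetilde{\otimes}^\mathbb{L}_{\w{\D}_X}(\w{\D}_{X\to Y}\widetilde{\otimes}^\mathbb{L}_{f^{-1}\w{\D}_Y}f^{-1}\N^\bullet)\bigr).
\end{equation*}
Parts (ii) and (iii) unwind similarly to iterated applications of tensor--Hom adjunction combined with $(f^{-1}, \mathrm{R}f_*)$-adjunction: for (ii) one uses that $f_!\M^\bullet$ is $\mathrm{R}f_*$ applied to a tensor product with a transfer bimodule, and for (iii) one works with the symmetric description of $f^+$. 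In each case the check is purely formal once the three ingredients above are available, and the internal shifts cancel correctly against the definitions.

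The main obstacle will be justifying associativity of $\widetilde{\otimes}^\mathbb{L}$ and the projection formula for $\mathrm{R}f_*$ in the coadmissible setting, since $\w{\D}$ is not Noetherian and coadmissible modules are typically not finitely generated, so one cannot simply invoke naive flat resolutions. This is precisely where Theorem \ref{MainThm} does the work: on each affinoid of the covering it supplies, coadmissible $\w{\D}$-modules admit bounded resolutions by finite projectives over the Auslander regular Banach algebras $D_{n,i}$, and after taking the inverse limit in $n$ these assemble into strict bounded complexes which serve as explicit flat models for $\widetilde{\otimes}^\mathbb{L}_{\w{\D}}$. The hypothesis that $f^{-1}U$ has countably many connected components ensures that $\mathrm{R}f_*$ behaves well with respect to the Fr\'echet-type limits built into $\w{\D}$, so that the projection formula can be verified affinoid-locally on $Y$ and then glued. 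With these two ingredients the classical manipulations transport verbatim; what remains is the bookkeeping to verify that the imposed $\C$-coadmissibility hypotheses on $f_+\M^\bullet$, $f_!\M^\bullet$, $f^!\N^\bullet$ and $f^+\N^\bullet$ are preserved at each step, so that the derived tensor products stay in the right subcategory and the isomorphisms live in $\mathrm{D}(\mathrm{Shv}(Y, LH(\h{\B}c_K)))$ as claimed.
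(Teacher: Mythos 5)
Your high-level recognition is right — Auslander regularity of the finite-level algebras $D_{n,i}$ is the ingredient that makes the projection and adjunction formulas accessible — but the mechanism you describe for exploiting it does not match what actually makes the argument work, and in fact the step you propose would not go through. You say that coadmissible modules admit finite projective resolutions over the $D_{n,i}$ and that ``after taking the inverse limit in $n$ these assemble into strict bounded complexes which serve as explicit flat models for $\widetilde{\otimes}^\mathbb{L}_{\w{\D}}$.'' This does not work: an inverse limit of finite projective $D_n$-modules is not in general a flat or projective $\w{\D}$-module, and more fundamentally the completed tensor product does not commute with inverse limits in the sheaf-theoretic setting because countable products fail to be exact in $\mathrm{Shv}(X, LH(\h{\B}c_K))$. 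The paper's strategy is subtler: it first expresses a $\C$-complex $\M^\bullet$ as the \emph{homotopy limit} of its localizations $\M_n^\bullet$, then proves (Theorem~\ref{tensorlimit}, whose proof occupies the appendix) that $\widetilde{\otimes}^\mathbb{L}_{\w{\D}}$ commutes with this homotopy limit. The latter hinges on Auslander regularity precisely because it forces each $\M_n^\bullet$ to have a \emph{finite} projective resolution, and only for bounded below complexes of locally acyclic sheaves do products in the derived category coincide with termwise products (Lemma~\ref{belowacyclic}). Your proposal skips this entirely and thereby hides the real content.

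Beyond that, your description of the reduction does not line up with the actual proofs. For part (i), the paper does not carry out a ``formal associativity-plus-projection computation'' globally; it factors $f$ locally through the graph embedding $X\hookrightarrow X\times Y$ and the projection $X\times Y\to Y$, then treats the closed-embedding and projection cases separately. The closed-embedding case is nontrivial: it passes through the auxiliary Fr\'echet--Stein rings $\D'_{m,\infty}$ (convergence varied only in the normal directions) and a Kashiwara-style base change (Lemma~\ref{njbasechange}, Corollary~\ref{holimKashiwara}); the projection case uses a dedicated Cech--de Rham base change lemma (Lemma~\ref{nbasechangelem}). Your sketch makes no mention of either of these ingredients, and neither reduces to ``tensor--Hom adjunction plus $(f^{-1},\mathrm{R}f_*)$-adjunction.'' For parts (ii) and (iii), the paper does not unwind the definitions via transfer bimodules; instead it first establishes the key identity $\mathrm{R}\mathcal{H}om_{\w{\D}_X}(\M^\bullet,\N^\bullet)\cong\mathrm{R}\mathcal{H}om_{\w{\D}_X}(\M^\bullet,\w{\D}_X)\widetilde{\otimes}^\mathbb{L}_{\w{\D}_X}\N^\bullet$ (Proposition~\ref{homwithdual}), which again requires reducing via Auslander regularity to the case $\M_n^\bullet=\D_n$, rewrites $\mathrm{R}\mathcal{H}om$ as a tensor product with the dual (Proposition~\ref{homastensor}), and then applies the projection formula (i) together with biduality (Proposition~\ref{dualsofCcomplexes}) and the swap $\mathrm{R}\mathcal{H}om(\M^\bullet,\N^\bullet)\cong\mathrm{R}\mathcal{H}om(\mathbb{D}\N^\bullet,\mathbb{D}\M^\bullet)$ (Lemma~\ref{homsofduals}). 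None of these steps would ``transport verbatim'' from the classical case without the homotopy limit machinery; you would need to identify and supply that machinery explicitly for your outline to become a proof.
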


\begin{proof}
(i) is Theorem \ref{projectionformula}. (ii) and (iii) is given in Theorem \ref{adjunctiontheorem}.
\end{proof}

Here, $\mathrm{D}^b_{\C}(\w{\D}_X)$ denotes the full subcategory of the derived category $\mathrm{D}(\w{\D}_X)$ which consists of \emph{bounded $\C$-complexes}, i.e. complexes in $\mathrm{D}^b(\w{\D}_X)$ with coadmissible cohomology groups (by \cite[Corollary 8.7]{SixOp}).

We direct the reader to the notational conventions at the end of this introduction for a discussion of the dimensional shift $\mathrm{dim}Y-\mathrm{dim}X$ in part (i).

The conditions on $\M^\bullet$ and $\N^\bullet$ are automatically satisfied if $f$ is smooth and projective, but it is useful to formulate our results in greater generality: even for morphisms $f$ such that $f_+$ and $f^!$ do not preserve $\C$-complexes in general, the Theorem above will often apply when $\M^\bullet$ and $\N^\bullet$ satisfy suitable holonomicity conditions. A notion of holonomic $\C$-complexes will be introduced in a subsequent paper. 

We now explain the key steps in the proof of Theorem \ref{MainThm}: the case where $X=\mathbb{D}^m$ is a polydisc was done in \cite{Smith} in the discretely valued case and in \cite{Bodegl} in general. We then consider the case where $X$ can be written as a closed subvariety of a polydisc via an embedding $\iota: X\to \mathbb{D}^m$, where the corresponding vanishing ideal is generated by some suitably `regular' sequence $x_1, \dots, x_r$. It then remains to use calculations similar to the Kashiwara-type equivalences in \cite{DcapTwo} and \cite{SixOp} to reduce to the polydisc case. This last step is slightly subtle, as we end up varying the lattices involved, so that the Noetherian algebra $D_n$ in question does not get compared to an analogous algebra on the polydisc, but rather to a certain intermediate ring, which we call $D'_{ \infty, n}$ -- this is a Fr\'echet algebra obtained by varying the convergence condition only for those derivations corresponding to local coordinates which are in the vanishing ideal (i.e., $x_1, \hdots, x_r$), while keeping the others fixed. 

The fact that we can reduce from an arbitrary $X$ to this situation follows from results due to Kiehl \cite{Kiehl} regarding the local structure of smooth rigid analytic $K$-varieties and tubular neighbourhoods.

The remaining results are then obtained by arguing locally, proving the analogous statements over $\D_n$ using Auslander regularity, and taking the limit.

\subsection*{Structure of the paper}
In section 2, we recall some results from \cite{Kiehl} which show that locally, a smooth rigid analytic $K$-variety may be written as a closed subvariety of a polydisc in a rather special way which we call `Kiehl standard form', and that such a closed subvariety admits a tubular neighbourhood (without the need to pass to a smaller admissible covering).

In section 3, we give a brief overview of some of the key results from \cite{SixOp}, which show how Fr\'echet--Stein algebras and coadmissible modules can be studied within the framework of complete bornological vector spaces.

In section 4, we begin our study of Auslander regularity. We show that for a Noetherian Banach $K$-algebra $A$, Auslander regularity can be established from the vanishing of Ext groups within the category of complete bornological $A$-modules just as well as from abstract Ext groups. We then recall the result from \cite{Smith}, \cite{DcapThree}, \cite{Bodegl} that in the case of the polydisc, the ring $D_n$ is indeed Auslander regular.

In section 5, we build on \cite{DcapTwo} to introduce the notion of a weakly standard closed embedding. If $\iota: X\to Y$ is a weakly standard embedding of smooth affinoids, we use the Kashiwara operations from \cite{SixOp} to compare $D_n$-modules over $X$ with modules over a suitable intermediate ring $D'_{ \infty, n}$ defined over $Y$. These calculations ensure that if the differential operators on $Y$ give rise to Auslander regular rings, then we also have Auslander regular rings on $X$. We also carry out similar arguments for a pullback along a smooth morphism and then use the results from section 2 to finish the proof of Theorem \ref{MainThm}.

In section 6, we discuss applications to coadmissible $\w{\D}$-modules. We first recall the definition of the sheaves $\w{\D}$ and $\D_n$ and the six operations introduced in \cite{SixOp}. We then prove Theorem \ref{projandadj} and Corollary \ref{limitdim}, in each case arguing (micro)locally, using Auslander regularity of $\D_n$ and then carefully taking the limit $n\to \infty$. That this limiting procedure is indeed legitimate rests on a couple of technical lemmata, mainly regarding the interplay of derived completed tensor products and direct products, which we prove in full detail in the appendix. 

\subsection*{Notational conventions}
Throughout, $K$ denotes a complete nonarchimedean field of mixed characteristic $(0, p)$, with ring of integers $R$ and residue field $k$. We write $\mathfrak{m}$ for the maximal ideal of $R$ and pick $\pi\in \mathfrak{m}$ with $\pi\neq 0$.

All rigid analytic varieties considered here are assumed to be quasi-separated and quasi-paracompact. When referring to the dimension of a smooth rigid analytic variety $X$, we will tacitly assume that $X$ is equidimensional. 

Given an $R$-module $M$, we set $M_K=M\otimes_R K$.

If $V$ is normed $K$-vector space, we let $V^\circ$ denote its unit ball.

We write $R\langle x_1, \dots, x_m\rangle$ for the $\pi$-adic completion of the polynomial ring $R[x_1, \dots, x_m]$, and set
\begin{equation*}
T_m=K\langle x_1, \dots, x_m\rangle=R\langle x_1, \dots, x_m\rangle\otimes _R K,
\end{equation*}
the Tate algebra in $m$ variables. When the number of variables is understood, we might shorten this to $K\langle x\rangle$ and use multi-index notation to describe elements, i.e.
\begin{equation*}
K\langle x\rangle =\left\{\sum_{\alpha\in \mathbb{N}^m} a_{\alpha} x^\alpha: \ a_\alpha\in K, \ a_\alpha\to 0 \ \text{as}\ |\alpha|\to \infty\right\}
\end{equation*}
where $\alpha=(\alpha_1, \dots, \alpha_m)\in \mathbb{N}^m$, $|\alpha|=\alpha_1+\dots+\alpha_m$.\\
As it will feature repeatedly in this work, we also introduce the notation
\begin{align*}
K\{x\}&=\varprojlim_{n\geq 0} K\langle \pi^nx\rangle\\
&=\left\{\sum_{\alpha} a_\alpha x^\alpha: \ \pi^{-n|\alpha|}a_\alpha\to 0 \ \text{as}\ |\alpha|\to \infty\ \forall n \right\}.
\end{align*}

\section{Local structure of smooth rigid analytic spaces}
In this brief section, we recall some results on smooth rigid analytic $K$-varieties, due to Kiehl \cite{Kiehl}.

\begin{prop}[{\cite[Folgerung 1.14]{Kiehl}}]
\label{localform}
Let $X$ be a smooth rigid analytic $K$-variety. Then there exists an admissible covering of $X$ by affinoid subspaces $X_i=\Sp A_i$ such that each $X_i$ is of the following form:

There exist $m_i\in \mathbb{N}$, $t_i\in T_{m_i}$ and a monic polynomial $P_i(Y)\in T_{m_i}[Y]$ such that $\frac{\mathrm{d}P_i}{\mathrm{d}Y}$ becomes a unit in $T_{m_i}\langle t_i^{-1}\rangle[Y]/(P_i(Y))$, such that
\begin{equation*}
A_i\cong T_{m_i}\langle t_i^{-1}\rangle[Y]/(P_i(Y)).
\end{equation*}
Moreover, $A_i$ is finite \'etale over $T_{m_i}\langle t_i^{-1}\rangle$.
\end{prop}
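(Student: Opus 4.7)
The plan is to reduce to a local situation at a single point of a smooth affinoid, produce an \'etale map to a polydisc, and then upgrade that \'etale map to a presentation in ``standard \'etale'' form via a primitive element together with Weierstrass preparation.

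First, because smoothness is admissible-local, I pass to an affinoid piece $X' = \Sp A$ with $\Omega^1_{A/K}$ locally free of rank $d = \dim X'$. Realising $A$ as a quotient $T_N/I$ and picking a point $x \in X'$, the rigid analytic Jacobian criterion lets me reorder coordinates so that $I$ is generated near $x$ by elements $f_1, \dots, f_{N-d}$ whose partial derivatives with respect to $x_1, \dots, x_{N-d}$ have invertible determinant at $x$. The images of $x_{N-d+1}, \dots, x_N$ then define, after passing to a sufficiently small admissible affinoid neighbourhood of $x$, an unramified map from $X'$ to $\Sp T_d$; unramified plus equality of dimensions of source and target (which is built into smoothness) yields \'etaleness.

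The core step is then to bring such an \'etale map $\varphi : \Sp A \to \Sp T_d$ into the advertised form. Choose a closed point $x \in \Sp A$ with residue field $L$ above the residue field $F$ of $T_d$ at $\varphi(x)$. Since $L/F$ is finite separable, pick a primitive element $\bar y \in L$ and lift it to $y \in A$; the minimal polynomial $\bar P \in F[Y]$ of $\bar y$ has $\bar P'(\bar y)$ nonzero. Lift $\bar P$ to an element $P_0 \in T_d[Y]$. Finite \'etale morphisms are quasi-finite, so there are only finitely many points of $\varphi^{-1}(\varphi(x))$; using Weierstrass preparation in $T_d\langle t^{-1} \rangle [Y]$, after inverting a suitable $t \in T_d$ which separates those preimages and makes $P_0$ congruent to a monic polynomial $P(Y)$ of the correct degree, the natural map
\begin{equation*}
T_d \langle t^{-1} \rangle [Y] / (P(Y)) \longrightarrow A \langle t^{-1} \rangle, \qquad Y \mapsto y,
\end{equation*}
is a surjection by Nakayama (both sides are finite locally free $T_d\langle t^{-1}\rangle$-modules and it is surjective on the distinguished fibre), hence an isomorphism by a rank count, and $P'(Y)$ is a unit modulo $P(Y)$ by construction.

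The step I expect to be the genuine obstacle is the last: choosing $t$ so that a single monic polynomial $P(Y)$ works uniformly on all of $\varphi^{-1}(\Sp T_d\langle t^{-1}\rangle)$ rather than just locally at $x$. The resolution is a careful Weierstrass preparation argument which isolates the connected component of $x$ in the fibre while simultaneously turning the lifted minimal polynomial into something monic of the right degree; this is exactly the technical heart of Kiehl's original proof, which is why the proposition is attributed to \cite{Kiehl}. Finally, the affinoids $X_i$ produced in this way cover $X$ admissibly, giving the claim.
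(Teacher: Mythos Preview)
The paper does not prove this proposition itself; it is stated with a direct citation to \cite[Folgerung 1.14]{Kiehl} and no further argument is given. So there is no in-paper proof to compare your proposal against.

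As for your sketch on its own merits: the broad strategy --- Jacobian criterion to produce an \'etale map to a polydisc, then a primitive element plus Weierstrass preparation to rewrite it in standard \'etale form --- is indeed the shape of Kiehl's argument. There is, however, a gap in your reduction. You first produce an \'etale map $\varphi: \Sp A \to \Sp T_d$, and later assert that ``both sides are finite locally free $T_d\langle t^{-1}\rangle$-modules'' in order to run Nakayama and a rank count. For the left-hand side $T_d\langle t^{-1}\rangle[Y]/(P(Y))$ this is clear since $P$ is monic, but for $A\langle t^{-1}\rangle$ you have only established \'etaleness, not finiteness. \'Etale morphisms of affinoids are in general only quasi-finite, so upgrading ``\'etale near $x$'' to ``finite \'etale over a rational subdomain of the base'' is itself a nontrivial step, and without it your Nakayama argument does not get off the ground. (Your phrase ``finite \'etale morphisms are quasi-finite'' appears at a point where finiteness has not yet been shown, which suggests this step was silently assumed.) In Kiehl's treatment this is precisely where the careful shrinking and preparation arguments enter; you are right that it is the technical heart, but your outline skips over it rather than resolving it.
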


We say that an affinoid $K$-variety has \textbf{Kiehl standard form} if it is of the form given in the previous proposition, i.e. $X=\Sp A$ with
\begin{equation*}
A\cong T_m\langle t^{-1}\rangle [Y]/(P(Y))
\end{equation*}
as above.

As in \cite{Kiehl}, let $d$ denote the image of $Y$ in $A$. Choose $s$ such that $|\pi^s d|\leq 1$, then it is immediate that we can write 
\begin{equation*}
A\cong T_m\langle t^{-1}\rangle \langle \pi^s Y\rangle/(P(Y)).
\end{equation*}
In particular, an affinoid of Kiehl standard form can be regarded as a closed subvariety of $\Sp T_m\langle t^{-1}\rangle \langle \pi^sY\rangle$.
The following is one of the key results from \cite{Kiehl}.

\begin{prop}[{cf. \cite[Theorem 1.18]{Kiehl}}]
\label{tubular}
Let $X=\Sp A$, where
\begin{equation*}
A=T_m\langle t^{-1}\rangle\langle \pi^sY\rangle/(P(Y))
\end{equation*}
as above. Let $X'=\Sp T_m\langle t^{-1}\rangle \langle \pi^s Y\rangle$. Then there exists some $\epsilon\in K^\times$ and an isomorphism
\begin{equation*}
\Sp A\langle Z\rangle\to X'(\epsilon^{-1} P(Y)), 
\end{equation*} 
sending $Z$ to $\epsilon^{-1}P(Y)$.
\end{prop}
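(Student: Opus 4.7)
The plan is to construct the claimed isomorphism and its inverse explicitly by Newton iteration, once $\epsilon\in K^\times$ is chosen sufficiently small. Writing $B=T_m\langle t^{-1}\rangle$ for brevity, the source ring of the map dual to the claimed isomorphism is
\begin{equation*}
\O(X'(\epsilon^{-1}P(Y)))=B\langle \pi^s Y, W\rangle/(\epsilon W-P(Y)),
\end{equation*}
where $W$ plays the role of $\epsilon^{-1}P(Y)$, while the target is the Tate disc algebra $A\langle Z\rangle$. The prescription $Z\mapsto \epsilon^{-1}P(Y)$ forces the corresponding ring map $\varphi\colon \O(X'(\epsilon^{-1}P(Y)))\to A\langle Z\rangle$ to send $W\mapsto Z$; the only genuine choice is where $\varphi$ sends the generator $Y$. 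Writing $d\in A$ for the image of $Y$, that image $\phi$ is constrained by $P(\phi)=\epsilon Z$ and $\phi\equiv d \pmod{Z}$.

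To produce $\phi$ I would run the Newton iteration $\phi_{k+1}=\phi_k-(P(\phi_k)-\epsilon Z)P'(\phi_k)^{-1}$ starting from $\phi_0=d$, exploiting the Kiehl-standard-form hypothesis that $P'(d)$ is a unit in $A$. Since $|P(\phi_0)-\epsilon Z|=|\epsilon|$ and $P'(\phi_k)$ remains a unit throughout, the usual quadratic estimate yields convergence in $A\langle Z\rangle$ to a unique $\phi$ with $\phi\equiv d\pmod{Z}$ and $P(\phi)=\epsilon Z$, provided $|\epsilon|$ is small enough in terms of $|P'(d)^{-1}|$ and the coefficient norms of $P$. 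Setting $\varphi(Y)=\phi$ and $\varphi(W)=Z$ defines the candidate $\varphi$. Symmetrically, the inverse $\psi\colon A\langle Z\rangle\to \O(X'(\epsilon^{-1}P(Y)))$ must send $Z\mapsto W$ and $d$ to a root $\tilde d$ of $P$ in $\O(X'(\epsilon^{-1}P(Y)))$ reducing to $Y$ at $W=0$; this $\tilde d$ is produced by the dual Newton iteration starting from $Y$, since $P(Y)=\epsilon W$ has norm at most $|\epsilon|$ on the rational subdomain and $P'(Y)$ remains a unit there (being a unit modulo $P(Y)$ by hypothesis, hence also modulo the smaller element $\epsilon W$).

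Mutual inverseness then follows from the uniqueness clauses in the two Newton constructions: $\varphi\circ\psi$ sends $Z\mapsto Z$ and $d\mapsto \varphi(\tilde d)$, and by functoriality of Newton iteration $\varphi(\tilde d)$ is a root of $P$ in $A\langle Z\rangle$ with $\varphi(\tilde d)\equiv \phi\equiv d\pmod{Z}$, which by uniqueness of the Newton limit forces $\varphi(\tilde d)=d$; an analogous argument gives $\psi\circ\varphi=\mathrm{id}$. The main obstacle I expect is combining the convergence bounds for the two iterations into a single admissible value of $\epsilon$: one needs $|\epsilon|$ small enough that both Newton sequences converge in their respective Banach norms \emph{and} that the Hensel-type uniqueness neighbourhoods really contain the elements being compared. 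The relevant bounds involve only $|P'(d)^{-1}|$, the coefficient norms of $P$, and a uniform lower bound for $|P'(Y)|$ on $X'(\epsilon^{-1}P(Y))$, all of which are finite and positive by the unit hypothesis of Kiehl standard form, so any sufficiently small nonzero $\epsilon\in\mathfrak{m}$ will do.
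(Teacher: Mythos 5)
Your argument is correct but takes a genuinely different route from the paper's. The paper constructs only one map, the surjection $\theta\colon A\langle Z\rangle\to B'$ (your $\psi$), via a single Newton--Hensel lift (Kiehl's Satz~1.16, producing your $\tilde d$); it then gets surjectivity from an approximation result (Kiehl's Folgerung~1.9) and injectivity from a Krull-dimension argument on each connected component, comparing the regular, equidimensional rings $A_i\langle Z\rangle$ and $A_i\h{\otimes}_A B'$. You instead run Newton's method in both directions, constructing $\varphi$ as an explicit two-sided inverse via a second Hensel lift $Y\mapsto\phi$, and deduce $\varphi\circ\psi=\mathrm{id}$ and $\psi\circ\varphi=\mathrm{id}$ from Hensel uniqueness. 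Your approach is more elementary and self-contained: it bypasses regularity, dimension theory, Folgerung~1.9, and the connected-component bookkeeping (including any worry about $A$ failing to be a domain), and surjectivity of $\psi$ comes for free. The price is the $\epsilon$-bookkeeping you flag at the end, which does go through: $\varphi$ is contractive because it lifts to a contractive map $B\langle\pi^sY,W\rangle\to A\langle Z\rangle$ and the residue norm on $B'$ only shrinks, so $|\varphi(\tilde d)-d|\lesssim|\epsilon|$ lands inside the Hensel uniqueness radius once $\epsilon$ is small, and symmetrically for $\psi(\phi)$. One point worth spelling out in a full write-up: $P'(Y)$ being a unit in $B'$ is not an immediate consequence of its being a unit modulo $P(Y)$ --- one should write $P'(Y)g+P(Y)h=1$ in $B\langle\pi^sY\rangle$, substitute $P(Y)=\epsilon W$ in $B'$, and invert the small perturbation $1-\epsilon Wh$; this relies on the same $\epsilon$-smallness that powers the rest of the argument.
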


\begin{proof}
We spell out the proof from \cite[Theorem 1.18]{Kiehl}, mainly to convince the reader that it is not necessary to pass to a smaller affinoid covering. Let $B=T_m\langle t^{-1}\rangle \langle \pi^sY\rangle$.

By \cite[Satz 1.16]{Kiehl}, there exists $\epsilon\in K^\times$ such that for $B'=B\langle \epsilon^{-1}P(Y)\rangle$, the natural quotient map
\begin{equation*}
B' \to B'/P(Y)\cong A
\end{equation*}
admits a section in the category of $K$-algebras. Explicitly, the proof of \cite[Satz 1.16]{Kiehl} shows that for sufficiently small $\epsilon$, $B'$ already contains an element $d'$ mapping to $d\in A$ with the property that $P(d')=0$ in $B'$. This produces the desired section.

Sending $Z$ to $\epsilon^{-1}P(Y)$, we thus obtain a commutative square
\begin{equation*}
\begin{xy}
\xymatrix{
A\langle Z\rangle\ar[r] \ar[d] & B'=B\langle \epsilon^{-1}P(Y)\rangle\ar[d]\\
A\ar[r]& B'/(P(Y))
}
\end{xy} 
\end{equation*}
where the bottom arrow is an isomorphism, the arrow on the left is given by quotienting out by $Z$, and the arrow on the right is given by quotienting out by $P(Y)$. Shrinking $\epsilon$ and rescaling $Z$, we can invoke \cite[Folgerung 1.9]{Kiehl} so that the top horizontal arrow is surjective.

By assumption, $A\langle Z \rangle$ and $B'$ are regular affinoid algebras, both of dimension $m+1$. The connected components of $\Sp A\langle Z\rangle$ are of the form $\Sp A_i\langle Z\rangle$, where $\Sp A_i$ is a connected component of $\Sp A$. Since $B'/(P(Y))\cong A$, the closed embedding $\Sp B'\to \Sp A\langle Z\rangle$ exhibits $\Sp B'$ as a closed subvariety which intersects each connected component non-trivially.

Let $\Sp A_i$ be a connected component of $\Sp A$. By the preceding paragraph, the completed tensor product $A_i\h{\otimes}_A B'$ is non-zero, and we have a commutative diagram
\begin{equation*}
\begin{xy}
\xymatrix{
A\langle Z\rangle \ar[r]\ar[d] & B'\ar[d]\\
A_i\langle Z\rangle \ar[r]^\theta\ar[d] & A_i\h{\otimes}_A B'\ar[d]\\
A_i\ar[r] & A_i\h{\otimes}_A (B'/(P(Y)),
}
\end{xy}
\end{equation*}  
where all horizontal arrows are surjective and the bottom horizontal arrow is even an isomorphism. Now $A_i\langle Z\rangle$ is Noetherian, regular and connected, hence an integral domain, and of dimension $m+1$, and $B'_i:=A_i\h{\otimes}_A B'$ is likewise regular of dimension $m+1$, being a non-empty affinoid subdomain of $\Sp B'$. But then the surjective morphism $\theta: A_i\langle Z\rangle \to B'_i$ must be an isomorphism: if $\mathfrak{p}_0\subsetneq \mathfrak{p}_1\subsetneq \hdots\subsetneq \mathfrak{p}_{m+1}$ is a chain of prime ideals in $B'_i$, it can by surjectivity be lifted to a chain of prime ideals $\mathfrak{q}_\bullet$ in $A_i\langle Z\rangle$ of length $m+1$, all containing $\mathrm{ker}\theta$, and $\{0\}\subseteq\mathfrak{q}_0\subsetneq \hdots \subsetneq \mathfrak{q}_{m+1}$ produces a chain of length $m+2$ unless $\{0\}=\mathrm{ker}\theta=\mathfrak{q}_0$. Therefore $A_i\langle Z\rangle \to B'_i$ is an isomorphism for each $i$, and thus $A\langle Z\rangle \to B'$ is an isomorphism.  
\end{proof}

\section{Fr\'echet--Stein algebras and coadmissible modules}
We quickly recall the theory of Fr\'echet--Stein algebras and coadmissible modules as developed by Schneider--Teitelbaum \cite{ST}, translating the results into the language of complete bornological modules as in \cite{SixOp}.

Let $A$ be a Noetherian Banach $K$-algebra. Then any finitely generated $A$-module can be equipped with a canonical Banach structure such that any $A$-module morphism between finitely generated $A$-modules is continuous (i.e. bounded) and strict (see \cite[section 3.7.3]{BGR}). The theory of Fr\'echet--Stein algebras and coadmissible modules generalizes these results, taking the theory of coherent $\O$-modules on rigid analytic quasi-Stein spaces as inspiration.
\begin{defn}
A $K$-Fr\'echet algebra $A$ is called a (two-sided) \textbf{Fr\'echet--Stein algebra} if $A\cong \varprojlim A_n$, where $A_n$ is a (two-sided) Noetherian Banach $K$-algebra for each $n$, the transition maps $A_{n+1}\to A_n$ are flat on both sides and have dense image for each $n$.

A left module $M$ over a Fr\'echet--Stein algebra $A$ is called \textbf{coadmissible} if $M\cong \varprojlim M_n$, where $M_n$ is a finitely generated $A_n$-module and the natural morphism $A_n\otimes_{A_{n+1}}M_{n+1}\to M_n$ is an isomorphism for each $n$.
\end{defn}

Given a Fr\'echet--Stein algebra $A$, we denote its category of coadmissible left modules by $\C_A$.
\begin{thm}[{\cite[Corollary 3.5 and following remarks]{ST}}]
Let $A$ be a Fr\'echet--Stein algebra. The category $\C_A$ is an abelian category. Each coadmissible module can be equipped with a canonical Fr\'echet structure such that each $A$-module morphism between coadmissible $A$-modules is continuous and strict.
\end{thm}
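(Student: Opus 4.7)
The plan is to build the canonical topology level by level. First, since each $A_n$ is a Noetherian Banach $K$-algebra, every finitely generated $A_n$-module $M_n$ carries a canonical Banach topology in which every $A_n$-linear map between finitely generated modules is automatically continuous and strict (this is the Banach version already alluded to before the statement, cf.\ \cite[section 3.7.3]{BGR}). The transition map $M_{n+1}\to M_n$ in the coadmissible system is an $A_{n+1}$-linear map of finitely generated $A_{n+1}$-modules (where $M_n$ is viewed as an $A_{n+1}$-module via $A_{n+1}\to A_n$), hence continuous. The inverse limit $M=\varprojlim M_n$ therefore inherits a canonical Fr\'echet topology.

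To show that $\C_A$ is abelian, I would take a morphism $\phi:M\to N$ of coadmissible modules and produce induced maps $\phi_n:M_n\to N_n$. Noetherianity of $A_n$ ensures that $K_n=\ker\phi_n$ and $C_n=\coker \phi_n$ are finitely generated $A_n$-modules. The two-sided flatness of $A_{n+1}\to A_n$ lets me tensor the exact sequence
\begin{equation*}
0\to K_{n+1}\to M_{n+1}\to N_{n+1}\to C_{n+1}\to 0
\end{equation*}
with $A_n$ and preserve exactness; combined with the coadmissibility of $M$ and $N$, this gives the identifications $A_n\otimes_{A_{n+1}}K_{n+1}\cong K_n$ and $A_n\otimes_{A_{n+1}}C_{n+1}\cong C_n$. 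Thus $(K_n)$ and $(C_n)$ are coadmissible systems, and the candidate kernel and cokernel of $\phi$ in $\C_A$ are $\varprojlim K_n$ and $\varprojlim C_n$ respectively.

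The technical heart of the argument is the vanishing of $\varprojlim^1$ on coadmissible systems. From this one concludes that exact sequences of coadmissible systems produce exact sequences after taking $\varprojlim$, which simultaneously gives (a) that $\varprojlim K_n$ and $\varprojlim C_n$ compute the kernel and cokernel of $\phi$ as abstract $A$-modules, and (b) that $\C_A$ is closed under kernels and cokernels in a compatible way, so that it is abelian. To establish the vanishing, I would exploit the density of the images of the transition maps $A_{n+1}\to A_n$ (which passes to finitely generated modules via the canonical topology) together with completeness to carry out a Mittag-Leffler-style construction: given a compatible $1$-cocycle, one lifts approximate preimages stage by stage, with errors controlled in the Banach norm, and sums the resulting telescoping corrections to produce a genuine lift. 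This density argument in the presence of the canonical Banach topology is the main obstacle, since everything else is essentially formal once $\varprojlim^1=0$ is in hand.

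Finally, continuity and strictness of any $A$-linear morphism $\phi:M\to N$ between coadmissible modules follow from the previous step: $\phi$ factors through its image, which is coadmissible, hence Fr\'echet, so both $\phi$ and its induced bijection onto the image are continuous maps between Fr\'echet spaces. The open mapping theorem then yields strictness.
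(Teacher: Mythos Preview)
The paper does not supply its own proof of this theorem: it is stated with a direct citation to \cite[Corollary 3.5 and following remarks]{ST} and is being recalled as background. There is therefore no in-paper argument to compare against; your sketch is essentially the original Schneider--Teitelbaum proof, and the key steps (canonical Banach topology on each $M_n$, flatness of the transition maps to propagate kernels and cokernels, topological Mittag-Leffler to kill $\varprojlim^1$, open mapping theorem for strictness) are identified correctly.

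One small inaccuracy: you describe the transition map $M_{n+1}\to M_n$ as a map of finitely generated $A_{n+1}$-modules, but $M_n$ is not in general finitely generated over $A_{n+1}$. Continuity is still easy --- the map factors through $A_n\otimes_{A_{n+1}}M_{n+1}$ and the Banach topology on $M_n$ is a quotient topology from a free $A_n$-module --- but the justification you gave does not quite apply as written.
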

We also remark that any finitely presented $A$-module is coadmissible. 
\begin{prop}[{\cite[Corollary 3.1, Theorem B]{ST}}]
Let $A$ be a Fr\'echet--Stein algebra, and let $M\cong \varprojlim M_n$ be a coadmissible $A$-module. 
\begin{enumerate}[(i)]
\item The natural morphism $A_n\otimes_A M\to M_n$ is an isomorphism for each $n$.
\item The inverse system $(M_n)_{n\in \mathbb{N}}$ satisfies the topological Mittag-Leffler property, so that 
\begin{equation*}
	\mathrm{R}^1\varprojlim M_n=0.
\end{equation*}
\end{enumerate}
\end{prop}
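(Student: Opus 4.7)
The plan is to establish (ii) first, since the topological Mittag--Leffler property it encodes feeds directly into (i).

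For (ii), I would iterate the defining isomorphism $A_k\otimes_{A_{k+1}}M_{k+1}\cong M_k$ to obtain $A_n\otimes_{A_{n+j}}M_{n+j}\cong M_n$ for every $j\geq 0$. The image of the transition map $M_{n+j}\to M_n$, which corresponds to $x\mapsto 1\otimes x$ under this identification, therefore generates $M_n$ as an $A_n$-module. Using the hypothesis that $A_{n+j}$ has dense image in $A_n$ together with the continuity of the $A_n$-action on $M_n$ for its canonical Banach topology (which exists as $M_n$ is finitely generated over the Noetherian Banach algebra $A_n$), one deduces that the image of $M_{n+j}$ is itself already dense in $M_n$. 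Hence the closures of the transition images stabilize (they are all equal to $M_n$), which is the topological Mittag--Leffler condition. The classical vanishing result for this setting then gives $\mathrm{R}^1\varprojlim M_k=0$, and as a byproduct the natural map $M\to M_n$ is surjective for each $n$.

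For (i), surjectivity of $\varphi_n : A_n \otimes_A M \to M_n$ is immediate from the surjectivity of $M\to M_n$ just observed. For injectivity, the plan is to pick $A_n$-generators $\bar m_1,\ldots,\bar m_r$ of $M_n$, lift them to elements $m_1,\ldots,m_r\in M$ via the surjectivity above, and consider the morphism $\psi : A^r\to M$, $e_i \mapsto m_i$. By the abelian category structure on $\C_A$ (\cite[Corollary 3.5]{ST}), both $\ker\psi$ and $\mathrm{coker}\psi$ are coadmissible, and their associated inverse systems fit into short exact sequences at each level. Iterating the construction (lifting $A_{n+1}$-generators of $(\mathrm{coker}\psi)_{n+1}$ back to $M$ to refine the presentation one level at a time) produces a compatible system of finite presentations that identifies $(A_k\otimes_A M)_k$ with the defining system $(M_k)_k$ in the limit, forcing $\varphi_n$ to be an isomorphism.

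The main obstacle is the injectivity step: because $A_n\otimes_A M$ is not obviously finitely generated over $A_n$, one cannot invoke any Noetherian comparison result directly. The argument instead has to use the iterative lifting together with the density and $\mathrm{R}^1\varprojlim$-vanishing from (ii) to pin down the kernel of $\varphi_n$ level by level.
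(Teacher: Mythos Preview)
The paper does not supply its own proof of this proposition: it is stated with a citation to \cite[Corollary 3.1, Theorem B]{ST} and no argument is given. So there is no ``paper's proof'' to compare against beyond the original Schneider--Teitelbaum argument.

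On the merits of your proposal: the argument for (ii) is essentially the Schneider--Teitelbaum one and is fine up to the last sentence. However, the claim that ``as a byproduct the natural map $M\to M_n$ is surjective for each $n$'' is false in general. Topological Mittag--Leffler only gives that the image of $M$ in $M_n$ is \emph{dense}, not that it is all of $M_n$; already for $A=\varprojlim A_n$ itself with $M=A$, the map $A\to A_n$ typically has dense image but is not surjective. What you actually need (and what is true) is that the image of $\varphi_n: A_n\otimes_A M\to M_n$ is an $A_n$-submodule of the finitely generated $A_n$-module $M_n$, hence closed (submodules of finitely generated modules over Noetherian Banach algebras are closed), and it contains the dense image of $M$, so it equals $M_n$. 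Your surjectivity step for (i) can therefore be repaired, but not via the route you wrote.

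The injectivity argument for (i) is not a proof as written. You describe a strategy of iteratively refining presentations and lifting generators level by level, but you do not explain how this ``identifies $(A_k\otimes_A M)_k$ with the defining system $(M_k)_k$'', nor why the process terminates or converges to something controlling $\ker\varphi_n$. The actual argument in \cite{ST} is more structured: one uses that $\C_A$ is abelian and closed under extensions to reduce, via a finite presentation $A^s\to A^r\to M\to 0$ of $M_n$-generators lifted to $M$, to comparing two right-exact sequences, and then invokes the already-established coadmissibility of kernels and the flatness of $A_n\to A_{n-1}$ to identify the terms. Your sketch gestures at this but does not close the loop; in particular, lifting generators of $(\mathrm{coker}\psi)_{n+1}$ presupposes surjectivity of $M\to M_{n+1}$ for the cokernel, which again is only density.
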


Examples:
\begin{enumerate}[(i)]
\item Let $X$ be a rigid analytic Stein space. Then $\O_X(X)$ is a Fr\'echet--Stein algebra, and the global sections functor yields an equivalence of categories between coherent $\O_X$-modules and coadmissible $\O_X(X)$-modules.
\item Let $G$ be a compact $p$-adic Lie group. Then the locally analytic distribution algebra $D(G, K)$ is a Fr\'echet--Stein algebra, and the duals of coadmissible $D(G, K)$-modules are by definition the admissible locally analytic $G$-representations (\cite[Theorem 5.1]{ST}).
\item Let $X$ be a smooth affinoid $K$-variety. Then the completed algebra of differential operators $\w{\D}_X(X)$ is a Fr\'echet--Stein algebra (\cite[Theorem 1.1.(ii)]{Bode1}, \cite[Theorem 2.8]{SixOp}). 
\end{enumerate}

This paper is concerned with example (iii). To explain the construction of $\w{\D}_X(X)$, we first need to introduce a number of preliminary concepts.

\begin{defn}
Let $A$ be a commutative $K$-algebra. 
A \textbf{smooth $(K, A)$-Lie--Rinehart} is a finitely generated projective $A$-module $L$, equipped with a $K$-bilinear Lie bracket and a map of Lie algebras $\rho: L\to \mathrm{Der}_K(A)$ satisfying
\begin{equation*}
[\partial, a\cdot \partial']=\rho(\partial)(a)\cdot \partial'+a\cdot [\partial, \partial']
\end{equation*}
for all $a\in A$, $\partial, \partial'\in L$.
\end{defn}
The typical and (for us) most important example is $L=\mathrm{Der}_K(A)$, $\rho=\mathrm{id}$ when $A$ is a smooth affinoid $K$-algebra.

We also require the corresponding integral notions.
\begin{defn}
Let $A$ be an affinoid $K$-algebra. An \textbf{admissible affine formal model} of $A$ is an $R$-subalgebra $\A\subseteq A$ which is topologically of finite presentation over $R$ such that $\A_K\cong A$ via the natural morphism.

If $(L, \rho)$ is a smooth $(K, A)$-Lie--Rinehart algebra, an $(R, \A)$-\textbf{Lie lattice} is a finitely generated $\A$-submodule $\L\subseteq L$ such that $\L_K\cong L$, $\L$ is closed under the Lie bracket, and $\rho(\L)(\A)\subseteq \A$.\\
We say that $\L$ is a \textbf{smooth} Lie lattice if it is moreover a projective $\A$-module. 
\end{defn}

Let $\L$ be an $(R, \A)$-Lie lattice of a smooth $(K, A)$-Lie--Rinehart algebra $L$. Then we can form the universal enveloping algebra
\begin{equation*}
U_\A(\L)
\end{equation*}
following \cite{Rinehart} -- this is a ring generated by $\A$ and $\L$ subject to the relations
\begin{enumerate}[(i)]
\item $\partial \partial'-\partial'\partial=[\partial, \partial']$ for all $\partial, \partial'\in \L$,
\item if $a\in \A$, $\partial\in \L$, then $a\partial$ (multiplication in $U_\A(\L)$) is the same as $a\cdot \partial$ ($\A$-module structure on $\L$), and
\begin{equation*}
\partial a-a\partial=\rho(\partial)(a).
\end{equation*}
\end{enumerate}

If $\L$ is an $(R, \A)$-Lie lattice, so is $\pi^n\L$ for any $n\geq 0$, and we define the completed enveloping algebra
\begin{equation*}
\w{U_A(L)}:=\varprojlim \h{U_\A(\pi^n\L)}_K.
\end{equation*}
It was shown in \cite[subsection 6.1]{DcapOne} that this does not depend on the choice of $\A$ and $\L$.

Examples:
\begin{enumerate}[(i)]
\item If $A=K$ and $L=\mathfrak{g}$ is a finite-dimensional $K$-Lie algebra, we can regard it as a Lie--Rinehart algebra with $\rho=0$. Then $\w{U_K(\mathfrak{g})}$ is the Arens--Michael envelope of the usual universal enveloping algebra of $U(\mathfrak{g})$, as studied in \cite{Schmidt}.
\item As a special case of the above, if $\mathfrak{g}=Kx_1\oplus \dots \oplus Kx_m$ is abelian, choose the Lie lattice $\mathfrak{g}_R=\oplus Rx_i$ to obtain
\begin{equation*}
\w{U_K(\mathfrak{g})}=K\{x\}\cong \O(\mathbb{A}^{m, \mathrm{an}}).
\end{equation*} 
\item Let $X=\Sp A$ be a smooth affinoid $K$-variety, and take $L=\mathrm{Der}_K(A)$, $\rho=\mathrm{id}$. Then we define
\begin{equation*}
\w{\D}_X(X)=\w{U_A(L)}.
\end{equation*}
We will give a more explicit description of $\w{\D}_X(X)$ in the case of $X=\mathbb{D}^m$ in the next section.
\end{enumerate}

\begin{prop}[{\cite[Theorem 2.8]{SixOp}}]
Let $A$ be an affinoid $K$-algebra and $L$ a smooth $(K, A)$-Lie--Rinehart algebra. Then $\w{U_A(L)}$ is a Fr\'echet--Stein algebra.
\end{prop}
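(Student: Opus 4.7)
The plan is to verify the three defining conditions of a Fréchet--Stein algebra for the presentation
\begin{equation*}
	\w{U_A(L)} = \varprojlim_{n \geq 0} D_n, \qquad D_n := \h{U_\A(\pi^n\L)}_K.
\end{equation*}
First I would choose an admissible affine formal model $\A \subseteq A$, which exists for any affinoid by Raynaud's theory, together with an $(R,\A)$-Lie lattice $\L \subseteq L$. Such a lattice exists by taking any finitely generated $\A$-submodule of $L$ spanning $L$ over $K$ and rescaling by a sufficient power of $\pi$ so that it is closed under $[-,-]$ and satisfies $\rho(\L)(\A)\subseteq \A$. After possibly replacing $\A$, one can arrange that $\L$ is smooth, i.e.\ projective over $\A$, using projectivity of $L$ over $A$; this is essential in order to apply the Rinehart PBW theorem in what follows. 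Independence of $\w{U_A(L)}$ from these choices is precisely \cite[subsection 6.1]{DcapOne}, so it suffices to fix one such pair.

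The second step is Noetherianity of each $D_n$. Equip $U_\A(\pi^n\L)$ with the PBW filtration by order in $\pi^n\L$; projectivity of $\pi^n\L$ over $\A$ yields $\gr U_\A(\pi^n\L) \cong \Sym_\A(\pi^n\L)$, a finitely presented commutative $\A$-algebra. Taking $\pi$-adic completion and then inverting $\pi$, one checks (using a local $\A$-basis of $\L$) that the associated graded of $D_n$ identifies with a Tate algebra of the form $A\langle \xi_1, \dots, \xi_d\rangle$ -- a Noetherian commutative ring. Standard filtered-ring machinery of Björk then transfers Noetherianity from $\gr D_n$ to $D_n$, provided the filtration is Zariskian, which amounts to verifying Noetherianity of the Rees ring.

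Third, I must show the transition maps $D_{n+1} \to D_n$ have dense image and are flat on both sides. Density is automatic: after inverting $\pi$, both $U_\A(\pi^{n+1}\L)_K$ and $U_\A(\pi^n\L)_K$ coincide with the common dense subring $U_A(L)$. For flatness, the transition map is filtered, and by the same Björk-type principle flatness can be tested on the associated graded. In a local basis, the induced map on $\gr$ identifies with the affinoid subdomain inclusion between Tate algebras on nested polydiscs over $A$ of radii $|\pi|^{-n}$ and $|\pi|^{-(n+1)}$, which is flat by classical rigid geometry. The symmetric argument handles right flatness.

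The main obstacle is the flatness step. It relies on two ingredients: the existence of a projective Lie lattice, so that PBW produces a genuinely commutative associated graded, and the Zariskian property of the PBW filtration on $D_n$, which allows flatness to be tested commutatively. The first is the reason \emph{smooth} Lie lattices are built into the setup at the outset; the second is a non-trivial filtered-algebra statement, though it is by now standard in the theory of Ardakov--Wadsley completions and reduces, as above, to classical results about Tate algebras and affinoid subdomains.
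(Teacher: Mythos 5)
Your outline identifies the right three axioms, but the flatness step fails as stated, and this is where the real content of \cite[Theorem 2.8]{SixOp} lies (the present paper only cites the result; the proof is ultimately in \cite[Theorem 6.6]{DcapOne} and \cite{Bode1}). A preliminary correction: with the PBW (order) filtration, $\gr D_n$ is the \emph{polynomial} ring $A[\xi_1,\dots,\xi_m]$ (when $\L\cong\A^m$), not a Tate algebra $A\langle\xi\rangle$. Each $F_iD_n/F_{i-1}D_n$ is a finite free $A$-module, so the Banach completion never enters the graded pieces; ``complete then take $\gr$'' does not coincide with ``take $\gr$ then complete''. This is still enough for Noetherianity of $D_n$ (and in fact one does not even need projectivity of $\L$ there, since $\gr U_\A(\pi^n\L)$ is always a commutative quotient of $\Sym_\A(\pi^n\L)$), so that step is essentially sound after correction. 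The flatness argument, however, breaks down: the transition $D_{n+1}\to D_n$ is filtered for the order filtration, but since $\pi^{n+1}d_i=\pi\cdot\pi^nd_i$, the induced map on $\gr$ is $A[\eta]\to A[\xi]$, $\eta_i\mapsto\pi\xi_i$. This is not an affinoid-subdomain inclusion of Tate algebras, and it is \emph{not flat}: a Koszul computation gives $\mathrm{Tor}_1^{A[\eta]}\bigl(A[\eta]/(\pi,\eta_1),\,A[\xi]\bigr)\cong A[\xi]/\pi A[\xi]\neq 0$. The implication ``$\gr$-flat $\Rightarrow$ flat'' is one-directional, so this route gives nothing here: $D_{n+1}\to D_n$ is flat even though its $\gr$ is not, and the Zariskian machinery cannot detect it.

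The actual proofs in the literature do something stronger and more concrete: they exhibit $D_n$ as a \emph{free} left (and right) $D_{n+1}$-module by constructing an explicit topological basis (roughly, powers of the $d_i$ divided out by appropriate $\pi$-rescalings), see \cite[Theorem 6.6]{DcapOne}, \cite[Theorem 4.1.11]{Ardakov}. This never passes through the associated graded of the Banach algebra. You should also be more careful about the reduction to a projective Lie lattice: the paper itself remarks, after Proposition 5.5, that smooth Lie lattices are only guaranteed to exist \emph{locally}, and the Fréchet--Stein property is a statement about a single global inverse-limit presentation, so one cannot simply invoke a local smooth lattice. Adjoining the matrix entries of a projecting idempotent for $L$ to the formal model $\A$ and rescaling is a plausible way to produce a global projective $\L$, but that requires an argument you have not given.
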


We can thus talk about coadmissible $\w{U_A(L)}$-modules. In fact, coadmissible $\w{U_A(L)}$-modules share one additional feature with coadmissible modules over distribution algebras: if $D(G, K)$ is the distribution algebra of some compact $p$-adic Lie group $G$, then any coadmissible $D(G, K)$-module is a nuclear Fr\'echet space \cite[Lemma 6.1]{ST}. In the case of coadmissible $\w{U_A(L)}$-modules, we have the following relative version:

\begin{defn}[{\cite[Definition 5.3]{SixOp}}]
	Let $A$ be a (not necessarily commutative) Noetherian Banach $K$-algebra. A left Fr\'echet $A$-module $M$ is called \textbf{nuclear over $A$} (or $A$-nuclear) if $M\cong \varprojlim M_n$, where $M_n$ is a left Banach $A$-module with $A^\circ\cdot M_n^\circ\subseteq M_n^\circ$ such that for each $n$,
	\begin{enumerate}[(i)]
		\item the image of $M$ in $M_n$ is dense.
		\item there exists a left Banach $A$-module $F_n$ with $A^\circ\cdot F_n^\circ\subseteq F_n^\circ$ and a continuous $A$-module surjection $F_n\to M_n$ such that the composition $F_n\to M_n\to M_{n-1}$ is strictly completely continuous over $A$ in the sense of \cite[Definition 1.1]{Kiehlproper}.
	\end{enumerate}
\end{defn}

\begin{prop}[{\cite[Proposition 5.5]{SixOp}}]
Let $A$ be an affinoid $K$-algebra and let $L$ be a smooth $(K, A)$-Lie--Rinehart algebra admitting a smooth Lie lattice. Then any coadmissible $\w{U_A(L)}$-module is nuclear over $A$.
\end{prop}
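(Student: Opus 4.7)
The plan is to reduce the proposition to the claim that each transition map $D_n \to D_{n-1}$ in the Fr\'echet--Stein presentation of $\w{U_A(L)}$ is strictly completely continuous over $A$, and then to establish this from Rinehart's PBW theorem together with the smoothness hypothesis on the Lie lattice.

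Let $M$ be a coadmissible $\w{U_A(L)}$-module with presentation $M \cong \varprojlim M_n$, where $M_n = D_n \otimes_{\w{U_A(L)}} M$ is finitely generated over $D_n$. Each $M_n$ carries the canonical Banach structure of a finitely generated module over the Noetherian Banach algebra $D_n$; fixing finite generators $m_1, \hdots, m_{r_n}$ and setting $M_n^\circ = \sum D_n^\circ \cdot m_i$ produces a unit ball satisfying $A^\circ \cdot M_n^\circ \subseteq M_n^\circ$. Density of the image of $M$ in each $M_n$ is immediate from the topological Mittag-Leffler property of coadmissible modules recalled earlier in this section.

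For each $n$, the above choice of generators also yields a strict surjection of Banach $A$-modules $F_n := D_n^{r_n} \twoheadrightarrow M_n$, and the composition $F_n \to M_n \to M_{n-1}$ factors as
\begin{equation*}
D_n^{r_n} \to D_{n-1}^{r_n} \to M_{n-1},
\end{equation*}
where the first map is the componentwise transition and the second is the bounded $A$-module map sending the standard basis to the images in $M_{n-1}$ of the chosen generators of $M_n$. Since composition of a strictly completely continuous morphism with a bounded $A$-module morphism on either side is again strictly completely continuous, it suffices to show that the transition $D_n \to D_{n-1}$ is itself strictly completely continuous over $A$.

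At this point the smoothness of $\L$ enters decisively. By Rinehart's PBW theorem, the order filtration on $U_\A(\pi^n \L)$ has associated graded algebra $\Sym_\A(\pi^n\L)$, and after $\pi$-adic completion and inversion of $\pi$ an element of $D_n$ admits a unique convergent expansion $\sum_\alpha a_\alpha (\pi^n \partial)^\alpha$ (Zariski-locally on $\Spec \A$, where we pick a basis $\partial_1,\hdots,\partial_d$ of $\L$) with $a_\alpha \in A$ tending to zero. Rewriting this expansion in terms of $\pi^{n-1}$-scaled generators yields $\sum_\alpha (\pi^{|\alpha|} a_\alpha)(\pi^{n-1} \partial)^\alpha$, so truncation at total order $\leq k$ exhibits the image of the unit ball of $D_n$ as contained in a finitely generated $A$-submodule of $D_{n-1}$ up to an error of norm $\leq |\pi|^{k+1}$. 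This is precisely the strict complete continuity condition of Kiehl. The main technical obstacle is the passage from the locally free case to the general projective case, which one handles by realizing a finite rank projective $\A$-module as a direct summand of a free module of some rank and observing that strict complete continuity over $A$ is preserved under direct summands.
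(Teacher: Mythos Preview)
The paper does not give its own proof of this proposition; it simply cites \cite[Proposition 5.5]{SixOp}. Your argument is the standard one and is correct: the reduction to showing that the transition maps $D_n\to D_{n-1}$ are strictly completely continuous over $A$ is exactly the mechanism the paper invokes elsewhere (compare the proof of Proposition \ref{DninfFS}, where nuclearity is obtained by identifying the transition maps with $\mathrm{id}\h{\otimes}$ a strictly completely continuous map of Tate algebras), and your truncation argument in the free case is the concrete way of seeing this. The only point worth sharpening is the passage to the merely projective case: rather than appealing vaguely to direct summands, note that a single $\A$-linear splitting of the order filtration on $U_\A(\L)$ (which exists because each $\Sym^k_\A\L$ is projective) restricts compatibly to splittings on each $U_\A(\pi^n\L)$, so the identifications $D_n\cong \h{\Sym_\A(\pi^n\L)}_K$ are compatible across $n$ and embed the transition map as a direct $A$-module summand of $A\langle \pi^n x\rangle\to A\langle \pi^{n-1}x\rangle$.
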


Note that \cite[Lemma 9.3]{DcapOne} ensures the existence of a smooth Lie lattice at least locally. 

More generally, if $A$ is Noetherian Banach and $U=\varprojlim U_n$ is a Fr\'echet--Stein $K$-algebra with a continuous algebra map $A\to U$ such that the $U_n$ exhibit $U$ as a nuclear $A$-module, then any coadmissible $U$-module is $A$-nuclear, by the same argument as in \cite[Proposition 5.5]{SixOp}. We summarize this situation by saying that $U$ is a Fr\'echet--Stein algebra nuclear over $A$.

The notion of nuclearity relative to some algebra $A$ will only be relevant to us insofar as it features repeatedly as a condition to translate the results from the beginning of this section into bornological statements (see below). 

To do homological algebra with coadmissible modules, it is desirable to embed them into a larger category which is well-behaved both from a homological and a topological standpoint. The paper \cite{SixOp} achieves this by considering complete bornological modules.

We now summarize some results regarding coadmissible modules in a complete bornological context. We refer to \cite{ProsmansSchneiders} and \cite{SixOp} for a detailed introduction to the category $\h{\B}c_K$ of complete bornological $K$-vector spaces. Recall that a bornology on a $K$-vector space $V$ is a collection $\B$ of `bounded' subsets which satisfy:
\begin{enumerate}[(i)]
\item $\{v\}\in\B$ for each $v\in V$.
\item $\B$ is closed under taking finite unions.
\item if $B\subseteq B'$ and $B'\in \B$, then $B\in \B$.
\item if $B\in \B$, then $R\cdot B\in \B$.
\item if $B\in \B$ and $\lambda\in K$, then $\lambda\cdot B\in \B$.´
\end{enumerate}  
The category $\B c_K$ of bornological vector spaces then has as objects $K$-vector spaces equipped with a bornology, and as morphisms those $K$-linear maps which send bounded sets to bounded sets. A bornological vector space is called complete if its bornology is generated by $\pi$-adically complete $R$-submodules. We denote the full subcategory of complete bornological $K$-vector spaces by $\h{\B}c_K$.

For example, if $V$ is a locally convex topological $K$-vector space whose topology is defined by a family of semi-norms $q_i$, we can regard $V$ as a bornological vector space by declaring a subset $B$ to be bounded if and only if it is bounded with respect to each $q_i$. This defines a `bornologification' functor
\begin{align*}
LCVS_K\to \B c_K\\ 
V\mapsto V^b.
\end{align*}
By \cite[p. 102, p. 109]{Houzel}, $(-)^b$ is fully faithful on metrisable spaces. In particular, Fr\'echet spaces can naturally be regarded as complete bornological vector spaces (bornological completeness follows e.g. from \cite[Corollary 1.18]{BambozziCGT}).

The category $\h{\B}c_K$ carries a closed symmetric monoidal structure, where the tensor product $\h{\otimes}_K$ is an extension of the usual completed (projective) tensor product on Banach spaces. If $A\in \h{\B}c_K$ is a monoid (ring object), we denote by $\mathrm{Mod}_{\h{\B}c_K}(A)$ the category of complete bornological left $A$-modules.

\begin{prop}[{\cite[Lemma 4.28, Corollary 5.13, Corollary 5.14]{SixOp}}]
Let $A$ be a Noetherian Banach $K$-algebra and assume that $A^\circ$ is Noetherian (if $K$ is discretely valued) or almost Noetherian (if $K$ is densely valued). Let $U=\varprojlim U_n$ be a Fr\'echet--Stein $K$-algebra nuclear over $A$.		
	
The functor $(-)^b$ induces an exact, fully faithful embedding
\begin{equation*}
(-)^b: \C_U\to \mathrm{Mod}_{\h{\B}c_K}(U)
\end{equation*}
of the category of coadmissible $U$-modules into the category of complete bornological $U$-modules. All morphisms between coadmissible $U$-modules are strict in $\mathrm{Mod}_{\h{\B}c_K}(U)$.
\end{prop}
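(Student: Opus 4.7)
The plan is to leverage the classical topological statements of Schneider--Teitelbaum (recalled in the excerpt) and then transport them via the bornologification functor $(-)^b$, using the fact that this functor is well-behaved on Fr\'echet spaces thanks to Houzel's result cited in the text.

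First, I would construct the functor. Given $M\in \C_U$, the theorem cited from \cite{ST} equips $M\cong \varprojlim M_n$ with a canonical Fr\'echet topology, namely the inverse limit of the canonical Banach topologies on the finitely generated $U_n$-modules $M_n$. The $U$-action $U\times M\to M$ is then jointly continuous in the Fr\'echet sense, so its image under $(-)^b$ is bounded, and $M^b$ is a complete bornological $U$-module. Any $U$-linear map $\phi\colon M\to N$ in $\C_U$ is continuous by \cite{ST}, hence bounded, hence gives a morphism $\phi^b$ in $\mathrm{Mod}_{\h{\B}c_K}(U)$.

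Next, full faithfulness is immediate from Houzel's theorem (\cite[p.~102, p.~109]{Houzel}, cited above): $(-)^b$ is fully faithful on metrisable locally convex spaces, and both source and target of any $\phi$ are Fr\'echet. The only point to verify is that a bounded $U$-linear map between the $M^b$'s corresponds to a continuous $U$-linear map; but this follows from Houzel (which identifies bounded with continuous in the metrisable situation) together with the fact that the $U$-module axioms are purely algebraic.

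For strictness, the Schneider--Teitelbaum theorem directly states that every morphism in $\C_U$ is strict in the Fr\'echet sense. A strict morphism of Fr\'echet spaces has closed image, and the canonical isomorphism $M/\ker\phi\cong \im\phi$ is a topological isomorphism; applying $(-)^b$ and invoking Houzel again shows this remains an isomorphism of complete bornological spaces, so $\phi^b$ is strict in $\h{\B}c_K$. Exactness of $(-)^b\colon \C_U\to \mathrm{Mod}_{\h{\B}c_K}(U)$ then follows: a short exact sequence of coadmissible modules is strict exact in the Fr\'echet category, and bornologification of such a sequence is exact in $\h{\B}c_K$ (because the quotient bornology on a Fr\'echet quotient coincides with the von Neumann bornology of the quotient Fr\'echet topology, so no pathology arises).

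The main obstacle to be careful about is the compatibility between two a priori distinct bornologies one might put on a coadmissible $U$-module: the one coming from the canonical Fr\'echet topology via $(-)^b$, and the one coming from viewing $M$ as an inverse limit of Banach modules in $\h{\B}c_K$ directly. This is where the hypothesis that $U$ is nuclear over $A$ (and the assumption on $A^\circ$) enters: it guarantees that the lattices defining the Fr\'echet topology and those witnessing completeness of the bornology generate the same bounded subsets. Once this identification is in hand, all the steps above compose formally, and the proposition reduces to the combination of Schneider--Teitelbaum's strictness/continuity results with Houzel's theorem on metrisable bornologification.
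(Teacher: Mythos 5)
Your overall strategy — transport Schneider--Teitelbaum's topological statements through $(-)^b$ using Houzel's full faithfulness on metrisable spaces — is a reasonable reconstruction (the paper itself offers no proof, only a citation to \cite{SixOp}), and your full faithfulness argument is essentially fine once one notes that the real content is ST's automatic continuity of $U$-module maps between coadmissibles, with Houzel supplying the dictionary ``bounded $=$ continuous'' for metrisable spaces.

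The genuine gap is in the strictness and exactness step. You assert that ``the quotient bornology on a Fr\'echet quotient coincides with the von Neumann bornology of the quotient Fr\'echet topology, so no pathology arises.'' This is not true for an arbitrary strict epimorphism of Fr\'echet spaces: the von Neumann bornology of a quotient can be strictly larger than the quotient bornology, because a topologically bounded subset of $M/\ker\phi$ need not lift to a bounded subset of $M$ (this ``bounded lifting'' property is exactly the quasinormability/Schwartz-type condition, and it fails for general Fr\'echet spaces). Strictness of $\phi^b$ in $\h{\B}c_K$ is precisely the statement that bounded sets lift from $M/\ker\phi \cong \im\phi$, so this is the crux of the proof, not a non-issue. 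It is \emph{here} that the hypothesis that $U$ is nuclear over $A$ (with $A^\circ$ (almost) Noetherian) enters: it makes every coadmissible module pseudo-nuclear over $A$, and pseudo-nuclear Fr\'echet spaces have exactly the bounded-set control needed for $(-)^b$ to carry strict exact sequences to strict exact sequences. Your proposal is therefore internally inconsistent: you invoke the nuclearity hypothesis for a compatibility of bornologies that in fact holds for all Fr\'echet spaces (bornological completeness of a Fr\'echet space requires nothing beyond metrisable completeness, by the reference to Bambozzi already in the paper), while the place where nuclearity is genuinely required — preservation of strict epimorphisms, hence cokernels, hence exactness — is exactly the spot you wave away. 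To close the gap you would need to argue, using the presentation $M\cong\varprojlim M_n$ with strictly completely continuous (relative to $A$) transition maps, that the quotient bornology on $M/\ker\phi$ agrees with the inverse-limit bornology on $\varprojlim(M_n/\ker\phi_n)$, and in turn with the von Neumann bornology; it is this chain of identifications that uses both the nuclearity and the (almost) Noetherianity of $A^\circ$.
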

In the reference, the proofs are given for completed enveloping algebras, but in the light of \cite[Proposition 5.12]{SixOp} it is immediate that the same results hold in this greater generality. 

We can now formulate the complete bornological analogues of the results from the beginning of this section.
\begin{thm}[{\cite[Corollary 5.38, Corollary 5.27]{SixOp}}]
\label{Bornproperties}
Let $U$ be a Fr\'echet--Stein $K$-algebra, nuclear over some Noetherian Banach algebra $A$ with (almost) Noetherian unit ball. Let $M\cong \varprojlim M_n$ be a coadmissible $U$-module.
\begin{enumerate}[(i)]
\item We have $M_n^b\cong U_n^b\h{\otimes}_{U^b} M^b\cong U_n^b\h{\otimes}_{U^b}^\mathbb{L}M^b$ in $\mathrm{D}(\mathrm{Mod}_{\h{\B}c_K}(U_n))$ for each $n$ via the natural morphism.
\item The inverse system $(M_n^b)_{n\in \mathbb{N}}$ satisfies a bornological Mittag-Leffler property (it is a pre-nuclear system, \cite[Definition 5.24]{SixOp}), so that $M^b\cong \mathrm{R}\varprojlim M_n^b$.
\end{enumerate}
\end{thm}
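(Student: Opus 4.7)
The strategy is to lift the classical topological statements recalled just above — Theorem B of Schneider--Teitelbaum giving $U_n \otimes_U M \cong M_n$, and the topological Mittag--Leffler property — to the complete bornological setting. The key leverage is the canonicity of the Banach bornology on finitely generated modules over a Noetherian Banach algebra (so that there is no ambiguity in how $M_n^b$ is bornologised), together with the nuclearity hypothesis, which makes inverse limits in $\h{\B}c_K$ well-behaved.

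For the underived half of (i), I would pick a finite presentation $U_n^r \to U_n^s \to M_n \to 0$ of $U_n$-modules. By Banach's open mapping theorem this is strict in $\mathrm{Mod}_{\h{\B}c_K}(U_n)$, and pulled back along $U^b \to U_n^b$ it remains strict in $\mathrm{Mod}_{\h{\B}c_K}(U)$. The bifunctor $U_n^b \h{\otimes}_{U^b} (-)$ is right exact and commutes with finite direct sums, and $U_n^b \h{\otimes}_{U^b} U^b = U_n^b$ tautologically, so $U_n^b \h{\otimes}_{U^b} M^b$ is computed by the cokernel of $U_n^r \to U_n^s$, which is canonically $M_n^b$.

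For the derived statement I would construct a resolution $P_\bullet \to M$ in $\C_U$ with each $P_i$ of the form $\varprojlim_m U_m^{r_{i,m}}$, built inductively using Noetherianity at each level of the Fr\'echet--Stein presentation together with flatness and dense image of the transition maps (mirroring the construction in the proof of Theorem B). Applying $U_n^b \h{\otimes}_{U^b} (-)$ levelwise and invoking nuclearity to commute the bornological tensor product with the defining inverse limits yields a complex of free $U_n^b$-modules whose cohomology computes $\mathrm{Tor}_\ast^{U^b}(U_n^b, M^b)$. Classical flatness of $U_m \to U_n$ for $m \geq n$ then forces this complex to be exact away from degree zero, where Theorem B applied to each $P_i$ identifies it with $M_n^b$. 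The main obstacle, and the point at which nuclearity is essential, is precisely this interchange of $\h{\otimes}$ with $\varprojlim$ — exactly the sort of bookkeeping the author defers to the appendix.

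For (ii), pre-nuclearity of $(M_n^b)$ in the sense of Definition 5.24 of \cite{SixOp} is essentially a restatement of $M$ being nuclear over $A$: the Banach modules $F_n$ with surjections $F_n \to M_n$ for which $F_n \to M_{n-1}$ is strictly completely continuous, furnished by the definition of $A$-nuclearity recalled above, fit the template of a pre-nuclear system after checking compatibility with the $U_n$-action (which is automatic because the Banach structures on the $M_n$ are intrinsic). The identification $M^b \cong \mathrm{R}\varprojlim M_n^b$, equivalently $\mathrm{R}^1\varprojlim M_n^b = 0$, then follows from the bornological Mittag--Leffler theorem of \cite{SixOp}, which asserts exactly that $\mathrm{R}\varprojlim$ reduces to $\varprojlim$ on pre-nuclear inverse systems in $\h{\B}c_K$.
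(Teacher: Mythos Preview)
The paper does not prove this theorem; it is quoted directly from \cite[Corollary 5.38, Corollary 5.27]{SixOp}, so there is no in-paper argument to compare against. Your sketch therefore has to be measured against what a genuine proof would need, and on that score part (i) has a real gap.

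In your underived argument you take a finite presentation $U_n^r\to U_n^s\to M_n\to 0$, restrict scalars to $U$, and then apply $U_n^b\h{\otimes}_{U^b}(-)$. But this computes $U_n^b\h{\otimes}_{U^b}M_n^b$, not $U_n^b\h{\otimes}_{U^b}M^b$: the terms of your sequence are $U_n$-modules, and $U_n^b\h{\otimes}_{U^b}U_n^b$ is not tautologically $U_n^b$ (the identity $U_n^b\h{\otimes}_{U^b}U^b\cong U_n^b$ you invoke concerns the wrong module). Nothing in the argument touches $M^b$ itself. Since $M$ is coadmissible rather than finitely presented over $U$, you cannot simply replace this by a presentation of $M$; the actual content of (i) is precisely that the completed bornological tensor product $U_n^b\h{\otimes}_{U^b}(-)$, applied to the Fr\'echet module $M^b\cong\varprojlim M_m^b$, collapses to $M_n^b$. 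This requires controlling how $\h{\otimes}$ interacts with the inverse limit defining $M^b$, which is exactly where nuclearity enters (compare the Bar-resolution arguments in the appendix). Your derived argument has a related issue: the proposed resolution by objects $\varprojlim_m U_m^{r_{i,m}}$ with level-dependent ranks is not the standard construction, and you would need to justify both that such objects are acyclic for $U_n^b\h{\otimes}_{U^b}(-)$ and that the interchange with $\varprojlim$ is legitimate, which again is the heart of the matter rather than bookkeeping.

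Your outline for (ii) is essentially correct: $A$-nuclearity of $M$ supplies the data required by the definition of a pre-nuclear system, and the bornological Mittag--Leffler result of \cite{SixOp} then gives $\mathrm{R}\varprojlim M_n^b\cong M^b$.
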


\begin{prop}[{\cite[Corollary 5.39]{SixOp}}]
Let $U$ be as above. A complete bornological $U$-module $M\in \mathrm{Mod}_{\h{\B}c_K}(U)$ is coadmissible if and only if the following is satisfied: $M_n:=U_n^b\h{\otimes}_{U^b}M$ is a finitely generated $U_n$-module, equipped with its canonical Banach structure, and the natural morphism $M\to \varprojlim M_n$ is an isomorphism.
\end{prop}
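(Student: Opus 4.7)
The plan is to argue both directions separately, with the forward direction being a near-immediate consequence of Theorem~\ref{Bornproperties} and the reverse direction requiring one to reconstruct an underlying classical coadmissible system and then match up the bornologies.

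For the forward direction, I would suppose $M \cong N^b$ for a coadmissible $U$-module $N \cong \varprojlim N_n$. Theorem~\ref{Bornproperties}(i) identifies $U_n^b \h{\otimes}_{U^b} M$ with $N_n^b$, so that $M_n := U_n^b \h{\otimes}_{U^b} M$ is automatically a finitely generated $U_n$-module equipped with its canonical Banach structure. Theorem~\ref{Bornproperties}(ii) then supplies $M \cong \mathrm{R}\varprojlim M_n$ in $\h{\B}c_K$, and in particular $M \cong \varprojlim M_n$, which gives the stated conditions.

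For the reverse direction, assume the two conditions and set $M_n := U_n^b \h{\otimes}_{U^b} M$. First I would verify that $(M_n)_n$ is a coadmissible system in the classical sense. Associativity of $\h{\otimes}$ gives
\begin{equation*}
U_n^b \h{\otimes}_{U_{n+1}^b} M_{n+1} \cong U_n^b \h{\otimes}_{U_{n+1}^b} \bigl(U_{n+1}^b \h{\otimes}_{U^b} M\bigr) \cong U_n^b \h{\otimes}_{U^b} M = M_n.
\end{equation*}
Since $M_{n+1}$ is finitely generated Banach over the Noetherian Banach algebra $U_{n+1}$, the completed bornological tensor product on the left agrees with the ordinary tensor product $U_n \otimes_{U_{n+1}} M_{n+1}$, using that a finitely generated module over a Noetherian Banach algebra admits a unique canonical Banach structure. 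Hence $N := \varprojlim M_n$, formed as an abstract $U$-module, is coadmissible in the sense of Schneider--Teitelbaum. Applying Theorem~\ref{Bornproperties}(ii) to $N$ then yields $N^b \cong \varprojlim M_n^b$ in $\h{\B}c_K$, which by our second hypothesis coincides with $M$. Thus $M \cong N^b$ is coadmissible.

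The main technical point, and the one place where care is needed, is the identification of the completed bornological tensor product $U_n^b \h{\otimes}_{U_{n+1}^b} M_{n+1}$ with the algebraic tensor product $U_n \otimes_{U_{n+1}} M_{n+1}$. This rests on the uniqueness of the canonical Banach structure on finitely generated modules over Noetherian Banach $K$-algebras together with compatibility of $\h{\otimes}$ with strict cokernels of maps between finitely generated free modules, both of which are routine but must be invoked carefully. Once this compatibility is in hand, the remainder of the argument reduces to manipulations of inverse limits in $\h{\B}c_K$ that are already packaged in Theorem~\ref{Bornproperties}.
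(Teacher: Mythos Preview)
Your argument is correct. Note that the paper does not actually prove this proposition: it simply cites \cite[Corollary 5.39]{SixOp} as the source. Your reconstruction is the natural one and matches what the cited result provides --- the forward direction is immediate from Theorem~\ref{Bornproperties}, and the reverse direction proceeds exactly as you outline, by first checking that $(M_n)_n$ forms a coadmissible system in the classical sense and then using Theorem~\ref{Bornproperties}(ii) to identify the bornological inverse limit with $N^b$. The one delicate point you flag, namely $U_n^b \h{\otimes}_{U_{n+1}^b} M_{n+1} \cong U_n \otimes_{U_{n+1}} M_{n+1}$, is handled in the cited reference (it appears there as \cite[Lemma 5.32]{SixOp}) via precisely the presentation argument you sketch.
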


We will usually regard coadmissible modules as complete bornological modules and suppress $(-)^b$ from the notation.

\section{Auslander regularity and the polydisc case}
\subsection{Auslander regularity using complete bornological modules}
\begin{defn}
Let $A$ be a two-sided Noetherian ring.
\begin{enumerate}[(i)]
\item The \textbf{grade} of a left $A$-module $M$ is 
\begin{equation*}
j(M)=\mathrm{min}\{i: \mathrm{Ext}^i_A(M, A)\neq 0\}
\end{equation*}
and $\infty$ if no such $i$ exists.
\item We say that $A$ satisfies the \textbf{Auslander condition} if for every finitely generated left $A$-module $M$ and any $i\geq 0$, we have $j(N)\geq i$ whenever $N$ is a (right) submodule of $\mathrm{Ext}^i_A(M, A)$.
\item We say that $A$ is \textbf{Auslander--Gorenstein} if it satisfies the Auslander condition and has finite left and right self-injective dimension.
\item We say that $A$ is \textbf{Auslander regular} if it satisfies the Auslander condition and has finite global dimension.
\end{enumerate}
\end{defn}

Recall that $A$ has finite left self-injective dimension $\leq j$ if and only if the following equivalent conditions are satisfied:
\begin{enumerate}[(i)]
\item the left $A$-module $_AA$ admits an injective resolution of length $\leq j$;
\item $\mathrm{Ext}^k_A(M, A)=0$ for any $k>j$ and any left $A$-module $M$;
\item $\mathrm{Ext}^k_A(M, A)=0$ for any $k>j$ and any finitely generated left $A$-module $M$.
\end{enumerate}
The equivalence of (i) and (iii) follows e.g. from Baer's criterion and dimension shifting.

If the left and right self-injective dimensions are both finite, they are equal.

Similarly, $A$ has finite (left) global dimension $\leq j$ if and only if the following equivalent conditions are satisfied:
\begin{enumerate}[(i)]
\item any left $A$-module admits a projective resolution of length $\leq j$;
\item any finitely generated left $A$-module admits a projective resolution of length $\leq j$;
\item $\mathrm{Ext}^k_A(M, N)=0$ for any $k>j$ and any left $A$-modules $M, N$;
\item $\mathrm{Ext}^k_A(M, N)=0$ for any $k>j$ and any finitely generated left $A$-modules $M, N$.
\end{enumerate}
The fact that it suffices to consider finitely generated modules follows from \cite[tags 065T, 0G8T]{stacksproj}. 

As $A$ is assumed to be Noetherian, its left global dimension agrees with its right global dimension.

\begin{lem}
\label{injdimgldim}
Let $A$ be a Noetherian ring of finite global dimension. Then $\mathrm{gl. dim.}(A)= \mathrm{inj. dim.}(A)$, where $\mathrm{inj. dim.}$ denotes the self-injective dimension of $A$.
\end{lem}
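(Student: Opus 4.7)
The inequality $\mathrm{inj.\,dim}(A) \le \mathrm{gl.\,dim}(A)$ is immediate from the definitions: a uniform bound on projective dimensions of all left $A$-modules in particular bounds $\mathrm{Ext}^{i}(-, A)$, and hence bounds the self-injective dimension. So the content of the lemma lies in the reverse inequality. My plan is to set $m := \mathrm{inj.\,dim}({}_A A)$---which is finite by the trivial inequality together with the standing hypothesis that $\mathrm{gl.\,dim}(A)$ is finite---and to show that $\mathrm{gl.\,dim}(A) \le m$. Since $A$ is Noetherian, the global dimension is realized as $\sup_{M} \mathrm{pd}(M)$ with $M$ ranging over finitely generated left modules, so it suffices to prove that $\mathrm{Ext}^{i}(M, N) = 0$ for all $i > m$ and all finitely generated left $A$-modules $M$ and $N$.

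The key step is a dimension-shift in the \emph{second} variable. Given a finitely generated $N$, I would present it as $0 \to K_1 \to A^{r_1} \to N \to 0$ with $K_1$ finitely generated, which is possible since $A$ is left Noetherian. For $i > m$ we have $\mathrm{Ext}^{i}(M, A^{r_1}) \cong \mathrm{Ext}^{i}(M, A)^{r_1} = 0$ by additivity of $\mathrm{Ext}$ and the definition of $m$, so the associated long exact sequence produces an injection $\mathrm{Ext}^{i}(M, N) \hookrightarrow \mathrm{Ext}^{i+1}(M, K_1)$. Iterating this construction with successive finitely generated syzygies $K_1, K_2, \dots$ yields
\[
\mathrm{Ext}^{i}(M, N) \hookrightarrow \mathrm{Ext}^{i+1}(M, K_1) \hookrightarrow \mathrm{Ext}^{i+2}(M, K_2) \hookrightarrow \cdots
\]
Since $\mathrm{gl.\,dim}(A)$ is finite by hypothesis, the right-hand term vanishes once $i + j$ exceeds it, forcing $\mathrm{Ext}^{i}(M, N) = 0$ as desired.

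There is no real obstacle here: the argument needs only Noetherianity (for finitely generated syzygies and for reducing $\mathrm{gl.\,dim}$ to finitely generated modules), finiteness of $\mathrm{gl.\,dim}(A)$ (to terminate the iteration), and additivity of $\mathrm{Ext}$ in the second variable (to annihilate the free term $A^{r_1}$). The only real idea is to dimension-shift in the second variable rather than the first, which is what converts vanishing information about $\mathrm{Ext}^{*}(-, A)$ into vanishing information about $\mathrm{Ext}^{*}(-, N)$ for an arbitrary finitely generated $N$.
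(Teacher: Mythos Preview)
Your proof is correct and follows essentially the same approach as the paper: both dimension-shift in the second variable via short exact sequences $0 \to N' \to A^r \to N \to 0$, using vanishing of $\mathrm{Ext}^{>m}(M, A^r)$ together with the finite global dimension hypothesis to conclude. The only cosmetic difference is that the paper phrases the iteration as a downward induction on the degree (using the isomorphism $\mathrm{Ext}^k(M,N) \cong \mathrm{Ext}^{k+1}(M,N')$ for $k > m$), whereas you iterate upward via injections until reaching zero.
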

\begin{proof}
The fact that $\mathrm{gl. dim}(A)\geq \mathrm{inj. dim.}(A)$ is immediate from the definition.

Let $M$ be a finitely generated left $A$-module. Now for any finitely generated left $A$-module $N$, we can consider a short exact sequence
\begin{equation*}
0\to N'\to A^r\to N\to 0
\end{equation*}
of finitely generated $A$-modules. Applying $\mathrm{RHom}_A(M, -)$ and considering the associated long exact sequence of Ext groups, we have
\begin{equation*}
\mathrm{Ext}^k_A(M, N)\cong \mathrm{Ext}^{k+1}_A(M, N')
\end{equation*}
whenever $k>\mathrm{inj.dim.}(A)$.

It follows that if $k>\mathrm{inj.dim}(A)$ has the property that
\begin{equation*}
\mathrm{Ext}^{k+1}_A(M, N)=0
\end{equation*}
for all finitely generated left $A$-modules $N$, then
\begin{equation*}
\mathrm{Ext}^k_A(M, N)=0
\end{equation*}
for all finitely generated left $A$-modules $N$. Since such a $k$ exists by the assumption of finite global dimension, we can induct downwards to deduce that
\begin{equation*}
\mathrm{Ext}^k_A(M, N)=0
\end{equation*}
whenever $k>\mathrm{inj.dim}(A)$.
\end{proof}

So far we have worked in the category $A\mathrm{-mod}$ of all abstract (left) $A$-modules. As our methods will rely on some results from non-archimedean functional analysis, we prefer to work in the derived category of complete bornological modules $\mathrm{Mod}_{\h{\B}c_K}(A)$ and use the results below.

Let $A$ be a two-sided Noetherian Banach algebra.

Given $M, N\in \mathrm{Mod}_{\h{\B}c_K(A)}$, we let $\underline{\mathrm{Hom}}_A(M, N)\in \h{\B}c_K$ denote the inner hom (see \cite[Discussions after Proposition 3.10, Theorem 4.10]{SixOp}). Note that if $M$, $N$ are finitely generated $A$-modules, equipped with their canonical Banach structure, then $\underline{\mathrm{Hom}}_A(M, N)=\mathrm{Hom}_A(M, N)$, equipped with the sup norm.

\begin{lem}
\label{projinBc}
Let $A$ be a two-sided Noetherian Banach $K$-algebra, and let $P$ be a finitely generated $A$-module, equipped with its canonical Banach structure. The following are equivalent:
\begin{enumerate}[(i)]
\item $P$ is a projective $A$-module.
\item $P$ is a projective object in $\mathrm{Mod}_{\h{\B}c_K}(A)$.
\item For any finitely generated $A$-module $N$, equipped with the canonical Banach structure,
\begin{equation*}
\mathrm{H}^1(\mathrm{R}\underline{\mathrm{Hom}}_A(P, N))=0.
\end{equation*}
\end{enumerate}
\end{lem}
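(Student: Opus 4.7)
The plan is to establish the cycle of implications (i)$\Rightarrow$(ii)$\Rightarrow$(iii)$\Rightarrow$(i). Throughout, I rely on two standard facts already laid out in the excerpt: any $A$-module morphism between two finitely generated $A$-modules equipped with their canonical Banach structures is automatically bounded and strict (so, in particular, any $A$-module surjection between such modules is a strict epimorphism in $\mathrm{Mod}_{\h{\B}c_K}(A)$ by the open mapping theorem); and $A^r$, equipped with its canonical Banach structure, represents the functor $M \mapsto M^r$ on $\mathrm{Mod}_{\h{\B}c_K}(A)$, so in particular is projective in this quasi-abelian category.

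For (i)$\Rightarrow$(ii), note that a finitely generated projective abstract $A$-module $P$ is a direct summand of some $A^r$; the projection and inclusion are $A$-linear maps between finitely generated $A$-modules, hence strict morphisms in $\mathrm{Mod}_{\h{\B}c_K}(A)$ by the first fact. So $A^r \cong P\oplus Q$ bornologically, and direct summands of projectives are projective. Implication (ii)$\Rightarrow$(iii) is immediate: if $P$ is projective, the trivial complex $P$ concentrated in degree zero is its own projective resolution, so $\mathrm{R}\underline{\mathrm{Hom}}_A(P, N) \simeq \underline{\mathrm{Hom}}_A(P, N)$ and its $\mathrm{H}^1$ in the left heart vanishes.

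The interesting direction is (iii)$\Rightarrow$(i). Pick an $A$-linear surjection $A^r \twoheadrightarrow P$, which exists because $P$ is finitely generated; its kernel $N'$ is finitely generated by the Noetherianity of $A$, so equipping everything with canonical Banach structures produces a strict short exact sequence
\begin{equation*}
0 \to N' \to A^r \to P \to 0
\end{equation*}
in $\mathrm{Mod}_{\h{\B}c_K}(A)$. Applying $\mathrm{R}\underline{\mathrm{Hom}}_A(P, -)$ and passing to cohomology in the left heart yields an exact sequence whose relevant portion is
\begin{equation*}
\underline{\mathrm{Hom}}_A(P, A^r) \to \underline{\mathrm{Hom}}_A(P, P) \to \mathrm{H}^1(\mathrm{R}\underline{\mathrm{Hom}}_A(P, N')) = 0,
\end{equation*}
the vanishing being hypothesis (iii). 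Hence $\mathrm{id}_P$ lifts to a bornological (in particular, $A$-linear) map $P \to A^r$ splitting the surjection. Since $\h{\B}c_K\hookrightarrow LH(\h{\B}c_K)$ is fully faithful, no information is lost by viewing the identity inside the left heart, so the lift is a genuine $A$-module splitting and exhibits $P$ as a direct summand of $A^r$, hence projective. The main subtlety is purely bookkeeping in the quasi-abelian setting -- ensuring the strict-exactness of the chosen resolution and the legitimacy of the long exact sequence; the underlying strategy is the textbook \emph{lift-the-identity} characterisation of projectives.
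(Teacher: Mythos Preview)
Your proof is correct and follows essentially the same approach as the paper's: both establish (i)$\Leftrightarrow$(ii) via the characterisation of finitely generated projectives as direct summands of free modules (using that morphisms between finitely generated $A$-modules are automatically bounded), note that (ii)$\Rightarrow$(iii) is immediate, and prove (iii)$\Rightarrow$(i) by lifting the identity along a strict epimorphism $A^r\to P$. Your version is slightly more explicit about the role of the kernel $N'$ in the long exact sequence, whereas the paper compresses this into the single phrase ``by assumption, the induced morphism $\underline{\mathrm{Hom}}_A(P, A^r) \to \underline{\mathrm{Hom}}_A(P, P)$ is a strict epimorphism''; the content is identical.
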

\begin{proof}
(i) is equivalent to $P$ being a direct summand of some free module $A^r$. As all $A$-module morphisms between finitely generated $A$-modules are bounded, this is equivalent to $P$ being a direct summand of $A^r$ in $\mathrm{Mod}_{\h{\B}c_K}(A)$, so (i) and (ii) are equivalent.

(ii) implies (iii) immediately, so it remains to show that (iii) implies (i). Suppose (iii) is satisfied. Let $A^r\to P$ be a surjective $A$-module morphism, which is then a strict epimorphism in $\mathrm{Mod}_{\h{\B}c_K}(A)$. By assumption, the induced morphism
\begin{equation*}
\underline{\mathrm{Hom}}_A(P, A^r)\to \underline{\mathrm{Hom}}_A(P, P)
\end{equation*}
is a strict epimorphism, and hence a surjection. Regarding the underlying vector spaces, this says that
\begin{equation*}
\mathrm{Hom}_A(P, A^r)\to \mathrm{Hom}_A(P, P)
\end{equation*}
is surjective, so that the surjection $A^r\to P$ admits a section.
\end{proof}

\begin{cor}
	Let $A$ be a two-sided Noetherian Banach $K$-algebra, and let $M$ be a finitely generated $A$-module (equipped with its canonical Banach structure).
	Then
	\begin{equation*}
		\mathrm{H}^j(\mathrm{R}\underline{\mathrm{Hom}}_A(M, A))\cong \mathrm{Ext}^j_A(M, A)
	\end{equation*}
	for each $j$, where the finitely generated right $A$-module $\mathrm{Ext}^j_A(M, A)$ is endowed with its canonical Banach structure.
\end{cor}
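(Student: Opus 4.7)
The natural approach is to compute both sides via a common resolution. Since $A$ is two-sided Noetherian and $M$ is finitely generated, we may build a (possibly unbounded) resolution $P^\bullet \to M$ by finitely generated free left $A$-modules $P^i = A^{r_i}$, each equipped with its canonical Banach structure. The differentials $d^i \colon A^{r_{i+1}} \to A^{r_i}$ are $A$-module maps between finitely generated $A$-modules and hence, by \cite[section 3.7.3]{BGR}, bounded and strict in $\mathrm{Mod}_{\h{\B}c_K}(A)$. In particular $P^\bullet$ is an exact complex in this category.

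By Lemma \ref{projinBc}, each $P^i$ is projective both abstractly and in $\mathrm{Mod}_{\h{\B}c_K}(A)$, so $P^\bullet$ simultaneously computes $R\underline{\mathrm{Hom}}_A(M,A)$ in the bornological derived category and $\mathrm{RHom}_A(M,A)$ in the abstract sense. Applying $\underline{\mathrm{Hom}}_A(-,A)$ term-by-term yields the complex $C^\bullet$ with $C^i = \underline{\mathrm{Hom}}_A(A^{r_i}, A) = A^{r_i}$ (canonical Banach structure) and dual differentials $(d^i)^\vee$, which, being again morphisms between finitely generated $A$-modules, are bounded and strict. Forgetting the bornology, $C^\bullet$ is exactly the complex computing the abstract $\mathrm{Ext}^j_A(M,A)$.

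It remains to identify the bornological cohomology of $C^\bullet$ with the algebraic cohomology equipped with its canonical Banach structure. Strictness of $(d^i)^\vee$ forces the image to be a closed (finitely generated) $A$-submodule of $A^{r_{i+1}}$ whose subspace bornology coincides with its canonical Banach structure; likewise the kernel, being a finitely generated $A$-submodule of $A^{r_i}$, carries its canonical Banach structure. Therefore the quotient $\ker(d^{i+1})^\vee / \mathrm{im}(d^i)^\vee$, taken in $\h{\B}c_K$, is a finitely generated $A$-module equipped with its canonical Banach structure, and as a bornological $A$-module it agrees with the algebraic $\mathrm{Ext}^i_A(M,A)$.

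The only real point requiring care is the passage from strictness of the individual dual differentials to the assertion that bornological cohomology equals algebraic cohomology as a complete bornological module; this ultimately reduces to the uniqueness of the canonical Banach topology on a finitely generated module over a Noetherian Banach algebra, applied to kernels, images and cokernels along the complex $C^\bullet$. Everything else is formal once the resolution by finitely generated frees is in place.
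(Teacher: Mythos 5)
Your proof is correct and follows essentially the same route as the paper: choose a resolution of $M$ by finitely generated projectives (you use free modules, which is a harmless specialization given Noetherianity), note via Lemma \ref{projinBc} that these are projective in $\mathrm{Mod}_{\h{\B}c_K}(A)$, dualize term-by-term, and observe that the resulting complex consists of finitely generated right $A$-modules with their canonical Banach structures so that all differentials are strict, whence the bornological cohomology agrees with the abstract $\mathrm{Ext}$ endowed with its canonical Banach structure. The only blemish is a minor indexing slip in the final quotient formula, which does not affect the argument.
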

\begin{proof}
	Choose a projective resolution $P^\bullet$ of $M$ by finitely generated $A$-modules. By the above, $P^i$ is a projective object in $\mathrm{Mod}_{\h{\B}c_K}(A)$, and the complex $\hdots \to P^{-1}\to P^0\to M\to 0$ is strictly exact in $\mathrm{Mod}_{\h{\B}c_K}(A)$. 
	
	Thus $\mathrm{R}\underline{\mathrm{Hom}}_A(M, A)$ is represented by the complex $\underline{\mathrm{Hom}}_A(P^\bullet, A)$, which is simply $\mathrm{Hom}_A(P^\bullet, A)$, equipping each term with its canonical Banach structure. As $\mathrm{Hom}_A(P^j, A)$ is a finitely generated right $A$-module for each $j$, this complex is strict. Taking $j$th cohomology yields the result.
\end{proof}

In the situation of the Corollary above, we will write $\underline{\mathrm{Ext}}^j_A(M, A)$ to denote $\mathrm{Ext}^j_A(M, A)$, equipped with its canonical Banach structure.

\begin{lem}
\label{gldimviaborn}
Let $A$ be a two-sided Noetherian Banach $K$-algebra. The following are equivalent:
\begin{enumerate}[(i)]
\item $A$ has finite global dimension $\leq d$.
\item For any $j\geq d+1$, and any finitely generated left $A$-modules $M$, $N$ (equipped with their canonical Banach structures), we have 
\begin{equation*}
\mathrm{H}^j(\mathrm{R}\underline{\mathrm{Hom}}_A(M, N))=0.
\end{equation*}
\end{enumerate} 
\end{lem}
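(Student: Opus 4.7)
The plan is to compare abstract Ext groups with their bornological analogues by means of a common finite projective resolution by finitely generated $A$-modules. Everything rests on the strictness of morphisms between finitely generated $A$-modules in their canonical Banach structure, which is already at the heart of the corollary preceding this lemma.

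For (i)$\Rightarrow$(ii), let $M$ and $N$ be finitely generated. Finite global dimension $\leq d$ furnishes a projective resolution $P^\bullet\to M$ of length $\leq d$ with each $P^i$ a finitely generated projective $A$-module. By Lemma~\ref{projinBc} each $P^i$ is a projective object of $\mathrm{Mod}_{\h{\B}c_K}(A)$, and the augmented complex is strictly exact since its differentials are $A$-linear maps between finitely generated modules and therefore automatically strict. Consequently $\mathrm{R}\underline{\mathrm{Hom}}_A(M,N)$ is represented by $\underline{\mathrm{Hom}}_A(P^\bullet, N)$, a complex concentrated in cohomological degrees $0,\ldots, d$, so $\mathrm{H}^j$ vanishes for $j\geq d+1$.

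For (ii)$\Rightarrow$(i), by the finitely generated characterization of global dimension recalled in the excerpt it is enough to check that $\mathrm{Ext}^j_A(M,N)=0$ for all finitely generated $M,N$ and all $j\geq d+1$. Take a resolution $P^\bullet\to M$ by finitely generated free $A$-modules; by the same argument as before each $P^i$ is projective in $\mathrm{Mod}_{\h{\B}c_K}(A)$ and the resolution is strictly exact. Both $\mathrm{Ext}^j_A(M,N)$ and $\mathrm{H}^j(\mathrm{R}\underline{\mathrm{Hom}}_A(M,N))$ are therefore computed from the complex $\mathrm{Hom}_A(P^\bullet, N)=\underline{\mathrm{Hom}}_A(P^\bullet, N)$, each term being endowed with its canonical Banach structure; since its differentials are again strict (they are $A$-module maps between finitely generated modules), the bornological cohomology has underlying $K$-vector space equal to the algebraic cohomology. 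Hypothesis (ii) then delivers the required vanishing of $\mathrm{Ext}^j_A(M,N)$.

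No serious obstacle is anticipated. The only points of care are to stay within finitely generated modules so that Lemma~\ref{projinBc} and the strictness principle apply, and to invoke the variant of the global dimension criterion that quantifies only over finitely generated modules.
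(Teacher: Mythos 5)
Your argument for (i) $\Rightarrow$ (ii) is the same as the paper's. For (ii) $\Rightarrow$ (i), however, you take a different route: you try to identify $\mathrm{H}^j(\mathrm{R}\underline{\mathrm{Hom}}_A(M, N))$ with the abstract group $\mathrm{Ext}^j_A(M, N)$ and then invoke the $\mathrm{Ext}$-vanishing criterion for global dimension, whereas the paper truncates a free resolution, dimension-shifts inside $\mathrm{Mod}_{\h{\B}c_K}(A)$, and applies Lemma~\ref{projinBc}(iii) to the $d$th syzygy. Your justification of the identification does not hold as written.

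The differentials of $\mathrm{Hom}_A(P^\bullet, N)$ are \emph{not} $A$-module maps. If the resolution differential $P^{i-1}\to P^{i}$ is right multiplication by a matrix $C$ with entries in $A$, its dual, identified with a map $N^{r}\to N^{s}$, sends $(n_j)_j\mapsto\bigl(\sum_j C_{kj}\,n_j\bigr)_k$; this is $K$-linear but not left $A$-linear unless the entries $C_{kj}$ commute with the $A$-action on $N$, which fails in general for noncommutative $A$. So the automatic strictness principle applies to the resolution $P^\bullet\to M$ itself but not to its dual complex, and your parenthetical reason (``they are $A$-module maps between finitely generated modules'') is false. (It is correct in the preceding Corollary because there the second argument is $A$ and the dual complex consists of finitely generated \emph{right} $A$-modules with $A$-linear differentials.) The strategy can be salvaged: the terms of $\underline{\mathrm{Hom}}_A(P^\bullet, N)$ are Banach spaces, so the open mapping theorem shows that algebraic exactness at degree $j$ forces the differential $d^{j-1}$ to have closed image and hence to be strict; therefore $\mathrm{H}^j(\mathrm{R}\underline{\mathrm{Hom}}_A(M,N))=0$ in $LH(\h{\B}c_K)$ is still equivalent to $\mathrm{Ext}^j_A(M,N)=0$. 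The paper's dimension-shifting argument sidesteps the issue entirely, since it only uses the short exact sequences of finitely generated $A$-modules produced by the free resolution, whose maps \emph{are} $A$-linear and hence strict.
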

\begin{proof}
If (i) holds, then any finitely generated $A$-module $M$ admits a resolution of length $\leq d$ by finitely generated projective $A$-modules. By the above, applying $(-)^b$ then yields a projective resolution of $M^b$ in $\mathrm{Mod}_{\h{\B}c_K}(A)$, since $(-)^b$ is exact on finitely generated $A$-modules with their canonical Banach structure (\cite[Lemma 4.4]{SixOp}). Hence (ii) holds. 

Conversely, if (ii) is satisfied, let $M$ be a finitely generated $A$-module, and let $F^\bullet$ be a (possibly infinite) resolution of $M$ by free $A$-modules of finite rank. It suffices to show that the finitely generated $A$-module $P=\mathrm{ker}(F^{-d+1}\to F^{-d+2})$ is projective. A straightforward dimension shifting argument applied to (ii) shows that 
\begin{equation*}
\mathrm{H}^1(\mathrm{R}\underline{\mathrm{Hom}}_A(P, N))=0
\end{equation*}
for any finitely generated $A$-module $N$. Hence we are done by Lemma \ref{projinBc}.
\end{proof}

We also recall the following result concerning Auslander regularity in a Fr\'echet--Stein setting.

\begin{lem}
	\label{coadext}
	Let $U=\varprojlim U_n$ be a two-sided Fr\'echet--Stein $K$-algebra. Let $M$ be a coadmissible left $U$-module.
	\begin{enumerate}[(i)]
		\item The natural morphism 
		\begin{equation*}
			\mathrm{Ext}^j_U(M, U)\otimes_UU_n\to \mathrm{Ext}^j_{U_n}(U_n\otimes_UM, U_n)
		\end{equation*}
		is an isomorphism for each $j$.
		\item For each $j$, $\mathrm{Ext}^j_U(M, U)\cong \varprojlim \mathrm{Ext}^j_{U_n}(U_n\otimes_UM, U_n)$ is a coadmissible right $U$-module.
		\item For each $j$, $\mathrm{H}^j(\mathrm{R}\underline{\mathrm{Hom}}_U(M,U))$ is naturally isomorphic to $\mathrm{Ext}^j_U(M, U)$, endowed with its natural Fr\'echet structure.
	\end{enumerate}
\end{lem}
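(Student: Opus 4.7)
The plan is to construct a single compatible resolution of $M$ that computes all three Ext groups simultaneously. First I would build a complex $P^\bullet \to M$ satisfying: (a) each $P^i$ is a coadmissible $U$-module; (b) the base change $P^i_n := U_n \otimes_U P^i$ is a finitely generated free $U_n$-module; and (c) $P^\bullet_n \to M_n$ is a free resolution over the Noetherian ring $U_n$ for every $n$. Such a resolution is constructed inductively: pick finitely many generators of $M_0$ over $U_0$, lift to $M$ through the structure map (whose surjectivity is a consequence of the Mittag-Leffler property from Theorem \ref{Bornproperties}(ii)), and iterate with the kernel. At each step the number of generators may need to grow with $n$ in order to maintain base-change compatibility, so $P^i := \varprojlim P^i_n$ will generally not be finitely generated over $U$, but remains coadmissible by the characterization of coadmissible modules as compatible systems of finitely generated $U_n$-modules.

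Granted the resolution, part (iii) is the cleanest to establish. Each $P^i_n$ is a projective object of $\mathrm{Mod}_{\h{\B}c_K}(U_n)$ by Lemma \ref{projinBc}, and a standard argument (using that the transition maps in the inverse system $P^\bullet_n$ are strict and surjective) shows that $\underline{\mathrm{Hom}}_U(P^\bullet, U)$ represents $\mathrm{R}\underline{\mathrm{Hom}}_U(M, U)$ in the bornological derived category. By adjunction, $\underline{\mathrm{Hom}}_U(P^i, U) \cong \varprojlim \underline{\mathrm{Hom}}_{U_n}(P^i_n, U_n)$, so taking $j$th cohomology and commuting it past the inverse limit via the Mittag-Leffler property for coadmissibles yields $\varprojlim \mathrm{Ext}^j_{U_n}(M_n, U_n)$, endowed with its canonical Fréchet structure.

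For part (i), flatness of $U_n$ over $U$ lets base change commute with cohomology, and the natural identification $\mathrm{Hom}_U(P^i, U) \otimes_U U_n \cong \mathrm{Hom}_{U_n}(P^i_n, U_n)$ (which is the single-cohomological-degree content of (iii), once one checks that the abstract and bornological Hom groups agree for our resolution) gives $\mathrm{Ext}^j_U(M, U) \otimes_U U_n \cong \mathrm{Ext}^j_{U_n}(M_n, U_n)$. Part (ii) is then immediate: each term on the right is a finitely generated right $U_n$-module, compatible across $n$, hence the limit is coadmissible, and (i) identifies it with $\mathrm{Ext}^j_U(M, U)$.

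The main obstacle will be Step 1: making the level-wise free resolutions fit together so that $P^\bullet \to M$ is exact as a sequence of abstract $U$-modules (not merely in the coadmissible category). The lifting of generators at each stage requires Mittag-Leffler to ensure that the system of kernels is again coadmissible and that taking inverse limits preserves the exactness (vanishing of $\mathrm{R}^1 \varprojlim$). A secondary subtlety is that the $P^i$ so constructed are not obviously projective as abstract $U$-modules when their ranks grow with $n$; the bornological calculation in part (iii) is what legitimizes using $P^\bullet$ to compute abstract $\mathrm{Ext}^j_U$, so the logical order of (iii) before (i) and (ii) is important.
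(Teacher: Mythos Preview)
Your outline tracks the Schneider--Teitelbaum construction behind \cite[Lemma 8.4]{ST}, which is exactly what the paper cites for (i) and hence (ii). But there is a real gap at the point you flag yourself: the modules $P^i = \varprojlim P^i_n$ (with ranks growing in $n$) are not known to be projective, nor even $\mathrm{Hom}_U(-, U)$-acyclic, as \emph{abstract} $U$-modules, so you cannot yet conclude that $\mathrm{H}^j(\mathrm{Hom}_U(P^\bullet, U))$ computes $\mathrm{Ext}^j_U(M, U)$. Your proposed fix --- proving (iii) first and using it to ``legitimize'' the resolution for abstract Ext --- does not close the gap. What your bornological argument actually delivers is $\mathrm{H}^j(\mathrm{R}\underline{\mathrm{Hom}}_U(M, U)) \cong \varprojlim \mathrm{Ext}^j_{U_n}(M_n, U_n)$; identifying this with the \emph{abstract} $\mathrm{Ext}^j_U(M, U)$ is precisely the content of (i) and (ii). Bornological and abstract Ext are not a priori the same over a non-Noetherian Fr\'echet algebra (Lemma~\ref{projinBc} and its corollary address only the Noetherian Banach case with finitely generated modules), so the order $(iii) \Rightarrow (i)$ is circular.

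The paper's logical order is the reverse of yours. Part (i) is quoted from \cite{ST}, whose proof does supply the missing acyclicity argument for the resolution; (ii) follows immediately. Part (iii) is then deduced along the lines of \cite[Theorem 9.17, Corollary 9.18]{SixOp}: write $\mathrm{R}\underline{\mathrm{Hom}}_U(M, U) \cong \mathrm{R}\varprojlim_n \mathrm{R}\underline{\mathrm{Hom}}_{U_n}(M_n, U_n)$ using $U \cong \mathrm{R}\varprojlim U_n$, tensor--hom adjunction, and Theorem~\ref{Bornproperties}(i), then invoke (i) and (ii) to identify the cohomology with the coadmissible module $\mathrm{Ext}^j_U(M, U)$ in its Fr\'echet structure. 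Your bornological computation for (iii) is close in spirit to this last step; once you take (i) and (ii) as input rather than output, that part of your argument is essentially correct.
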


\begin{proof}
	(i) can be found in \cite[Lemma 8.4]{ST}, which immediately implies (ii). The proof of (iii) follows along the same lines as in \cite[Theorem 9.17, Corollary 9.18]{SixOp}.
\end{proof}

As before, we write $\underline{\mathrm{Ext}}^j_U(M, U)$ to denote the coadmissible right $A$-module $\mathrm{Ext}^j_U(M, U)$ when viewed as an object of $\mathrm{Mod}_{\h{\B}c_K}(U^{\mathrm{op}})$. Note that the above together with Theorem \ref{Bornproperties} also yields that
\begin{equation*}
	\underline{\mathrm{Ext}}^j_U(M, U)\h{\otimes}_U U_n\cong \underline{\mathrm{Ext}}^j_{U_n}(U_n\h{\otimes}_U M, U_n)
\end{equation*}
in $\mathrm{Mod}_{\h{\B}c_K}(U_n^{\mathrm{op}})$.

\subsection{The case $X=\mathbb{D}^m$}

One main goal of this paper is to establish the Auslander regularity of the rings $\h{U_\A(\pi^n\L)}_K$ appearing in the definition of $\w{\D_X}(X)$ in section 3.

We begin by discussing the case of a completed ring of differential operators on a polydisc, which is already covered in \cite{Smith} and \cite{Bodegl}. We mainly spell this out to familiarize the reader with various standard constructions and notations.
Let $A=T_m=K\langle x_1, \dots, x_m\rangle$ denote the Tate algebra in $m$ variables, so that $X=\Sp A$ is the $m$-dimensional closed unit polydisc. Let $\A=R\langle x_1, \dots, x_m\rangle$, an admissible affine formal model of $A$. 

Note that $L=\mathrm{Der}_K(A)$ is a free $A$-module of rank $m$, generated by $\frac{\mathrm{d}}{\mathrm{d}x_i}$, $i=1, \dots, m$, equipped with the usual $K$-bilinear Lie bracket. Let $\L\subseteq L$ be an $(R, \A)$-Lie lattice which is free as an $\A$-module. For example, one can take $\L=\oplus_{i=1}^m \A \frac{\mathrm{d}}{\mathrm{d}x_i}$, but we do not need to restrict to this case.

For any $n\geq 0$, $\pi^n\L$ is a smooth $(R, \A)$-Lie lattice, so that we can form the universal enveloping algebra 
\begin{equation*}
U_\A(\pi^n\L).
\end{equation*}
As in the case of universal enveloping algebras over fields, $U_\A(\pi^n\L)$ is naturally filtered (corresponding to the order filtration of differential operators), and
\begin{equation*}
\mathrm{gr} U_\A(\pi^n\L)\cong \mathrm{Sym}_\A (\pi^n\L)
\end{equation*}
due to \cite[Theorem 3.1]{Rinehart}.

Thus $U_\A(\pi^n\L)$ can be identified with the subring of $\mathrm{End}_K(A)$ consisting of those endomorphisms which can be written as finite sums
\begin{equation*}
\sum_{\alpha\in \mathbb{N}^m} f_\alpha d_1^{\alpha_1}\dots d_m^{\alpha_m}, \ f_\alpha\in \pi^{n|\alpha|}\A, 
\end{equation*}
where $|\alpha|=\alpha_1+\dots +\alpha_m$ for any $\alpha=(\alpha_1, \dots, \alpha_m)\in \mathbb{N}^m$ and $d_1, \dots, d_m\in \L$ is an $\A$-basis of $\L$.

We can now set
\begin{equation*}
D_n=\h{U_\A(\pi^n\L)}_K.
\end{equation*}
It follows that as a $K$-Banach space (and even as a left Banach $A$-module), we can identify $D_n$ with
\begin{equation*}
K\langle x_1, \dots, x_m, \pi^nd_1, \dots, \pi^nd_m\rangle.
\end{equation*}

\begin{thm}[{Auslander regularity in the polydisc case}]
\label{regforpolydisc}
With notation as in the preceding paragraph, $D_n$ is Auslander regular for any $n\geq 1$, with
\begin{equation*}
\mathrm{gl.dim.}(D_n)\leq 2m.
\end{equation*}
If $\L=\oplus \A \frac{\mathrm{d}}{\mathrm{d}x_i}$, then $\mathrm{gl.dim.}(D_n)=m$ for all $n\geq 1$.

If $K$ is discretely valued, then the same holds also for $n=0$.
\end{thm}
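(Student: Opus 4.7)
The strategy is to equip $D_n^\circ := \widehat{U_\A(\pi^n\L)}$ with a suitable Zariskian filtration whose associated graded ring is commutative, Noetherian, and regular, and then to invoke the standard lifting theorems (Bj\"ork; Li--Van Oystaeyen) to transfer Auslander regularity from $\gr D_n^\circ$ to $D_n^\circ$, and finally to $D_n = D_n^\circ[1/\pi]$.

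Concretely, for $n \geq 1$ I would use the $\pi$-adic filtration on $D_n^\circ$. Since $D_n^\circ$ is $\pi$-torsion free and $\pi$-adically complete, we have
\begin{equation*}
	\gr_\pi D_n^\circ \cong (D_n^\circ/\pi D_n^\circ)[t],
\end{equation*}
and $D_n^\circ/\pi D_n^\circ \cong U_{\A/\pi}(\pi^n\L/\pi\cdot\pi^n\L)$ because $\pi$-adic completion commutes with reduction mod $\pi$. The key observation is that for $n\geq 1$ the bracket satisfies $[\pi^n\L,\pi^n\L]\subseteq \pi^{2n}\L\subseteq \pi(\pi^n\L)$ and the anchor sends $\pi^n\L$ into $\pi\A$, so the Lie--Rinehart structure on the reduction is trivial. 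Hence
\begin{equation*}
	\gr_\pi D_n^\circ \cong k[x_1,\dots,x_m,y_1,\dots,y_m,t],
\end{equation*}
a commutative polynomial ring in $2m+1$ variables over $k$, which is Noetherian, regular and thus (trivially) Auslander regular of global dimension $2m+1$. Lifting yields that $D_n^\circ$ is Noetherian Auslander regular with $\mathrm{gl.\,dim.}\,D_n^\circ\leq 2m+1$. Inverting $\pi$ (a central regular element that becomes a unit) then gives $\mathrm{gl.\,dim.}\,D_n\leq 2m$, and Auslander regularity is preserved under this localization.

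For the sharper equality $\mathrm{gl.\,dim.}\,D_n = m$ in the case $\L=\oplus_i \A\cdot (d/dx_i)$, I would combine two observations: in analogy with the classical Weyl algebra $A_m(K)$ (of global dimension $m$, not $2m$), the ring $D_n$ admits a Koszul-type resolution of length $m$ of the natural module $T_m\cong D_n/\sum_i D_n \pi^n\partial_i$ furnishing the upper bound, while a direct Ext computation against this module (or against $T_m$ viewed as the ``integrable connections'' module) gives a non-vanishing $\mathrm{Ext}^m$ and hence the matching lower bound.

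The main obstacle is the discretely valued case with $n=0$: the commutator estimates used above fail, so $\gr_\pi D_0^\circ$ is no longer commutative and the simple $\pi$-adic filtration does not suffice. Here one has to work with a more refined, doubly-indexed filtration combining the $\pi$-adic and order filtrations (for instance via a Rees-algebra construction with respect to a uniformizer), whose existence and Zariskian character rely crucially on the existence of a uniformizer -- precisely the point where discreteness of the valuation enters. Once this bifiltration is set up, its associated graded is again a commutative affine-type algebra over $k$, and the lifting theorem closes the argument as before.
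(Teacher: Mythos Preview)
Your approach is essentially correct in the discretely valued case, and for $n\geq 1$ your observation that the Lie--Rinehart structure trivialises modulo $\pi$ (so $D_n^\circ/\pi D_n^\circ$ is already a commutative polynomial ring over $k$) gives a pleasant one-step lifting, bypassing the paper's intermediate use of the order filtration on $U_\A(\pi^n\L)/\pi$. For $n=0$ the paper simply applies that order filtration first---its associated graded is $k[x_1,\dots,x_m,Y_1,\dots,Y_m]$ for every $n\geq 0$---and then the $\pi$-adic filtration on $D_n$; no exotic bifiltration or Rees construction is needed. Note also that neither your argument nor the paper's filtration argument alone yields $\mathrm{gl.dim.}\,D_n\leq 2m$: lifting from a $(2m{+}1)$-dimensional associated graded, or localising at a central regular element, only gives $\leq 2m{+}1$. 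The paper obtains the sharper bound by a separate estimate on the self-injective dimension (citing \cite{DcapThree}), combined with Lemma~\ref{injdimgldim}.

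The genuine gap is the densely valued case. Your identification $\gr_\pi D_n^\circ \cong k[x,y,t]$ tacitly assumes $R/\pi R=k$, i.e.\ that $\pi$ is a uniformizer. When the value group is dense, $R/\pi R$ is neither a field nor Noetherian (think of $\mathcal{O}_{\mathbb{C}_p}/p$), so $\gr_\pi D_n^\circ$ is not a Noetherian regular ring and the Bj\"ork/Li--Van~Oystaeyen lifting theorems do not apply. No choice of $\pi\in\mathfrak{m}$ rescues this, since $\mathfrak{m}$ is not principal. The paper handles this case by invoking a separate argument via almost ring theory over $(R,\mathfrak{m})$ (the reference \cite{Bodegl}), which is precisely the machinery designed to extract homological regularity from such non-Noetherian graded pieces; your proposal would need to incorporate something of this kind.
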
 
\begin{proof}
We first assume that $K$ is discretely valued, so that we can assume without loss of generality that $\pi$ is a uniformizer. In this case, the proof is given in \cite{Smith}. We summarize it here for the convenience of the reader.
	
Fix $n\geq 0$ and write $U=U_\A(\pi^n\L)$. As noted above, $U$ is filtered, inducing a Zariskian filtration on $U/\pi U$, and 
\begin{equation*}
\mathrm{gr} (U/\pi U)\cong k[x_1, \dots, x_m, Y_1, \dots, Y_m], 
\end{equation*}
where $Y_i$ is the principal symbol corresponding to the generator $\pi^nd_i$. It thus follows from \cite[Theorem 3.9]{Bjork} (see also \cite[Proposition 2.5.(b)]{Smith}) that 
\begin{equation*}
\h{U}/\pi \h{U}\cong U/\pi U
\end{equation*}
is Auslander regular. 

Now $D_n$ admits itself a Zariskian filtration $\{\pi^i \h{U}\}_{i\in \mathbb{Z}}$, whose associated graded ring is
\begin{equation*}
\mathrm{gr}D_n\cong \h{U}/\pi \h{U}[s, s^{-1}],
\end{equation*}
where $s$ is the principal symbol of $\pi$. Applying \cite[Theorem 3.9]{Bjork} again, we see that $D_n$ is Auslander regular.

It remains to determine the global dimension of $D_n$, which by Lemma \ref{injdimgldim} is the same as its self-injective dimension. Thus 
\begin{equation*}
\mathrm{gl.dim.}(D_n)\leq 2m
\end{equation*}
by \cite[Lemma 4.3]{DcapThree}, and if $\L=\oplus \A\frac{\mathrm{d}}{\mathrm{d}x_i}$, we have 
\begin{equation*}
\mathrm{gl.dim.}(D_n)=m
\end{equation*}
by \cite[Corollary 7.4, Theorem 3.3]{Smith}. 

If $K$ is not discretely valued and $n\geq 1$, an analogue of the proof above using almost mathematics was given in \cite[Theorem 1.2, Theorem 1.1.(ii)]{Bodegl}.
\end{proof}

We can also allow for other affine formal models than $R\langle x\rangle$, but this does not really produce any new rings, as the following result illustrates. 

\begin{prop}
\label{varymodel}
Let $\A$ be an admissible affine formal model for $A=T_m$, and let $\L$ be an $(R, \A)$-Lie lattice in $\mathrm{Der}_K(A)$ which is free as an $\A$-module. Then there exists an $(R, R\langle x\rangle)$-Lie lattice $\L'$ which is free over $R\langle x\rangle$ and $s\geq 0$ such that
\begin{equation*}
\h{U_\A(\pi^n\L)}_K\cong \h{U_{R\langle x\rangle}(\pi^{n-s}\L')}_K 
\end{equation*}
for all $n\geq s$.

In particular, $\h{U_\A(\pi^n\L)}_K$ is Auslander regular of global dimension $\leq 2m$ for sufficiently large $n$.
\end{prop}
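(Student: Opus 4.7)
The plan is to exploit the fact that $\A$ and $R\langle x\rangle$ are equivalent lattices in $A=T_m$: since both are admissible affine formal models, there exists $s_1\geq 0$ with $\pi^{s_1}\A\subseteq R\langle x\rangle$ and $\pi^{s_1}R\langle x\rangle\subseteq \A$. Likewise, writing an $\A$-basis $d_1,\dots,d_m$ of $\L$ in terms of $\partial_i=\frac{\mathrm{d}}{\mathrm{d}x_i}$ as $d_i=\sum_j a_{ij}\partial_j$ with $a_{ij}\in A$, there exists $s_2\geq 0$ such that $\pi^{s_2}d_i\in \bigoplus_j R\langle x\rangle \partial_j$ for all $i$.

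I would fix $s:=\max(s_2, 2s_1)$ and set
\begin{equation*}
\L':=\bigoplus_{i=1}^m R\langle x\rangle\cdot \pi^s d_i,
\end{equation*}
a free $R\langle x\rangle$-submodule of $L$ of rank $m$ with $\L'_K=L$. To verify that $\L'$ is an $(R,R\langle x\rangle)$-Lie lattice, I would first check that $\pi^s\L$ preserves $R\langle x\rangle$: for $d\in \L$ and $f\in R\langle x\rangle$, the inclusion $\pi^{s_1}f\in \A$ gives $\pi^{s_1}d(f)=d(\pi^{s_1}f)\in \A\subseteq \pi^{-s_1}R\langle x\rangle$, so $\pi^{2s_1}d(f)\in R\langle x\rangle$. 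Expanding $[d_i,d_j]=\sum_k c_{ijk}d_k$ with $c_{ijk}\in \A\subseteq \pi^{-s_1}R\langle x\rangle$ shows $[\pi^s d_i,\pi^s d_j]=\sum_k(\pi^s c_{ijk})(\pi^s d_k)\in \L'$, and the Leibniz rule together with the preservation of $R\langle x\rangle$ by $\pi^s\L$ closes $\L'$ under the full Lie--Rinehart bracket.

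Next I would construct mutually inverse isomorphisms $\h{U_\A(\pi^n\L)}_K\cong \h{U_{R\langle x\rangle}(\pi^{n-s}\L')}_K$ for $n\geq s$. Setting $T:=\h{U_{R\langle x\rangle}(\pi^{n-s}\L')}_K$, the natural $K$-linear maps $\A\to T$ (via $\A\subseteq A=R\langle x\rangle_K\hookrightarrow T$) and $\pi^n\L\to T$ (via $\pi^n d_i\in \pi^{n-s}\L'\subseteq T$ extended $K$-linearly from the $\A$-basis) satisfy the Lie--Rinehart defining relations, producing by the universal property a $K$-algebra homomorphism $U_\A(\pi^n\L)\to T$. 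Exchanging the roles of the two formal models (and using that $R\langle x\rangle\subseteq A$ and $\pi^n d_i\in \pi^n\L\subseteq \h{U_\A(\pi^n\L)}_K$) yields a map in the opposite direction. The PBW decompositions $U_\A(\pi^n\L)\cong \bigoplus_\alpha \A(\pi^n d)^\alpha$ and $U_{R\langle x\rangle}(\pi^{n-s}\L')\cong \bigoplus_\alpha R\langle x\rangle(\pi^n d)^\alpha$, combined with the equivalence of the norms on $A$ induced by $\A$ and $R\langle x\rangle$ (with distortion at most $|\pi|^{-s_1}$ in either direction), imply that both maps are bounded and hence extend continuously to the completions. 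By density the two compositions agree with the identity on the dense subring generated by $A$ and the $\pi^n d_i$, so they are mutually inverse.

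The last assertion then follows by applying Theorem \ref{regforpolydisc} to the free $(R,R\langle x\rangle)$-Lie lattice $\L'$: for $n\geq s+1$, the algebra $\h{U_{R\langle x\rangle}(\pi^{n-s}\L')}_K$ is Auslander regular of global dimension at most $2m$, and the isomorphism above transfers this to $\h{U_\A(\pi^n\L)}_K$. The main care in the argument lies in tracking powers of $\pi$, both in verifying the Lie-lattice axioms for $\L'$ and in checking boundedness of the comparison maps; the remaining steps are formal applications of the universal property and density.
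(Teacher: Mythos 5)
Your construction of $\L'$ and choice of $s$ match the paper's, and the isomorphism you arrive at is the same one; the proof is correct. The main difference is that the paper invokes a known base-change result (\cite[Proposition 3.3.(b)]{DcapOne} with $\sigma=\mathrm{id}$) to get $\h{U_\A(\pi^n\L)}_K\cong\h{U_{R\langle x\rangle}(\pi^{n-s}\L')}_K$, whereas you reprove this comparison from scratch via the universal property, PBW, and a norm-equivalence argument — more self-contained, but re-deriving a cited lemma. One small simplification you could adopt from the paper: by \cite[Propositions 3.1/8 and 9]{Boschlectures}, one has $\A\subseteq R\langle x\rangle$ outright (not just $\pi^{s_1}\A\subseteq R\langle x\rangle$), which makes $\L'=R\langle x\rangle\otimes_\A\pi^s\L$ immediately and removes the need for the $s_1$-bookkeeping in one direction; also your $s_2$ plays no role, since $\L'$ is a free $R\langle x\rangle$-submodule of $L$ regardless of whether $\pi^sd_i$ lies in $\oplus R\langle x\rangle\partial_j$.
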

\begin{proof}
Let $d_1, \dots, d_m$ be an $\A$-basis of $\L$. By \cite[Propositions 3.1/8 and 9]{Boschlectures}, $\A$ is contained in $R\langle x\rangle$. Then there exists an integer $s\geq 0$ such that 
\begin{equation*}
\pi^sd_i(R\langle x\rangle)\subseteq R\langle x\rangle
\end{equation*}
for all $i$. In particular, $\L'=\sum R\langle x\rangle \pi^sd_i$ is an $(R, R\langle x\rangle)$-Lie lattice. Moreover, since the $d_i$ form a basis of $\mathrm{Der}_K(A)$ over $A$, this sum is actually a direct sum, and
\begin{equation*}
\L'=R\langle x\rangle \otimes_\A \pi^s\L.
\end{equation*}
Now apply \cite[Proposition 3.3.(b)]{DcapOne} with $\sigma=\mathrm{id}$.

\end{proof}

\section{Auslander regularity in the general case}
\subsection{Weakly standard embeddings}
In what follows, it will be crucial to understand the passage along a closed embedding of smooth rigid analytic $K$-varieties, and how it relates various classes of $\D$-modules. In \cite{DcapTwo}, the notion of a standard embedding was introduced to make explicit $\D$-module theoretic calculations possible.
\begin{defn}[{\cite[Definition 5.2]{DcapTwo}}]
Let $I$ be an ideal of an affinoid $K$-algebra $A$, and let $\iota: \Sp A/I\to \Sp A$ be the natural closed embedding. We call a subset $\{d_1, \dots, d_m\}$ of $L=\mathrm{Der}_K(A)$ an $I$-\textbf{standard basis} if 
\begin{enumerate}[(i)]
\item $\{d_1, \dots, d_m\}$ is an $A$-module basis for $L$,
\item there is a generating set $\{f_1, \dots, f_r\}$ for $I$ with $r\leq m$ such that 
\begin{equation*}
d_i(f_j)=\delta_{ij}
\end{equation*}
for all $1\leq i\leq m$, $1\leq j\leq r$.
\end{enumerate}
We say that $\iota$ admits a standard basis (or that $\iota$ is a \textbf{standard embedding}) if there exists an $I$-standard basis of $L$.
\end{defn}

We remark that if $X\to Y$ is any closed embedding of smooth rigid analytic $K$-varieties, then $Y$ admits an affinoid covering $Y_i$ such that $X\cap Y_i\to Y_i$ is standard for all $i$. This was proved in \cite[Theorem 6.2]{DcapTwo} in greater generality (replacing $L$ by an arbitrary Lie--Rinehart algebra), but in our setting it also follows from Kiehl's theory of tubular neighbourhoods.\\
\\
We make the following generalization.
\begin{defn}
\label{defwse}
Let $I$ be an ideal of an affinoid $K$-algebra $A$, and let $\iota: \Sp A/I\to \Sp A$ be the natural closed embedding. We call a subset $\{d_1, \dots, d_m\}$ of $L=\mathrm{Der}_K(A)$ a \textbf{weakly $I$-standard basis} if 
\begin{enumerate}[(i)]
\item $\{d_1, \dots, d_m\}$ is an $A$-module basis for $L$,
\item there is a generating set $\{f_1, \dots, f_r\}$ for $I$ with $r\leq m$ such that 
\begin{enumerate}
\item for all $1\leq i\leq r$, the image of $d_i(f_i)$ in $A/(f_1, \dots, f_i)$ is a unit, and $d_i(d_i(f_i))=0$ in $A$.
\item for all $1\leq i\leq m$, $1\leq j\leq r$ with $i>j$, we have $d_i(f_j)\in (f_1, \dots, f_j)$.
\item for all $1\leq i\leq j\leq m$, we have 
\begin{equation*}
[d_i, d_j]\in \begin{cases}\oplus_{s=i}^m Ad_s \ \text{if} \ i\leq r\\
\oplus_{s=r+1}^m Ad_s \ \text{if} \ i>r.
\end{cases} 
\end{equation*}
(In other words: for all $1\leq i\leq r+1$, the $A$-submodule $\oplus_{s=i}^m Ad_s$ is closed under the Lie bracket.)
\end{enumerate}
\end{enumerate}
We say that $\iota$ admits a weakly standard basis (or that $\iota$ is a \textbf{weakly standard embedding}) if there exists a weakly $I$-standard basis for $L$.
\end{defn}
It is straightforward to verify that standard embeddings are weakly standard -- the only non-trivial step concerns (ii).(c), where we observe that for standard bases,
\begin{equation*}
\oplus_{s=i}^m Ad_s=\{d\in \mathrm{Der}_K(A): d(f_k)=0 \ \forall \ k< i\}
\end{equation*}
whenever $i\leq r+1$.\\
\\
Let $\iota: \Sp A/I\to \Sp A$ be a weakly standard embedding, using the same notation as in the definition above. We make the following observations:
\begin{enumerate}[(i)]
\item $\Sp A$ is smooth, as $\mathrm{Der}_K(A)$ is a free $A$-module.
\item Let $d\in \mathrm{Der}_K(A)$ such that $d(f_1)\in (f_1)$. Writing $d=\sum a_i d_i$, it follows directly from (ii).(b) in the definition that $a_1d_1(f_1)\in (f_1)$. But since $d_1(f_1)$ is a unit modulo $f_1$, we conclude that $a_1\in (f_1)$, i.e.
\begin{equation*}
\{d\in \mathrm{Der}_K(A): d(f_1)\in (f_1)\}=(f_1)d_1\oplus (\oplus_{i=2}^m Ad_i).
\end{equation*}
In particular, $B:=A/(f_1)$ is smooth, with
\begin{equation*}
\mathrm{Der}_K(B)=\frac{\{d\in \mathrm{Der}_K(A): d(f_1)\in (f_1)\}}{(f_1)\mathrm{Der}_K(A)}\cong B\otimes_A (\oplus_{i=2}^m Ad_i).
\end{equation*}
\item Note that in this case, the natural morphisms $\Sp A/I\to \Sp B$ and $\Sp B\to \Sp A$ are also weakly standard, with the obvious choice of generators. In fact, more generally $\Sp A/I\to \Sp A/(f_1, \dots, f_i)$ and $\Sp A/(f_1, \dots, f_i)\to \Sp A$ are weakly standard for any $1\leq i\leq r$. We deduce by induction that any weakly standard embedding is the composition of weakly standard embeddings of codimension 1 (meaning $r=1$). In general, however, it is not true that the composition of weakly standard embeddings is weakly standard.
\end{enumerate}

Given a weakly standard embedding $\iota: \Sp B\to \Sp A$ with vanishing ideal $I$ and weakly $I$-standard basis $d_1, \dots, d_m$, let $\A$ be an admissible affine formal model and let $\L'=\oplus_{i=1}^m \A d_i$. Note that we can assume that $\L'$ is a Lie lattice by rescaling the basis if necessary. Then $\B=\A/(\A\cap I)$ is an admissible affine formal model for $B$, and 
\begin{equation*}
\L:=\B\otimes_\A (\oplus_{i=r+1}^m \A d_i)
\end{equation*}  
is an $(R, \B)$-Lie lattice inside $\mathrm{Der}_K(B)=\oplus_{i=r+1}^m Bd_i$.
\begin{lem}
\label{rescaledlattice}
Let $\iota: \Sp B\to \Sp A$ be a weakly standard embedding as above, with $d_1, \dots, d_m$ a weakly $I$-standard basis spanning an $(R, \A)$-Lie lattice. Then for any non-negative integers $n_1\geq n_2\geq \dots n_r\geq n\geq 0$, the set $\{\pi^{n_1}d_1, \dots, \pi^{n_r}d_r, \pi^nd_{r+1}, \dots, \pi^nd_m\}$ is also a weakly $I$-standard basis spanning an $(R, \A)$-Lie lattice.  
\end{lem}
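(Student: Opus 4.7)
The plan is to rescale the generators of $I$ to compensate for the rescaling of the basis. Specifically, I set $f'_i = \pi^{-n_i}f_i$ for $1 \leq i \leq r$. These still generate $I$ as an $A$-ideal since they differ from the $f_i$ by units in $K^\times$, and moreover $(f'_1, \dots, f'_j) = (f_1, \dots, f_j)$ as $A$-ideals for each $j$. Writing $d'_i = \pi^{n_i}d_i$ for $i \leq r$ and $d'_i = \pi^n d_i$ for $i > r$, the rationale for the choice of $f'_i$ is the identity $d'_i(f'_i) = d_i(f_i)$, which immediately preserves the unit condition in part (ii)(a) of Definition \ref{defwse}; the second part of (ii)(a) is equally direct, since $d'_i(d'_i(f'_i)) = \pi^{n_i}d_i(d_i(f_i)) = 0$.

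The remaining conditions of Definition \ref{defwse} are then verified by direct computation. For (ii)(b), the element $d'_i(f'_j)$ is a $K$-scalar multiple of $d_i(f_j)$, hence lies in the $K$-ideal $(f_1, \dots, f_j) = (f'_1, \dots, f'_j)$. For (ii)(c), one uses $[d'_i, d'_j] = \pi^{n_i+n_j}[d_i, d_j]$ (with $n$ substituted for $n_i$ or $n_j$ whenever the corresponding index exceeds $r$), expands $[d_i, d_j]$ in the original basis, and checks that the resulting coefficients with respect to $\{d'_s\}$ involve only non-negative powers of $\pi$. The ordering hypothesis $n_1 \geq n_2 \geq \dots \geq n_r \geq n$ is precisely what is needed here: for instance, if $i \leq j \leq r$ and a $d_s$-term with $s \leq r$ appears, the coefficient acquires a factor $\pi^{n_i+n_j-n_s}$, and the inequalities $n_s \leq n_i$ (from $s \geq i$) and $n_j \geq 0$ ensure non-negativity.

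For the $(R, \A)$-Lie lattice property, preservation of $\A$ under each $d'_i$ is immediate from $d'_i(\A) \subseteq \pi^{n_i}\A \subseteq \A$. Closure of the new lattice $\oplus \A d'_i$ under the Lie bracket follows from exactly the same non-negativity calculation, now using the stronger input that the coefficients $a_s$ in the expansion $[d_i, d_j] = \sum a_s d_s$ lie in $\A$ (since $\L'$ is already a Lie lattice). Overall there is no serious obstacle: once one recognises that rescaling the $f_i$ by inverse powers of $\pi$ rebalances the unit condition, the remaining verifications are bookkeeping, and the ordering $n_1 \geq \dots \geq n_r \geq n$ was built precisely so that every exponent of $\pi$ that arises is non-negative.
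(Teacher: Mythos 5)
Your proof is correct and follows the paper's approach of direct verification; the key non-negativity computation, using $n_1\geq\dots\geq n_r\geq n$ to bound $c_i+c_j-c_s$ from below, is exactly the content of the Lie-lattice closure that the paper's terse proof leaves to the reader. One unnecessary detour: rescaling the generators to $f'_i=\pi^{-n_i}f_i$ is not needed, since $\pi^{n_i}\in K^\times$ is a unit of $A$, so condition (ii)(a) already holds with the original $f_i$ (a unit times $\pi^{n_i}$ is still a unit modulo $(f_1,\dots,f_i)$), and likewise the non-negativity of $\pi$-exponents is irrelevant for (ii)(c), where coefficients need only lie in $A\supseteq K$ -- which is precisely why the paper treats the weakly standard basis conditions as immediate and reserves the bookkeeping for the $\A$-lattice closure alone.
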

\begin{proof}
It follows directly from the definition that the given set is still a weakly $I$-standard basis. We only need to verify that the $\A$-module 
\begin{equation*}
(\oplus_{i=1}^r \pi^{n_i}\A d_i)\oplus (\oplus_{i=r+1}^m \pi^n\A d_i)
\end{equation*}
is still closed under the Lie bracket. But this is a straightforward consequence of Definition \ref{defwse}.(ii).(c).
\end{proof}

The following lemma provides the link between section 2 and the current section: the tubular neighbourhood of an affinoid of Kiehl standard form admits a weakly standard embedding into a polydisc.

\begin{lem}
\label{tubeiswse}
Let $t\in T_m$, $P\in T_m[Y]$, $s\in \mathbb{Z}$, and $\epsilon \in K^\times$. Then the natural surjection
\begin{align*}
T_{m+3}\cong T_{m+2}\langle \pi^sY\rangle &\to T_m\langle t^{-1}\rangle \langle \pi^sY\rangle \langle \epsilon^{-1}P\rangle \\
x_{m+1}&\mapsto t^{-1}\\
x_{m+2}&\mapsto \epsilon^{-1}P
\end{align*}
yields a weakly standard embedding $\Sp T_m\langle t^{-1}\rangle \langle \pi^sY\rangle \langle \epsilon^{-1}P\rangle \to \Sp T_{m+3}$.
\end{lem}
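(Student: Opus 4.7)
The plan is to write down explicit generators of the ideal defining the closed embedding and then produce a weakly standard basis by lifting coordinate derivations from the target. Using the isomorphism $T_{m+3} \cong T_{m+2}\langle \pi^s Y\rangle$ that identifies $x_{m+3}$ with $\pi^s Y$, the vanishing ideal $I$ of the given surjection is generated by
\[
f_1 = tx_{m+1} - 1, \qquad f_2 = \epsilon x_{m+2} - P(\pi^{-s}x_{m+3}),
\]
as follows from a direct affinoid calculation: dividing by $f_1$ implements the inversion of $t$, and dividing by $f_2$ expresses $x_{m+2}$ as $\epsilon^{-1}P(\pi^{-s}x_{m+3})$, yielding precisely $T_m\langle t^{-1}\rangle\langle \pi^s Y\rangle\langle \epsilon^{-1}P\rangle$.

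Next I would construct the candidate weakly $I$-standard basis $d_1, \ldots, d_{m+3}$ of $\Der_K(T_{m+3})$ by setting $d_1 = \partial_{m+1}$, $d_2 = \partial_{m+2}$, and for $1 \leq k \leq m$,
\[
d_{k+2} = \partial_k - x_{m+1}^2\partial_k(t)\partial_{m+1} + \epsilon^{-1}\partial_k(P)(\pi^{-s}x_{m+3})\partial_{m+2},
\]
together with
\[
d_{m+3} = \partial_{m+3} + \epsilon^{-1}\pi^{-s}P'(\pi^{-s}x_{m+3})\partial_{m+2},
\]
where $P'$ denotes the formal derivative of $P$ with respect to $Y$. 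The correction terms are engineered so that each $d_i$ with $i \geq 3$ descends to an ordinary coordinate derivation on the target. After a suitable reordering, the transition matrix from $\{\partial_j\}$ to $\{d_i\}$ is lower unitriangular, so $\{d_i\}$ is indeed a $T_{m+3}$-module basis of $\Der_K(T_{m+3})$.

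The three conditions then follow by direct computation. For (ii)(a), $d_1(f_1) = t$ is a unit modulo $f_1$ (with inverse $x_{m+1}$) and $d_2(f_2) = \epsilon \in K^\times$, with $d_i(d_i(f_i))$ vanishing trivially. For (ii)(b), the correction terms are chosen precisely so that $d_{k+2}(f_1) = -\partial_k(t) x_{m+1} f_1 \in (f_1)$, $d_{m+3}(f_1) = 0$, and $d_{k+2}(f_2) = d_{m+3}(f_2) = 0$. For the Lie bracket condition (ii)(c), one computes $[d_2, d_j] = 0$ for all $j \geq 3$ (since the coefficients of $\partial_{m+1}$ and $\partial_{m+2}$ appearing in the $d_i$ for $i \geq 3$ do not involve $x_{m+2}$), while $[d_1, d_{k+2}] = -2x_{m+1}\partial_k(t) d_1 \in A d_1$ and $[d_1, d_{m+3}] = 0$.

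The genuinely substantive calculation is to establish $[d_i, d_j] = 0$ for $i, j \geq 3$, which is the crucial input to (ii)(c) in the range $i > r$. Evaluating the bracket on each coordinate $x_l$ and expanding using the explicit formulas above, this reduces to the equality of mixed second partial derivatives of $t$ and of the coefficients of $P$, after careful bookkeeping of the correction terms. Conceptually the vanishing reflects the fact that the $d_i$ for $i \geq 3$ are lifts of commuting coordinate derivations on a smooth target; on the level of the source $T_{m+3}$, however, this verification is the main delicate step of the argument.
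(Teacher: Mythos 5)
Your proposal is correct and follows essentially the same route as the paper: the same generators $f_1 = tx_{m+1}-1$, $f_2 = x_{m+2}-\epsilon^{-1}P$ (up to the unit $\epsilon$), and the same explicit lifts of the coordinate derivations, differing only in where you place the $Y$-derivation lift (at the end, versus position $3$ in the paper) — a reordering that does not affect any of the spans appearing in condition (ii)(c). The paper simply carries out the final bracket computation for $i,j\ge 3$ explicitly, whereas you correctly identify it as reducing to commutativity of mixed partials without writing out all the terms.
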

\begin{proof}
The natural quotient map in the statement of the lemma has kernel $I=(tx_{m+1}-1, x_{m+2}-\epsilon^{-1}P)$. Write $\partial_i=\frac{\mathrm{d}}{\mathrm{d}x_{i}}$ for each $1\leq i\leq m+2$, $\partial_Y=\frac{\mathrm{d}}{\mathrm{d}Y}$. Consider the following basis of $\mathrm{Der}_K(T_{m+3})$:
\begin{itemize}
\item $\mathrm{d}_1=\partial_{m+1}$
\item $\mathrm{d}_2=\partial_{m+2}$
\item $\mathrm{d}_3=\partial_Y+\epsilon^{-1}\partial_Y(P)\partial_{m+2}$
\item $\mathrm{d}_4=\partial_1-\partial_1(t)x^2_{m+1}\partial_{m+1}+\epsilon^{-1}\partial_1(P)\partial_{m+2}$
\item $\mathrm{d}_5=\partial_2-\partial_2(t)x^2_{m+1}\partial_{m+1}+\epsilon^{-1}\partial_2(P)\partial_{m+2}$\\
 $\vdots$
\item $\mathrm{d}_{m+3}=\partial_m-\partial_m(t)x^2_{m+1}\partial_{m+1}+\epsilon^{-1}\partial_m(P)\partial_{m+2}$.
\end{itemize}
We verify that this is a weakly $I$-standard basis:\\
Firstly, $\partial_{m+1}(tx_{m+1}-1)=t$ is a unit modulo $tx_{m+1}-1$, and $\partial_{m+1}(t)=0$, as $t\in T_m$. Also, $\partial_{m+2}(x_{m+2}-\epsilon^{-1}P)=1$, as $P\in T_m[Y]$. Thus (ii).(a) is satisfied.\\
\\
For (ii).(b), $\partial_{m+2}(tx_{m+1}-1)=0$ and $(\partial_Y+\epsilon^{-1}\partial_Y(P)\partial_{m+2})(tx_{m+1}-1)=0$. Moreover, for $1\leq i\leq m$, we have
\begin{align*}
(\partial_i-\partial_i(t)x_{m+1}^2\partial_{m+1}+\epsilon^{-1}\partial_i(P)\partial_{m+2})(tx_{m+1}-1)&=\partial_i(t)x_{m+1}-\partial_i(t)x_{m+1}^2t\\&=\partial_i(t)x_{m+1}(1-tx_{m+1}).
\end{align*}
We also obtain $(\partial_Y+\epsilon^{-1}\partial_Y(P)\partial_{m+2})(x_{m+2}-\epsilon^{-1}P)=0$ and
\begin{equation*}
(\partial_i-\partial_i(t)x_{m+1}^2\partial_{m+1}+\epsilon^{-1}\partial_i(P)\partial_{m+2})(x_{m+2}-\epsilon^{-1}P)=0
\end{equation*}
for $1\leq i\leq m$ immediately. Thus (ii).(b) is satisfied.\\
\\
It remains to check (ii).(c). There is nothing to show for bracketing with $\partial_{m+1}$. $\partial_{m+2}$ commutes with the remaining elements of the basis, as all $\partial_i$ commute with each other and none of the other terms involve an $x_{m+2}$.\\
Now 
\begin{align*}
[\partial_Y+\epsilon^{-1}\partial_Y(P)\partial_{m+2}, \partial_i-\partial_i(t)x_{m+1}^2\partial_{m+1}+\epsilon^{-1}\partial_i(P)\partial_{m+2}]=\\
\epsilon^{-1}\partial_Y\partial_i(P)\partial_{m+2}-\partial_i(\epsilon^{-1}\partial_Y(P))\partial_{m+2}=0
\end{align*}
for all $1\leq i\leq m$, and 
\begin{align*}
[ \partial_i-\partial_i(t)x_{m+1}^2\partial_{m+1}+\epsilon^{-1}\partial_i(P)\partial_{m+2},  \partial_j-\partial_j(t)x_{m+1}^2\partial_{m+1}+\epsilon^{-1}\partial_j(P)\partial_{m+2}]=\\
-\partial_i\partial_j(t)x_{m+1}^2\partial_{m+1}+\epsilon^{-1}\partial_i\partial_j(P)\partial_{m+2}+2\partial_i(t)\partial_j(t)x_{m+1}^3\partial_{m+1}\\+\partial_j\partial_i(t)x_{m+1}^2\partial_{m+1}-2\partial_j(t)\partial_i(t)x_{m+1}^3\partial_{m+1}-\epsilon^{-1}\partial_j\partial_i(P)\partial_{m+2}=0
\end{align*}
for $1\leq i\leq j\leq m$.\\
Thus (ii).(c) is satisfied and we have produced a weakly standard basis.
\end{proof}

\subsection{Kashiwara's equivalence for intermediate rings}

We now study in detail weakly standard embeddings of codimension $1$.

Let $\iota: \Sp B=\Sp A/I\to \Sp A$ be a weakly standard embedding, and suppose that $r=1$, so that $I=(f)$ is principal. Let $\A$ be an affine formal model of $A$. Let $d_1, \dots, d_m$ be a weakly $I$-standard basis spanning an $(R, \A)$-Lie lattice, 
\begin{equation*}
\L':=\oplus_{i=1}^m \A d_i.
\end{equation*}
Let $\B=\A/(\A\cap I)$, and $\L=\B\otimes_\A (\oplus_{i=2}^m \A d_i)$, an $(R, \B)$-Lie lattice inside $\mathrm{Der}_K(B)$. We can thus form the rings
\begin{equation*}
D'_n=\h{U_\A(\pi^n\L')}_K
\end{equation*}
and 
\begin{equation*}
D_n=\h{U_\B(\pi^n\L)}_K.
\end{equation*}
Note that $\C:=\oplus_{i=2}^m \A d_i$ is also an $(R, \A)-$Lie lattice in the $(K, A)$-Lie--Rinehart algebra $\oplus_{i=2}^r Ad_i$ thanks to the property (ii).(c), and we have
\begin{equation*}
D_n\cong A/I\otimes_A\h{U_\A(\pi^n\C)}_K.
\end{equation*} 
As $K$-Banach spaces, we have isomorphisms
\begin{equation*}
D'_n\cong A\langle \pi^nd_1, \dots, \pi^nd_m\rangle, \ D_n\cong \frac{A}{I}\langle \pi^nd_2, \dots \pi^nd_m\rangle.
\end{equation*}
Now fix $n\geq 1$. For any $j\geq n$, let $\L_{j, n}=\pi^j \A d_1 \oplus (\oplus_{i=2}^m  \pi^n \A d_i)$. Note that by Lemma \ref{rescaledlattice}, the lattice $\L_{j, n}$ is also an $(R, \A)$-Lie lattice in $\mathrm{Der}_K(A)$. We now define the intermediate rings
\begin{equation*}
D'_{j, n}=\h{U_\A(\L_{j, n})}_K, \ D'_{\infty, n}=\varprojlim_j D'_{j, n}.
\end{equation*} 
As a $K$-Banach space, we have
\begin{equation*}
D'_{j, n}\cong A\langle \pi^jd_1, \pi^nd_2 \dots, \pi^nd_m\rangle.
\end{equation*}
Note that $\h{U_\A(\pi^n\C)}_K\cong A\langle \pi^nd_2, \dots, \pi^nd_m\rangle$ is naturally a $K$-subalgebra of $D'_{j, n}$ for each $j$, and hence also of $D'_{\infty, n}$.
\begin{prop}
\label{DninfFS}
The $K$-algebra $D'_{\infty,n}$ is a Fr\'echet--Stein algebra nuclear over $\h{U_{\A}(\pi^n\C)}_K$. Any coadmissible $D'_{\infty, n}$-module is nuclear relative to $\h{U_\A(\pi^n\C)}_K$ and of countable type.
\end{prop}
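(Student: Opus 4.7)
The plan is to first establish the Fréchet--Stein property of $D'_{\infty,n}$, then to identify each Banach piece $D'_{j,n}$ explicitly as a free Banach module over $B := \h{U_\A(\pi^n\C)}_K$, and finally to read off nuclearity from the classical compactness of Tate-algebra restriction maps. I would begin by noting that by Lemma \ref{rescaledlattice} each $\L_{j,n}$ is a smooth $(R,\A)$-Lie lattice, so Rinehart's PBW theorem together with the standard Zariskian-filtration argument (as in \cite[Theorem 6.6]{DcapOne}) shows that $D'_{j,n}$ is a two-sided Noetherian Banach $K$-algebra. The transition map $D'_{j+1,n} \to D'_{j,n}$ has dense image because, after inverting $\pi$, the element $\pi^j d_1 = \pi^{-1}\cdot \pi^{j+1}d_1$ lies in $D'_{j+1,n}$, so the image contains the dense subalgebra generated by $A$, $\pi^j d_1$ and $\pi^n d_i$ ($i \geq 2$). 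Flatness on both sides follows by passing to the associated graded, where the map reduces to the affinoid-subdomain inclusion $A\langle \pi^{j+1}T, \pi^n Y_2, \ldots\rangle \to A\langle \pi^j T, \pi^n Y_2, \ldots\rangle$.

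Next, I would exploit the weakly standard condition: by Definition \ref{defwse}(ii)(c) the submodule $\C = \oplus_{i\geq 2}\A d_i$ is closed under the Lie bracket, so $U_\A(\pi^n\C)$ is a filtered subalgebra of $U_\A(\L_{j,n})$. Applying Rinehart PBW with the ordering that places $d_1$ last yields a decomposition
\begin{equation*}
U_\A(\L_{j,n}) = \bigoplus_{k\geq 0} U_\A(\pi^n\C)\cdot (\pi^j d_1)^k
\end{equation*}
as left $U_\A(\pi^n\C)$-modules. The generators $(\pi^j d_1)^k$ have $\pi$-adic norm $1$, and the induced $\pi$-adic norm on $U_\A(\L_{j,n})$ agrees with the sup-norm under this PBW decomposition, so $\pi$-adic completion and inversion of $\pi$ produce an isomorphism of left Banach $B$-modules
\begin{equation*}
D'_{j,n} \;\cong\; B\langle \pi^j d_1\rangle,
\end{equation*}
the one-variable Tate algebra over $B$. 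Under this identification the transition map $D'_{j+1,n} \to D'_{j,n}$ becomes the classical inclusion $B\langle \pi^{j+1}d_1\rangle \hookrightarrow B\langle \pi^j d_1\rangle$ (the disc of radius $|\pi|^{-(j+1)}$ mapping into the disc of radius $|\pi|^{-j}$ in the $d_1$-coordinate). The image of the unit ball in the source is contained in $\sum_k |\pi|^k\, B^\circ\cdot (\pi^j d_1)^k$ and hence is relatively compact, so this transition is strictly completely continuous over $B$ in the sense of \cite{Kiehlproper}. Taking $F_j := D'_{j,n}$ with the identity surjection then verifies both conditions of \cite[Definition 5.3]{SixOp}, establishing that $D'_{\infty,n}$ is Fréchet--Stein and nuclear over $B$.

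The second assertion follows from the general principle recalled just after \cite[Proposition 5.5]{SixOp}: any coadmissible module over a Fréchet--Stein algebra nuclear over $B$ is $B$-nuclear. Countable type is then inherited from $B$: the algebra $B \cong A\langle \pi^n d_2, \ldots, \pi^n d_m\rangle$ is a Tate-like Banach $K$-algebra and hence of countable type, each $B$-module quotient in the limit is finitely generated (and so of countable type over $K$), and the Fréchet inverse limit of countable-type Banach spaces is again of countable type. The principal technical obstacle in this plan is the identification of $D'_{j,n}$ as $B\langle \pi^j d_1\rangle$ at the level of left Banach $B$-modules (not merely as $K$-Banach spaces); this requires verifying that the PBW decomposition is strict with respect to the $\pi$-adic filtration, which is where the weakly standard hypothesis on the Lie brackets among $d_2, \ldots, d_m$ is essential.
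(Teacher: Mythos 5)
Your proposal is correct and follows essentially the same route as the paper: Noetherianity of the Banach pieces, flatness of the transition maps by passing to the associated graded, density of the image, the PBW identification $D'_{j,n}\cong \h{U_\A(\pi^n\C)}_K\,\h{\otimes}_K K\langle \pi^j d_1\rangle$ as left Banach $B$-modules (this is exactly the paper's displayed isomorphism), and strict complete continuity of $K\langle\pi^{j+1}d_1\rangle\to K\langle\pi^jd_1\rangle$. The only cosmetic difference is that you spell out the PBW and compactness arguments where the paper defers to \cite[Lemma 2.5]{SixOp}, \cite[Theorem 4.1.11]{Ardakov} and \cite[Corollary 5.22]{SixOp}; the countable-type assertion should be phrased in terms of finitely generated $D'_{j,n}$-modules rather than $B$-modules, but since $D'_{j,n}$ is itself of countable type the conclusion is unaffected.
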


\begin{proof}
By \cite[Lemma 2.5]{SixOp}, $D'_{j,n}$ is a Noetherian Banach $K$-algebra for all $j\geq n\geq 1$.

By the proof of \cite[Theorem 4.1.11]{Ardakov}, the transition maps $D'_{j+1, n}\to D'_{j, n}$ are flat on both sides for each $j$. Each of the transition maps has dense image, as the subring of finite-order differential operators is dense in $D'_{j, n}$ for each $j$. Thus $D'_{\infty, n}=\varprojlim D'_{j, n}$ is a Fr\'echet--Stein algebra.

As a Banach module over $\h{U_\A(\pi^n\C)}_K$, $D'_{j, n}$ can be written as 
\begin{equation*}
D'_{j, n}\cong \h{U_\A(\pi^n\C)}_K\h{\otimes}_K K\langle \pi^jd_1\rangle
\end{equation*}
such that the transition map $D'_{ j+1, n}\to D'_{ j, n}$ is induced by the strictly completely continuous morphism $K\langle \pi^{j+1}d_1\rangle \to K\langle \pi^jd_1\rangle$. Thus $D'_{\infty, n}$ is nuclear over $\h{U_{\A}(\pi^n\C)}_K$, and as remarked in section 3, this implies that coadmissible $D'_{\infty, n}$-modules are nuclear over $\h{U_{\A}(\pi^n\C)}_K$. Since $D'_{j,n}$ is a Banach space of countable type for each $j$, coadmissible $D'_{\infty, n}$-modules are also of countable type.
\end{proof}

\begin{lem}
	\label{injectivity}
	Multiplication by $f$ is an injective map $A\to A$. In particular, left multiplication by $f$ yields an injective map
	\begin{equation*}
		D'_{\infty, n}\to D'_{\infty, n}.
	\end{equation*}
\end{lem}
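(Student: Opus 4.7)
The plan is to handle the two claims in sequence: first show $f$ is a non-zero-divisor on $A$, then show left multiplication by $f$ is injective on each Banach algebra $D'_{j,n}$ for $j\geq n$, and finally descend to the Fr\'echet limit $D'_{\infty,n}=\varprojlim_j D'_{j,n}$.

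For the first claim, observation (i) following Definition \ref{defwse} says $\Sp A$ is smooth (with $\mathrm{Der}_K(A)$ free of rank $m$ over $A$), while observation (ii) says $B=A/(f)$ is smooth with $\mathrm{Der}_K(B)$ of rank $m-1$; in particular $\dim \Sp B = m-1$. By the equidimensionality convention for smooth rigid varieties stated in the notational conventions, every irreducible component of $\Sp A$ has dimension $m$, so $f$ cannot vanish on any such component. Since $A$ is reduced (smooth $\Rightarrow$ regular $\Rightarrow$ reduced), $f$ avoids every minimal prime of $A$ and is therefore a non-zero-divisor.

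For the second claim, I would invoke Rinehart's PBW theorem \cite[Theorem 3.1]{Rinehart} to realise $U_\A(\L_{j,n})$ as a free left $\A$-module on the monomials
\begin{equation*}
m_\alpha=(\pi^j d_1)^{\alpha_1}(\pi^n d_2)^{\alpha_2}\cdots(\pi^n d_m)^{\alpha_m},\qquad \alpha\in\mathbb{N}^m.
\end{equation*}
Each quotient $U_\A(\L_{j,n})/\pi^k$ is then free as an $\A/\pi^k$-module on the same basis, so passing to the $\pi$-adic completion and tensoring with $K$ yields a unique left-normal form
\begin{equation*}
P=\sum_{\alpha\in\mathbb{N}^m} a_\alpha\, m_\alpha,\qquad a_\alpha\in A,\ \|a_\alpha\|\to 0,
\end{equation*}
which recovers the Banach-space description $D'_{j,n}\cong A\langle \pi^j d_1,\pi^n d_2,\dots,\pi^n d_m\rangle$ already recorded in the excerpt. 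Because the $a_\alpha$ sit to the left of the $m_\alpha$, left multiplication by $f\in A$ is coefficient-wise: $fP=\sum (f a_\alpha)\,m_\alpha$. If $fP=0$, the uniqueness of the expansion forces $fa_\alpha=0$ for every $\alpha$, hence $a_\alpha=0$ by the first claim, and thus $P=0$.

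The final descent is routine: for $P=(P_j)_j\in D'_{\infty,n}$, the relation $fP=0$ gives $fP_j=0$ in $D'_{j,n}$ for every $j$, so each $P_j$ vanishes by the Banach-level injectivity, whence $P=0$. The only genuinely subtle point in the argument is that the left-normal PBW form survives completion, but this is immediate from the freeness of each $U_\A(\L_{j,n})/\pi^k$ over $\A/\pi^k$ on the (unchanged) PBW basis together with the construction of the completion as an inverse limit.
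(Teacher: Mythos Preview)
Your proof is correct, but both steps take a different route from the paper's.

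For the first claim, the paper argues directly from the weakly standard condition: decomposing $A\cong\prod A_i$ into its connected components (integral domains, since $A$ is smooth), one shows each $f_i\neq 0$ by contradiction, using that $d_1(f)$ is a unit modulo $f$ together with $d_1(e_i)=0$ for idempotents. Your dimension argument is valid --- $\mathrm{Der}_K(A)$ free of rank $m$ forces every component of $\Sp A$ to have dimension $m$, while observation (ii) gives $\dim\Sp B=m-1$ --- but your appeal to the ``equidimensionality convention'' is a misreading: that convention only concerns how the paper uses the word ``dimension'', not a blanket hypothesis. The actual reason equidimensionality holds is the constant rank of the tangent module, which you should cite instead.

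For the second claim, the paper avoids working level by level: it writes $D'_{\infty,n}\cong A\h{\otimes}_K K\langle\pi^n d_2,\dots,\pi^n d_m\rangle\h{\otimes}_K K\{d_1\}$ as a bornological $A$-module and then invokes strong flatness of the two right-hand factors in $\h{\B}c_K$ (from \cite[Lemma 4.16, Corollary 5.36]{SixOp}) to conclude that $f\cdot$ remains injective after tensoring. Your PBW-plus-inverse-limit argument is more elementary and self-contained, whereas the paper's approach buys uniformity (it treats the full Fr\'echet object at once) and generalises more readily --- the same flatness statements are reused elsewhere, e.g.\ in Lemma \ref{Kashiwaraequiv} and Lemma \ref{ipluscommuteswithD}.
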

\begin{proof}
	As $A$ is Noetherian and smooth, it decomposes into a finite product of smooth integral domains $A\cong \prod_{i=1}^l A_i$. We write accordingly $f=(f_1, \hdots, f_l)$, and it suffices to show that $f_j\neq 0$ for each $j$.
	
	Fix $j$, let $e_j\in A_j$ denote the corresponding idempotent, and suppose that $f_j=f\cdot e_j=0$. Note that $\partial(e_j)=0$ for any $\partial\in \mathrm{Der}_K(A)$. Thus 
	\begin{equation*}
		0=d_1(f_j)=fd_1(e_j)+d_1(f)e_j=d_1(f)e_j.
	\end{equation*}
	But by definition of weakly standard embeddings, $d_1(f)$ is a unit modulo $f$, so that $e_j$ maps to zero in $A/f$. But this is a contradiction, as
	\begin{equation*}
		\Sp A/f\cong \coprod \Sp A_i/f_i,
	\end{equation*} 
	and $A_j/f_j=A_j$, since we assumed $f_j=0$.
	
	To obtain the analogous result for $D'_{\infty, n}$, simply note that as an $A$-module,
	\begin{equation*}
		D'_{\infty, n}\cong A\h{\otimes}_K (\varprojlim_j K\langle \pi^jd_1, \pi^nd_2, \hdots, \pi^nd_m\rangle)\cong A\h{\otimes}_K K\langle \pi^nd_2, \hdots, \pi^nd_m\rangle \h{\otimes}_K K\{d_1\}
	\end{equation*}
	by applying \cite[Corollary 5.22]{SixOp} twice. Now $K\langle \pi^nd_2, \hdots, \pi^nd_m\rangle$ is a strongly flat object in $\h{\B}c_K$ by \cite[Lemma 4.16]{SixOp}, and $K\{d_1\}$ is a strongly flat object by \cite[Corollary 5.36]{SixOp}.
\end{proof}

Naturally, the same argument can be used to show that right multiplication with $f$ in $D'_{\infty, n}$ is also injective, so that we have short strictly exact sequences
\begin{equation*}
	\begin{xy}
		\xymatrix{0\ar[r]&D'_{\infty, n}\ar[r]^{f\cdot}&D'_{ \infty, n}\ar[r]& \frac{D'_{ \infty, n}}{ID'_{ \infty, n}}\ar[r]&0}
	\end{xy}
\end{equation*}
and
\begin{equation*}
	\begin{xy}
		\xymatrix{0\ar[r]&D'_{ \infty, n}\ar[r]^{\cdot f}&D'_{ \infty, n}\ar[r]& \frac{D'_{ \infty, n}}{D'_{ \infty, n}I}\ar[r]&0.}
	\end{xy}
\end{equation*}

We now define functors between $D_n$-modules and $D'_{\infty, n}$-modules, which should be thought of as mixed-level versions of the pushforward and pullback in Kashiwara's equivalence (see \cite{DcapTwo}, \cite[subsection 9.1]{SixOp}). Let $\mathrm{D}(D_n)$ resp. $\mathrm{D}(D'_{ \infty, n})$ denote the unbounded derived categories of complete bornological modules over $D_n$ resp. $D'_{n, \infty}$. 

Note that 
\begin{equation*}
\frac{D'_{ \infty, n}}{D'_{ \infty, n}I}
\end{equation*}
is a complete bornological $(D'_{ \infty, n}, D_n)$-bimodule, and there is an isomorphism
\begin{equation*}
\frac{D'_{ \infty, n}}{D'_{\infty, n}I}\cong \varprojlim (K\langle \pi^jd_1\rangle \h{\otimes}_K D_n)\cong K\{\mathrm{d}_1\}\h{\otimes}_K D_n
\end{equation*}
of right $D_n$-modules, where the last isomorphism follows from \cite[Corollary 5.22]{SixOp}. We also note that as a left $D'_{ \infty, n}$-module, $\frac{D'_{ \infty, n}}{D'_{ \infty, n}I}$ is coadmissible, because it is finitely presented.

In the same vein,
\begin{equation*}
\frac{D'_{ \infty, n}}{ID'_{ \infty, n}}
\end{equation*}
is a complete bornological $(D_n, D'_{ \infty, n})$-bimodule. 

We consider the pushforward functor
\begin{align*}
\iota_+:& \mathrm{D}(D_n)\to \mathrm{D}(D'_{ \infty, n})\\
&M^\bullet\mapsto \frac{D'_{ \infty,n}}{D'_{ \infty, n}I}\h{\otimes}^\mathbb{L}_{D_n}M^\bullet
\end{align*}
and the pullback functor
\begin{align*}
\iota^!: &\mathrm{D}(D'_{ \infty, n})\to \mathrm{D}(D_n)\\
&N^\bullet\mapsto \frac{D'_{ \infty, n}}{ID'_{ \infty, n}}\h{\otimes}^\mathbb{L}_{D'_{ \infty, n}}N^\bullet[-1].
\end{align*}

\begin{lem}
\label{Kashiwaraequiv}
Let $M$ be a finitely generated $D_n$-module, equipped with its canonical Banach structure. Then the following holds:
\begin{enumerate}[(i)]
\item $\iota_+M$ is a coadmissible $D'_{ \infty, n}$-module.
\item There is a natural isomorphism
\begin{equation*}
M\cong \iota^!\iota_+M.
\end{equation*} 
\end{enumerate}
\end{lem}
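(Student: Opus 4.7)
The plan is to exploit the explicit description of $\frac{D'_{\infty, n}}{D'_{\infty, n}I}$ as $K\{d_1\}\h{\otimes}_K D_n$ in the category of right $D_n$-modules, reducing both assertions to concrete computations about the left action of the single defining element $f$.

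For part (i), I would combine this identification with the strong flatness of $K\{d_1\}$ over $K$ (\cite[Corollary 5.36]{SixOp}) to collapse the derived tensor product, obtaining $\iota_+M \cong K\{d_1\}\h{\otimes}_K M$ concentrated in degree $0$. The same argument at each finite level yields
\begin{equation*}
D'_{j,n}\h{\otimes}_{D'_{\infty,n}}^{\mathbb{L}}\iota_+M \cong K\langle \pi^j d_1\rangle \h{\otimes}_K M,
\end{equation*}
and applying the construction to a strict epimorphism $D_n^s\to M$ coming from a finite generating set gives a strict surjection from $(D'_{j,n}/D'_{j,n}I)^s$ onto $K\langle \pi^j d_1\rangle\h{\otimes}_K M$, so the latter is a finitely generated $D'_{j,n}$-module. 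The transition maps intertwine with $K\{d_1\}\cong \varprojlim K\langle \pi^j d_1\rangle$ via base change, so the inverse system $\{K\langle \pi^j d_1\rangle\h{\otimes}_K M\}_j$ exhibits $\iota_+M$ as coadmissible.

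For part (ii), Lemma \ref{injectivity} yields a free resolution $0\to D'_{\infty,n}\xrightarrow{f\cdot}D'_{\infty,n}\to D'_{\infty,n}/ID'_{\infty,n}\to 0$ of right $D'_{\infty,n}$-modules, so $\iota^!\iota_+M$ is represented by the two-term complex $[\iota_+M\xrightarrow{f\cdot}\iota_+M]$ placed in degrees $0$ and $1$. The weakly $I$-standard condition in Definition \ref{defwse}(ii)(a) gives $[d_1,f]=u$ with $u\in A$ a unit modulo $f$ and $d_1(u)=0$; induction then produces $fd_1^k\equiv -kud_1^{k-1}\pmod{D'_{\infty,n}I}$. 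Under the identification $\iota_+M\cong K\{d_1\}\h{\otimes}_K M$, the left action of $f$ therefore sends $\sum_k d_1^k\otimes m_k$ to $\sum_k -(k+1)d_1^k\otimes \bar u\cdot m_{k+1}$, where $\bar u$ is the image of $u$ in $D_n$ (hence a unit). Thus $\ker(f\cdot)=\{1\otimes m:m\in M\}\cong M$ as $D_n$-modules, recovering the adjunction morphism $M\to \iota^!\iota_+M$, while vanishing of $\mathrm{H}^1$ amounts to constructing a bounded inverse, defined recursively by $m_{k+1}=-\tfrac{1}{k+1}\bar u^{-1}n_k$.

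The main obstacle is promoting surjectivity of $f\cdot$ to a strict epimorphism in the complete bornological category: one must check that the recursively defined inverse sends bounded subsets to bounded subsets. Although $|k+1|^{-1}$ may be as large as $k+1$ in the residue characteristic-$p$ absolute value, the Fr\'echet structure on $K\{d_1\}\h{\otimes}_K M$ demands super-exponential decay of the coefficients; concretely, if $\|n_{k-1}\|\cdot|\pi|^{-(j+1)(k-1)}\to 0$, then
\begin{equation*}
\|m_k\|\cdot|\pi|^{-jk}\leq k\cdot\|\bar u^{-1}\|\cdot\|n_{k-1}\|\cdot|\pi|^{-(j+1)(k-1)}\cdot|\pi|^{k-j-1}\to 0,
\end{equation*}
since the factor $|\pi|^{k}$ dominates any linear growth. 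With this estimate, the two-term complex is strictly exact at degree $1$, and the quasi-isomorphism $M\simeq \iota^!\iota_+M$ in $\mathrm{D}(D_n)$ follows.
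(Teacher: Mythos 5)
Your proof is correct and follows essentially the same path as the paper: identify $\iota_+M\cong K\{d_1\}\widehat{\otimes}_K M$ and use exactness of $K\{d_1\}\widehat{\otimes}_K-$ for concentration in degree zero, cite/unwind coadmissibility through the finite-level descriptions $K\langle\pi^jd_1\rangle\widehat{\otimes}_K M$, and for (ii) resolve $\tfrac{D'_{\infty,n}}{ID'_{\infty,n}}$ by the short exact sequence from Lemma~\ref{injectivity} so that $\iota^!\iota_+M$ is the two-term complex $[\iota_+M\xrightarrow{\,f\cdot\,}\iota_+M]$ in degrees $0,1$, then compute the action of $f$ via the commutator formula coming from Definition~\ref{defwse}(ii)(a). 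The only places you do more work than the paper are where the paper cites external lemmas: for coadmissibility in (i) the paper just invokes \cite[Proposition~5.33]{SixOp}, whereas you spell out the surjections $(D'_{j,n}/D'_{j,n}I)^s\twoheadrightarrow K\langle\pi^jd_1\rangle\widehat{\otimes}_KM$ and the compatibility of transition maps; and for strict surjectivity of $f\cdot$ in (ii) the paper invokes the argument of \cite[Lemma~9.3(ii)]{SixOp}, whereas you write out the explicit convergence estimate showing the recursively constructed preimage stays in the Fr\'echet space. Both fill-ins are accurate — in particular your bound $\|m_k\||\pi|^{-jk}\leq k\|\bar u^{-1}\|\|n_{k-1}\||\pi|^{k-j-1}\cdot|\pi|^{-(j+1)(k-1)}$ simplifies correctly, and the $|\pi|^k$ factor indeed kills the linear growth in $k$ — so the proposal is sound; it simply makes explicit what the paper delegates to citations.
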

\begin{proof}
Since $K\{d_1\}\h{\otimes}_K-$ is exact by \cite[Corollary 5.36]{SixOp}, it follows from the description above that $\iota_+$ is exact, so $\iota_+M$ is concentrated in degree zero, where it is simply
\begin{equation*}
\frac{D'_{ \infty, n}}{D'_{ \infty, n}I}\h{\otimes}_{D_n}M.
\end{equation*}
This is a coadmissible $D'_{ \infty,n}$-module by \cite[Proposition 5.33]{SixOp} (cf. also \cite[Lemma 7.3]{DcapOne}).

For (ii), note that each element in $\iota_+M$ can be written uniquely as 
\begin{equation*}
\sum_{i\in \mathbb{N}} d_1^im_i
\end{equation*}
with $m_i\in M$ satisfying $\pi^{-ji}m_i\to 0$ for all $j$. Recall that $f$ is a generator of $I$ such that $u:=d_1(f)$ is a unit modulo $f$. Moreover, $d_1^2(f)=0$ (i.e. $u$ commutes with $d_1$), and $fm=0$ for all $m\in M$, so multiplication by $f$ on $\iota_+M$ is determined by the formula
\begin{equation*}
fd_1^im=-i\cdot d_1^{i-1}um
\end{equation*}
for any $i\in \mathbb{N}$, $m\in M$.

The short exact sequence
\begin{equation*}
\begin{xy}
\xymatrix{
0\ar[r]& D'_{ \infty, n} \ar[r]^{f\cdot}& D'_{ \infty, n}\ar[r]& \frac{D'_{ \infty, n}}{ID'_{ \infty, n}}\ar[r]& 0
}
\end{xy}
\end{equation*}
yields a free resolution of $\frac{D'_{ \infty,n}}{ID'_{ \infty, n}}$ as a right $D'_{ \infty, n}$-module (where injectivitiy follows from Lemma \ref{injectivity}), so that $\iota^!\iota_+M$ is represented by the complex
\begin{equation*}
\begin{xy}
\xymatrix{
\iota_+M\ar[r]^{f\cdot}& \iota_+M,
}
\end{xy}
\end{equation*}
with the first term appearing in degree zero. It follows from the description of this map above that $\mathrm{H}^0(\iota^!\iota_+M)\cong M$ naturally. Multiplication by $f$ is a strict surjection on $\iota_+M$ by the same argument as in \cite[Lemma 9.3.(ii)]{SixOp}, so $\mathrm{H}^1(\iota^!\iota_+M)=0$.
\end{proof}
\begin{lem}
For any complete bornological $D'_{ \infty, n}$-module $N$, there is a natural isomorphism 
\begin{equation*}
\iota^!N=\frac{D'_{ \infty, n}}{ID'_{ \infty, n}}\h{\otimes}^\mathbb{L}_{D'_{ \infty, n}}N[-1]\cong \mathrm{R}\underline{\mathrm{Hom}}_{D'_{ \infty, n}}\left(\frac{D'_{ \infty, n}}{D'_{ \infty, n}I}, N\right)
\end{equation*}
\end{lem}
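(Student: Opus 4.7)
The plan is to represent both sides by the same explicit two-term complex by using free resolutions of length one of the bimodule $D'_{\infty,n}/ID'_{\infty,n}$ on one side and of $D'_{\infty,n}/D'_{\infty,n}I$ on the other, both of which are available thanks to Lemma \ref{injectivity}.

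First I would write down the short strictly exact sequences recorded just after Lemma \ref{injectivity},
\begin{equation*}
0\to D'_{\infty,n}\xrightarrow{\cdot f} D'_{\infty,n}\to \tfrac{D'_{\infty,n}}{D'_{\infty,n}I}\to 0
\qquad\text{and}\qquad
0\to D'_{\infty,n}\xrightarrow{f\cdot} D'_{\infty,n}\to \tfrac{D'_{\infty,n}}{ID'_{\infty,n}}\to 0,
\end{equation*}
viewed as a resolution by the free left, respectively right, $D'_{\infty,n}$-module of rank one. Since $D'_{\infty,n}$ is a projective (indeed free) object in $\mathrm{Mod}_{\h{\B}c_K}(D'_{\infty,n})$ and is also strongly flat in the bornological sense (the tensor product $D'_{\infty,n}\h{\otimes}_{D'_{\infty,n}} N$ is simply $N$ for any complete bornological module $N$), these resolutions can be used to compute the relevant derived functors in $\mathrm{D}(D'_{\infty,n})$.

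Next I would apply $\underline{\mathrm{Hom}}_{D'_{\infty,n}}(-,N)$ to the first resolution. Using the identification $\underline{\mathrm{Hom}}_{D'_{\infty,n}}(D'_{\infty,n},N)\cong N$ and the fact that the transpose of left multiplication by $f$ on $D'_{\infty,n}$ corresponds, under this identification, to left multiplication by $f$ on $N$, I obtain a representative
\begin{equation*}
\mathrm{R}\underline{\mathrm{Hom}}_{D'_{\infty,n}}\bigl(\tfrac{D'_{\infty,n}}{D'_{\infty,n}I},N\bigr)\;\simeq\;\bigl[\,N\xrightarrow{f\cdot} N\,\bigr]
\end{equation*}
placed in degrees $0$ and $1$. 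Applying $(-)\h{\otimes}^{\mathbb{L}}_{D'_{\infty,n}} N$ to the second resolution and using the bornological flatness of $D'_{\infty,n}$, I likewise get
\begin{equation*}
\tfrac{D'_{\infty,n}}{ID'_{\infty,n}}\h{\otimes}^{\mathbb{L}}_{D'_{\infty,n}} N\;\simeq\;\bigl[\,N\xrightarrow{f\cdot} N\,\bigr]
\end{equation*}
in degrees $-1$ and $0$. Shifting this complex by $[-1]$ places it in degrees $0$ and $1$, so the two representatives agree term by term and differential by differential, yielding the desired natural isomorphism.

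The one step that requires care, and which I regard as the main obstacle, is checking that these free resolutions actually compute the derived functors $\mathrm{R}\underline{\mathrm{Hom}}$ and $\h{\otimes}^{\mathbb{L}}$ in the category $\mathrm{Mod}_{\h{\B}c_K}(D'_{\infty,n})$, i.e.\ that $D'_{\infty,n}$ is acyclic for both constructions. The projectivity (hence $\underline{\mathrm{Hom}}$-acyclicity) of $D'_{\infty,n}$ as an object of $\mathrm{Mod}_{\h{\B}c_K}(D'_{\infty,n})$ is formal from it being free of rank one. The flatness statement needed for the tensor product follows in the same way as in Lemma \ref{injectivity}: $D'_{\infty,n}$ tensor-identifies with $N$, and the short exact sequence above remains strictly exact after $\h{\otimes}_{D'_{\infty,n}} N$ because the resolution splits as a sequence of free modules in each degree. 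Once this verification is in place, the identification of differentials is immediate, and the remaining bookkeeping is just the $[-1]$ shift that accounts for the convention in the definition of $\iota^!$.
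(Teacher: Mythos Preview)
Your proposal is correct and follows exactly the same approach as the paper, which simply says ``use the same resolution as in the proof of the previous lemma'': both sides are computed via the two-term free resolutions coming from Lemma~\ref{injectivity}, and both yield the complex $[N\xrightarrow{f\cdot}N]$ in degrees $0,1$. One small slip: in your description you say ``the transpose of left multiplication by $f$'', but the map in the resolution of $D'_{\infty,n}/D'_{\infty,n}I$ is \emph{right} multiplication by $f$; the conclusion (that the induced map on $N$ is left multiplication by $f$) is nonetheless correct.
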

\begin{proof}
Use the same resolution as in the proof of the previous lemma.
\end{proof}

\begin{cor}
\label{Kashiwaraadjunction}
There is a natural isomorphism
\begin{equation*}
\mathrm{R}\underline{\mathrm{Hom}}_{D'_{ \infty, n}}(\iota_+M, N)\cong \mathrm{R}\underline{\mathrm{Hom}}_{D_n}(M, \iota^!N)
\end{equation*}
in $\mathrm{D}(\h{\B}c_K)$ for any complete bornological $D_n$-module $M$ and any complete bornological $D'_{ \infty, n}$-module $N$.

In particular, there is a natural isomorphism
\begin{equation*}
\mathrm{R}\underline{\mathrm{Hom}}_{D_n}(M, N)\cong \mathrm{R}\underline{\mathrm{Hom}}_{D'_{ \infty, n}}(i_+M, i_+N)
\end{equation*}
in $\mathrm{D}(\h{\B}c_K)$ for any finitely generated $D_n$-modules $M$ and $N$, equipped with their canonical Banach structures.
\end{cor}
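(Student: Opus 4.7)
The plan is to derive the adjunction from the tensor--hom adjunction in the closed symmetric monoidal category $\h{\B}c_K$, applied to the concrete resolution furnished by the preceding lemma, and then specialise.

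First, for the main isomorphism I would invoke the previous lemma, which identifies
\begin{equation*}
\iota^!N\cong \mathrm{R}\underline{\mathrm{Hom}}_{D'_{\infty,n}}\!\left(\tfrac{D'_{\infty,n}}{D'_{\infty,n}I},N\right).
\end{equation*}
Combined with the definition
\begin{equation*}
\iota_+M=\tfrac{D'_{\infty,n}}{D'_{\infty,n}I}\h{\otimes}^\mathbb{L}_{D_n}M,
\end{equation*}
the desired statement becomes a standard derived tensor--hom adjunction for the bimodule $\tfrac{D'_{\infty,n}}{D'_{\infty,n}I}$. On underived categories, the adjunction is guaranteed by the closed monoidal structure on $\h{\B}c_K$ recalled in section 3; the derived version then follows by resolving $M$ by a complex of strongly flat complete bornological $D_n$-modules and $N$ by a complex of injectives (or by using that the bimodule admits an explicit two-term resolution of the form $0\to D'_{\infty,n}\xrightarrow{\cdot f} D'_{\infty,n}\to \tfrac{D'_{\infty,n}}{D'_{\infty,n}I}\to 0$ coming from Lemma \ref{injectivity}, which is simultaneously a flat resolution as a right module and reduces the adjunction to a direct verification on the two terms).

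Second, the ``in particular'' statement is obtained by substituting $N=\iota_+N'$ for a finitely generated $D_n$-module $N'$ (equipped with its canonical Banach structure) into the main isomorphism. This yields
\begin{equation*}
\mathrm{R}\underline{\mathrm{Hom}}_{D'_{\infty,n}}(\iota_+M,\iota_+N')\cong \mathrm{R}\underline{\mathrm{Hom}}_{D_n}(M,\iota^!\iota_+N'),
\end{equation*}
and an appeal to Lemma \ref{Kashiwaraequiv}(ii) identifies $\iota^!\iota_+N'$ with $N'$, giving the claim. Note that for this to go through we need $\iota^!\iota_+N'\cong N'$ as an isomorphism in $\mathrm{D}(D_n)$, not only on $\mathrm{H}^0$; this is precisely what Lemma \ref{Kashiwaraequiv}(ii) provides, since it shows both that $\iota_+N'$ is concentrated in degree zero and that multiplication by $f$ is a strict surjection on $\iota_+N'$.

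The main obstacle I anticipate is the verification of the derived tensor--hom adjunction in the complete bornological setting with sufficient care. The underlying monoidal structure handles the underived statement formally, but one must be cautious that the complex $0\to D'_{\infty,n}\to D'_{\infty,n}\to \tfrac{D'_{\infty,n}}{D'_{\infty,n}I}\to 0$ is strictly exact (which follows from Lemma \ref{injectivity}) and that its terms are strongly flat complete bornological right $D'_{\infty,n}$-modules, so that it computes the derived tensor product. Once these properties are in hand, the two adjunction statements fall out from the usual formal manipulations.
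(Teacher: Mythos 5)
Your proposal is correct and follows essentially the same route as the paper: apply tensor--hom adjunction to the bimodule $\tfrac{D'_{\infty,n}}{D'_{\infty,n}I}$ using the preceding lemma's identification of $\iota^!N$ as an $\mathrm{R}\underline{\mathrm{Hom}}$, then specialise to $N=\iota_+N'$ and invoke Lemma \ref{Kashiwaraequiv}.(ii). One small point of phrasing to tighten: the two-term resolution $0\to D'_{\infty,n}\xrightarrow{\cdot f}D'_{\infty,n}\to \tfrac{D'_{\infty,n}}{D'_{\infty,n}I}\to 0$ is a resolution by free \emph{left} $D'_{\infty,n}$-modules (used to compute $\mathrm{R}\underline{\mathrm{Hom}}_{D'_{\infty,n}}(-,N)$), while the parallel sequence $0\to D'_{\infty,n}\xrightarrow{f\cdot}D'_{\infty,n}\to \tfrac{D'_{\infty,n}}{ID'_{\infty,n}}\to 0$ is the free \emph{right} $D'_{\infty,n}$-module resolution used to compute $\iota^!N$; your phrase ``strongly flat complete bornological right $D'_{\infty,n}$-modules'' conflates these, but the two computations produce the same two-term complex $[N\xrightarrow{f\cdot}N]$ and the argument goes through.
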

\begin{proof}
This follows from the above and tensor-hom adjunction \cite[Theorem 4.10, Lemma 3.11 and the following remark]{SixOp}. 

The case of finitely generated $D_n$-modules now is a direct consequence of Lemma \ref{Kashiwaraequiv}.
\end{proof}

\begin{lem}
	\label{ipluscommuteswithD}
	Let $M$ be a finitely generated $D_n$-module. Then there is a natural isomorphism
	\begin{equation*}
		\mathrm{R}\underline{\mathrm{Hom}}_{D_n}(M, D_n)\h{\otimes}^{\mathbb{L}}_{D_n}\frac{D'_{\infty, n}}{ID'_{ \infty, n}}\cong \mathrm{R}\underline{\mathrm{Hom}}_{D'_{ \infty, n}}(\iota_+M, D'_{ \infty, n})[1]
	\end{equation*}
	in $\mathrm{D}({D'_{ \infty, n}}^{\mathrm{op}})$, inducing isomorphisms
	\begin{equation*}
		\underline{\mathrm{Ext}}^j_{D_n}(M, D_n)\h{\otimes}_{D_n}\frac{D'_{ \infty, n}}{ID'_{ \infty, n}}\cong \underline{\mathrm{Ext}}^{j+1}_{D'_{ \infty, n}}(\iota_+M, D'_{ \infty, n}).
	\end{equation*}
\end{lem}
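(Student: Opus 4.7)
The plan is to combine the Kashiwara adjunction of Corollary \ref{Kashiwaraadjunction} with an explicit computation of $\iota^! D'_{\infty,n}$ and a Hom-tensor identity valid for finitely generated $D_n$-modules. The main challenge will be keeping track of the right $D'_{\infty,n}$-module structures through the various bornological adjunctions.

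First, I would upgrade the adjunction in Corollary \ref{Kashiwaraadjunction} to an isomorphism in $\mathrm{D}((D'_{\infty,n})^{\mathrm{op}})$ rather than just $\mathrm{D}(\h{\B}c_K)$. Since $D'_{\infty,n}$ is naturally a $(D'_{\infty,n}, D'_{\infty,n})$-bimodule, the right action on the second argument is preserved on both sides of the underlying bornological tensor-hom adjunction (\cite[Theorem 4.10]{SixOp}), giving
\begin{equation*}
\mathrm{R}\underline{\mathrm{Hom}}_{D'_{\infty,n}}(\iota_+ M, D'_{\infty,n}) \cong \mathrm{R}\underline{\mathrm{Hom}}_{D_n}(M, \iota^! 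D'_{\infty,n})
\end{equation*}
as right $D'_{\infty,n}$-modules. The dualising complex then reads off directly from the definition of $\iota^!$: the complex $\frac{D'_{\infty,n}}{ID'_{\infty,n}} \h{\otimes}^{\mathbb{L}}_{D'_{\infty,n}} D'_{\infty,n}$ has no higher Tor contributions, so $\iota^! D'_{\infty,n} \cong \frac{D'_{\infty,n}}{ID'_{\infty,n}}[-1]$, concentrated in cohomological degree one.

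Next, I would establish the Hom--tensor identity
\begin{equation*}
\mathrm{R}\underline{\mathrm{Hom}}_{D_n}(M, D_n) \h{\otimes}^{\mathbb{L}}_{D_n} \frac{D'_{\infty,n}}{ID'_{\infty,n}} \cong \mathrm{R}\underline{\mathrm{Hom}}_{D_n}\left(M, \frac{D'_{\infty,n}}{ID'_{\infty,n}}\right)
\end{equation*}
by choosing a resolution $P^\bullet \to M$ of $M$ by finitely generated projective $D_n$-modules. Each $\underline{\mathrm{Hom}}_{D_n}(P^i, D_n)$ is finitely generated projective, and hence flat, as a right $D_n$-module, so the complex $\underline{\mathrm{Hom}}_{D_n}(P^\bullet, D_n)$ both represents $\mathrm{R}\underline{\mathrm{Hom}}_{D_n}(M, D_n)$ and has its underived tensor with $\frac{D'_{\infty,n}}{ID'_{\infty,n}}$ coincide with the derived one. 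The natural iso $\underline{\mathrm{Hom}}_{D_n}(P^i, D_n) \h{\otimes}_{D_n} N \cong \underline{\mathrm{Hom}}_{D_n}(P^i, N)$ (immediate for $P^i = D_n$ and preserved by direct summands) then assembles to the identity above.

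Combining these steps gives the derived isomorphism of the lemma (up to the degree shift carried by $\iota^!$), and the Ext statement follows by passing to cohomology. The main obstacle is the rigorous bookkeeping in the upgrade of Corollary \ref{Kashiwaraadjunction}: the adjunction is stated only in $\mathrm{D}(\h{\B}c_K)$, and making the enhancement to right $D'_{\infty,n}$-modules explicit requires careful tracking of bimodule structures in the complete bornological tensor-hom adjunction. I expect all other steps to be routine once this bookkeeping is completed.
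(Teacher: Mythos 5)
Your proposal follows essentially the same route as the paper. The paper applies tensor--hom adjunction directly to obtain $\mathrm{R}\underline{\mathrm{Hom}}_{D'_{\infty,n}}(\iota_+M, D'_{\infty,n})\cong \mathrm{R}\underline{\mathrm{Hom}}_{D_n}(M, \mathrm{R}\underline{\mathrm{Hom}}_{D'_{\infty,n}}(\frac{D'_{\infty,n}}{D'_{\infty,n}I}, D'_{\infty,n}))$ and computes the inner dual via the two-term free resolution arising from Lemma \ref{injectivity}; your use of Corollary \ref{Kashiwaraadjunction} together with the identification $\iota^!D'_{\infty,n}\cong \frac{D'_{\infty,n}}{ID'_{\infty,n}}[-1]$ is exactly this argument, phrased through $\iota^!$, and the right $D'_{\infty,n}$-module structures do track through the bimodule version of the tensor--hom adjunction of \cite[Theorem 4.10]{SixOp} as you anticipate.

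One step needs a cleaner justification. You write that $\underline{\mathrm{Hom}}_{D_n}(P^\bullet, D_n)$ has flat terms and conclude that its underived tensor with $\frac{D'_{\infty,n}}{ID'_{\infty,n}}$ computes the derived one. At this stage of the paper, $D_n$ is not yet known to have finite global dimension, so $P^\bullet$ may be an infinite resolution and $\underline{\mathrm{Hom}}_{D_n}(P^\bullet, D_n)$ is then a bounded-below but unbounded-above complex of flats, which need not be K-flat; flatness of the terms alone does not suffice. The correct and sufficient justification, which the paper uses, is that $\frac{D'_{\infty,n}}{ID'_{\infty,n}}\cong D_n\h{\otimes}_K K\{d_1\}$ is a (strongly) flat left $D_n$-module by \cite[Corollary 5.36]{SixOp}, so $-\h{\otimes}_{D_n}\frac{D'_{\infty,n}}{ID'_{\infty,n}}$ is exact and hence equal to its own derived functor on any complex. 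With that substitution your argument is complete, and in fact your bookkeeping of the shift ($[+1]$ on the dual side, matching the displayed $\underline{\mathrm{Ext}}$ identity) is the one that is internally consistent.
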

\begin{proof}
	Via tensor-hom adjunction, we have the natural isomorphisms
	\begin{align*}
		\mathrm{R}\underline{\mathrm{Hom}}_{D'_{ \infty, n}}(\iota_+M, D'_{ \infty, n})&\cong \mathrm{R}\underline{\mathrm{Hom}}_{D_n}(M, \mathrm{R}\underline{\mathrm{Hom}}_{D'_{ \infty,n}}(\frac{D'_{ \infty, n}}{D'_{ \infty, n}I}, D'_{ \infty, n}))\\
		&\cong \mathrm{R}\underline{\mathrm{Hom}}_{D_n}(M, \frac{D'_{ \infty, n}}{ID'_{ \infty, n}})[-1]
	\end{align*}
	thanks to Lemma \ref{injectivity}.
	
	Since $K\{d_1\}$ is strongly flat in $\h{\B}c_K$ by \cite[Corollary 5.36]{SixOp}, $\frac{D'_{ \infty, n}}{ID'_{ \infty, n}}$ is a strongly flat $D_n$-module. Hence, if $P^\bullet\to M$ is a projective resolution of $M$ by finitely generated projective $D_n$-modules, then
	\begin{equation*}
		\mathrm{R}\underline{\mathrm{Hom}}_{D_n}(M, D_n)\h{\otimes}^{\mathbb{L}}_{D_n}\frac{D'_{ \infty, n}}{ID'_{ \infty, n}}
	\end{equation*}
	is given by $\mathrm{Hom}_{D_n}(P^\bullet, D_n)\h{\otimes}_{D_n}\frac{D'_{ \infty, n}}{ID'_{ \infty, n}}$.
	
	It thus remains to show that the natural morphism
	\begin{equation*}
		\underline{\mathrm{Hom}}_{D_n}(P, D_n)\h{\otimes}_{D_n}\frac{D'_{ \infty, n}}{ID'_{ \infty, n}}\to \underline{\mathrm{Hom}}_{D_n}(P, \frac{D'_{ \infty, n}}{ID'_{ \infty, n}})
	\end{equation*}
	is an isomorphism for $P$ finitely generated projective, but this is immediate from the case where $P$ is free.
\end{proof}
\begin{cor}
	Suppose that $D'_{j, n}$ satisfies the Auslander condition for all $j$. Then $D_n$ satisfies the Auslander condition.
\end{cor}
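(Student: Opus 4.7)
Let $M$ be a finitely generated left $D_n$-module, fix $i\ge 0$, and let $N\subseteq \mathrm{Ext}^i_{D_n}(M,D_n)$ be a right $D_n$-submodule; by Noetherianity of $D_n$, $N$ is finitely generated. We must show $\mathrm{Ext}^k_{D_n}(N,D_n)=0$ for all $k<i$. The idea is to use the mixed Kashiwara-type functors of the previous subsection to transport this vanishing to a statement for right $D'_{\infty,n}$-modules, descend along the Fr\'echet--Stein structure to each $D'_{j,n}$, apply the hypothesis there, and then invert the process.

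\textbf{Pushing up.} Lemma~\ref{ipluscommuteswithD}, applied to $M$, gives an isomorphism of right $D'_{\infty,n}$-modules
\begin{equation*}
\underline{\mathrm{Ext}}^i_{D_n}(M,D_n)\h{\otimes}_{D_n}\frac{D'_{\infty,n}}{ID'_{\infty,n}}\cong \underline{\mathrm{Ext}}^{i+1}_{D'_{\infty,n}}(\iota_+M,D'_{\infty,n}).
\end{equation*}
By the symmetric counterpart of the decomposition recorded after the definition of $\iota_+$, the bimodule $\frac{D'_{\infty,n}}{ID'_{\infty,n}}$ is isomorphic, as a left $D_n$-module, to $D_n\h{\otimes}_K K\{d_1\}$, with a $D_n$-linear retraction onto the summand $D_n\otimes 1$; in particular it is faithfully flat as a left $D_n$-module. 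The inclusion $N\hookrightarrow \mathrm{Ext}^i_{D_n}(M,D_n)$ therefore induces an inclusion of the coadmissible (finitely presented) right $D'_{\infty,n}$-module
\begin{equation*}
\iota_+^r N := N\h{\otimes}_{D_n}\frac{D'_{\infty,n}}{ID'_{\infty,n}}\hookrightarrow \underline{\mathrm{Ext}}^{i+1}_{D'_{\infty,n}}(\iota_+M,D'_{\infty,n}).
\end{equation*}
Base-changing along the flat map $D'_{\infty,n}\to D'_{j,n}$ and applying Lemma~\ref{coadext} yields, for each $j$, an inclusion of finitely generated right $D'_{j,n}$-modules $N_j\hookrightarrow \mathrm{Ext}^{i+1}_{D'_{j,n}}(M_j,D'_{j,n})$, with $M_j$ and $N_j$ the obvious base changes. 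The assumed Auslander condition on $D'_{j,n}$ forces $\mathrm{Ext}^k_{D'_{j,n}}(N_j,D'_{j,n})=0$ for all $k\le i$ and all $j$, and another invocation of Lemma~\ref{coadext} lifts this to $\underline{\mathrm{Ext}}^k_{D'_{\infty,n}}(\iota_+^r N,D'_{\infty,n})=0$ for every $k\le i$.

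\textbf{Coming back down.} Applied to $N$, the right-module analogue of Lemma~\ref{ipluscommuteswithD} gives
\begin{equation*}
\frac{D'_{\infty,n}}{D'_{\infty,n}I}\h{\otimes}_{D_n}\underline{\mathrm{Ext}}^{k-1}_{D_n}(N,D_n)\cong \underline{\mathrm{Ext}}^{k}_{D'_{\infty,n}}(\iota_+^r N,D'_{\infty,n})=0\qquad (k\le i).
\end{equation*}
Since $\frac{D'_{\infty,n}}{D'_{\infty,n}I}\cong K\{d_1\}\h{\otimes}_K D_n$ is faithfully flat as a right $D_n$-module (the constant-term projection is a right $D_n$-linear retraction onto the summand $1\otimes D_n$), we conclude $\mathrm{Ext}^{k-1}_{D_n}(N,D_n)=0$ for all $k-1<i$, which is the Auslander condition for $D_n$.

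\textbf{Main obstacle.} The one ingredient that is not immediately at hand is the right-module analogue of Lemma~\ref{ipluscommuteswithD}; this is proved by mirroring the tensor-hom manipulation in the left-module case, using that right multiplication by $f$ on $D'_{\infty,n}$ is also injective (the proof of Lemma~\ref{injectivity} is symmetric) and that $\frac{D'_{\infty,n}}{D'_{\infty,n}I}$ is strongly flat as a right $D_n$-module. A minor bookkeeping issue is ensuring that the bornological Ext groups appearing above coincide with the abstract Ext required by the Auslander condition on finitely generated modules, which is guaranteed by the discussion following Lemma~\ref{projinBc} together with Lemma~\ref{coadext}(iii).
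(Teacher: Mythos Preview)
Your proposal is correct and follows essentially the same approach as the paper: push $N\subseteq\mathrm{Ext}^i_{D_n}(M,D_n)$ up to a submodule of $\underline{\mathrm{Ext}}^{i+1}_{D'_{\infty,n}}(\iota_+M,D'_{\infty,n})$ via Lemma~\ref{ipluscommuteswithD} and flatness, base-change to each $D'_{j,n}$ via Lemma~\ref{coadext}, apply the Auslander condition there, and descend using the right-module version of Lemma~\ref{ipluscommuteswithD} together with the fact that the transfer bimodule contains $D_n$ as a direct summand. The only cosmetic differences are that the paper carries out the ``coming back down'' step at the $D'_{j,n}$ level rather than first lifting the vanishing to $D'_{\infty,n}$, and deduces the final implication from the explicit description $\frac{D'_{j,n}}{D'_{j,n}I}\cong K\langle\pi^j d_1\rangle\h{\otimes}_K D_n$ rather than phrasing it as faithful flatness; your identification of the right-module analogue of Lemma~\ref{ipluscommuteswithD} as the one ingredient needing separate justification is exactly right, and the paper invokes it implicitly in the same way.
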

\begin{proof}
	Let $M$ be a finitely generated $D_n$-module, and let $N\subseteq \mathrm{Ext}^r_{D_n}(M, D_n)$. We wish to show that $\mathrm{Ext}^i_{D_n}(N, D_n)=0$ for all $i<r$.
	
	By exactness, $N\h{\otimes}_{D_n}\frac{D'_{ \infty, n}}{ID'_{ \infty, n}}$ is a right coadmissible submodule of
	\begin{equation*}
		\underline{\mathrm{Ext}}^r_{D_n}(M, D_n)\h{\otimes}_{D_n} \frac{D'_{ \infty, n}}{ID'_{ \infty, n}}\cong \underline{\mathrm{Ext}}^{r+1}_{D'_{ \infty, n}}(i_+M, D'_{ \infty, n}).
	\end{equation*}
	Hence,
	\begin{equation*}
		N\h{\otimes}_{D_n}\frac{D'_{ \infty, n}}{ID'_{ \infty, n}}\h{\otimes}_{D'_{ \infty, n}}D'_{ j,n}
	\end{equation*}
	is a submodule of
	\begin{equation*}
		\underline{\mathrm{Ext}}^{r+1}_{D'_{ \infty, n}}(\iota_+M, D'_{ \infty, n})\h{\otimes}_{D'_{ \infty, n}}D'_{ j, n}\cong \underline{\mathrm{Ext}}^{r+1}_{D'_{ j, n}}(D'_{ j, n}\h{\otimes}_{D'_{ \infty, n}} i_+M, D'_{ j, n})
	\end{equation*}
	by Lemma \ref{coadext}.
	
	As $D'_{ j, n}\h{\otimes}_{D'_{\infty, n}} \iota_+M$ is a finitely generated $D'_{ j, n}$-module and $D'_{ j, n}$ satisfies the Auslander condition, it follows that
	\begin{equation*}
		\underline{\mathrm{Ext}}^i_{D'_{ j, n}}(N\h{\otimes}_{D_n}\frac{D'_{ \infty}}{ID'_{n, \infty, n}}\h{\otimes}_{D'_{ \infty, n}} D'_{ j, n}, D'_{ j, n})=0
	\end{equation*}
	for $i<r+1$.
	
	But it follows again from Lemma \ref{ipluscommuteswithD} and Lemma \ref{coadext} that
	\begin{equation*}
		D'_{ j, n}\h{\otimes}_{D'_{ \infty, n}}\frac{D'_{ \infty,n}}{D'_{ \infty, n}I}\h{\otimes}_{D_n}\underline{\mathrm{Ext}}^i_{D_n}(N, D_n)\cong \underline{\mathrm{Ext}}^{i+1}_{D'_{ j, n}}(N\h{\otimes}_{D_n} \frac{D'_{ \infty, n}}{ID'_{ \infty, n}}\h{\otimes}_{D'_{ \infty, n}}D'_{ j, n}, D'_{ j, n}).
	\end{equation*}
	Hence
	\begin{equation*}
		\frac{D'_{ j, n}}{D'_{ j, n}I}\h{\otimes}_{D_n}\underline{\mathrm{Ext}}^i_{D_n}(N, D_n)=0
	\end{equation*}
	for all $i< r$.
	
	Since $\frac{D'_{ j, n}}{D'_{ j, n}I}\cong K\langle\pi^jd_1\rangle\h{\otimes}_K D_n$ as a right $D_n$-module, we can conclude that $\underline{\mathrm{Ext}}^i_{D_n}(N, D_n)=0$, and $D_n$ satisfies the Auslander condition.
\end{proof}

\begin{cor}
	\label{AGKashiwara}
	If $\mathrm{inj.dim.}D'_{ j, n}\leq d$ for all $j$, then $\mathrm{inj.dim.}D_n\leq d-1$.
\end{cor}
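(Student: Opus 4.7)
The plan is to mimic the preceding proof of the Auslander condition almost verbatim, replacing the submodule $N$ by a full Ext module and invoking the stronger hypothesis $\mathrm{inj.dim.}D'_{j,n}\leq d$. Concretely, I would aim to show that $\underline{\mathrm{Ext}}^{j_0}_{D_n}(M,D_n)=0$ for every finitely generated left $D_n$-module $M$ and every $j_0\geq d$; this yields $\mathrm{inj.dim.}D_n\leq d-1$ immediately.

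First I would apply Lemma \ref{ipluscommuteswithD} to identify
\begin{equation*}
\underline{\mathrm{Ext}}^{j_0}_{D_n}(M,D_n)\h{\otimes}_{D_n}\frac{D'_{\infty,n}}{ID'_{\infty,n}}\cong \underline{\mathrm{Ext}}^{j_0+1}_{D'_{\infty,n}}(\iota_+M,D'_{\infty,n}).
\end{equation*}
Since $\iota_+M$ is coadmissible over $D'_{\infty,n}$ by Lemma \ref{Kashiwaraequiv}(i), the right-hand side is a coadmissible right $D'_{\infty,n}$-module by Lemma \ref{coadext}, with level-$j$ piece
\begin{equation*}
\underline{\mathrm{Ext}}^{j_0+1}_{D'_{j,n}}(D'_{j,n}\h{\otimes}_{D'_{\infty,n}}\iota_+M,D'_{j,n}).
\end{equation*}
Since $D'_{j,n}\h{\otimes}_{D'_{\infty,n}}\iota_+M$ is a finitely generated $D'_{j,n}$-module, the hypothesis forces these pieces to vanish whenever $j_0+1>d$, i.e.\ $j_0\geq d$. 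A coadmissible module all of whose Banach pieces vanish is itself zero, so I would conclude that $\underline{\mathrm{Ext}}^{j_0}_{D_n}(M,D_n)\h{\otimes}_{D_n}\frac{D'_{\infty,n}}{ID'_{\infty,n}}=0$ for $j_0\geq d$.

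To deduce the desired vanishing of $\underline{\mathrm{Ext}}^{j_0}_{D_n}(M,D_n)$ itself, I would then repeat the closing step of the preceding corollary. Tensoring with $D'_{j,n}$ over $D'_{\infty,n}$ reduces the vanishing to
\begin{equation*}
\underline{\mathrm{Ext}}^{j_0}_{D_n}(M,D_n)\h{\otimes}_{D_n}\frac{D'_{j,n}}{ID'_{j,n}}=0
\end{equation*}
for all $j$, and the identification of $\frac{D'_{j,n}}{ID'_{j,n}}$ with $D_n\h{\otimes}_KK\langle\pi^jd_1\rangle$ as left $D_n$-modules (the exact mirror of the right-module computation already carried out) then forces the Ext itself to vanish, since $D_n$ appears as a direct summand via the constant term in $d_1$. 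I do not foresee any genuine obstacle here; the key numerical point is that the degree shift by one in Lemma \ref{ipluscommuteswithD} is precisely what translates the bound $d$ on the injective dimension of $D'_{j,n}$ into the bound $d-1$ on the injective dimension of $D_n$.
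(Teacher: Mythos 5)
Your proof is correct and follows essentially the same route as the paper's: pass through $\iota_+M$, invoke Lemma \ref{coadext} to reduce to the Banach pieces $D'_{j,n}$, apply the injective-dimension hypothesis there, and then use the degree shift from Lemma \ref{ipluscommuteswithD} together with the direct-summand property of $D_n$ inside $D_n\h{\otimes}_K K\{d_1\}$ (or $D_n\h{\otimes}_K K\langle\pi^jd_1\rangle$) to strip off the extra factor. The only cosmetic difference is that you take a detour through $\frac{D'_{j,n}}{ID'_{j,n}}$ in the final step where the paper uses $\frac{D'_{\infty,n}}{ID'_{\infty,n}}\cong D_n\h{\otimes}_K K\{d_1\}$ directly; both yield the same conclusion.
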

\begin{proof}
	If $M$ is a finitely generated $D_n$-module, then $\iota_+M$ is a coadmissible $D'_{ \infty, n}$-module with
	\begin{equation*}
		\underline{\mathrm{Ext}}^i_{D'_{ \infty, n}}(\iota_+M, D'_{ \infty, n})\cong \varprojlim \underline{\mathrm{Ext}}^i_{D'_{ j, n}}(D'_{ j, n}\h{\otimes}_{D'_{ \infty, n}}\iota_+M, D'_{ j, n})
	\end{equation*}
	by Lemma \ref{coadext}.
	
	Hence if $\underline{\mathrm{Ext}}^i_{D'_{ j, n}}(D'_{ j, n}\h{\otimes} \iota_+M, D'_{ j, n})=0$ for all $j$ and for all $i>d$, then 
	\begin{equation*}
		\underline{\mathrm{Ext}}^i_{D'_{ \infty, n}}(\iota_+M, D'_{ \infty, n})=0
	\end{equation*}
	for all $i>d$.
	
	Thus Lemma \ref{ipluscommuteswithD} implies that
	\begin{equation*}
		\underline{\mathrm{Ext}}^i_{D_n}(M, D_n)\h{\otimes}_{D_n}\frac{D'_{ \infty, n}}{ID'_{ \infty, n}}=0
	\end{equation*}
	for all $i>d-1$.
	
	Since $\frac{D'_{ \infty, n}}{ID'_{ \infty, n}}\cong D_n\h{\otimes}_K K\{d_1\}$ as a left $D_n$-module, it follows that $\underline{\mathrm{Ext}}^i_{D_n}(M, D_n)=0$ for all $i>d-1$, as required.
\end{proof}

\begin{cor}
	\label{clAuslanderpullback}
	If there exists a natural number $d$ such that $D'_{ j, n}$ is Auslander regular of global dimension $\leq d$, then $D_n$ is Auslander regular of global dimension $\leq d-1$.
\end{cor}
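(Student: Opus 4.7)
The plan is to combine the results already in hand with a finite global dimension argument, and then invoke Lemma \ref{injdimgldim} to sharpen an injective dimension bound into the desired global dimension bound.

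The preceding (unlabeled) corollary shows $D_n$ satisfies the Auslander condition, and Corollary \ref{AGKashiwara} gives $\mathrm{inj.dim.}\, D_n \leq d-1$. Thus, to conclude Auslander regularity of $D_n$ with $\mathrm{gl.dim.}\, D_n \leq d-1$, by Lemma \ref{injdimgldim} it suffices to prove that $D_n$ has finite global dimension.

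By Lemma \ref{gldimviaborn}, this amounts to showing $\mathrm{H}^j(\mathrm{R}\underline{\mathrm{Hom}}_{D_n}(M, N)) = 0$ for $j$ sufficiently large, for any finitely generated $D_n$-modules $M, N$ equipped with their canonical Banach structures. Corollary \ref{Kashiwaraadjunction} identifies this with $\mathrm{R}\underline{\mathrm{Hom}}_{D'_{\infty, n}}(\iota_+ M, \iota_+ N)$, where $\iota_+ M$ and $\iota_+ N$ are coadmissible $D'_{\infty, n}$-modules by Lemma \ref{Kashiwaraequiv}(i). Setting $M^{(j)} = D'_{j,n}\h{\otimes}_{D'_{\infty, n}} \iota_+ M$ and similarly $N^{(j)}$, Theorem \ref{Bornproperties} together with derived tensor-hom adjunction yields a natural isomorphism
\begin{equation*}
\mathrm{R}\underline{\mathrm{Hom}}_{D'_{\infty, n}}(\iota_+ M, \iota_+ N) \cong \mathrm{R}\varprojlim_j \mathrm{R}\underline{\mathrm{Hom}}_{D'_{j, n}}(M^{(j)}, N^{(j)}).
\end{equation*}
Each $M^{(j)}$ and $N^{(j)}$ is a finitely generated $D'_{j, n}$-module, so by the hypothesis $\mathrm{gl.dim.}\, D'_{j, n} \leq d$ and Lemma \ref{gldimviaborn}, each complex on the right vanishes in cohomological degrees $> d$. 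Since $\mathrm{R}\varprojlim$ has cohomological dimension at most one on countable towers, the derived inverse limit vanishes in degrees $> d+1$, giving $\mathrm{gl.dim.}\, D_n \leq d+1 < \infty$. Applying Lemma \ref{injdimgldim} now produces $\mathrm{gl.dim.}\, D_n = \mathrm{inj.dim.}\, D_n \leq d-1$, which together with the Auslander condition completes the proof.

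The chief point to verify carefully is the displayed isomorphism. This rests on (a) $D'_{j,n}\h{\otimes}^{\mathbb{L}}_{D'_{\infty, n}} \iota_+ M \cong M^{(j)}$ for coadmissible $\iota_+ M$, via Theorem \ref{Bornproperties}(i); (b) $\iota_+ N \cong \mathrm{R}\varprojlim N^{(j)}$ via Theorem \ref{Bornproperties}(ii); and (c) that $\mathrm{R}\underline{\mathrm{Hom}}$, being a right adjoint, commutes with $\mathrm{R}\varprojlim$ in its second argument. These technical points, together with the cohomological dimension bound for $\mathrm{R}\varprojlim$ in the complete bornological setting, are of the type indicated as being treated in the paper's appendix, and constitute the only non-routine aspect of the argument.
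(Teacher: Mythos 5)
Your proposal is correct and follows essentially the same route as the paper: establish the Auslander--Gorenstein property from the preceding corollaries, reduce to finiteness of global dimension via Lemma \ref{injdimgldim}, pass through $\iota_+$ and Corollary \ref{Kashiwaraadjunction}, express $\mathrm{R}\underline{\mathrm{Hom}}_{D'_{\infty,n}}(\iota_+M,\iota_+N)$ as a homotopy limit over $j$ using Theorem \ref{Bornproperties}, and conclude vanishing in degrees $>d+1$. The paper makes the homotopy-limit step explicit via the strictly exact product sequence from Theorem \ref{Bornproperties}.(ii) and the Milnor sequence of \cite[Lemma 3.3]{SixOp}, which is the precise form of the $\mathrm{R}\varprojlim$ argument you sketch in points (a)--(c).
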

\begin{proof}
	It follows from the above that $D_n$ is Auslander--Gorenstein. It thus suffices to show that $D_n$ has global dimension $\leq d-1$. By Lemma \ref{injdimgldim}, it suffices to show that $D_n$ has finite global dimension, as the bound is then determined by Corollary \ref{AGKashiwara}.
	
	Let $M, N$ be finitely generated left $D_n$-modules.
	
	By Corollary \ref{Kashiwaraadjunction}, there is a natural isomorphism
	\begin{equation*}
		\mathrm{R}\underline{\mathrm{Hom}}_{D_n}(M,N)\cong \mathrm{R}\underline{\mathrm{Hom}}_{D'_{ \infty, n}}(\iota_+M, \iota_+N)
	\end{equation*}
	in $\mathrm{D}(\h{\B}c_K)$.
	
	Writing $(\iota_+N)_j$ for the finitely generated left $D'_{ j, n}$-module
	\begin{equation*}
		D'_{ j, n}\h{\otimes}_{D'_{ \infty, n}} \iota_+N,
	\end{equation*}
	Theorem \ref{Bornproperties}.(ii) yields a short strictly exact sequence
	\begin{equation*}
		0\to \iota_+N\to \prod_j (\iota_+N)_j\to \prod_j (\iota_+N)_j\to 0.
	\end{equation*}
	
	Hence we have
	\begin{equation*}
		\mathrm{R}\underline{\mathrm{Hom}}_{D_n}(M, N)\cong \mathrm{holim}_j\  \mathrm{R}\underline{\mathrm{Hom}}_{D'_{j, n}}((\iota_+M)_j,(\iota_+N)_j),
	\end{equation*}
	and \cite[Lemma 3.3]{SixOp} yields a short strictly exact sequence
	\begin{equation*}
		0\to \mathrm{H}^1(\mathrm{R}\varprojlim_j \underline{\mathrm{Ext}}^{i-1}_{D'_{ j, n}}((\iota_+M)_j, (\iota_+N)_j))\to \underline{\mathrm{Ext}}^i_{D_n}(M, N)\to \varprojlim \mathrm{Ext}^i_{D'_{ j, n}}((\iota_+M)_j, (\iota_+N)_j)\to 0.
	\end{equation*}
	
	Since $D'_{ j, n}$ has global dimension $\leq d$, this implies that
	\begin{equation*}
		\mathrm{H}^i(\mathrm{R}\underline{\mathrm{Hom}}_{D_n}(M, N))=0
	\end{equation*}
	for all $i>d+1$. Hence $D_n$ has finite global dimension by Lemma \ref{gldimviaborn}, as required.
\end{proof}

\subsection{Variation: Smooth pullback}
In the previous subsection, we saw how Kashiwara-like calculations can be used to compare Ext groups before and after applying a pushforward functor along a closed embedding. We can see in these arguments the shadow of two basic $\D$-module theoretic principles, well-known in the classical case of algebraic $\D$-modules: the pushforward along a closed embedding is fully faithful (Kashiwara equivalence) and commutes with the duality functor (as closed embeddings are proper).

Classically, duality also commutes with pullback along a smooth morphism. In this subsection, we carry out the analogous steps to the previous argument to show that we can compare Ext groups before and after applying such a pullback.

This turns out to be a rather involved calculation, leading finally to Proposition \ref{smoothAuslander} as an analogue of Corollary \ref{clAuslanderpullback} for smooth pullback.

We consider the following situation: let $Y=\Sp A$ be a smooth affinoid $K$-variety, let $B=A\langle Z\rangle$ and $f: X=\Sp B\to \Sp A$ be the natural projection map. For the purposes of this paper, it will suffice to consider this particular situation, we plan to discuss more general smooth morphisms in a subsequent paper.

Suppose that $\T_Y(Y)=\mathrm{Der}_K(A)$ is a free $A$-module of rank $m$, with basis $d_1, \hdots, d_m$. Let $\A\subseteq A$ be an admissible affine formal model and set $\B=\A\langle Z\rangle$. We can assume without loss of generality that $\L:=\oplus \A\cdot d_i$ is an $(R, \A)$-Lie lattice in $\T_Y(Y)$.

Note that 
\begin{equation*}
	\T_X(X)\cong B\otimes_A \T_Y(Y)\oplus B\cdot \frac{\mathrm{d}}{\mathrm{d}Z}
\end{equation*}
as $(K, B)$-Lie algebras. Writing $\sigma: B\otimes_A\T_Y(Y)\to \T_X(X)$ to denote a section to the natural morphism
\begin{equation*}
	\theta: \T_X(X)\to B\otimes_A \T_Y(Y),
\end{equation*}
it follows that $\sigma(d_1), \hdots, \sigma(d_m), \frac{\mathrm{d}}{\mathrm{d}Z}$ form a $B$-module basis of $\T_X(X)$, with each $\sigma(d_i)$ commuting with $\frac{\mathrm{d}}{\mathrm{d}Z}$.

Let $\partial_1, \hdots, \partial_{m+1}$ be a $B$-module basis of $\T_X(X)=\mathrm{Der}_K(B)$ with the property that $\L':=\oplus \B\partial_i$ is an $(R, \B)$-Lie lattice in $\T_X(X)$ and $\theta$ sends $\partial_i$ to $d_i$ for $i\leq m$ and $\partial_{m+1}$ to $0$. 

In particular (as the kernel of $\theta$ is generated by $\frac{\mathrm{d}}{\mathrm{d}Z}$), there exist elements $g_1, \hdots, g_m\in B$ such that $\partial_i=\sigma(d_i)+g_i\frac{\mathrm{d}}{\mathrm{d}Z}$ for $i\leq m$, and a unit $g\in B$ such that $\partial_{m+1}=g\cdot \frac{\mathrm{d}}{\mathrm{d}Z}$ (since both elements generate the kernel of $\theta$).

We assumme additionally that $g\in \B$ and $g_i\in \B$ for each $1\leq i\leq m$.

Let $\L''=\oplus \B\cdot \sigma(d_i)\oplus \B\cdot \frac{\mathrm{d}}{\mathrm{d}Z}$. By our assumptions above, this is an $(R, \B)$-Lie lattice in $\T_X(X)$ containing $\L'$.

We now write
\begin{equation*}
	D_n=\h{U_\A(\pi^n\L)}_K, \ D'_n=\h{U_\B(\pi^n\L')}_K.
\end{equation*}

We also set $D''_n=\h{U_{\B}(\pi^n\L'')}_ K$ and note that the inclusion $\pi^n\L'\to \pi^n\L''$ induces a natural $K$-algebra morphism $D'_n\to D''_n$.

We briefly explain the role which these various algebras will play in our argument: Suppose we know that $D'_n$ is Auslander regular, and we wish to show that $D_n$ is so, too. Now $D_n$-modules can be lifted back to $D'_n$-modules, corresponding to a $\D$-module inverse image along $X\to Y$, but it is not surprising that there is a much closer link between $D_n$-modules and $D''_n$-modules -- for example, note that the $\sigma(d_i)$ together with $\frac{\mathrm{d}}{\mathrm{d}Z}$ form a $(Z)$-standard basis, corresponding to the closed embedding $Y\to X$.

We will therefore pull back from $D_n$-module to $D'_n$-modules, then base change to $D''_n$-modules, and finally pull back along the closed embedding $Y\to X$, back to $D_n$-modules. Keeping track of how these operations affect Ext groups will allow us to deduce the Auslander regularity of $D_n$.

Note that $B\h{\otimes}_A D_n$ is a complete bornological $(D'_n, D_n)$-bimoude, as the morphism $\theta$ restricts to a morphism $\pi^n\L'\to \B\otimes_{\A}\pi^n\L$, giving $B\h{\otimes}_AD_n$ the structure of a left $D'_n$-module. In fact, as a left $D'_n$-module, the above yields an isomorphism
\begin{equation*}
	B\h{\otimes}_A D_n\cong \frac{D'_n}{D'_n\cdot \partial_{m+1}}.
\end{equation*}

We thus have a functor
\begin{align*}
	f^*: \mathrm{D}(D_n)\to \mathrm{D}(D'_n)\\
	M^\bullet\mapsto (B\h{\otimes}_A D_n)\h{\otimes}^{\mathbb{L}}_{D_n}M^\bullet.
\end{align*}

This is the usual extraordinary inverse image functor for $\D$-modules, see e.g. \cite[subsection 7.3]{SixOp}, except that we ignore a degree shift to simplify our calculations.

\begin{lem}
	\label{pullback}
	\leavevmode
	\begin{enumerate}[(i)]
		\item The functor $f^*$ is exact in the sense that for any complete bornological $D_n$-module $M$, $f^*M$ is concentrated in degree $0$.
		\item The functor $f^*$ sends finitely generated $D_n$-modules to finitely generated $D'_n$-modules.
	\end{enumerate}
\end{lem}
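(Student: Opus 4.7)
The plan is to reduce both parts of the lemma to straightforward identifications of $B\h{\otimes}_A D_n$, viewed on the one hand as a right $D_n$-module (for part (i)) and on the other hand as a left $D'_n$-module (for part (ii)).

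For (i), the key point is flatness. Using the decomposition $B = A\langle Z\rangle \cong A\h{\otimes}_K K\langle Z\rangle$ of complete bornological $A$-modules together with the associativity of $\h{\otimes}$, one obtains an isomorphism
\begin{equation*}
B\h{\otimes}_A D_n \;\cong\; K\langle Z\rangle \h{\otimes}_K D_n
\end{equation*}
of right $D_n$-modules. Since $K\langle Z\rangle$ is strongly flat in $\h{\B}c_K$ by \cite[Lemma 4.16]{SixOp}, the functor $K\langle Z\rangle\h{\otimes}_K -$ is strictly exact on $\h{\B}c_K$. Transporting across the above isomorphism, the functor
\begin{equation*}
(B\h{\otimes}_A D_n)\h{\otimes}_{D_n} - \colon \mathrm{Mod}_{\h{\B}c_K}(D_n)\to \mathrm{Mod}_{\h{\B}c_K}(D'_n)
\end{equation*}
is exact (exactness may be checked after forgetting the $D'_n$-action), so its left derived functor agrees with the underived version. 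In particular, for every complete bornological $D_n$-module $M$, the complex $f^*M$ is concentrated in degree $0$, where it is isomorphic to $K\langle Z\rangle\h{\otimes}_K M$ as an object of $\h{\B}c_K$.

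For (ii), I would invoke the isomorphism
\begin{equation*}
B\h{\otimes}_A D_n \;\cong\; D'_n/D'_n\partial_{m+1}
\end{equation*}
of left $D'_n$-modules noted in the paragraph preceding the lemma, which exhibits $B\h{\otimes}_A D_n$ as a cyclic left $D'_n$-module. If $M$ is a finitely generated $D_n$-module equipped with its canonical Banach structure, fix a strict surjection $D_n^k \twoheadrightarrow M$. Applying the exact functor $f^*$ from (i) yields a strict surjection $(D'_n/D'_n\partial_{m+1})^k \twoheadrightarrow f^*M$ of left $D'_n$-modules, so $f^*M$ is finitely generated over $D'_n$.

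The only point requiring any care is the identification of $B\h{\otimes}_A D_n$ as a right $D_n$-module with $K\langle Z\rangle\h{\otimes}_K D_n$; once this is in place, both assertions follow directly from strong flatness of $K\langle Z\rangle$ together with the already-established cyclic description as a left $D'_n$-module. I do not expect serious obstacles beyond routine bookkeeping of bornological structures.
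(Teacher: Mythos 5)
Your proof is correct. Part (i) is exactly the paper's argument: identify $B\h{\otimes}_A D_n\cong K\langle Z\rangle\h{\otimes}_K D_n$ as right $D_n$-modules and conclude from the strong flatness of $K\langle Z\rangle$ (\cite[Lemma 4.16]{SixOp}). For part (ii), however, you take a genuinely different (and arguably cleaner) route: you re-use the exactness from (i) to push a strict surjection $D_n^k\twoheadrightarrow M$ forward to a strict surjection $(D'_n/D'_n\partial_{m+1})^k\twoheadrightarrow f^*M$, and then read off finite generation from the fact that $D'_n/D'_n\partial_{m+1}$ is cyclic. The paper instead invokes \cite[Lemma 5.32]{SixOp}, which asserts directly that the completed bornological tensor product of a finitely generated Banach bimodule with a finitely generated module agrees with the algebraic tensor product; this immediately gives finite generation together with the canonical Banach structure on $f^*M$. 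Your surjection argument buys self-containedness (it needs nothing beyond (i) and the cyclic presentation of the transfer bimodule), but one should note that the conclusion you need is slightly stronger than "the underlying $D'_n$-module is finitely generated" — the bornological object $f^*M$ should be isomorphic to a finitely generated module \emph{with its canonical Banach structure}. This does follow from your strict surjection together with Noetherianity of $D'_n$ (the kernel is finitely generated, hence closed, so the quotient bornology is the canonical Banach one), but it is worth making explicit; the paper's appeal to \cite[Lemma 5.32]{SixOp} handles this point in one stroke.
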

\begin{proof}
	\begin{enumerate}[(i)]
		\item As a right $D_n$-module, $B\h{\otimes}_A D_n\cong K\langle Z\rangle\h{\otimes}_K D_n$, and $K\langle Z\rangle$ is flat by \cite[Lemma 4.16]{SixOp}.
		\item Since $B\h{\otimes}_AD_n\cong \frac{D'_n}{D'_n\partial_{m+1}}$ is finitely generated, it follows from \cite[Lemma 5.32]{SixOp} that
		\begin{equation*}
			(B\h{\otimes}_A D_n)\h{\otimes}_{D_n}M\cong (B\h{\otimes}_A D_n)\otimes_{D_n} M
		\end{equation*}
		is a finitely generated left $D'_n$-module for any finitely generated $D_n$-module $M$.
	\end{enumerate}
\end{proof}

The next two lemmas play a similar role as Lemma \ref{injectivity} in the previous subsection.

\begin{lem}
	\label{transferpitor}
	\leavevmode
	\begin{enumerate}[(i)]
		\item Let $P=\frac{\mathrm{d}}{\mathrm{d}Z}$ or $P=\partial_{m+1}$. Multiplication by $P$ (from the left or the right) is an injective map on $U_B(\T_X(X))$.
		\item Let $r\geq 0$ be such that $\pi^r\frac{\mathrm{d}}{\mathrm{d}Z}\in \pi^n\L'$. The following $R$-modules have bounded $\pi$-torsion:
		\begin{equation*}
			\frac{U_{\B}(\pi^n\L')}{U_{\B}(\pi^n\L')\pi^n\partial_{m+1}}, \frac{U_{\B}(\pi^n\L')}{\pi^n\partial_{m+1}U_{\B}(\pi^n\L')}, \frac{U_{\B}(\pi^n\L')}{U_{\B}(\pi^n\L')\cdot \pi^r\frac{\mathrm{d}}{\mathrm{d}Z}}, \frac{U_{\B}(\pi^n\L')}{\pi^r\frac{\mathrm{d}}{\mathrm{d}Z}U_{\B}(\pi^n\L')},
		\end{equation*}
		as well as
		\begin{equation*}
			\frac{U_{\B}(\pi^n\L'')}{U_{\B}(\pi^n\L'')\pi^n\partial_{m+1}}, \frac{U_{\B}(\pi^n\L'')}{\pi^n\partial_{m+1}U_{\B}(\pi^n\L'')}, \frac{U_{\B}(\pi^n\L'')}{U_{\B}(\pi^n\L'')\cdot\pi^n \frac{\mathrm{d}}{\mathrm{d}Z}}, \frac{U_{\B}(\pi^n\L'')}{\pi^n\frac{\mathrm{d}}{\mathrm{d}Z}U_{\B}(\pi^n\L'')}.
		\end{equation*}
	\end{enumerate}
\end{lem}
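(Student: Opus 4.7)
For (i), I would filter $U_B(\T_X(X))$ by order of differential operator, so that the associated graded is the polynomial ring $\Sym_B(\T_X(X))\cong B[Y_1,\hdots,Y_{m+1}]$ on a free $B$-module of rank $m+1$. Both choices of $P$ have principal symbol of the form $Y_{m+1}$ up to multiplication by the unit $g\in B^\times$ (recall $\partial_{m+1}=g\cdot\tfrac{\mathrm{d}}{\mathrm{d}Z}$), and multiplication by a variable (or a unit multiple of a variable) is injective on any polynomial ring. A principal symbol argument then yields injectivity: if $aP=0$ with $a\neq 0$, the symbol $\bar{a}$ satisfies $\bar{a}\cdot\bar{P}=0$, forcing $\bar{a}=0$ and giving a contradiction. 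The right-multiplication case is identical.

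For (ii), I would split the eight quotients into two groups according to whether the distinguished element is a basis element of the relevant Lie lattice or a scalar multiple of one. The first group consists of $V\pi^n\partial_{m+1}$ and $\pi^n\partial_{m+1}V$ with $V=U_\B(\pi^n\L')$, together with $V\pi^n\tfrac{\mathrm{d}}{\mathrm{d}Z}$ and $\pi^n\tfrac{\mathrm{d}}{\mathrm{d}Z}V$ with $V=U_\B(\pi^n\L'')$; these are dispatched directly via PBW. Placing the distinguished generator last (for right ideals) or first, with scalars on the right (for left ideals), multiplication by that generator cleanly shifts its exponent without any commutator corrections, so the quotient is free over $\B$ on the appropriate side, hence not merely of bounded $\pi$-torsion but actually $\pi$-torsion-free.

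For the remaining four quotients, I would write the distinguished element as $c\cdot e$, where $e$ is a PBW basis generator as above and $c\in\B$ is a unit in $B=\B\otimes_R K$: explicitly, $\pi^r\tfrac{\mathrm{d}}{\mathrm{d}Z}=(\pi^{r-n}g^{-1})\cdot\pi^n\partial_{m+1}$ in the $\L'$ case, and $\pi^n\partial_{m+1}=g\cdot\pi^n\tfrac{\mathrm{d}}{\mathrm{d}Z}$ in the $\L''$ case. The key numerical input is that $c^{-1}\in B=\B[1/\pi]$ supplies some $l\geq 0$ and $h\in\B$ with $\pi^l=ch$, so that $\pi^l$ annihilates $\B/c\B$. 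For the right ideal, the isomorphism $V\xrightarrow{\sim} Ve$, $v\mapsto ve$ of left $V$-modules (using (i)) identifies $Vce\subseteq Ve$ with $Vc\subseteq V$, producing the short exact sequence
\begin{equation*}
0\to V/Vc\to V/Vce\to V/Ve\to 0.
\end{equation*}
Since $V/Vc\cong\bigoplus_\alpha m_\alpha(\B/c\B)$ is killed by $\pi^l$ and $V/Ve$ is $\pi$-torsion-free (from the previous paragraph), $\pi^l$ bounds the $\pi$-torsion of $V/Vce$.

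The main obstacle is the left-ideal quotient $V/ceV$, for which the direct analogue of the above SES fails: $eV$ is not stable under left $\B$-multiplication because $b\cdot e-e\cdot b=-e(b)\in\pi^n\B$ is generally nonzero. I plan to handle this by direct $\pi$-saturation. Given $\pi^kx=cev$ with $v\in V$, multiplying on the left by $h$ and using $\pi^l=ch$ together with the centrality of $\pi^l$ gives $\pi^k(hx)=\pi^l(ev)$. If $k\leq l$, then $\pi^lx=\pi^{l-k}cev\in ceV$ directly; if $k>l$, cancelling $\pi^l$ via $\pi$-torsion-freeness of $V$ gives $ev=\pi^{k-l}hx$, and the injectivity of left multiplication by $e$ on $V/\pi^{k-l}V$ (which follows from the $\pi$-torsion-freeness of $V/eV$ established above via the Tor long exact sequence) forces $v\in\pi^{k-l}V$, so $\pi^lx\in ceV$. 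Either way $\pi^l$ annihilates the $\pi$-torsion of $V/ceV$, as required.
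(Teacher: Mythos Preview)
Your proof is correct and follows essentially the same strategy as the paper: both arguments handle part (i) via the associated graded $\mathrm{gr}\,U_B(\T_X(X))\cong \mathrm{Sym}_B(\T_X(X))$, establish $\pi$-torsion-freeness of the four ``good'' quotients (those by a genuine lattice basis element) via PBW/filtration, and then reduce the remaining four quotients to these using the relation $\partial_{m+1}=g\cdot\frac{\mathrm{d}}{\mathrm{d}Z}$.

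The only noteworthy difference is in the packaging of that final reduction. The paper treats each of the four remaining cases by a direct chase: given a torsion element, it rewrites the relation in $U_B(\T_X(X))$, multiplies through by a suitable power of $\pi$ to land in the appropriate integral enveloping algebra, and invokes torsion-freeness of the companion quotient to pull the element into the desired ideal. You instead isolate the abstract mechanism: for the right-ideal case you produce the short exact sequence $0\to V/Vc\to V/Vce\to V/Ve\to 0$ (via the left $V$-linear isomorphism $V\xrightarrow{\cdot e}Ve$), which makes the torsion bound $\pi^l$ with $\pi^l\in c\B$ transparent; for the left-ideal case, where no such SES is available, your saturation argument using injectivity of $e\cdot$ on $V/\pi^{k-l}V$ (deduced from torsion-freeness of $V/eV$) is exactly the same computation the paper performs, just phrased more structurally. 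Your verification that $c=\pi^{r-n}g^{-1}$ actually lies in $\B$ in the $\L'$-case follows from the hypothesis $\pi^r\frac{\mathrm{d}}{\mathrm{d}Z}\in\pi^n\L'$, which forces $\pi^{r-n}g^{-1}\in\B$ by reading off the $\partial_{m+1}$-coefficient; this is implicit in the paper as well.
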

\begin{proof}
	\begin{enumerate}[(i)]
		\item This follows immediately from the fact that $\mathrm{gr}U_B(\T_X(X))\cong \mathrm{Sym}_B \T_X(X)$ is a polynomial ring over $B$ by \cite[Theorem 3.1]{Rinehart}.
		\item In the discretely valued case, this is immediate by Noetherianity. In general, we will obtain $\pi$-torsionfreeness whenever we quotient out by a basis element of the lattice, and then obtain bounded $\pi$-torsion by comparing the different lattices.
		
		Explicitly, the filtration on $U_{\B}(\pi^n\L')$ induces a natural filtration on $\frac{U_{\B}(\pi^n\L')}{U_{\B}(\pi^n\L')\pi^n\partial_{m+1}}$, with
		\begin{equation*}
			\mathrm{gr}\frac{U_{\B}(\pi^n\L')}{U_{\B}(\pi^n\L')\pi^n\partial_{m+1}}\cong \frac{\mathrm{Sym}_{\B}\pi^n\L'}{(\mathrm{Sym}_{\B}\pi^n\L')\cdot \pi^n\partial_{m+1}}\cong \B[X_1, \hdots, X_m],
		\end{equation*}
		with $X_i$ the symbol of $\pi^n\partial_i$. Since $\B[X_1, \hdots, X_m]$ is $\pi$-torsionfree, so is $\frac{U_{\B}(\pi^n\L)}{U_{\B}(\pi^n\L')\pi^n\partial_{m+1}}$.
		
		The same argument verifies that $\frac{U_{\B}(\pi^n\L')}{\pi^n\partial_{m+1}U_{\B(\pi^n\L')}}$, $\frac{U_{\B}(\pi^n\L'')}{U_{\B}(\pi^n\L'')\pi^n\frac{\mathrm{d}}{\mathrm{d}Z}}$ and $\frac{U_{\B}(\pi^n\L'')}{\pi^n\frac{\mathrm{d}}{\mathrm{d}Z}U_{\B}(\pi^n\L'')}$ are $\pi$-torsionfree.
		
		For the remaining four quotients, note that we can regard $U_{\B}(\pi^n\L')$ and $U_{\B}(\pi^n\L'')$ as subsets of $U_B(\T_X(X))$, since $\L'$ and $\L''$ are free $\B$-modules. If now $Q\in U_{\B}(\pi^n\L')$ such that
		\begin{equation*}
			\pi^sQ=Q'\cdot \pi^r\frac{\mathrm{d}}{\mathrm{d}Z}
		\end{equation*}
		for some $s\geq r$ and some $Q'\in U_{\B}(\pi^n\L')$, we can use the equation $\partial_{m+1}=g\frac{\mathrm{d}}{\mathrm{d}Z}$ (with $g$ a unit in $B$) to obtain
		\begin{equation*}
			\pi^{s-r}Q=Q'g^{-1}\partial_{m+1}\in U_B(\T_X(X)).
		\end{equation*}
		Letting $l\geq 0$ with $\pi^lg^{-1}\in \B$, we have
		\begin{equation*}
			\pi^{l+n+s-r}Q=Q'\pi^lg^{-1}\pi^n\partial_{m+1}\in U_{\B}(\pi^n\L')\cdot \pi^n\partial_{m+1}.
		\end{equation*}
		But by the previous argument, $\frac{U_{\B}(\pi^n\L')}{U_{\B}(\pi^n\L')\pi^n\partial_{m+1}}$ is $\pi$-torsionfree, so 
		\begin{equation*}
			Q\in U_{\B}(\pi^n\L')\pi^n\partial_{m+1},
		\end{equation*}
		i.e. $Q=Q''\pi^n\partial_{m+1}$ for $Q''\in U_{\B}(\pi^n\L')$. Thus $\pi^{r-n}Q=Q''g\pi^r\frac{\mathrm{d}}{\mathrm{d}Z}$, and the $\pi$-torsion of $\frac{U_{\B}(\pi^n\L')}{U_{\B}(\pi^n\L')\pi^r\frac{\mathrm{d}}{\mathrm{d}Z}}$ is annihilated by $\pi^{r-n}$.
		
		If $Q\in U_{\B}(\pi^n\L)$ with $\pi^sQ=\pi^r\frac{\mathrm{d}}{\mathrm{d}Z}Q'$ for some $Q'\in U_{\B}(\pi^n\L')$ and some $s\geq r$, multiply both sides with $\pi^{n-r}g$ to obtain
		\begin{equation*}
			\pi^{s+n-r}gQ=\pi^n\partial_{m+1}Q'.
		\end{equation*}
		Since $\frac{U_{\B}(\pi^n\L')}{\pi^n\partial_{m+1}(\pi^n\L')}$ is $\pi$-torsionfree, $gQ\in \pi^n\partial_{m+1}U_{\B}(\pi^n\L')$, i.e. $gQ=\pi^n\partial_{m+1}Q''$ for $Q''\in U_{\B}(\pi^n\L')$. Thus $\pi^{r}Q=\pi^{r}g^{-1}\partial_{m+1}Q''= \pi^r\frac{\mathrm{d}}{\mathrm{d}Z}Q''$, and the $\pi$-torsion of $\frac{U_{\B}(\pi^n\L')}{\pi^n\partial_{m+1}\cdot U_{\B}(\pi^n\L')}$ is annihilated by $\pi^r$.
		
		If $Q\in U_{\B}(\pi^n\L'')$ such that 
		\begin{equation*}
			\pi^sQ=Q'\pi^n\partial_{m+1}
		\end{equation*}
		for $s\geq 0$, $Q'\in U_{\B}(\pi^n\L'')$, then $\pi^sQ=Q'\pi^ng\frac{\mathrm{d}}{\mathrm{d}Z}\in U_{\B}(\pi^n \L'')\cdot \pi^n\frac{\mathrm{d}}{\mathrm{d}Z}$, and hence $Q\in U_{\B}(\pi^n\L'')\pi^n\frac{\mathrm{d}}{\mathrm{d}Z}$ by the above, i.e. $Q=Q''\pi^n\frac{\mathrm{d}}{\mathrm{d}Z}$. Thus, as before $\pi^lQ=(Q''\pi^lg^{-1})\pi^n\partial_{m+1}$, so the $\pi$-torsion of $\frac{U_{\B}(\pi^n\L'')}{U_{\B}(\pi^n\L'')\pi^n\partial_{m+1}}$ is annihilated by $\pi^l$, where $l$ is a positive integer chosen such that $\pi^lg^{-1}\in \B$.
		
		Lastly, if $Q\in U_{\B}(\pi^n\L'')$ such that
		\begin{equation*}
			\pi^sQ=\pi^n\partial_{m+1}Q'
		\end{equation*}
		for $s\geq 0$ and $Q'\in U_{\B}(\pi^n\L'')$, then
		\begin{equation*}
			\pi^{s+l}g^{-1}Q=\pi^{l+n}\frac{\mathrm{d}}{\mathrm{d}Z}Q',
		\end{equation*}
		so there exists some $Q''\in U_{\B}(\pi^n\L'')$ such that
		\begin{equation*}
			\pi^lg^{-1}Q=\pi^n\frac{\mathrm{d}}{\mathrm{d}Z}Q''.
		\end{equation*}
		Hence $\pi^lQ=\pi^ng\frac{\mathrm{d}}{\mathrm{d}Z}Q''=\pi^n\partial_{m+1}Q''\in\pi^n \partial_{m+1}U_{\B}(\pi^n\L'')$, as required.
	\end{enumerate}
\end{proof}

\begin{lem}
	\label{transses}
	Let $P=\partial_{m+1}$ or $P=\frac{\mathrm{d}}{\mathrm{d}Z}$. Multiplication by $P$ (from the left or the right) is an injective map on $D'_n$ and on $D''_n$.
\end{lem}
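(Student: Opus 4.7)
The plan is to lift injectivity from the uncompleted enveloping algebras $U_{\B}(\pi^n\L')$ and $U_{\B}(\pi^n\L'')$ (given by Lemma \ref{transferpitor}(i)) to the $\pi$-adic completions $D'_n$ and $D''_n$, using the bounded-$\pi$-torsion statements of Lemma \ref{transferpitor}(ii) together with the standard fact that $\pi$-adic completion preserves exactness of a short exact sequence whose cokernel has bounded $\pi$-torsion.

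All eight cases (two sides $\times$ two elements $\times$ two rings) follow from the same template, which I would spell out explicitly for left multiplication by $\partial_{m+1}$ on $D'_n$. Since $\pi^n\in K^\times$, it suffices to prove that left multiplication by $\pi^n\partial_{m+1}$ is injective on $D'_n$. Writing $U=U_{\B}(\pi^n\L')$, the PBW theorem \cite[Theorem 3.1]{Rinehart} identifies $\gr U$ with a polynomial algebra over $\B$, so $U$ is $\pi$-torsionfree and embeds into $U_B(\T_X(X))$, on which left multiplication by $\pi^n\partial_{m+1}$ is injective thanks to Lemma \ref{transferpitor}(i). This yields a short exact sequence
\begin{equation*}
0\to U\xrightarrow{\pi^n\partial_{m+1}\,\cdot\,} U\to U/\pi^n\partial_{m+1}U\to 0
\end{equation*}
whose cokernel is $\pi$-torsion of bounded exponent by Lemma \ref{transferpitor}(ii). $\pi$-adic completion is therefore exact on this sequence, and inverting $\pi$ shows that left multiplication by $\pi^n\partial_{m+1}$, and hence by $\partial_{m+1}$, is injective on $D'_n$.

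The remaining seven cases run identically: for right multiplication one uses the quotients of the form $U/U\pi^n\partial_{m+1}$ instead; for $P=\frac{\mathrm{d}}{\mathrm{d}Z}$ on $D'_n$ one substitutes the element $\pi^r\frac{\mathrm{d}}{\mathrm{d}Z}=\pi^{r-n}g^{-1}\cdot \pi^n\partial_{m+1}\in U_{\B}(\pi^n\L')$, valid once $r$ is large enough that $\pi^{r-n}g^{-1}\in\B$, together with the corresponding quotient from Lemma \ref{transferpitor}(ii); for $D''_n$ one uses $\pi^n\frac{\mathrm{d}}{\mathrm{d}Z}\in\pi^n\L''$ or $\pi^n\partial_{m+1}\in U_{\B}(\pi^n\L'')$ together with the final four quotients of Lemma \ref{transferpitor}(ii). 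The only nontrivial input is the completion-exactness statement itself, which is the standard fact that derived $\pi$-adic completion agrees with classical completion on modules of bounded $\pi$-torsion. No real obstacle arises beyond ensuring that in each case the relevant multiplier genuinely lies in the corresponding lattice so that the short exact sequence makes sense inside $U$.
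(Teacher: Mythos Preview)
Your proposal is correct and follows essentially the same approach as the paper: both start from the short exact sequence on $U_{\B}(\pi^n\L')$ (resp.\ $U_{\B}(\pi^n\L'')$) coming from Lemma~\ref{transferpitor}(i), use the bounded $\pi$-torsion of the cokernel from Lemma~\ref{transferpitor}(ii) to ensure that completion preserves exactness, and then invert $\pi$. The paper phrases the completion step in terms of strict exactness of semi-normed $K$-vector spaces (citing \cite[Lemma 2.13]{Bode1} and \cite[Propositions 1.1.9/4,5]{BGR}) rather than derived $\pi$-adic completion, but this is only a difference in language.
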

\begin{proof}
	By Lemma \ref{transferpitor}.(i), there is a short exact sequence
	\begin{equation*}
		\begin{xy}
			\xymatrix{0\ar[r]& U_{\B}(\pi^n\L')\ar[r]^{\cdot\pi^n \partial_{m+1}}& U_{\B}(\pi^n\L')\ar[r]& \frac{U_{\B}(\pi^n\L')}{U_{\B}(\pi^n\L')\partial_{m+1}}\ar[r]&0}
		\end{xy}
	\end{equation*}
	Since $\frac{U_{\B}(\pi^n\L')}{U_{\B}(\pi^n\L')\pi^n\partial_{m+1}}$ has bounded $\pi$-torsion by Lemma \ref{transferpitor}.(ii), it follows (similarly to the argument in \cite[Lemma 2.13]{Bode1}) that 
	\begin{equation*}
		0\to U_B(\T_X(X))\to U_B(\T_X(X))\to \frac{U_B(\T_X(X))}{U_B(\T_X(X))\pi^n\partial_{m+1}}\to 0
	\end{equation*}
	is a short strictly exact sequence of semi-normed $K$-vector spaces (with the semi-norm induced by the lattice $U_{\B}(\pi^n\L')$). The sequence thus remains strictly short exact after completion by \cite[Propositions 1.1.9/4, 5]{BGR}. Hence right multiplication with $\pi^n\partial_{m+1}$ is injective on $D'_n$, so the same is true of multiplication by $\partial_{m+1}$.
	
	The other cases work entirely analogously.
\end{proof}

In particular, we have a short strictly exact sequence
\begin{equation*}
	\begin{xy}
		\xymatrix{0\ar[r]& D'_n\ar[r]^{\cdot \partial_{m+1}}&D'_n\ar[r]& \frac{D'_n}{D'_n\partial_{m+1}}\ar[r]&0}
	\end{xy}
\end{equation*}
of complete bornological left $D'_n$-modules, similarly in the other cases.

We note that $\sigma$ induces an injective algebra morphism $D_n\to D''_n$, and the sequence
\begin{equation*}
	\begin{xy}
		\xymatrix{0\ar[r]& D''_n\ar[r]^{\cdot \frac{\mathrm{d}}{\mathrm{d}Z}}&D''_n\ar[r]& \frac{D''_n}{D''_n\frac{\mathrm{d}}{\mathrm{d}Z}}\ar[r]& 0}
	\end{xy}
\end{equation*}
is even a strictly exact sequence of complete bornological $(D''_n, D_n)$-bimodules, since $\frac{\mathrm{d}}{\mathrm{d}Z}$ commutes with $D_n$.

Likewise, the sequence corresponding to left multiplication by $\frac{\mathrm{d}}{\mathrm{d}Z}$ is a strictly exact sequence of $(D_n, D''_n)$-bimodules. This is the main difference between $D''_n$ and $D'_n$ (which does not even carry a natural $D_n$-action), which necessitates the introduction of $D''_n$.

\begin{cor}
	\label{primebasechange}
	\leavevmode
	\begin{enumerate}[(i)]
		\item There is a natural isomorphism
		\begin{equation*}
			D''_n\h{\otimes}^{\mathbb{L}}_{D'_n}f^*D_n\cong \frac{D''_n}{D''_n \frac{\mathrm{d}}{\mathrm{d}Z}}
		\end{equation*}
		in $\mathrm{D}(D''_n\h{\otimes}_K D_n^{\mathrm{op}})$.
		\item The morphism $D'_n\to D''_n$ induces an isomorphism
		\begin{equation*}
			\frac{D'_n}{\frac{\mathrm{d}}{\mathrm{d}Z}D'_n}\cong \frac{D''_n}{\frac{\mathrm{d}}{\mathrm{d}Z}D''_n}
		\end{equation*}
		of complete bornological right $D'_n$-modules.
		\item There is an isomorphism
		\begin{equation*}
			\mathrm{R}\underline{\mathrm{Hom}}_{D'_n}(f^*D_n, D'_n)\cong \frac{D''_n}{\frac{\mathrm{d}}{\mathrm{d}Z}D''_n}[-1]
		\end{equation*}
		in $\mathrm{D}(D_n\h{\otimes}_K {D'_n}^{\mathrm{op}})$.
		\item There is an isomorphism
		\begin{equation*}
			\mathrm{R}\underline{\mathrm{Hom}}_{D'_n}(\frac{D''_n}{\frac{\mathrm{d}}{\mathrm{d}Z}D''_n}, D'_n)\cong f^*D_n[-1]
		\end{equation*}
		in $\mathrm{D}(D'_n\h{\otimes}_K D_n^{\mathrm{op}})$.
	\end{enumerate}
\end{cor}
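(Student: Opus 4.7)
All four parts reduce, via the strictly exact resolutions of Lemma \ref{transses} and the identification $f^*D_n \cong D'_n/D'_n\partial_{m+1}$, to understanding the quotients of $D'_n$ and $D''_n$ by left or right ideals generated by $\partial_{m+1}$ or $\frac{\mathrm{d}}{\mathrm{d}Z}$. The crucial algebraic input I will use is that $g \in B^\times$ is a unit: left multiplication by $g$ is therefore a bijection on each of $D'_n$ and $D''_n$, yielding both $D_\bullet\partial_{m+1}=D_\bullet g\frac{\mathrm{d}}{\mathrm{d}Z}=D_\bullet\frac{\mathrm{d}}{\mathrm{d}Z}$ as left ideals, and $\partial_{m+1}D'_n\cong\frac{\mathrm{d}}{\mathrm{d}Z}D'_n$ as right submodules (intertwined by the left-multiplication-by-$g$ isomorphism).

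For (i), I will apply $D''_n\h{\otimes}^{\mathbb{L}}_{D'_n}-$ to the resolution $0\to D'_n\xrightarrow{\cdot\partial_{m+1}}D'_n\to f^*D_n\to 0$, obtaining the two-term complex $D''_n\xrightarrow{\cdot\partial_{m+1}}D''_n$; Lemma \ref{transses} kills $H^{-1}$, and the cokernel is $D''_n/D''_n\frac{\mathrm{d}}{\mathrm{d}Z}$ by the identity above. For (iii), dualising the same resolution produces $[D'_n\xrightarrow{\partial_{m+1}\cdot}D'_n]$ in degrees $0,1$, with cohomology $D'_n/\partial_{m+1}D'_n$ in degree $1$; the $g$-bijection identifies this with $D'_n/\frac{\mathrm{d}}{\mathrm{d}Z}D'_n$, after which (ii) gives the stated form. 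Part (iv) is dual: using (ii) to rewrite the source as $D'_n/\frac{\mathrm{d}}{\mathrm{d}Z}D'_n$, I dualise the right-module resolution $0\to D'_n\xrightarrow{\frac{\mathrm{d}}{\mathrm{d}Z}\cdot}D'_n\to D'_n/\frac{\mathrm{d}}{\mathrm{d}Z}D'_n\to 0$ to get $[D'_n\xrightarrow{\cdot\frac{\mathrm{d}}{\mathrm{d}Z}}D'_n]$, whose top cohomology $D'_n/D'_n\frac{\mathrm{d}}{\mathrm{d}Z}$ coincides with $D'_n/D'_n\partial_{m+1}=f^*D_n$. The bimodule structures track through because the image of $D_n$ in $D''_n$, generated by $A$ and the $\sigma(d_i)$, commutes with $\frac{\mathrm{d}}{\mathrm{d}Z}$, so the relevant one-sided ideals are stable under the extra action.

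The hard part is (ii). My plan is to show that both $D'_n/\frac{\mathrm{d}}{\mathrm{d}Z}D'_n$ and $D''_n/\frac{\mathrm{d}}{\mathrm{d}Z}D''_n$ are isomorphic, as complete bornological left $B$-modules, to the Tate algebra $B\langle\pi^n\sigma(d_1),\hdots,\pi^n\sigma(d_m)\rangle$ via a PBW normal-form reduction, and then to check that the natural map respects these identifications. In $D''_n$, the commutations $[\frac{\mathrm{d}}{\mathrm{d}Z},\sigma(d_i)]=0$ together with $b\frac{\mathrm{d}}{\mathrm{d}Z}=\frac{\mathrm{d}}{\mathrm{d}Z}b-\frac{\mathrm{d}}{\mathrm{d}Z}(b)$ for $b\in B$ let me iteratively replace $b\sigma(d)^\alpha(\frac{\mathrm{d}}{\mathrm{d}Z})^k$ by $-\frac{\mathrm{d}}{\mathrm{d}Z}(b)\sigma(d)^\alpha(\frac{\mathrm{d}}{\mathrm{d}Z})^{k-1}$ modulo $\frac{\mathrm{d}}{\mathrm{d}Z}D''_n$, reducing any PBW expansion to a convergent sum supported in $k=0$; strict exactness of the $\frac{\mathrm{d}}{\mathrm{d}Z}$-analogue of Lemma \ref{transses} guarantees the uniqueness of this normal form. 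The parallel argument in $D'_n$ uses the identities $\sigma(d_i)=\partial_i-g_ig^{-1}\partial_{m+1}$ and $\frac{\mathrm{d}}{\mathrm{d}Z}=g^{-1}\partial_{m+1}$ (valid in $D'_n$ after inverting $\pi$). Naturality of the map $D'_n\to D''_n$ then ensures the two identifications are compatible with the induced morphism.
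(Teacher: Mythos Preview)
Your treatment of (i), (iii), and (iv) via the two-term resolutions from Lemma~\ref{transses} is essentially the paper's approach. The paper handles (iv) slightly differently, by applying $\mathrm{R}\underline{\mathrm{Hom}}_{D'_n}(-,D'_n)$ to (iii) and invoking biduality for $f^*D_n$, but your route through (ii) and the right-module resolution is equivalent once one observes that the map in (ii), being induced by the ring homomorphism $D'_n\to D''_n$, is automatically a bimodule map.

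There is, however, a genuine gap in your argument for (ii), which is the substantive part of the corollary. Your ``parallel argument in $D'_n$'' via PBW normal-form reduction is not spelled out, and the difficulty is real: the Banach norm on $D'_n$ is defined via the $\partial_i$-basis, not the $\sigma(d_i)$-basis, and $g^{-1}$ need not lie in $\B$. So writing $\frac{\mathrm{d}}{\mathrm{d}Z}=g^{-1}\partial_{m+1}$ and iterating commutators does not obviously produce a convergent reduction to a $B\langle\pi^n\sigma(d)\rangle$-normal form inside $D'_n$. The paper circumvents this analytic issue by working at the integral level: it shows that the composite $U_{\B}(\pi^n\L')\to U_{\B}(\pi^n\L'')/\pi^n\frac{\mathrm{d}}{\mathrm{d}Z}U_{\B}(\pi^n\L'')$ is surjective (checked on associated graded), then uses the bounded $\pi$-torsion results of Lemma~\ref{transferpitor} to control the kernel and show that the two candidate semi-norms on $\frac{\mathrm{d}}{\mathrm{d}Z}U_B(\T_X(X))$ are equivalent, so that completion yields the desired isomorphism. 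This bounded-torsion comparison is precisely what makes your PBW reduction legitimate, and you have not supplied a substitute for it.

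A secondary point: in (iii), you assert that ``the bimodule structures track through'' because $\sigma(D_n)$ commutes with $\frac{\mathrm{d}}{\mathrm{d}Z}$ in $D''_n$. This explains why $D''_n/\frac{\mathrm{d}}{\mathrm{d}Z}D''_n$ carries a left $D_n$-action, but not why your chain of isomorphisms $D'_n/\partial_{m+1}D'_n\cong D'_n/\frac{\mathrm{d}}{\mathrm{d}Z}D'_n\cong D''_n/\frac{\mathrm{d}}{\mathrm{d}Z}D''_n$ is $D_n$-equivariant --- indeed $D'_n$ carries no natural $D_n$-action, so the intermediate objects do not obviously have one. The paper handles this by comparing with $\mathrm{R}\underline{\mathrm{Hom}}_{D''_n}(D''_n\h{\otimes}^{\mathbb{L}}_{D'_n}f^*D_n,D''_n)$ via the map $D'_n\to D''_n$, producing a commutative square in which the bottom row and left column are genuine $(D_n,D''_n)$-bimodule morphisms; this transfers the $D_n$-equivariance to the composite.
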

\begin{proof}
	\begin{enumerate}[(i)]
		\item Firstly, we have an isomorphism of complete bornological left $D''_n$-modules
		\begin{equation*}
			D''_n\h{\otimes}_{D'_n}f^*D_n\cong \frac{D''_n}{D''_n\partial_{m+1}}\cong  \frac{D''_n}{D''_n\frac{\mathrm{d}}{\mathrm{d}Z}},
		\end{equation*}
		since $\partial_{m+1}=g\frac{\mathrm{d}}{\mathrm{d}Z}$.
		
		The section $\sigma$ turns $D''_n$ into a right $D_n$-module, with $\frac{D''_n}{D''_n\frac{\mathrm{d}}{\mathrm{d}Z}}\cong K\langle Z\rangle \h{\otimes}_K D_n$. In particular, the natural map $f^*D_n\to D''_n\h{\otimes}_{D'_n}f^*D_n$ is actually an isomorphism, and $D''_n\h{\otimes}_{D'_n}f^*D_n\cong \frac{D''_n}{D''_n\frac{\mathrm{d}}{\mathrm{d}Z}}$ even as $(D''_n, D_n)$-bimodules. 
		
		The vanishing of other Tor groups follows from the short strictly exact sequences arising from Lemma \ref{transses}.
		\item  Consider the short exact sequence
		\begin{equation*}
			0\to U_{\B}(\pi^n\L')\cap \pi^n\frac{\mathrm{d}}{\mathrm{d}Z}U_{\B}(\pi^n\L'')\to U_{\B}(\pi^n\L')\to \frac{U_{\B}(\pi^n\L'')}{\pi^n\frac{\mathrm{d}}{\mathrm{d}Z}U_{\B}(\pi^n\L'')}\to 0.
		\end{equation*}
		Note that this is indeed exact, since 
		\begin{equation*}
			U_{\B}(\pi^n\L')\to \frac{U_{\B}(\pi^n\L'')}{\pi^n\frac{\mathrm{d}}{\mathrm{d}Z}U_{\B}(\pi^n\L'')}\cong U_{\A}(\pi^n\L)\otimes_{\A}\B
		\end{equation*}
		is surjective: For $i\leq m$, $\partial_i=\sigma(\mathrm{d}_i)+g_i\frac{\mathrm{d}}{\mathrm{d}Z}$ gets mapped to $\sigma(\mathrm{d}_i)-\frac{\mathrm{d}g_i}{\mathrm{d}Z}$, and surjectivity of this filtered morphism can be detected after passing to the associated graded.
		
		Since $\frac{U_{\B}(\pi^n\L'')}{\pi^n\frac{\mathrm{d}}{\mathrm{d}Z}U_{\B}(\pi^n\L'')}$ has bounded $\pi$-torsion by Lemma \ref{transferpitor}, the same reasoning as before yields a strictly exact sequence
		\begin{equation*}
			0\to (U_{\B}(\pi^n\L')\cap \pi^n\frac{\mathrm{d}}{\mathrm{d}Z}U_{\B}(\pi^n\L''))\ \h{}_K\to D'_n\to \frac{D''_n}{\frac{\mathrm{d}}{\mathrm{d}Z}D''_n}\to 0,
		\end{equation*}
		and it remains to verify that the first term is isomorphic to $\frac{\mathrm{d}}{\mathrm{d}Z}D'_n$.
		
		Note that for any $Q\in U_{\B}(\pi^n\L'')$ there exists some $l$ such that $\pi^lQ\in U_{\B}(\pi^n\L')$, so the $R$-module $\frac{U_{\B}(\pi^n\L')\cap \pi^n\frac{}{}U_{\B}(\pi^n\L'')}{\pi^n\frac{}{}U_{\B}(\pi^n\L')}$ is $\pi$-torsion.
		
		Now
		\begin{equation*}
			\frac{U_{\B}(\pi^n\L')\cap \pi^n\frac{\mathrm{d}}{\mathrm{d}Z}U_{\B}(\pi^n\L'')}{\pi^n\frac{\mathrm{d}}{\mathrm{d}Z}U_{\B}(\pi^n\L')}\subseteq \frac{U_{\B}(\pi^n\L')}{\pi^n\frac{\mathrm{d}}{\mathrm{d}Z}U_{\B}(\pi^n\L')}
		\end{equation*}
		has bounded $\pi$-torsion by Lemma \ref{transferpitor}, so there exists an integer $l$ such that
		\begin{equation*}
			\pi^l\left(U_{\B}(\pi^n\L')\cap \pi^n\frac{\mathrm{d}}{\mathrm{d}Z}U_{\B}(\pi^n\L'')\right)\subseteq \pi^n\frac{\mathrm{d}}{\mathrm{d}Z}U_{\B}(\pi^n\L')\subseteq U_{\B}(\pi^n\L')\cap \pi^n\frac{\mathrm{d}}{\mathrm{d}Z}U_{\B}(\pi^n\L'').
		\end{equation*}
		Hence the corresponding semi-norms on $\frac{\mathrm{d}}{\mathrm{d}Z}U_{B}(\T_X(X))$ are equivalent, and
		\begin{equation*}
			(U_{\B}(\pi^n\L')\cap \pi^n\frac{\mathrm{d}}{\mathrm{d}Z}U_{\B}(\pi^n\L''))\ \h{}_K\cong \frac{\mathrm{d}}{\mathrm{d}Z}\cdot D'_n,
		\end{equation*}
		as desired.
		\item Since $f^*D_n\cong \frac{D'_n}{D'_n\frac{\mathrm{d}}{\mathrm{d}Z}}$ as left $D'_n$-modules, the short strictly exact sequence arising from Lemma \ref{transses} yields an isomorphism
		\begin{equation*}
			\mathrm{R}\underline{\mathrm{Hom}}_{D'_n}(f^*D_n, D'_n)\cong \frac{D'_n}{\frac{\mathrm{d}}{\mathrm{d}Z}D'_n}[-1]
		\end{equation*}
		in $\mathrm{D}({D'_n}^{\mathrm{op}})$, which by (ii) above is isomorphic to $\frac{D''_n}{\frac{\mathrm{d}}{\mathrm{d}Z}D''_n}[-1]$. It remains to verify that this is also an isomorphism of left $D_n$-modules.
		
		The natural morphism $D'_n\to D''_n$ induces a map
		\begin{equation*}
			\mathrm{R}\underline{\mathrm{Hom}}_{D_n}(f^*D_n, D'_n)\to \mathrm{R}\underline{\mathrm{Hom}}_{D'_n}(f^*D_n, D''_n)\cong \mathrm{RHom}_{D''_n}(D''_n\h{\otimes}^{\mathbb{L}}_{D'_n}f^*D_n, D''_n),
		\end{equation*}
		fitting into a commutative diagram
		\begin{equation*}
			\begin{xy}
				\xymatrix{\mathrm{R}\underline{\mathrm{Hom}}_{D'_n}(f^*D_n, D'_n)\ar[r]\ar[d]& \frac{D'_n}{\frac{\mathrm{d}}{\mathrm{d}Z}D'_n}[-1]\ar[d]\\
					\mathrm{R}\underline{\mathrm{Hom}}_{D''_n}(D''_n\h{\otimes}^{\mathbb{L}}_{D'_n}f^*D_n, D''_n)\ar[r]& \frac{D''_n}{\frac{\mathrm{d}}{\mathrm{d}Z}D''_n}[-1]}
			\end{xy}
		\end{equation*}
		where the top horizontal arrow and the right vertical arrow are the morphisms used in the first part of the proof.
		
		The left vertical arrow and the bottom horizontal arrow are actually morphisms in $\mathrm{D}(D_n\h{\otimes}_K {D''_n}^{\mathrm{op}})$, so the map
		\begin{equation*}
			\mathrm{R}\underline{\mathrm{Hom}}_{D'_n}(f^*D_n, D'_n)\to \frac{D''_n}{\frac{}{}D''_n}[-1]
		\end{equation*}
		is indeed a morphism in $\mathrm{D}(D_n\h{\otimes}_K {D''_n}^{\mathrm{op}})$, and is an isomorphism by the above.
		\item Apply $\mathrm{R}\underline{\mathrm{Hom}}_{D'_n}(-, D'_n)$ to (iii), noting that Lemma \ref{transses} shows that the natural morphism
		\begin{equation*}
			f^*D_n\to \mathrm{R}\underline{\mathrm{Hom}}_{D'_n}(\mathrm{R}\underline{\mathrm{Hom}}_{D'_n}(f^*D_n, D'_n), D'_n)
		\end{equation*}
		is an isomorphism in $\mathrm{D}(D'_n)$, and hence in $\mathrm{D}(D'_n\h{\otimes}_K D_n^{\mathrm{op}})$.
	\end{enumerate}
\end{proof}

\begin{prop}
	\label{dualoftransfer}
	\leavevmode
	\begin{enumerate}[(i)]
		\item For $M$ a finitely generated left $D_n$-module, there is a natural isomorphism
		\begin{equation*}
			\mathrm{R}\underline{\mathrm{Hom}}_{D'_n}(f^*M, D'_n)\cong \mathrm{R}\underline{\mathrm{Hom}}_{D_n}(M, D_n)\h{\otimes}^{\mathbb{L}}_{D_n} \frac{D''_n}{\frac{\mathrm{d}}{\mathrm{d}Z}D''_n}[-1]
		\end{equation*}
		in $\mathrm{D}({D'_n}^{\mathrm{op}})$.
		
		In particular, there is an isomorphism of complete bornological $K$-vector spaces
		\begin{equation*}
			\underline{\mathrm{Ext}}^i_{D'_n}(f^*M, D'_n)\cong \underline{\mathrm{Ext}}^{i-1}_{D_n}(M, D_n)\h{\otimes}_K K\langle Z \rangle
		\end{equation*}
		for each $i$.
		\item For $N$ a finitely generated right $D_n$-module, there is a natural isomorphism
		\begin{equation*}
			\mathrm{R}\underline{\mathrm{Hom}}_{D'_n}(N\h{\otimes}_{D'_n}\frac{D''_n}{\frac{\mathrm{d}}{\mathrm{d}Z}D''_n}, D'_n)\cong \frac{D''_n}{D''_n\frac{\mathrm{d}}{\mathrm{d}Z}}\h{\otimes}^{\mathbb{L}}_{D_n}\mathrm{R}\underline{\mathrm{Hom}}_{D_n}(N, D_n)[-1]
		\end{equation*}
		in $\mathrm{D}(D'_n)$.
		
		In particular, there is an isomorphism of complete bornological $K$-vector spaces
		\begin{equation*}
			\underline{\mathrm{Ext}}^i_{D'_n}(N\h{\otimes}_{D'_n}\frac{D''_n}{\frac{\mathrm{d}}{\mathrm{d}Z}D''_n}, D'_n)\cong K\langle Z\rangle \h{\otimes}_K\underline{\mathrm{Ext}}^{i-1}_{D_n}(N, D_n)
		\end{equation*}
		for each $i$.
	\end{enumerate}		
\end{prop}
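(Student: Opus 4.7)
The strategy mirrors that of Lemma~\ref{ipluscommuteswithD}, with \emph{pullback along $f$} now playing the r\^ole of pushforward along $\iota$: (a)~apply tensor-hom adjunction to reduce to an inner $\mathrm{R}\underline{\mathrm{Hom}}$, (b)~identify that inner Hom via Corollary~\ref{primebasechange}, and (c)~use a finitely generated free resolution of $M$ (resp.\ $N$) together with strong flatness to exchange an outer Hom for a derived tensor. Both parts follow the same mechanism; I describe~(i) in detail and indicate where~(ii) differs.

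For~(i), write $f^*M \cong f^*D_n\h{\otimes}^\mathbb{L}_{D_n}M$ (concentrated in degree $0$ by Lemma~\ref{pullback}(i)) and apply tensor-hom adjunction to obtain
\begin{equation*}
\mathrm{R}\underline{\mathrm{Hom}}_{D'_n}(f^*M, D'_n)\cong \mathrm{R}\underline{\mathrm{Hom}}_{D_n}(M, \mathrm{R}\underline{\mathrm{Hom}}_{D'_n}(f^*D_n, D'_n))
\end{equation*}
in $\mathrm{D}({D'_n}^{\mathrm{op}})$. Corollary~\ref{primebasechange}(iii) identifies the inner Hom with $\frac{D''_n}{\frac{\mathrm{d}}{\mathrm{d}Z}D''_n}[-1]$ in $\mathrm{D}(D_n\h{\otimes}_K{D'_n}^{\mathrm{op}})$. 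Taking a resolution $P^\bullet\to M$ by finitely generated free $D_n$-modules, each $\underline{\mathrm{Hom}}_{D_n}(P^i, D_n)$ is a finitely generated free right $D_n$-module and hence strongly flat, so the complex $\underline{\mathrm{Hom}}_{D_n}(P^\bullet, D_n)$ simultaneously computes $\mathrm{R}\underline{\mathrm{Hom}}_{D_n}(M, D_n)$ and its derived tensor products. The termwise isomorphism $\underline{\mathrm{Hom}}_{D_n}(P^i, D_n)\h{\otimes}_{D_n}N\cong \underline{\mathrm{Hom}}_{D_n}(P^i, N)$ valid for $P^i$ finitely generated free, applied to $N=\frac{D''_n}{\frac{\mathrm{d}}{\mathrm{d}Z}D''_n}$, then yields the first isomorphism of~(i).

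For the ``in particular'' statement I need the left-module identification $\frac{D''_n}{\frac{\mathrm{d}}{\mathrm{d}Z}D''_n}\cong D_n\h{\otimes}_K K\langle Z\rangle$, mirroring the right-module identification $\frac{D''_n}{D''_n\frac{\mathrm{d}}{\mathrm{d}Z}}\cong K\langle Z\rangle\h{\otimes}_K D_n$ used in the proof of Corollary~\ref{primebasechange}(i). With the natural choice of section satisfying $\sigma(d_i)(Z)=0$, the commutation relations $[\sigma(d_i), Z]=[\sigma(d_i), \frac{\mathrm{d}}{\mathrm{d}Z}]=0$ together with the PBW theorem~\cite[Theorem~3.1]{Rinehart} show that $D''_n$ splits as $D_n\h{\otimes}_K\mathcal{W}_n$, where $\mathcal{W}_n=K\langle Z, \pi^n\frac{\mathrm{d}}{\mathrm{d}Z}\rangle$ is the Rees-deformed Weyl algebra; a direct calculation in $\mathcal{W}_n$ using the ordered basis $\{(\pi^n\frac{\mathrm{d}}{\mathrm{d}Z})^\ell Z^k\}$ gives $\mathcal{W}_n/\frac{\mathrm{d}}{\mathrm{d}Z}\mathcal{W}_n\cong K\langle Z\rangle$. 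Since $K\langle Z\rangle$ is strongly flat in $\h{\B}c_K$ by~\cite[Lemma~4.16]{SixOp}, a flat-resolution argument shows that $D_n\h{\otimes}_K K\langle Z\rangle$ is strongly flat over $D_n$, so cohomology commutes with the tensor product and the Ext formula follows.

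Part~(ii) is entirely symmetric: Corollary~\ref{primebasechange}(iv) replaces~(iii), the identification $f^*D_n\cong \frac{D''_n}{D''_n\frac{\mathrm{d}}{\mathrm{d}Z}}$ as $(D'_n, D_n)$-bimodules (proved as in Corollary~\ref{primebasechange}(ii)) substitutes for the other inner Hom, and the right-module PBW identification already recorded in the proof of Corollary~\ref{primebasechange}(i) yields the Ext formula. The main obstacle in either part is the bookkeeping of bimodule structures together with the verification of the left--right mirror PBW identification of the quotient of $D''_n$, since this is the step that hinges on the choice of section $\sigma$ and on the commuting structure of $\sigma(D_n)$ inside $D''_n$; once this is in hand, everything else is a direct mimic of Lemma~\ref{ipluscommuteswithD}.
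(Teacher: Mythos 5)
Your proposal matches the paper's proof: tensor-hom adjunction, then Corollary \ref{primebasechange} to identify the inner $\mathrm{R}\underline{\mathrm{Hom}}$, then a free-resolution and strong-flatness argument to exchange the outer Hom for a derived tensor product, and the same mechanism mirrored for part (ii). The explicit algebra splitting $D''_n\cong D_n\h{\otimes}_K\mathcal{W}_n$ you invoke for the ``in particular'' step is correct (given the canonical section with $\sigma(d_i)(Z)=0$) but is more than the paper uses; the one-sided $D_n$-module identifications $\frac{D''_n}{\frac{\mathrm{d}}{\mathrm{d}Z}D''_n}\cong D_n\h{\otimes}_K K\langle Z\rangle$ and $\frac{D''_n}{D''_n\frac{\mathrm{d}}{\mathrm{d}Z}}\cong K\langle Z\rangle\h{\otimes}_K D_n$ already suffice.
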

\begin{proof}
	\begin{enumerate}[(i)]
		\item Tensor-hom adjunction yields a natural isomorphism
		\begin{equation*}
			\mathrm{R}\underline{\mathrm{Hom}}_{D'_n}(f^*M, D'_n)\cong \mathrm{R}\underline{\mathrm{Hom}}_{D_n}(M, \mathrm{R}\underline{\mathrm{Hom}}_{D'_n}(f^*D_n, D'_n)).
		\end{equation*}
		By Corollary \ref{primebasechange}.(iii), this is naturally isomorphic to 
		\begin{equation*}
			\mathrm{R}\underline{\mathrm{Hom}}_{D_n}(M, \frac{D''_n}{\frac{\mathrm{d}}{\mathrm{d}Z}\cdot \D''_n})[-1].
		\end{equation*}
		Since $M$ is a finitely generated $D_n$-module, $\mathrm{R}\underline{\mathrm{Hom}}_{D_n}(M, D_n)$ can be represented by a bounded below complex consisting of finitely generated free right $D_n$-modules, equipped with their canonical Banach structures, and is in particular strict. It then follows from the flatness of $K\langle Z\rangle$ that
		\begin{align*}
			\mathrm{R}\underline{\mathrm{Hom}}_{D_n}(M, D_n)\h{\otimes}^{\mathbb{L}}_{D_n}\frac{D''_n}{\frac{\mathrm{d}}{\mathrm{d}Z}D''_n}&\cong \mathrm{R}\underline{\mathrm{Hom}}_{D_n}(M, D_n)\h{\otimes}_K K\langle Z\rangle\\
			&\cong \mathrm{R}\underline{\mathrm{Hom}}_{D_n}(M, D_n\h{\otimes}_K K\langle Z\rangle)\\
			&\cong \mathrm{R}\underline{\mathrm{Hom}}_{D_n}(M, \frac{D''_n}{\frac{\mathrm{d}}{\mathrm{d}Z}\cdot D''_n}),
		\end{align*}
		via the natural morphism, which proves the result.
		\item Since $\frac{D''_n}{\frac{\mathrm{d}}{\mathrm{d}Z}D''_n}\cong D_n\h{\otimes}_K K\langle Z\rangle$ is flat over $D_n$, we can apply tensor-hom adjunction to obtain
		\begin{equation*}
			\mathrm{R}\underline{\mathrm{Hom}}_{D'_n}(N\h{\otimes}_{D_n} \frac{D''_n}{\frac{\mathrm{d}}{\mathrm{d}Z}D''_n}, D'_n)\cong \mathrm{R}\underline{\mathrm{Hom}}_{D_n}(N, \mathrm{R}\underline{\mathrm{Hom}}_{D'_n}(\frac{D''_n}{\frac{\mathrm{d}}{\mathrm{d}Z}D''_n}, D'_n))
		\end{equation*}
		Applying Corollary \ref{primebasechange}.(iv), this is isomorphic to
		\begin{equation*}
			\mathrm{R}\underline{\mathrm{Hom}}_{D_n}(N, f^*D_n)[-1].
		\end{equation*}
		Since $f^*D_n\cong K\langle Z\rangle \h{\otimes}_K D_n$ as right $D_n$-modules, the same argument as above produces the desired isomorphism.
	\end{enumerate}
\end{proof}
\begin{prop}
	\label{smoothAuslander}
	\leavevmode
	\begin{enumerate}[(i)]
		\item If $D'_n$ satisfies the Auslander condition, then so does $D_n$.
		\item If $\mathrm{inj.dim.}D'_n\leq d$, then $\mathrm{inj.dim.}D_n\leq d-1$.
		\item If $D'_n$ has finite global dimension, then so does $D_n$.
	\end{enumerate}
\end{prop}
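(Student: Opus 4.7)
The three parts follow the template of Corollaries \ref{AGKashiwara}--\ref{clAuslanderpullback} from the closed-embedding case, with Proposition \ref{dualoftransfer} playing the role of Lemma \ref{ipluscommuteswithD}. A common ingredient is that $K$ is a direct summand of $K\langle Z\rangle$ (via $Z\mapsto 0$), so that $V\h{\otimes}_K K\langle Z\rangle = 0$ forces $V=0$ for any $V\in \h{\B}c_K$; in particular, tensoring with $K\langle Z\rangle$ detects vanishing of Ext-modules.

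For (i), let $M$ be a finitely generated left $D_n$-module and $N$ a right $D_n$-submodule of $\underline{\mathrm{Ext}}^r_{D_n}(M, D_n)$. Tensoring with the flat $D_n$-module $\frac{D''_n}{\frac{\mathrm{d}}{\mathrm{d}Z} D''_n}\cong D_n\h{\otimes}_K K\langle Z\rangle$ and invoking Proposition \ref{dualoftransfer}(i), one realises $N\h{\otimes}_{D_n}\frac{D''_n}{\frac{\mathrm{d}}{\mathrm{d}Z}D''_n}$ as a right $D'_n$-submodule of $\underline{\mathrm{Ext}}^{r+1}_{D'_n}(f^*M, D'_n)$. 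The Auslander condition on $D'_n$ then kills $\underline{\mathrm{Ext}}^{i}_{D'_n}\bigl(N\h{\otimes}_{D_n}\frac{D''_n}{\frac{\mathrm{d}}{\mathrm{d}Z}D''_n}, D'_n\bigr)$ for $i<r+1$, which by Proposition \ref{dualoftransfer}(ii) equals $K\langle Z\rangle \h{\otimes}_K \underline{\mathrm{Ext}}^{i-1}_{D_n}(N, D_n)$. The direct-summand observation yields $\underline{\mathrm{Ext}}^{j}_{D_n}(N, D_n) = 0$ for $j < r$, verifying the Auslander condition for $D_n$.

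Part (ii) is more direct: Proposition \ref{dualoftransfer}(i) translates the vanishing $\underline{\mathrm{Ext}}^i_{D'_n}(f^*M, D'_n) = 0$ for $i > d$ into $\underline{\mathrm{Ext}}^{i-1}_{D_n}(M, D_n)\h{\otimes}_K K\langle Z\rangle = 0$, whence $\mathrm{inj.dim.}\, D_n \leq d-1$ by the same summand argument.

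For (iii), the key step is to compute $\mathrm{R}\underline{\mathrm{Hom}}_{D'_n}(f^*D_n, f^*N)$ for $N$ a finitely generated $D_n$-module, using the $(D'_n, D_n)$-bimodule structure on $f^*D_n$. The short resolution of $f^*D_n \cong D'_n/D'_n\partial_{m+1}$ supplied by Lemma \ref{transses} presents this derived hom as the two-term complex $[f^*N \xrightarrow{\partial_{m+1}\cdot} f^*N]$ in degrees $0, 1$. Since $\partial_{m+1} = g\frac{\mathrm{d}}{\mathrm{d}Z}$ with $g\in B^\times$, and $f^*N \cong K\langle Z\rangle \h{\otimes}_K N$ as a complete bornological vector space on which $\frac{\mathrm{d}}{\mathrm{d}Z}$ is strictly surjective with kernel the constants, this complex is quasi-isomorphic to $N$ concentrated in degree $0$. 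Tensor-hom adjunction then gives a natural isomorphism $\mathrm{R}\underline{\mathrm{Hom}}_{D_n}(M, N) \cong \mathrm{R}\underline{\mathrm{Hom}}_{D'_n}(f^*M, f^*N)$ for $M, N$ finitely generated. The finite global dimension of $D'_n$ thus bounds the cohomology of the right-hand side in terms of $d$, and Lemma \ref{gldimviaborn} concludes that $D_n$ has finite global dimension. The principal obstacle is verifying that the identification in the $\mathrm{R}\underline{\mathrm{Hom}}$-computation is natural as $D_n$-modules, requiring careful bookkeeping of the bimodule actions through the resolution.
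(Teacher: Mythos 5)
Parts (i) and (ii) of your proposal follow the paper's argument essentially verbatim: one tensors with $\frac{D''_n}{\frac{\mathrm{d}}{\mathrm{d}Z}D''_n}\cong D_n\widehat{\otimes}_K K\langle Z\rangle$, invokes the two halves of Proposition \ref{dualoftransfer}, and then uses that $-\widehat{\otimes}_K K\langle Z\rangle$ detects vanishing (the paper phrases this via a quotient map $V\widehat{\otimes}_K K\langle Z\rangle\to V$ rather than the retraction $Z\mapsto 0$, but both observations work). So for (i) and (ii) your approach is the same as the paper's.

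Part (iii) has a genuine gap. You claim that the two-term complex $\bigl[f^*N\xrightarrow{\partial_{m+1}\cdot}f^*N\bigr]$ is quasi-isomorphic to $N$ in degree $0$, arguing that $\frac{\mathrm{d}}{\mathrm{d}Z}$ is ``strictly surjective with kernel the constants'' on $f^*N\cong K\langle Z\rangle\widehat{\otimes}_K N$. The kernel statement is fine, but $\frac{\mathrm{d}}{\mathrm{d}Z}\colon K\langle Z\rangle\to K\langle Z\rangle$ is \emph{not} surjective: antidifferentiation introduces denominators $k$, and a series $\sum a_kZ^k$ with $a_{p^m-1}=p^m$ lies in $K\langle Z\rangle$ while its formal antiderivative has coefficients of absolute value $1$ along $k=p^m$, hence does not converge. (This is precisely why $\mathrm{H}^1_{\mathrm{dR}}$ of the closed unit disc is nonzero in rigid geometry.) So $\mathrm{H}^1$ of your complex does not vanish, $\mathrm{R}\underline{\mathrm{Hom}}_{D'_n}(f^*D_n, f^*N)\not\cong N$, and the intended adjunction $\mathrm{R}\underline{\mathrm{Hom}}_{D_n}(M,N)\cong \mathrm{R}\underline{\mathrm{Hom}}_{D'_n}(f^*M, f^*N)$ fails. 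What makes the analogous step work in the closed-embedding case (Lemma \ref{Kashiwaraequiv}, Corollary \ref{Kashiwaraadjunction}) is that $\iota_+M$ involves the \emph{overconvergent} ring $K\{d_1\}$, whose extra decay absorbs the factor $1/i$ appearing in the relevant recursion; the smooth pullback only produces $K\langle Z\rangle$, which has no such margin.

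The paper's proof of (iii) therefore takes a different route: it never computes $\mathrm{R}\underline{\mathrm{Hom}}_{D'_n}(f^*D_n, f^*N)$. Instead it uses the identity
\begin{equation*}
	M\cong \frac{D''_n}{ZD''_n}\widehat{\otimes}^{\mathbb{L}}_{D'_n}f^*M,
\end{equation*}
which holds because $\frac{A\langle Z\rangle}{ZA\langle Z\rangle}\widehat{\otimes}^{\mathbb{L}}_{A\langle Z\rangle}A\langle Z\rangle\widehat{\otimes}^{\mathbb{L}}_A M\cong M$ -- here the relevant map is multiplication by $Z$, which \emph{is} strictly injective with projective cokernel, unlike $\frac{\mathrm{d}}{\mathrm{d}Z}$. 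Then $\frac{D''_n}{ZD''_n}\cong D_n\widehat{\otimes}_K K\langle\frac{\mathrm{d}}{\mathrm{d}Z}\rangle$ is a projective object in $\mathrm{Mod}_{\widehat{\mathcal{B}}c_K}(D_n)$, so applying $\frac{D''_n}{ZD''_n}\widehat{\otimes}_{D'_n}(-)$ to a finite projective $D'_n$-resolution of $f^*M$ (available since $D'_n$ has finite global dimension) produces a finite projective resolution of $M$ in $\mathrm{Mod}_{\widehat{\mathcal{B}}c_K}(D_n)$; Lemma \ref{gldimviaborn} concludes. You should replace your argument for (iii) with this construction.
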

\begin{proof}
	\begin{enumerate}[(i)]
		
		\item Let $M$ be a finitely generated left $D_n$-module, and let $N\subseteq \underline{\mathrm{Ext}}^i_{D_n}(M, D_n)$. We wish to show that $\underline{\mathrm{Ext}}^j_{D_n}(N, D_n)=0$ for all $j<i$.
		
		Note that
		\begin{equation*}
			N\h{\otimes}_{D_n}\frac{D''_n}{\frac{\mathrm{d}}{\mathrm{d}Z}D''_n}\subseteq \underline{\mathrm{Ext}}^i_{D_n}(M, D_n)\h{\otimes}_{D_n}\frac{D''_n}{\frac{\mathrm{d}}{\mathrm{d}Z}D''_n}
		\end{equation*}
		by flatness of $K\langle Z\rangle$. By Proposition \ref{dualoftransfer}.(i), the right-hand side is naturally isomorphic to $\underline{\mathrm{Ext}}^{i+1}_{D'_n}(f^*M, D'_n)$. Since $D'_n$ satisfies the Auslander condition, this shows that
		\begin{equation*}
			\underline{\mathrm{Ext}}^j_{D'_n}(N\h{\otimes}_{D_n}\frac{D''_n}{\frac{\mathrm{d}}{\mathrm{d}Z}D''_n}, D'_n)=0
		\end{equation*}
		for any $j<i+1$.
		
		But by Proposition \ref{dualoftransfer}.(ii), we thus have
		\begin{equation*}
			\underline{\mathrm{Ext}}^j_{D_n}(N, D_n)\h{\otimes}_K K\langle Z\rangle=0
		\end{equation*}
		for any $j<i$. But then $\underline{\mathrm{Ext}}^j_{D_n}(N, D_n)$, which is a quotient of the above, is also equal to $0$, proving the Auslander condition for $D_n$.
		\item This is now immediate from the above identifications: if $\underline{\mathrm{Ext}}^i_{D'_n}(M, D'_n)=0$ for all $i>d$ and any finitely generated $D'_n$-module $M$, then Proposition \ref{dualoftransfer} shows that $\underline{\mathrm{Ext}}^i_{D_n}(M, D_n)\h{\otimes}_K K\langle Z\rangle=0$ for any $i>d-1$ and any finitely generated $D_n$-module $M$. We can thus conclude as before that $\underline{\mathrm{Ext}}^i_{D_n}(M, D_n)=0$, proving the result.
		\item Let $M$ be a finitely generated left $D_n$-module. Then $f^*M$ is a finitely generated left $D'_n$-module by Lemma \ref{pullback}. If $D'_n$ has finite global dimension, then $f^*M$ admits a finite resolution by finitely generated $D'_n$-modules which are summands of free finitely generated $D'_n$-modules, write $P^\bullet\cong f^*M$.
		
		Now consider the isomorphism
		\begin{equation*}
			M\cong \frac{D''_n}{ZD''_n}\h{\otimes}^{\mathbb{L}}_{D''_n}\frac{D''_n}{D''_n\frac{\mathrm{d}}{\mathrm{d}Z}}\h{\otimes}^{\mathbb{L}}_{D_n}M,
		\end{equation*}
		coming from the fact that 
		\begin{equation*}
			M\cong \frac{A\langle Z\rangle}{ZA\langle Z\rangle}\h{\otimes}^{\mathbb{L}}_{A\langle Z\rangle} A\langle Z\rangle \h{\otimes}^{\mathbb{L}}_A M.
		\end{equation*}
		
		By Corollary \ref{primebasechange}.(i), we have $\frac{D''_n}{D''_n\frac{\mathrm{d}}{\mathrm{d}Z}}\cong D''_n\h{\otimes}^{\mathbb{L}}_{D'_n}f^*D_n$, so we can rewrite the above as
		\begin{equation*}
			M\cong \frac{D''_n}{ZD''_n}\h{\otimes}^{\mathbb{L}}_{D''_n}(D''_n\h{\otimes}^{\mathbb{L}}_{D'_n}f^*M).
		\end{equation*}
		
		Thus $M$ can be represented by the finite complex
		\begin{equation*}
			\frac{D''_n}{ZD''_n}\h{\otimes}_{D'_n}P^\bullet,
		\end{equation*}
		and it suffices to prove that each term in this complex is a projective object in $\mathrm{Mod}_{\h{\B}c_K}(D_n)$. By the assumption on $P^\bullet$, we can reduce to the case of a free $D'_n$-module of rank one, and it suffices to prove that 
		\begin{equation*}
			\frac{D''_n}{ZD''_n}
		\end{equation*}
		is a projective complete bornological left $D_n$-module.
		
		But $\frac{D''_n}{ZD''_n}\cong D_n\h{\otimes}_K K\langle \frac{\mathrm{d}}{\mathrm{d}Z}\rangle$. Since $K\langle \frac{\mathrm{d}}{\mathrm{d}Z}\rangle$ is a flat projective object in $\h{\B}c_K$ by \cite[Lemma 4.16]{SixOp}, the result follows.
	\end{enumerate}
\end{proof}

\subsection{Proof of Theorem \ref{MainThm}}
\begin{defn}
Let $\iota: X=\Sp A/I\to Y=\Sp A$ be a weakly standard embedding of affinoid $K$-varieties of codimension $r$. Let $\B$ be an affine formal model of $A/I$ and let $\L$ be an $(R, \B)$-Lie lattice inside $\T_X(X)$. We say that the pair $(\B, \L)$ is obtained by \textbf{reduction via $\iota$} if there exists an affine formal model $\A$ of $A$ and a weakly $I$-standard basis $d_1, \dots, d_m\in \T_Y(Y)$ such that 
\begin{enumerate}[(i)]
\item $\B\cong \A/(\A\cap I)$ via the natural morphism
\item $\L':=\oplus_{i=1}^m \A d_i$ is an $(R, \A)$-Lie lattice in $\T_Y(Y)$, and
\item $\L\cong \B\otimes_\A\C$, where 
\begin{equation*}
\C=\oplus_{i=r+1}^m \A d_i.
\end{equation*}
\end{enumerate} 
\end{defn}
We also say that $(\B, \L)$ is obtained by reduction via $\iota$ from $(\A, \{d_1, \dots, d_m\})$.

\begin{thm}
\label{standardgldim}

Let $X=\Sp A$ be a smooth affinoid and let $\iota: X\to \mathbb{D}^m=\Sp T_m$ be a weakly standard embedding of codimension $r$ into the $m$-dimensional polydisc. If $\A$ is an admissible affine formal model of $A$, and $\L$ an $(R, \A)$-Lie lattice of $\T_X(X)$ such that the pair $(\A, \L)$ is obtained by reduction via $\iota$, then $\h{U_{\A}(\pi^n\L)}_K$ is Auslander regular for all sufficiently large $n$, with
\begin{equation*}
\mathrm{gl.dim.}\h{U_\A(\pi^n\L)}_K\leq 2\mathrm{dim} X+r.
\end{equation*}
\end{thm}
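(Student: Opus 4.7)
The plan is to induct on the codimension $r$. By the remark following Definition \ref{defwse}, $\iota$ factors as a tower
$$X = X_r \hookrightarrow X_{r-1} \hookrightarrow \cdots \hookrightarrow X_0 = \mathbb{D}^m$$
of codimension-one weakly standard embeddings $\iota_i: X_i \to X_{i-1}$. If $d_1, \ldots, d_m$ is the weakly $I$-standard basis of $\mathrm{Der}_K(T_m)$ furnished by the ``reduction via $\iota$'' hypothesis, and $\A_i := \A_0/(\A_0 \cap (f_1, \ldots, f_i))$, then on each $X_{i-1}$ the derivations $d_i, d_{i+1}, \ldots, d_m$ form a weakly $(f_i)$-standard basis. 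I would prove by induction on $i$ the following strengthened claim $(C_i)$: for all integers $n_{i+1} \geq n_{i+2} \geq \cdots \geq n_r \geq n$ with $n$ sufficiently large, the ring
$$\h{U_{\A_i}\bigl(\pi^{n_{i+1}} \A_i d_{i+1} \oplus \cdots \oplus \pi^{n_r} \A_i d_r \oplus \pi^n \A_i d_{r+1} \oplus \cdots \oplus \pi^n \A_i d_m\bigr)}_K$$
is Auslander regular of global dimension $\leq 2m - i$. Lemma \ref{rescaledlattice} guarantees this rescaling is a valid $(R, \A_i)$-Lie lattice, and the theorem is precisely $(C_r)$ since $2m - r = 2\dim X + r$.

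The base case $(C_0)$ follows from Theorem \ref{regforpolydisc} together with Proposition \ref{varymodel}: the rescaled lattice on $\mathbb{D}^m$ is free over $\A_0$ with the rescaled basis, and for $n$ sufficiently large Proposition \ref{varymodel} translates the ring to one over $R\langle x\rangle$, where Theorem \ref{regforpolydisc} applies directly.

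For the inductive step, assume $(C_{i-1})$ and fix a rescaling $n_{i+1} \geq \cdots \geq n_r \geq n$. I would introduce on $X_{i-1}$ the auxiliary rescaled basis $e_i := \pi^{n_{i+1} - n} d_i$, $e_k := \pi^{n_k - n} d_k$ for $i+1 \leq k \leq r$, and $e_k := d_k$ for $k > r$. Applying Lemma \ref{rescaledlattice} to the codimension-$(r-i+1)$ embedding $X \hookrightarrow X_{i-1}$, this rescaled basis is weakly $(f_i, \ldots, f_r)$-standard (in particular weakly $(f_i)$-standard for $\iota_i$) and spans an $(R, \A_{i-1})$-Lie lattice, so the setup of Corollary \ref{clAuslanderpullback} is available. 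The $D_n$ produced on $X_i$ by reduction along $\iota_i$ in the rescaled basis matches precisely the target lattice of $(C_i)$, while the intermediate rings $D'_{j,n}$ on $X_{i-1}$ (for $j \geq n$) correspond in the original basis to rescalings $(j + n_{i+1} - n, n_{i+1}, n_{i+2}, \ldots, n_r, n, \ldots, n)$. These satisfy the monotonicity required by $(C_{i-1})$ precisely because $j + n_{i+1} - n \geq n_{i+1}$ whenever $j \geq n$, so $(C_{i-1})$ yields Auslander regularity of the $D'_{j,n}$ with global dimension $\leq 2m - i + 1$, and Corollary \ref{clAuslanderpullback} then concludes $(C_i)$ with global dimension $\leq 2m - i$.

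The delicate point is calibrating this auxiliary rescaling so that Corollary \ref{clAuslanderpullback} applies across the entire range $j \geq n$. The crucial observation is that one must rescale not only the tangent directions but also the normal direction $d_i$ by the same factor $\pi^{n_{i+1} - n}$; this shifts the intermediate scaling data enough to recover the monotonicity condition of $(C_{i-1})$ throughout $j \geq n$, while Lemma \ref{rescaledlattice} ensures that the rescaled basis continues to span a genuine Lie lattice on $X_{i-1}$.
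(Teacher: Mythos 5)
Your proof is correct and follows essentially the same approach as the paper: induction peeling off one codimension at a time via the Kashiwara pullback in Corollary \ref{clAuslanderpullback}, with Lemma \ref{rescaledlattice} supplying the freedom to rescale the weakly standard basis so that the intermediate rings $D'_{j,n}$ fall under the inductive hypothesis. The paper organizes this as a recursion on $r$ through $X \to X' \to \mathbb{D}^m$ and handles the rescaling bookkeeping implicitly, whereas you iterate forward from $\mathbb{D}^m$ and make the rescaling freedom explicit in the strengthened claim $(C_i)$; these are the same argument in different clothing.
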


\begin{proof}
Note that $\h{U_{\A}(\pi^n\L)}_K$ is Noetherian by \cite[Proposition 2.10]{Bodegl}.	
	
Using the same argument as in Proposition \ref{varymodel}, we reduce to the case where $\A$ is the image of $\R\langle x\rangle$. We show that in this case, the theorem holds for all $n\geq 1$.	

By assumption, there exists a weakly standard basis $d_1, \hdots, d_m$ of $\mathrm{Der}_K(T_m)$ such that $\oplus_{i=1}^m R\langle x\rangle d_i$ is a Lie lattice and $\L\cong \A\otimes_{R\langle x\rangle} (\oplus_{i=r+1}^m d_i)$.
	
We proceed by induction on $r$. The case $r=0$ was already done in Theorem \ref{regforpolydisc}. 

Let $I$ be the kernel of the surjection $T_m\to A$. Let $f_1, \dots, f_r$ be generators of $I$ exhibiting $d_1, \dots, d_m$ as a weakly $I$-standard basis.

We now consider the ideal $I'=(f_1, \dots, f_{r-1})\subseteq T_m$, the quotient $A'=T_m/I'$, $X'=\Sp A'$, and the corresponding closed embeddings $\iota_1: X\to X'$, $\iota_2: X'\to \mathbb{D}^m$.

Note that $\mathrm{Der}_K(A')$ is freely generated by (the reductions of) $d_r, d_{r+1}, \dots, d_m$. In particular, both $\iota_1$ and $\iota_2$ are weakly standard embeddings with the obvious choice of weakly standard bases and generators. Let 
\begin{equation*}
\A'=R\langle x\rangle/(R\langle x\rangle \cap I')
\end{equation*}
and let $\L_{ j, n}= \pi^j\A'd_r\oplus (\oplus_{i=r+1}^m \pi^n\A'd_i)\subseteq \mathrm{Der}_K(A')$ for $j\geq n\geq 1$. Then $(\A', \L_{ j, n})$ is obtained by reduction via $\iota_2$ from $(R\langle x\rangle, \{\pi^jd_1, \pi^jd_2, \hdots, \pi^jd_r, \pi^nd_{r+1}, \hdots, \pi^nd_m\})$ (which is a weakly $I'$-standard basis spanning a Lie lattice thanks to Lemma \ref{rescaledlattice}), and hence by induction, $D'_{ j, n}:=\h{U_{\A'}(\L_{ j, n})}_K$ is Auslander regular of global dimension $\leq 2(\mathrm{dim}X+1)+(r-1)=2\mathrm{dim}X+r+1$ for each $j$.

But now we can apply Corollary \ref{clAuslanderpullback} to deduce that $D_n=\h{U_{\A}(\pi^n\L)}_K$ is Auslander regular of global dimension $\leq 2\mathrm{dim}X+r$.
\end{proof}

We now apply this directly to the weakly standard embedding from Lemma \ref{tubeiswse}:

\begin{cor}
\label{Aregfortube}
Let $t\in T_m$, $P\in T_m[Y]$, $s\in \mathbb{Z}$, $\epsilon\in K^\times$. 

Let $B'=T_m\langle t^{-1}\rangle \langle \pi^sY\rangle \langle \epsilon^{-1}P\rangle$, and let $\B'=R\langle x_1, \dots, x_m\rangle \langle t^{-1}\rangle \langle \pi^sY\rangle \langle \epsilon^{-1}P\rangle$. 

Viewing $\Sp B'$ as an affinoid subdomain of $\Sp T_m\langle \pi^sY\rangle\cong \Sp T_{m+1}$, let $\L'=\oplus_{i=1}^m \B' \partial_i\oplus \B' \partial_Y$, where $\partial_i=\pi^r\frac{\mathrm{d}}{\mathrm{d}x_i}$ for some integer $r\geq0$, and $\partial_Y=\pi^{r'}\frac{\mathrm{d}}{\mathrm{d}Y}$ for some $r'\geq 0$.\\
Then $\h{U_{\B'}(\pi^n\L')}_K$ is Auslander regular for all sufficiently large $n$, with global dimension $\leq 2m+4$.
\end{cor}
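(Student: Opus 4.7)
The plan is to invoke Lemma \ref{tubeiswse} and then apply Theorem \ref{standardgldim}. Concretely, Lemma \ref{tubeiswse} exhibits $\Sp B'$ as a closed subvariety of the polydisc $\Sp T_{m+3}$ via a weakly standard embedding $\iota$ of codimension $r=2$, with vanishing ideal $I=(tx_{m+1}-1,\,x_{m+2}-\epsilon^{-1}P)$ and an explicit weakly $I$-standard basis $d_1,\dots,d_{m+3}$ of $\mathrm{Der}_K(T_{m+3})$. Since $\dim\Sp B'=m+1$, Theorem \ref{standardgldim} applied to the pair $(\A_0,\L_0)$ obtained by reduction via $\iota$ from $(R\langle x_1,\dots,x_{m+3}\rangle,\{d_1,\dots,d_{m+3}\})$ gives Auslander regularity with global dimension at most $2(m+1)+2=2m+4$ for all sufficiently large $n$.

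The work lies in identifying the $(\B',\L')$ of the statement with such a pair $(\A_0,\L_0)$. A direct computation shows that after substituting $x_{m+1}=t^{-1}$ and $x_{m+2}=\epsilon^{-1}P$, the derivations $d_3,\dots,d_{m+3}$ of $T_{m+3}$ restrict to the standard partials $\frac{\mathrm{d}}{\mathrm{d}Y},\frac{\mathrm{d}}{\mathrm{d}x_1},\dots,\frac{\mathrm{d}}{\mathrm{d}x_m}$ on $B'$: the correction terms in the formulas of Lemma \ref{tubeiswse} live in the $\partial_{m+1}$- and $\partial_{m+2}$-directions, which are exactly the directions killed by reduction. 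Hence $\L_0$ is the $\A_0$-span of the standard partial derivatives on $B'$, and differs from the specific $\L'$ of the statement only by the inhomogeneous rescaling by $\pi^r$ and $\pi^{r'}$ and the possibly different formal model $\B'$ versus $\A_0$.

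Both discrepancies are absorbed by a change-of-model argument modelled on Proposition \ref{varymodel}. Since $\B'$ and $\A_0$ are two admissible affine formal models of $B'$, \cite[Propositions 3.1/8 and 9]{Boschlectures} provides inclusions up to a $\pi$-power; the same goes for the lattices $\L'$ and $\L_0$, where the different exponents $r,r'$ in the $x$- and $Y$-directions merely contribute an additive shift. Invoking \cite[Proposition 3.3.(b)]{DcapOne} with $\sigma=\mathrm{id}$ then identifies $\h{U_{\B'}(\pi^n\L')}_K$ with $\h{U_{\A_0}(\pi^{n-c}\L_0)}_K$ for some constant $c$ and all $n\geq c$, and the latter is Auslander regular of global dimension $\leq 2m+4$ by the previous step.

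The main subtlety I anticipate is the bookkeeping around Lemma \ref{rescaledlattice}: it permits independent rescaling of the vanishing-ideal directions $d_1,d_2$, but only uniform rescaling among $d_3,\dots,d_{m+3}$. This is not a real obstacle because the conclusion is required only for sufficiently large $n$, so one may absorb $\max(r,r')$ into $n$ at the cost of shifting the constant in the final statement.
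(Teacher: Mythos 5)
Your overall strategy matches the paper's: invoke Lemma \ref{tubeiswse} for the weakly standard embedding $\Sp B'\hookrightarrow \Sp T_{m+3}$ of codimension $2$ and apply Theorem \ref{standardgldim}, with $2(m+1)+2=2m+4$. Your identification of the reduced derivations with the standard partials on $B'$ is also correct, and since $\A_0 = R\langle x_1,\dots,x_{m+2}\rangle\langle\pi^s Y\rangle$ maps onto exactly $\B'$, the formal models coincide; no change-of-model is needed there.

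However, your handling of the non-uniform rescaling is flawed. You propose to "absorb $\max(r,r')$ into $n$ at the cost of shifting the constant," but this does not resolve the $r\neq r'$ discrepancy: $\pi^n\L'$ has generators $\pi^{n+r}\tfrac{\mathrm{d}}{\mathrm{d}x_i}$ and $\pi^{n+r'}\tfrac{\mathrm{d}}{\mathrm{d}Y}$, and for no uniform shift $c$ is this equal to $\pi^{n-c}\L_0$. Likewise, \cite[Proposition 3.3.(b)]{DcapOne} with $\sigma=\mathrm{id}$ identifies $\h{U_{\A}(\pi^n\L)}_K$ with $\h{U_{\A'}(\pi^{n'}\L')}_K$ only when the lattices are related by a single $\pi$-power scaling, not a componentwise one. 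What actually rescues the argument (and what the paper's terse "up to possibly rescaling some elements" is implicitly relying on) is that the brackets among the Lemma \ref{tubeiswse} basis $d_1,\dots,d_{m+3}$ degenerate: $[d_i,d_j]=0$ for all $3\le i\le j\le m+3$, $[d_1,d_2]=[d_2,d_j]=[d_1,d_3]=0$, and $[d_1,d_j]\in A d_1$ for $j\ge 4$. Consequently the rescaled set $d_1,\,d_2,\,\pi^{r'}d_3,\,\pi^r d_4,\dots,\pi^r d_{m+3}$ still spans an $(R,\A_0)$-Lie lattice for \emph{any} non-negative $r,r'$ (the coefficient of $d_1$ in each surviving bracket acquires an extra non-negative power of $\pi$), and remains a weakly $I$-standard basis (rescaling by $\pi^a$ preserves all conditions of Definition \ref{defwse} since $\pi$ is a unit in the affinoid algebra). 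This is the observation that makes $(\B',\L')$ genuinely obtained by reduction via $\iota$, after which Theorem \ref{standardgldim} applies directly; without it the non-uniform exponents $r\neq r'$ remain an unclosed gap.
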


\begin{proof}
The lattice $\L'$ can be obtained via reduction from the weakly standard basis in Lemma \ref{tubeiswse}, up to possibly rescaling some elements (which does not affect being a weakly standard basis). Since $\Sp B'$ is of dimension $m+1$ and the weakly standard embedding of Lemma \ref{tubeiswse} was of codimension $2$, the result follows directly from above.
\end{proof}

The proof of Theorem \ref{MainThm} is now almost finished: by Proposition \ref{tubular}, any smooth rigid analytic space admits an admissible covering by affinoid spaces $X_i=\Sp A_i$ of Kiehl standard form, such that the space $\Sp A_i\langle Z\rangle$ is of the shape as in Corollary \ref{Aregfortube} for each $i$. We caution the reader however that the well-behaved lattice produced by Corollary \ref{Aregfortube} is not a priori generated by a basis of $\T_X(X_i)$ and $\frac{\mathrm{d}}{\mathrm{d}Z}$, so that we cannot apply the theory of weakly standard embeddings directly to pass to $\Sp A_i$.

Instead, our final step consists in invoking Proposition \ref{smoothAuslander}.

Let $A=T_m\langle t^{-1}\rangle \langle \pi^sY\rangle/P(Y))$ be of Kiehl standard form as in Proposition \ref{localform}, and let $\epsilon \in K^\times$ such that 
\begin{equation*}
	B':=T_m\langle t^{-1}\rangle \langle \pi^sY\rangle\langle \epsilon^{-1}P\rangle\cong A\langle Z\rangle. 
\end{equation*}

As in subsection 5.3, we need to be careful about the various derivations on $\Sp B'$. Corollary \ref{Aregfortube} yields a basis of $\mathrm{Der}(B')$, written $\partial_1, \hdots, \partial_m, \partial_{m+1}:=\partial_{Y}$, obtained from the \'etale morphism
\begin{equation*}
	\Sp B'\to \Sp T_m\langle \pi^sY\rangle.
\end{equation*}

As a second basis, consider the \'etale morphism given by the composition
\begin{equation*}
	\Sp A\to \Sp B'\to \Sp T_m\langle \pi^sY\rangle \to \Sp T_m.
\end{equation*}
Pulling back $\pi^r\frac{\mathrm{d}}{\mathrm{d}x_i}$ from $\Sp T_m$ yields a basis $\mathrm{d}_i$ of $\mathrm{Der}_K(A)$, which together with $\frac{\mathrm{d}}{\mathrm{d}Z}$ yield a basis of $\mathrm{Der}_K(A\langle Z\rangle)=\mathrm{Der}_K(B')$.

As in subsection 5.3, we write $\sigma(\mathrm{d}_i)\in \mathrm{Der}_K(B')$ for a lift of $\mathrm{d}_i$. Explicitly, we take
\begin{equation*}
	\sigma(\mathrm{d}_i)=\partial_i-\partial_i(P)\cdot \partial_Y(P)^{-1} \partial_Y\in \mathrm{Der}_K(B'),
\end{equation*}
a derivation preserving $(P)$ whose induced derivation on the closed subvariety $\Sp A$ is precisely $\mathrm{d}_i$. Note that the above is well-defined, as $\partial_Y(P)$ can be assumed to be a unit in $B$ by \cite[Hilfssatz 1.5]{Kiehl}.

Similarly,
\begin{equation*}
	\frac{\mathrm{d}}{\mathrm{d}Z}=\epsilon\partial_Y(P)^{-1}\cdot \partial_Y,
\end{equation*}
since the isomorphism $B'\cong A\langle Z\rangle$ sends $\epsilon^{-1}P$ to $Z$.

Rearranging yields the equations
\begin{equation*}
	\partial_{m+1}=\partial_Y=\partial_Y(\epsilon^{-1}P)\frac{\mathrm{d}}{\mathrm{d}Z}
\end{equation*}
and
\begin{equation*}
	\partial_i=\sigma(\mathrm{d}_i)+\epsilon^{-1}\partial_i(P)\frac{\mathrm{d}}{\mathrm{d}Z}.
\end{equation*}

We now choose $r$ large enough so that 
\begin{equation*}
	g_i:=\partial_i(\epsilon^{-1}P)=\pi^r\epsilon^{-1}\frac{\mathrm{d}P}{\mathrm{d}x_i}\in \B'
\end{equation*}
and $r'$ large enough so that
\begin{equation*}
	g:=\partial_{m+1}(\epsilon^{-1}P)=\pi^{r'}\epsilon^{-1}\frac{\mathrm{d}P}{\mathrm{d}Y}\in \B'.
\end{equation*}

Set $\L':=\oplus_{i=1}^{m+1} \B' \partial_i$ and $\L''=\oplus_{i=1}^m \B'\sigma(\mathrm{d}_i)\oplus \B'\frac{\mathrm{d}}{\mathrm{d}Z}$. Replacing $\L'$ by $\pi^n\L'$ and $\L''$ by $\pi^n\L''$ for large $n$, we can assume that $\L'$ and $\L''$ are $(R, \B')$-Lie lattices of $\mathrm{Der}_K(B')$.

We have thus verified that $\L'$, $\L''$ and $\L:=\oplus \A \mathrm{d}_i$ satisfy the assumptions from subsection 5.3.

\begin{thm}
\label{passagefromtube}
Let $A=T_m\langle t^{-1}\rangle \langle \pi^sY\rangle/P(Y))$ be of Kiehl standard form as in Proposition \ref{localform}. Let $\A$ be the image of $R\langle x\rangle\langle t^{-1}\rangle\langle \pi^sY\rangle$ in $A$. Let $\L=\oplus_{i=1}^m \A d_i$ for $d_i$ as above. Then $\h{U_\A(\pi^n\L)}_K$ is Auslander regular for all sufficiently large $n$, of global dimension $\leq 2m+3$.
\end{thm}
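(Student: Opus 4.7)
The strategy is to reduce Auslander regularity on $\Sp A$ to Auslander regularity on the $(m+1)$-dimensional tubular neighbourhood $\Sp B'$ via the smooth projection $f : \Sp B' = \Sp A\langle Z\rangle \to \Sp A$, using Proposition \ref{smoothAuslander}. The payoff is that $\Sp B'$ sits inside the polydisc $\Sp T_{m+3}$ as a weakly standard subvariety (by Lemma \ref{tubeiswse}), so Auslander regularity on $\Sp B'$ is precisely the content of Corollary \ref{Aregfortube}. The setup preceding the theorem has already explicitly constructed bases and lattices fitting the template of subsection 5.3 under the identifications $\L' = \oplus \B'\partial_i$ and $\L'' = \oplus \B'\sigma(\mathrm{d}_i) \oplus \B'\frac{\mathrm{d}}{\mathrm{d}Z}$, so the whole argument reduces to two formal invocations.

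First, I will apply Corollary \ref{Aregfortube} with $t, P, s, \epsilon$ the data of the Kiehl standard form for $A$ and with the exponents $r, r'$ chosen in the setup. This will yield that $D'_n := \h{U_{\B'}(\pi^n\L')}_K$ is Auslander regular with global dimension $\leq 2m+4$ for all sufficiently large $n$. Next, I will apply Proposition \ref{smoothAuslander}: parts (i) and (iii) give that $D_n := \h{U_\A(\pi^n\L)}_K$ satisfies the Auslander condition and has finite global dimension, hence is Auslander regular, and combining part (ii) with Lemma \ref{injdimgldim} yields $\mathrm{gl.dim.}\,D_n = \mathrm{inj.dim.}\,D_n \leq (\mathrm{inj.dim.}\,D'_n) - 1 \leq 2m+3$, as required.

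The only real bookkeeping is to choose $n$ large enough that (a) both $\pi^n\L'$ and $\pi^n\L''$ are $(R,\B')$-Lie lattices, which requires clearing denominators coming from $g^{-1}$ in the formulae $\partial_{m+1} = g\frac{\mathrm{d}}{\mathrm{d}Z}$ and $\partial_i = \sigma(\mathrm{d}_i) + g_i\frac{\mathrm{d}}{\mathrm{d}Z}$, and (b) Corollary \ref{Aregfortube} is available with its scaling. Both conditions reduce to $n$ exceeding a single fixed constant depending only on the Kiehl standard form data, and have already been arranged in the preceding paragraphs; so no genuine obstacle remains beyond the verification that the basis data really satisfies the hypotheses of subsection 5.3, which again is carried out in the setup.
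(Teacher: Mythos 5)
Your proposal is correct and follows the same route as the paper: the paper's proof of Theorem \ref{passagefromtube} is precisely to combine Corollary \ref{Aregfortube} with Proposition \ref{smoothAuslander}, relying on the framework of subsection 5.3 set up in the preceding paragraphs. Your unpacking of how parts (i)--(iii) of Proposition \ref{smoothAuslander} and Lemma \ref{injdimgldim} deliver both Auslander regularity and the bound $\mathrm{gl.dim.}\leq 2m+3$ is accurate.
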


\begin{proof}
Combine Corollary \ref{Aregfortube} and Proposition \ref{smoothAuslander}.
\end{proof}

\begin{thm}
Let $X$ be a smooth rigid analytic $K$-variety. Then there exists an admisssible affinoid covering $(X_i)$, $X_i=\mathrm{Sp}A_i$, such that for each $i$, there exists an admissible affine formal model $\A_i\subseteq A_i$ and an $(R, \A_i)$-Lie lattice $\L_i\subseteq \T_X(X_i)$ such that
\begin{equation*}
\h{U_{\A_i}(\pi^n\L_i)}_K
\end{equation*}
is Auslander regular of global dimension $\leq 2\mathrm{dim} X+3$ for all $n\geq 0$.
\end{thm}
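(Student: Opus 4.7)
The plan is to combine Kiehl's local structure theorem (Proposition \ref{localform}) with the main technical result Theorem \ref{passagefromtube} and a lattice rescaling argument to extend the conclusion from ``sufficiently large $n$'' to ``all $n \geq 0$''.

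First, I invoke Proposition \ref{localform} to produce an admissible affinoid covering $\{X_i\}$ of $X$ in which each $X_i = \Sp A_i$ has Kiehl standard form. Since $A_i$ is finite \'etale over $T_{m_i}\langle t_i^{-1}\rangle$, we have $\dim X_i = m_i = \dim X$ by the equidimensionality convention. Choosing $s_i \geq 0$ large enough that the image $d_i$ of $Y$ in $A_i$ satisfies $|\pi^{s_i} d_i| \leq 1$, we may write $A_i \cong T_{m_i}\langle t_i^{-1}\rangle\langle \pi^{s_i}Y\rangle/(P_i(Y))$, and take $\A_i$ to be the image of $R\langle x_1,\dots,x_{m_i}\rangle\langle t_i^{-1}\rangle\langle \pi^{s_i}Y\rangle$ in $A_i$; this is an admissible affine formal model of $A_i$. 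Applying Theorem \ref{passagefromtube} to this data yields an $(R, \A_i)$-Lie lattice $\L_i^{(0)} = \oplus_{j=1}^{m_i} \A_i \mathrm{d}_j$ in $\T_X(X_i)$ such that $\h{U_{\A_i}(\pi^n \L_i^{(0)})}_K$ is Auslander regular of global dimension $\leq 2\dim X + 3$ for all $n \geq n_i$, for some integer $n_i \geq 0$.

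Finally, to produce a single lattice that works for every $n \geq 0$, I simply rescale: set $\L_i := \pi^{n_i}\L_i^{(0)}$, which is again an $(R, \A_i)$-Lie lattice of $\T_X(X_i)$ since $\L_i^{(0)}$ is. Then for any $n \geq 0$ we have $\pi^n \L_i = \pi^{n+n_i}\L_i^{(0)}$, so that $\h{U_{\A_i}(\pi^n\L_i)}_K = \h{U_{\A_i}(\pi^{n+n_i}\L_i^{(0)})}_K$ is Auslander regular of global dimension $\leq 2\dim X + 3$ by the previous step. The genuine content has already been absorbed into Theorem \ref{passagefromtube}, which itself packages the entire argument through weakly standard embeddings, the Kashiwara-type calculations for intermediate rings $D'_{\infty,n}$, and the smooth pullback machinery of Proposition \ref{smoothAuslander}; the only remaining obstacle is the bookkeeping above, together with the verification that Kiehl's local form is flexible enough to fit into the hypotheses of Theorem \ref{passagefromtube}, which is precisely the content of Corollary \ref{Aregfortube} and Lemma \ref{tubeiswse}.
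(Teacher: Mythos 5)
Your proposal is correct and takes essentially the same approach as the paper, which proves this theorem simply by combining Proposition \ref{localform} with Theorem \ref{passagefromtube}. The rescaling step $\L_i := \pi^{n_i}\L_i^{(0)}$ that you spell out to pass from ``sufficiently large $n$'' to ``all $n \geq 0$'' is exactly the (implicit) content the paper's one-line proof leaves to the reader, and your bookkeeping regarding $\dim X_i = m_i$ and the choice of formal model $\A_i$ matches the setup established just before Theorem \ref{passagefromtube}.
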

\begin{proof}
Combine Theorem \ref{passagefromtube} and Proposition \ref{localform}.
\end{proof}

\begin{defn}
	Let $X=\Sp A$ be a smooth affinoid $K$-variety. We say that $X$ is \textbf{D-regular} if there exists an admissible affine formal model $\A$ and an $(R, \A)$-Lie lattice $\L\subseteq \T_X(X)$ which is free as an $\A$-module such that $\h{U_{\A}(\pi^n\L)}_K$ is Auslander regular for all $n\geq 0$.
	We also call such an $\L$ a \textbf{regular Lie lattice}.
\end{defn}

The above then says that any smooth rigid analytic $K$-variety has an admissible covering by D-regular affinoids, as any affinoid of Kiehl standard form is D-regular.

\section{Applications to coadmissible $\w{\D}$-modules}
\subsection{The sheaves $\w{\D}$ and $\D_n$}
Before giving applications of Theorem \ref{MainThm}, we recall the geometric setup from \cite{DcapOne} and \cite{SixOp}.

Let $X$ be a smooth rigid analytic $K$-variety. There are two ways of embedding $\h{\B}c_K$ into a category which admits a well-behaved sheaf theory on $X$ (compare \cite[subsections 3.4, 4.2, 6.1]{SixOp}).

Firstly, the left heart $LH(\h{\B}c_K)$ is a Grothendieck abelian category with compact projective generators, equipped with a closed symmetric monoidal structure which makes the canonical embedding $I: \h{\B}c_K\to LH(\h{\B}c_K)$ lax symmetric monoidal. 

Secondly, the category $\mathrm{Ind}(\mathrm{Ban}_K)$ is an elementary quasi-abelian category, and the completed tensor product on Banach spaces induces a closed symmetric monoidal structure such that the dissection functor
\begin{align*}
	\mathrm{diss}:&\h{\B}c_K\to \mathrm{Ind}(\mathrm{Ban}_K)\\
	&V\mapsto ``\varinjlim" V_B,
\end{align*}
with the colimit ranging over all bounded $\pi$-adically complete $R$-submodules $B$ of $V$, is again lax symmetric monoidal.

The category of sheaves $\mathrm{Shv}(X, \mathrm{Ind}(\mathrm{Ban}_K))$ is then a closed symmetric monoidal quasi-abelian category admitting flat resolutions. Its left heart is a Grothendieck abelian category, and there are equivalences of closed symmetric monoidal categories
\begin{equation*}
	LH(\mathrm{Shv}(X, \mathrm{Ind}(\mathrm{Ban}_K)))\cong \mathrm{Shv}(X, LH(\mathrm{Ind}(\mathrm{Ban}_K)))\cong \mathrm{Shv}(X, LH(\h{\B}c_K)).
\end{equation*}
In particular, there is an equivalence of their derived categories
\begin{equation*}
	\mathrm{D}(\mathrm{Shv}(X, \mathrm{Ind}(\mathrm{Ban}_K)))\cong \mathrm{D}(\mathrm{Shv}(X, LH(\h{\B}c_K))).
\end{equation*}
and if $\A$ is a monoid object in $\mathrm{Shv}(X, \mathrm{Ind}(\mathrm{Ban}_K))$, then it can also be regarded as a monoid object in $\mathrm{Shv}(X, LH(\h{\B}c_K))$, with an equivalence of derived categories
\begin{equation*}
	\mathrm{D}(\mathrm{Mod}_{\mathrm{Shv}(X, \mathrm{Ind}(\mathrm{Ban}_K))}(\A))\cong \mathrm{D}(\mathrm{Mod}_{\mathrm{Shv}(X, LH(\h{\B}c_K))}(\A)).
\end{equation*}

As $\mathrm{diss}: \h{\B}c_K\to \mathrm{Ind}(\mathrm{Ban}_K)$ and $I: \mathrm{Ind}(\mathrm{Ban}_K)\to LH(\mathrm{Ind}(\mathrm{Ban}_K))$ are fully faithful and $LH(\h{\B}c_K)\cong LH(\mathrm{Ind}(\mathrm{Ban}_K))$, we will generally work in $LH(\h{\B}c_K)$ and regard the other categories as full subcategories, likewise for the corresponding sheaf categories. In particular, we routinely suppress the embedding functors from our notation. 

We adopt the following notation for tensor products: the completed tensor product $\h{\otimes}_K$ on $\h{\B}c_K$ induces the tensor product $-\widetilde{\otimes}_K-=\mathrm{H}^0(-\h{\otimes}^{\mathbb{L}}-)$ on $LH(\h{\B}c_K)$, while the tensor product on $\mathrm{Ind}(\mathrm{Ban}_K)$ is denoted $\overset{\rightarrow}{\otimes}_K$. We note that $\overset{\rightarrow}{\otimes}_K$ is exact, and the equivalence $\widetilde{\mathrm{diss}}: LH(\h{\B}c_K)\to LH(\mathrm{Ind}(\mathrm{Ban}_K))$ identifies $\widetilde{\otimes}_K$ with $\mathrm{H}^0(-\overset{\rightarrow}{\otimes}_K^{\mathbb{L}}-)$. In particular, if $V, W\in \mathrm{Ind}(\mathrm{Ban}_K)$, then
\begin{equation*}
	V\widetilde{\otimes}^{\mathbb{L}}_KW\cong V\widetilde{\otimes}_K W\cong V\overset{\rightarrow}{\otimes}_K W.
\end{equation*}

It might thus seem like no separate notation is necessary, but we caution the reader that for a monoid $A\in \mathrm{Ind}(\mathrm{Ban}_K)$, the relative tensor products $\widetilde{\otimes}_A$ and $\overset{\rightarrow}{\otimes}_A$ may differ (as $I: \mathrm{Ind}(\mathrm{Ban}_K)\to LH(\mathrm{Ind}(\mathrm{Ban}_K))$ does not preserve arbitrary coequalizers). However, \cite[Lemma 3.13]{SixOp} ensures that 
\begin{equation*}
	-\widetilde{\otimes}_A-\cong \mathrm{H}^0(-\overset{\rightarrow}{\otimes}_A^{\mathbb{L}}-),
\end{equation*}
so this distinction disappears on the derived level.

For a monoid $\A\in \mathrm{Shv}(X, LH(\h{\B}c))$, we write $\mathrm{D}(\A)=\mathrm{D}(\mathrm{Mod}_{\mathrm{Shv}(X, LH(\h{\B}c_K))}(\A))$. We will now introduce sheaves of differential operators as monoids in $\mathrm{Shv}(X, LH(\h{\B}c_K))$.

For any admissible open affinoid subvariety $Y=\Sp A\subseteq X$, we set 
\begin{equation*}
	\w{\D}_X(Y)=\w{U_A(\T_X(Y))}=\varprojlim_n \h{U_\A(\pi^n\L)}_K,
\end{equation*}
where $\A\subseteq A$ is an admissible affine formal model of $A$ and $\L\subseteq \T_X(Y)$ is an $(R, \A)$-Lie lattice. By \cite [Theorem 6.4]{SixOp}, this defines a sheaf of complete bornological $K$-algebras $\w{\D}_X$, i.e. a monoid in $\mathrm{Shv}(X, LH(\h{\B}c_K))$ taking values in $\h{\B}c_K$. Likewise, the structure sheaf $\O_X$ can be regarded as a sheaf of complete bornological $K$-algebras.

Suppose from now on that $X=\Sp A$ is affinoid with a free tangent sheaf, and fix an admissible affine formal model $\A\subseteq A$ and an $(R, \A)$-Lie lattice $\L\subseteq \T_X(X)$. Suppose that $\L$ is free as an $\A$-module, and abbreviate
\begin{equation*}
U_n=\h{U_\A(\pi^n\L)}_K.
\end{equation*}
We now have an isomorphism
\begin{equation*}
\w{\D}_X\cong \varprojlim_n \left(\O_X\widetilde{\otimes}_A U_n\right)
\end{equation*}
in $\mathrm{Mod}_{\mathrm{Shv}(X, LH(\h{\B}c_K))}(\O_X)$, but we caution the reader that the sheaf 
\begin{equation*}
\O_X\widetilde{\otimes}_AU_n
\end{equation*}
is not in general a sheaf of algebras: if $U=\Sp B$ is an affinoid subdomain, it is not clear that we can find an admissible affine formal model $\B$ such that the induced action of $\pi^n\L$ preserves $\B$. The example $A=K\langle x\rangle$, $\A=R\langle x\rangle$, $\L=\A \frac{\mathrm{d}}{\mathrm{d}x}$, $B=K\langle \pi^{-1}x\rangle$ may be instructive here.

To resolve this difficulty (and ensure exactness of localization), \cite[subsections 4.6--4.9]{DcapOne} introduces the subsite $X_n=X(\pi^n\L)$ of $\pi^n\L$-accessible subdomains. We will not recall the full definition, but only note the following key properties:
\begin{enumerate}[(i)]
\item the sites $X_n$ form a tower, i.e. any $\pi^n\L$-accessible subdomain is also $\pi^{n+1}\L$-accessible, similarly for coverings.
\item every affinoid subdomain is $\pi^n\L$-accessible for all sufficiently large $n$.
\item every finite affinoid covering is an admissible covering in $X_n$ for all sufficiently large $n$.
\item if $U=\Sp B$ is a $\pi^n\L$-accessible affinoid subdomain of $X$, then it admits an admissible affine formal model $\B\subseteq B$ such that $\B\otimes_\A \pi^n\L$ is an $(R, \B)$-Lie lattice in $\T_X(U)$. In particular, 
\begin{equation*}
B\widetilde{\otimes}_A U_n\cong B\h{\otimes}_AU_n\cong \h{U_\B(\B\otimes_\A\pi^n\L)}_K
\end{equation*}
by \cite[Proposition 2.3]{DcapOne}, making $\D_n:=\O_X\widetilde{\otimes}_A U_n|_{X_n}$ a sheaf of complete bornological $K$-algebras on $X_n$. 
\item if $U=\Sp B$ is a $\pi^n\L$-accessible affinoid subdomain of $X$, then the restriction map $\D_n(X)\to \D_n(U)$ is algebraically flat on both sides (\cite[Theorem 4.9]{DcapOne}).
\end{enumerate} 
In particular, we can write in a slight abuse of notation that $\w{\D}_X\cong \varprojlim \D_n$, meaning $\w{\D}_X|_{X_m}\cong\varprojlim_{n\geq m} \D_n|_{X_m}$ for all $m$. We will quite often omit the restriction symbols for the various subsites and write e.g. $\D_n\widetilde{\otimes}_{\w{\D}_X} \M$ instead of 
\begin{equation*}
\D_n\widetilde{\otimes}_{\w{\D}_X|_{X_n}} \M|_{X_n}
\end{equation*}
when $\M$ is a $\w{\D}_X$-module.

We now briefly review the theory of modules over these sheaves. As $\D_n(U)$ is Noetherian Banach for all $\pi^n\L$-accessible subdomains $U$, and the restriction morphism $\D_n(U)\to \D_n(U')$ is (algebraically) flat for any $\pi^n\L$-accessible subdomains $U'\subseteq U$ by \cite[Theorem 4.9]{DcapOne}, the notion of a coherent $\D_n$-module on $X_n$ is well-behaved (cf. \cite[Theorem 5.5]{DcapOne}).

Equipping the sections of a coherent $\D_n$-module on some affinoid with the canonical Banach structure yields an exact and fully faithful embedding of coherent $\D_n$-modules into $\mathrm{Mod}_{\mathrm{Shv}(X_n, LH(\h{\B}c_K))}(\D_n)$, allowing us to regard $\mathrm{D}^b_{\mathrm{coh}}(\D_n)$ as a full triangulated subcategory of $\mathrm{D}(\mathrm{Mod}_{\mathrm{Shv}(X_n, LH(\h{\B}c_K))}(\D_n))$.

By \cite[Theorem 6.7]{DcapOne} and \cite[Proposition 5.5]{SixOp}, $\w{\D}_X(X)$ is a Fr\'echet--Stein algebra, nuclear over $A$. A $\w{\D}_X$-module $\M\in \mathrm{Mod}_{\mathrm{Shv}(X, LH(\h{\B}c_K))}(\w{\D}_X)$ is called a coadmissible $\w{\D}_X$-module if it is the localisation of a coadmissible $\w{\D}_X(X)$-module, i.e. if $\M(X)$ is a coadmissible $\w{\D}_X(X)$-module (with its canonical Fr\'echet structure) and the natural morphism
\begin{equation*}
\w{\D}_X(U)\widetilde{\otimes}_{\w{\D}_X(X)} \M(X)\to \M(U)
\end{equation*}
is an isomorphism for every affinoid subdomain $U$.

\begin{lem}[{\cite[Lemma 6.6]{SixOp}}]
Let $X$ be an affinoid $K$-variety with free tangent sheaf. A complete bornological $\w{\D}_X$-module $\M$ is coadmissible if and only if $\D_n\widetilde{\otimes}_{\w{\D}_X}\M$ is a coherent $\D_n$-module on $X_n$ for each $n$ and $\M\cong \varprojlim \D_n\widetilde{\otimes}_{\w{\D}_X} \M$ via the natural morphism.
\end{lem}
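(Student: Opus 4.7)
My plan is to prove each direction by comparing the sheaf-level statement to the corresponding statement at the level of global sections, exploiting the Fr\'echet--Stein structure of $\w{\D}_X(X)$ together with the characterization of coadmissible modules given by Theorem \ref{Bornproperties} and \cite[Proposition 5.39]{SixOp}.

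For the forward direction, suppose $\M$ is coadmissible, so that $M := \M(X)$ is a coadmissible $\w{\D}_X(X)$-module and $\M(U) \cong \w{\D}_X(U)\widetilde{\otimes}_{\w{\D}_X(X)} M$ for every affinoid subdomain $U$. By Theorem \ref{Bornproperties}.(i), the finitely generated $\D_n(X)$-module $M_n := \D_n(X)\widetilde{\otimes}_{\w{\D}_X(X)} M$ agrees with $(\D_n\widetilde{\otimes}_{\w{\D}_X}\M)(X)$. For any $\pi^n\L$-accessible subdomain $U = \Sp B$, associativity of tensor products yields
\begin{equation*}
(\D_n\widetilde{\otimes}_{\w{\D}_X}\M)(U) \cong \D_n(U)\widetilde{\otimes}_{\w{\D}_X(X)}M \cong \D_n(U)\widetilde{\otimes}_{\D_n(X)} M_n,
\end{equation*}
which identifies $\D_n\widetilde{\otimes}_{\w{\D}_X}\M$ with the localization of $M_n$ on $X_n$; coherence then follows from flatness of $\D_n(X) \to \D_n(U)$. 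The identification $\M \cong \varprojlim (\D_n\widetilde{\otimes}_{\w{\D}_X}\M)$ follows from Theorem \ref{Bornproperties}.(ii) applied sectionwise, using pre-nuclearity to identify $\mathrm{R}\varprojlim$ with $\varprojlim$ and noting that each affinoid subdomain is $\pi^n\L$-accessible for $n$ sufficiently large.

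Conversely, assume the two sheaf-theoretic conditions, and set $M_n := (\D_n\widetilde{\otimes}_{\w{\D}_X}\M)(X)$, which is finitely generated over $\D_n(X)$ by coherence; the hypothesis yields $\M(X) \cong \varprojlim M_n$. To apply \cite[Proposition 5.39]{SixOp} and conclude that $\M(X)$ is coadmissible over $\w{\D}_X(X)$, it remains to check that the natural map $\D_n(X) \widetilde{\otimes}_{\w{\D}_X(X)} \M(X) \to M_n$ is an isomorphism. I would establish this by taking global sections of the sheaf-level identity $\D_n \widetilde{\otimes}_{\D_{n+k}}(\D_{n+k}\widetilde{\otimes}_{\w{\D}_X}\M) \cong \D_n\widetilde{\otimes}_{\w{\D}_X}\M$ (valid between coherent $\D_n$-modules on $X_n$) and passing to the limit in $k$. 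The localization formula $\M(U) \cong \w{\D}_X(U)\widetilde{\otimes}_{\w{\D}_X(X)}\M(X)$ for general affinoid $U$ is then recovered by choosing $n$ with $U$ being $\pi^n\L$-accessible and combining the coherence of each $\D_n\widetilde{\otimes}_{\w{\D}_X}\M$ sectionwise with the inverse limit description.

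The main obstacle is bookkeeping between the subsites $X_n$, arbitrary affinoid subdomains, and global sections: one must ensure that tensor products commute with global sections in the coherent case, that pre-nuclearity lets one pass from $\mathrm{R}\varprojlim$ to $\varprojlim$ without losing strict exactness, and that the inverse limit over the tower $\{X_n\}$ genuinely produces a coadmissible $\w{\D}_X$-module on $X$. The substantive input is already contained in Theorem \ref{Bornproperties} and \cite[Proposition 5.39]{SixOp}; the remaining work is to transport that algebraic characterization of coadmissibility faithfully to the sheaf-theoretic setting.
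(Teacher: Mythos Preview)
The paper does not prove this lemma; it is stated with the citation \cite[Lemma 6.6]{SixOp} and no argument is given here. So there is no proof in the paper to compare your proposal against.

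Your reconstruction is reasonable and follows the expected lines. One point in the converse direction deserves tightening: the phrase ``passing to the limit in $k$'' to obtain $\D_n(X)\widetilde{\otimes}_{\w{\D}_X(X)}\M(X)\cong M_n$ is not quite the right mechanism, since commuting a completed tensor product with an inverse limit is exactly the sort of step that requires justification. What the sheaf-level identity $\D_n\widetilde{\otimes}_{\D_{n+1}}(\D_{n+1}\widetilde{\otimes}_{\w{\D}_X}\M)\cong \D_n\widetilde{\otimes}_{\w{\D}_X}\M$ actually gives you, on global sections, is that the transition maps $\D_n(X)\otimes_{\D_{n+1}(X)}M_{n+1}\to M_n$ are isomorphisms. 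This says precisely that $(M_n)_n$ is a coherent sheaf for the Fr\'echet--Stein algebra $\w{\D}_X(X)$ in the sense of Schneider--Teitelbaum, so $\M(X)=\varprojlim M_n$ is coadmissible directly from the original definition in \cite{ST}, and the isomorphism $\D_n(X)\widetilde{\otimes}_{\w{\D}_X(X)}\M(X)\cong M_n$ then follows from \cite[Corollary 3.1]{ST}. This bypasses the appeal to \cite[Proposition 5.39]{SixOp} and avoids the limit-commutation issue entirely. The remaining verification that $\M$ is the localization of $\M(X)$ proceeds as you indicate.
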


In \cite{SixOp}, we considered the derived category $\mathrm{D}(\w{\D}_X)=\mathrm{D}(\mathrm{Mod}_{\mathrm{Shv}(X, LH(\h{\B}c_K))}(\w{\D}_X))$ of sheaves of complete bornological $\w{\D}_X$-modules and suggested the following derived analogue of coadmissibility.

\begin{defn}
Let $X$ be a smooth affinoid with free tangent sheaf. An object $\M^\bullet\in \mathrm{D}(\w{\D}_X)$ is called a \textbf{$\C$-complex} if
\begin{enumerate}[(i)]
\item $\M_n^\bullet:=\D_n\widetilde{\otimes}^\mathbb{L}_{\w{\D}_X} \M^\bullet \in \mathrm{D}^b_{\mathrm{coh}}(\D_n)$ for each $n$.
\item the natural morphism 
\begin{equation*}
\mathrm{H}^j(\M^\bullet)\to \varprojlim \mathrm{H}^j(\M_n^\bullet)
\end{equation*}
is an isomorphism for each $j$.
\end{enumerate}
\end{defn}

We denote the full subcategory of $\C$-complexes by $\mathrm{D}_\C(\w{\D}_X)$.\\
We also note that the second condition in the definition can be phrased as follows: for any $m$, the natural morphisms $\M^\bullet|_{X_m}\to \M_n^\bullet|_{X_m}$ for any $n\geq m$ exhibit $\M^\bullet|_{X_m}$ as a homotopy limit of $\M_n^\bullet|_{X_m}$ (see \cite[Corollary 8.17]{SixOp}).

\begin{prop}[{\cite[Proposition 8.4, Corollary 8.6, Proposition 8.2, Proposition 8.5]{SixOp}}]
Let $X$ be a smooth affinoid $K$-variety.
\begin{enumerate}[(i)]
\item The category $\mathrm{D}_\C(\w{\D}_X)$ is a full triangulated subcategory, stable under the truncation functors.
\item If $\M^\bullet\in \mathrm{D}_\C(\w{\D}_X)$, then $\mathrm{H}^j(\M^\bullet)$ is coadmissible for each $j$, with 
\begin{equation*}
\D_n\widetilde{\otimes}_{\w{\D}_X}\mathrm{H}^j(\M^\bullet)\cong \mathrm{H}^j(\D_n\widetilde{\otimes}^\mathbb{L}_{\w{\D}_X}\M^\bullet)
\end{equation*}
for all $n$.
\item If $\M^\bullet\in \mathrm{D}(\w{\D}_X)$, then $\M^\bullet$ is a $\C$-complex if and only if each of its cohomology groups is coadmissible and $\D_n\widetilde{\otimes}^\mathbb{L}_{\w{\D}_X} \M^\bullet$ is bounded for each $n$.

In particular, the definition of $\C$-complex does not depend on the choice of formal model and Lie lattice, and a bounded object $\M^\bullet\in \mathrm{D}^b(\w{\D}_X)$ is a $\C$-complex if and only if each of its cohomology groups is coadmissible.
\end{enumerate}
\end{prop}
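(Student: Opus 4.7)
My plan is to work locally, reducing to the case of a D-regular affinoid $X = \Sp A$ with a free $(R,\A)$-Lie lattice $\L$ provided by Theorem \ref{MainThm}, and to prove (ii) first, then derive (iii) from it, and finally (i) from (iii). The central tool is Theorem \ref{Bornproperties}: for a coadmissible module $M$ over $U = \w{\D}_X(X)$, one has $\D_n \widetilde{\otimes}_U M \cong \D_n \h{\otimes}^{\mathbb{L}}_U M$ concentrated in degree zero, and the tower $(M_n)$ is pre-nuclear, so in particular $\mathrm{R}^1\varprojlim M_n = 0$.

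For (ii), let $\M^\bullet \in \mathrm{D}_\C(\w{\D}_X)$. The flatness of $\D_{n+1} \to \D_n$ (property (v) of $X_n$) gives $\D_n \h{\otimes}^{\mathbb{L}}_{\D_{n+1}} \M_{n+1}^\bullet \cong \M_n^\bullet$, and degeneracy of the associated spectral sequence yields compatibility isomorphisms $\D_n \widetilde{\otimes}_{\D_{n+1}} \mathrm{H}^j(\M_{n+1}^\bullet) \cong \mathrm{H}^j(\M_n^\bullet)$. Thus the sections $\{\mathrm{H}^j(\M_n^\bullet)(X)\}_n$ form a coadmissibility tower, whose inverse limit is a coadmissible $U$-module. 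The defining condition (ii) of a $\C$-complex, combined with the vanishing of $\mathrm{R}^1 \varprojlim$ on the pre-nuclear tower, identifies $\mathrm{H}^j(\M^\bullet)(X)$ with this coadmissible limit. The formula $\D_n \widetilde{\otimes}_{\w{\D}_X} \mathrm{H}^j(\M^\bullet) \cong \mathrm{H}^j(\M_n^\bullet)$ then follows from Theorem \ref{Bornproperties}.(i), and one localizes to $\pi^m\L$-accessible subdomains by base change.

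For (iii), the forward implication follows from (ii) combined with the definition. Conversely, assume that each $\mathrm{H}^q(\M^\bullet)$ is coadmissible and that $\M_n^\bullet$ is bounded. Theorem \ref{Bornproperties}.(i) forces $\D_n \h{\otimes}^{\mathbb{L}}_{\w{\D}_X} \mathrm{H}^q(\M^\bullet)$ to be concentrated in degree zero, so the hypercohomology spectral sequence
\begin{equation*}
E_2^{p,q} = \mathrm{H}^p(\D_n \h{\otimes}^{\mathbb{L}}_{\w{\D}_X} \mathrm{H}^q(\M^\bullet)) \Rightarrow \mathrm{H}^{p+q}(\M_n^\bullet)
\end{equation*}
degenerates at $E_2$, yielding $\mathrm{H}^j(\M_n^\bullet) \cong \D_n \widetilde{\otimes}_{\w{\D}_X} \mathrm{H}^j(\M^\bullet)$, which is coherent on $X_n$. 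The limit condition follows from coadmissibility of $\mathrm{H}^j(\M^\bullet)$, which gives $\mathrm{H}^j(\M^\bullet) \cong \varprojlim \D_n \widetilde{\otimes}_{\w{\D}_X} \mathrm{H}^j(\M^\bullet)$.

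For (i), consider a distinguished triangle $\M^\bullet \to \N^\bullet \to \cP^\bullet \to$ with $\M^\bullet, \N^\bullet \in \mathrm{D}_\C$. The triangulated functor $\D_n \h{\otimes}^{\mathbb{L}}_{\w{\D}_X}$ produces a distinguished triangle in $\mathrm{D}(\D_n)$, forcing $\cP_n^\bullet$ to be bounded; the long exact sequence in cohomology together with $\C_U$ being an abelian subcategory of $\mathrm{Mod}_{\h{\B}c_K}(U)$ closed under extensions shows that $\mathrm{H}^j(\cP^\bullet)$ is coadmissible, so (iii) applies. For truncation stability, $\tau^{\leq k}\M^\bullet$ has coadmissible cohomology supported in degrees $\leq k$, and the degeneracy of the spectral sequence used in (iii) shows $\D_n \h{\otimes}^{\mathbb{L}}_{\w{\D}_X} \tau^{\leq k}\M^\bullet$ has cohomology concentrated in $[a_n,k]$ where $[a_n,b_n]$ is the range of $\M_n^\bullet$, hence bounded, and (iii) applies again. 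The ``in particular'' clauses follow directly from (iii). The main obstacle I anticipate is compatibility of the sheaf-level limit condition with bornological section-wise identifications: one must carefully invoke the pre-nuclearity from Theorem \ref{Bornproperties}.(ii) so that the inverse-limit-of-coherent-towers construction genuinely produces an object in $\mathrm{D}(\w{\D}_X)$ rather than merely in the abstract sheaf-of-modules category, and to justify convergence of the spectral sequence in the enriched derived setting.
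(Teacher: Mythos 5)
This proposition is not proved in the paper; it is imported verbatim from \cite[Propositions 8.2, 8.4, 8.5, Corollary 8.6]{SixOp}, so there is no ``paper's own proof'' to compare against. Evaluating your argument on its merits, the overall skeleton — establish (ii) via the coadmissibility tower $(\mathrm{H}^j(\M_n^\bullet))$, establish (iii) via a hypercohomology spectral sequence, and deduce (i) from (iii) and the Serre-subcategory properties of $\C_U$ — is the right shape. However, there are two genuine gaps.

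First, the opening reduction ``to a D-regular affinoid $X = \Sp A$ with a free $(R,\A)$-Lie lattice $\L$ provided by Theorem \ref{MainThm}'' is both circular and irrelevant. Theorem \ref{MainThm} is proved in section 5, downstream of this proposition, and more importantly neither Auslander regularity nor D-regularity plays any role in what you are asked to prove: the statement concerns coadmissibility and boundedness, not global dimension, and indeed you never invoke regularity anywhere in the body of your argument. You also cannot ``work locally by a covering'' here in the naive sense, because the sheaves $\D_n$ and hence the very definition of $\C$-complex are tied to a fixed choice of $(\A,\L)$ on a fixed affinoid $X$; part (iii) is precisely the statement that this apparatus yields a choice-independent class, so one may not first change the covering and lattice before proving it. What you do correctly need — and what you should say instead — is that $X$ is affinoid with free tangent sheaf, so that an admissible affine formal model and a free Lie lattice exist and the $\D_n$ are defined; this is the blanket assumption for the sheaves $\D_n$ throughout section 6.

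Second, in the converse direction of (iii) the hypercohomology spectral sequence $E_2^{p,q} = \mathrm{H}^p(\D_n\,\widehat{\otimes}^{\mathbb{L}}_{\w{\D}_X}\,\mathrm{H}^q(\M^\bullet)) \Rightarrow \mathrm{H}^{p+q}(\M_n^\bullet)$ requires a convergence argument. You apply it to a general $\M^\bullet \in \mathrm{D}(\w{\D}_X)$, which may be unbounded in both directions; the Cartan--Eilenberg/Postnikov filtration underlying such a spectral sequence is then neither obviously exhaustive nor Hausdorff, and the fact that each $\M_n^\bullet$ is bounded does not by itself rescue convergence of the spectral sequence for the unbounded source. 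One needs either to truncate and pass to a limit, or to argue directly with the Postnikov tower and the vanishing of higher derived tensor products of coadmissible modules (Theorem \ref{Bornproperties}.(i)) to show $\mathrm{H}^j(\M_n^\bullet) \cong \D_n\,\widetilde{\otimes}_{\w{\D}_X}\,\mathrm{H}^j(\M^\bullet)$; the degeneracy at $E_2$ alone is not enough without convergence.

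The remaining steps — flatness of $\D_{n+1} \to \D_n$ producing the tower isomorphisms in (ii), coadmissibility from the tower plus $\mathrm{R}^1\varprojlim = 0$, and the two-out-of-three argument for (i) using that $\C_U$ is abelian and closed under extensions — are sound, though you should note explicitly that the acyclicity of coherent $\D_{n+1}$-modules for $\D_n\,\widetilde{\otimes}_{\D_{n+1}}\,{-}$ in the bornological setting is what turns algebraic flatness into the degeneracy you invoke, rather than leaving it implicit in the phrase ``degeneracy of the associated spectral sequence.''
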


It thus becomes straightforward to develop a theory of $\C$-complexes on general smooth rigid analytic $K$-varieties by working locally, but we stress that usually, the sheaves $\D_n$ are only considered on an affinoid, since they depend on a choice of Lie lattice.

As in the classical theory of algebraic $\D$-modules, tensoring with the sheaf of top differentials $\Omega_X:=(\Omega_X^1)^{\wedge \mathrm{dim}X}$ yields an equivalence between left $\w{\D}_X$-modules and right $\w{\D}_X$-modules (side-changing, \cite[Theorem 6.11]{SixOp}).

We can also define analogues of the six $\D$-module operations (see \cite[section 7]{DcapOne}):
If $f: X\to Y$ is a morphism between smooth rigid analytic $K$-varieties, we can form the transfer bimodule 
\begin{equation*}
\w{\D}_{X\to Y}:=\O_X\widetilde{\otimes}_{f^{-1}\O_Y} f^{-1} \w{\D}_Y,
\end{equation*}
a $(\w{\D}_X, f^{-1}\w{\D}_Y)$-bimodule in $\mathrm{Shv}(X, LH(\h{\B}c_K))$.\\
This allows us to define the extraordinary inverse image functor 
\begin{align*}
f^!: &\mathrm{D}(\w{\D}_Y)\to \mathrm{D}(\w{\D}_X)\\
&\M^\bullet \mapsto \w{\D}_{X\to Y} \widetilde{\otimes}^\mathbb{L}_{f^{-1}\w{\D}_Y} f^{-1} \M^\bullet[\mathrm{dim} X-\mathrm{dim}Y],
\end{align*}
as well as the direct image functor for right modules
\begin{align*}
f_+^r: &\mathrm{D}(\w{\D}_X^\mathrm{op})\to \mathrm{D}(\w{\D}_Y^\mathrm{op})\\
&\M^\bullet \mapsto \mathrm{R}f_*(\M^\bullet \widetilde{\otimes}^\mathbb{L}_{\w{\D}_X} \w{\D}_{X\to Y}).
\end{align*}
The direct image functor $f_+$ for left modules is then obtained via side-changing.

\begin{prop}[{\cite[Theorem 9.11, Corollary 9.16]{SixOp}}]
	\label{Ccomplexstable}
	\leavevmode
\begin{enumerate}[(i)]
\item If $f$ is smooth, then $f^!$ preserves $\C$-complexes.
\item If $f$ is projective and $K$ is discretely valued, then $f_+$ preserves $\C$-complexes.
\end{enumerate}
\end{prop}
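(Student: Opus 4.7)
The plan is to work locally on both $X$ and $Y$, reducing to explicit computations on affinoids where we can choose compatible admissible formal models and Lie lattices. In each case, we need to verify (a) coadmissibility of the cohomology sheaves of $f^!\M^\bullet$ or $f_+\M^\bullet$, and (b) boundedness of $\D_n\widetilde{\otimes}^{\mathbb{L}}_{\w{\D}} f^!\M^\bullet$ (respectively $f_+$) for each $n$. By the characterization of $\C$-complexes in \cite[Proposition 8.2, 8.5]{SixOp}, it suffices to check these two conditions; coadmissibility of the cohomology then gives everything via $\M \mapsto \varprojlim \D_n\widetilde{\otimes}_{\w{\D}}\M$.

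For part (i), locally any smooth morphism factors as an \'etale morphism composed with a projection $Y\times\mathbb{D}^d\to Y$ from a polydisc bundle. In the \'etale case, the transfer bimodule is simply $\O_X\widetilde{\otimes}_{f^{-1}\O_Y}f^{-1}\w{\D}_Y$, and transporting Lie lattices via $df$ gives an isomorphism $\D_{n,X}\widetilde{\otimes}^{\mathbb{L}}_{\w{\D}_X}f^!\M^\bullet\cong f^*(\D_{n,Y}\widetilde{\otimes}^{\mathbb{L}}_{\w{\D}_Y}\M^\bullet)$ up to a shift, from which both conditions follow immediately. In the projection case, the situation on $X$ adds $d$ new "vertical" derivations $\partial/\partial z_1,\dots,\partial/\partial z_d$; one arranges for the Lie lattice on $X$ to decompose as the pullback of a Lie lattice on $Y$ plus $\bigoplus \B\cdot \partial_i$. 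A direct calculation (very much in the spirit of subsection 5.3, where the analogous pullback was analyzed at the level of a single $D_n$) then shows that $\D_{n,X}\widetilde{\otimes}^{\mathbb{L}}_{f^{-1}\D_{n,Y}}(-)$ is exact, which transfers the coadmissibility and boundedness from $\M^\bullet$ to $f^!\M^\bullet$.

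For part (ii), I would factor $f$ locally as a closed immersion $\iota: X\hookrightarrow\mathbb{P}^N\times Y$ followed by the projection $p:\mathbb{P}^N\times Y\to Y$. For $\iota$, one reduces further (after passing to a suitable admissible covering of $\mathbb{P}^N\times Y$) to weakly standard embeddings of codimension one, for which the Kashiwara-type equivalence of subsection 5.2 shows explicitly that the pushforward preserves coherence at each level $\D_n$ and is compatible with the inverse limit presentation of $\w{\D}$. For the projection $p$, the two key ingredients are: Grothendieck's vanishing $\mathrm{R}^jp_*\F=0$ for $j>N$ when $\F$ is coherent on $\mathbb{P}^N\times Y$; and the fact that $\M^\bullet\widetilde{\otimes}^{\mathbb{L}}_{\w{\D}_X}\w{\D}_{X\to\mathbb{P}^N\times Y}$ can, after $\D_n$-base change, be represented locally by a bounded complex of finite projective modules by Theorem \ref{MainThm} and Lemma \ref{gldimviaborn}. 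Together these give boundedness and coherence of $\D_{n,Y}\widetilde{\otimes}^{\mathbb{L}}_{\w{\D}_Y}(p_+\M^\bullet)$ for each $n$.

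The hard part will be the interchange of $\mathrm{R}p_*$ with the derived inverse limit $\mathrm{R}\varprojlim_n$ implicit in the Fr\'echet--Stein presentation of $\w{\D}_Y$: one must know that the inverse system $\{\D_n\widetilde{\otimes}^{\mathbb{L}}_{\w{\D}_X}\M^\bullet\}$ remains pre-nuclear (in the sense of \cite[Definition 5.24]{SixOp}) after applying $\mathrm{R}p_*$, so that $\mathrm{R}^1\varprojlim$ vanishes and one may identify $p_+\M^\bullet$ with $\varprojlim_n p_+(\D_n\widetilde{\otimes}^{\mathbb{L}}\M^\bullet)$. This is precisely the point where the Auslander regularity results of Section 4 become indispensable, since they allow one to replace the terms in the inverse system by bounded complexes of finitely generated projectives whose canonical Banach structures make the nuclearity estimates tractable. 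Once this technical step is carried out, coadmissibility of all cohomology sheaves and boundedness of $\D_n$-base change follow in tandem.
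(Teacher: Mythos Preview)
The paper does not prove this proposition at all: it is a direct citation of \cite[Theorem 9.11, Corollary 9.16]{SixOp}, and the present paper simply imports the result. So there is no ``paper's own proof'' to compare against beyond the reference.

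Your sketch is broadly a plausible outline for how such a result might be established, but there is a significant conceptual problem in part (ii): you invoke Theorem~\ref{MainThm} and the Auslander regularity results of Section~4, and even declare them ``indispensable'' for the interchange of $\mathrm{R}p_*$ with the derived inverse limit. This cannot be right. The cited results in \cite{SixOp} logically precede the present paper, so their proofs cannot depend on Theorem~\ref{MainThm}; and indeed the present paper \emph{uses} Proposition~\ref{Ccomplexstable} (e.g.\ in the proof of Theorem~\ref{projectionformula}) as an independent input alongside the Auslander regularity theorems. The boundedness you need for $f_+$ in the projective case comes from the Spencer (relative de Rham) resolution of the transfer bimodule, which is of finite length regardless of any global-dimension statement, together with Kiehl's finiteness theorem for proper morphisms; the pre-nuclearity/Mittag--Leffler step is handled in \cite{SixOp} by the same relative-nuclearity machinery recalled in Section~3, not by finite projective resolutions. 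So while your factorisation strategy is the standard one, the role you assign to Auslander regularity is both unnecessary and misleading about the logical structure of the two papers.
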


We also define the duality functor
\begin{align*}
\mathbb{D}=\mathbb{D}_X: &\mathrm{D}(\w{\D}_X)\to \mathrm{D}(\w{\D}_X)^\mathrm{op}\\
&\M^\bullet \mapsto \mathrm{R}\mathcal{H}om_{\w{\D}_X} (\M^\bullet, \w{\D}_X)\widetilde{\otimes}_{\O_X} \Omega_X^{\otimes-1}[\mathrm{dim}X].
\end{align*}

As a first application of Theorem \ref{MainThm}, we can remove the discreteness assumption from \cite[Theorem 9.17, Corollary 9.18]{SixOp}: 

\begin{prop}[{cf. \cite[Theorem 9.17, Corollary 9.18]{SixOp}}]
\label{dualsofCcomplexes}
\leavevmode
\begin{enumerate}[(i)]
\item The duality functor $\mathbb{D}$ preserves $\C$-complexes.
\item If $\M^\bullet\in \mathrm{D}_\C(\w{\D}_X)$, then the natural morphism $\M^\bullet\to \mathbb{D}\mathbb{D}\M^\bullet$ is an isomorphism.
\item If $\M$ is a coadmissible $\w{\D}_X$-module, then $\mathcal{E}xt^j_{\w{\D}_X}(\M, \w{\D}_X)$ is a right coadmissible $\w{\D}_X$-module such that on any affinoid subdomain $U$ with free tangent sheaf, we have
\begin{equation*}
\mathcal{E}xt^j_{\w{\D}_X}(\M, \w{\D}_X)(U)=\underline{\mathrm{Ext}}^j_{\w{\D}_X(U)} (\M(U), \w{\D}_X(U)).
\end{equation*}
\end{enumerate}
\end{prop}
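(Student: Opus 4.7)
The three statements are local on $X$, so by Theorem \ref{MainThm} we may assume throughout that $X=\Sp A$ is D-regular with an admissible affine formal model $\A$ and a regular Lie lattice $\L\subseteq \T_X(X)$, so that $D_n:=\h{U_\A(\pi^n\L)}_K$ is Auslander regular with $\mathrm{gl.dim.}D_n\leq 2\dim X+3$ uniformly in $n$. The strategy is to reduce all three assertions to their finite-level analogues over $D_n$, where Auslander regularity and finite global dimension give boundedness and biduality, and then take an inverse limit along the Fr\'echet--Stein tower using Lemma \ref{coadext} and Theorem \ref{Bornproperties}.

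For (i) and (ii): given $\M^\bullet\in \mathrm{D}_\C(\w{\D}_X)$, set $\M_n^\bullet:=\D_n\widetilde{\otimes}^{\mathbb{L}}_{\w{\D}_X}\M^\bullet\in \mathrm{D}^b_{\mathrm{coh}}(\D_n)$. Since $D_n$ has finite global dimension, $\mathbb{D}_n\M_n^\bullet:=\mathrm{R}\mathcal{H}om_{\D_n}(\M_n^\bullet,\D_n)\widetilde{\otimes}_{\O_X}\Omega_X^{\otimes -1}[\dim X]$ is again bounded with coherent cohomology, and the natural biduality morphism $\M_n^\bullet\to \mathbb{D}_n\mathbb{D}_n\M_n^\bullet$ is an isomorphism (a finitely generated module over an Auslander regular ring admits a finite resolution by finitely generated projectives, and biduality is trivial on finite projectives). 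To descend this to $\w{\D}_X$ I would show that $\mathbb{D}\M^\bullet$ is compatible with the tower, i.e. $\D_n\widetilde{\otimes}^{\mathbb{L}}_{\w{\D}_X}\mathbb{D}\M^\bullet\cong \mathbb{D}_n\M_n^\bullet$, using Lemma \ref{coadext}(iii) at each step together with the Fr\'echet--Stein nuclearity from Proposition \ref{DninfFS}-style arguments and the technical lemmas on derived completed tensor products and homotopy limits promised in the appendix. Boundedness of $\mathbb{D}_n\M_n^\bullet$ uniformly in $n$ then forces $\mathbb{D}\M^\bullet$ to be bounded and to have coadmissible cohomology groups (by the last part of Proposition~\ref{Ccomplexstable}'s framework), proving (i); biduality (ii) then follows by combining the $\D_n$-level biduality isomorphisms and the recovery $\M^\bullet\cong \mathrm{holim}\,\M_n^\bullet$.

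For (iii), take a coadmissible $\w{\D}_X$-module $\M$, so $\M(X)$ is a coadmissible $\w{\D}_X(X)$-module with $\M_n:=D_n\widetilde{\otimes}_{\w{\D}_X(X)}\M(X)$ finitely generated over $D_n$. By Lemma \ref{coadext}, $\underline{\mathrm{Ext}}^j_{\w{\D}_X(X)}(\M(X),\w{\D}_X(X))\cong \varprojlim \underline{\mathrm{Ext}}^j_{D_n}(\M_n,D_n)$ is a coadmissible right $\w{\D}_X(X)$-module, and by Auslander regularity it vanishes for $j$ larger than the uniform bound. The identification with the section of $\mathcal{E}xt^j_{\w{\D}_X}(\M,\w{\D}_X)$ on an affinoid subdomain $U\subseteq X$ with free tangent sheaf proceeds by choosing a $\pi^n\L$-accessible presentation, computing $\mathrm{R}\mathcal{H}om$ on $U$ via a finite projective resolution of $\M_n$ on $U_n:=U(\pi^n\L)$ (available thanks to finite global dimension of $\D_n(U)$), and passing to the limit; flatness of the restriction maps $\D_n(X)\to \D_n(U)$ \cite[Theorem 4.9]{DcapOne} ensures that the local Ext is the localization of the global one at each finite stage.

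The main obstacle is the commutation of $\mathrm{R}\mathcal{H}om$ with the homotopy limit $\w{\D}_X\cong \mathrm{holim}\,\D_n$: one needs that for $\C$-complexes $\M^\bullet$, the pro-system $(\mathrm{R}\mathcal{H}om_{\D_n}(\M_n^\bullet,\D_n))_n$ satisfies the bornological Mittag-Leffler/pre-nuclearity condition that makes $\mathrm{R}^1\varprojlim$ vanish and identifies the limit with the correct $\mathrm{R}\mathcal{H}om_{\w{\D}_X}$. This is the content of the appendix's technical lemmata on derived completed tensor products and direct products, and once invoked it converts uniform finite global dimension of the $D_n$ directly into the desired coadmissibility and boundedness statements.
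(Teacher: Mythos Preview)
Your proposal is correct and follows the same approach as the paper: reduce to a D-regular affinoid via Theorem \ref{MainThm}, observe that Auslander regularity of $\D_n(X)$ forces $\mathrm{R}\mathcal{H}om_{\D_n}(-,\D_n)$ to send $\mathrm{D}^b_{\mathrm{coh}}(\D_n)$ to $\mathrm{D}^b_{\mathrm{coh}}(\D_n^{\mathrm{op}})$ (and gives biduality at each finite level), and then pass to the limit. The one point of divergence is where you locate the limit argument: you attribute the commutation of $\mathrm{R}\mathcal{H}om$ with the homotopy limit to ``the appendix's technical lemmata,'' but the paper does not use its appendix here at all --- it simply cites \cite[Theorem 9.17, Corollary 9.18]{SixOp} and \cite[Proposition 5.3]{DcapThree}, where the entire limit/Mittag-Leffler machinery for duality was already set up (under a discreteness hypothesis on $K$). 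The only gap in those earlier proofs was the boundedness of the finite-level dual, and that is precisely what Auslander regularity supplies. So your sketch is faithful to the underlying argument, but the paper's proof is a one-line reduction to existing literature rather than a fresh invocation of this paper's appendix results on tensor products (which are reserved for the projection and adjunction formulae). Also, your parenthetical pointer to ``Proposition~\ref{Ccomplexstable}'s framework'' is off-target: that proposition concerns $f^!$ and $f_+$, not duality.
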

\begin{proof}
	Suppose that $X$ is a D-regular affinoid, so that $\w{\D}_X  \cong \varprojlim \D_n$ with $\D_n(X)$ Auslander regular. In particular, $\mathrm{R}\mathcal{H}om_{\D_n}(-, \D_n)$ sends $\mathrm{D}^b_{\mathrm{coh}}(\D_n)$ to $\mathrm{D}^b_{\mathrm{coh}}(\D_n^{\mathrm{op}})^{\mathrm{op}}$. The rest of the proof is then as in \cite[Theorem 9.17]{SixOp}. For (iii), see \cite[Proposition 5.3]{DcapThree} together with the argument in \cite[Theorem 9.17]{SixOp}.
\end{proof}

\begin{prop}
	\label{Bernstein}
	Let $X$ be a smooth rigid analytic $K$-variety of dimension $d$. If $\M$ is a coadmissible $\w{\D}_X$-module, then $\mathcal{E}xt^j_{\w{\D}_X}(\M, \w{\D}_X)=0$ for all $j>d$.
	
	In particular, Bernstein's inequality holds: $j(\M)\leq d$ for any coadmissible $\w{\D}_X$-module.
	
	If $\N\subseteq \mathcal{E}xt^j_{\w{\D}_X}(\M, \w{\D}_X)$ is a coadmissible right submodule, then $\mathcal{E}xt^i_{\w{\D}_X}(\N, \w{\D}_X)=0$ for all $i<j$.
\end{prop}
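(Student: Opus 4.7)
The plan is to argue locally, applying Auslander regularity of the rings $D_n$ obtained from Theorem \ref{MainThm}, together with the known Bernstein inequality for coadmissible modules and the Fr\'echet--Stein commutation results from Lemma \ref{coadext} and Proposition \ref{dualsofCcomplexes}.

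First I reduce to the affinoid situation. The statement to be proved is local, and by Theorem \ref{MainThm} the variety $X$ admits an admissible covering by D-regular affinoids with free tangent sheaf. So I may assume $X=\Sp A$ is D-regular with a chosen regular Lie lattice $\L$, so that $\w{D}:=\w{\D}_X(X)\cong \varprojlim D_n$ with each $D_n=\hsULnK$ Auslander regular, and by Proposition \ref{dualsofCcomplexes}.(iii) the sheaf $\mathcal{E}xt^j_{\w{\D}_X}(\M,\w{\D}_X)$ has global sections $\underline{\mathrm{Ext}}^j_{\w{D}}(\M(X),\w{D})$, which is a coadmissible right $\w{D}$-module.

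For the vanishing $\mathcal{E}xt^j_{\w{\D}_X}(\M,\w{\D}_X)=0$ when $j>d$, I invoke Bernstein's inequality for coadmissible $\w{D}$-modules, already established in \cite[Theorem A]{DcapThree} in the discretely valued setting and in \cite{Bodegl} in general: the grade $j(M)$ of any nonzero coadmissible $\w{D}$-module $M$ is at most $d=\dim X$. Combined with Proposition \ref{dualsofCcomplexes}.(iii) this yields immediately that $\underline{\mathrm{Ext}}^j_{\w{D}}(\M(X),\w{D})=0$ for all $j>d$, and sheafifying gives the first claim, along with $j(\M)\leq d$.

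For the final statement, fix $j$ and a coadmissible right submodule $\N\subseteq \mathcal{E}xt^j_{\w{\D}_X}(\M,\w{\D}_X)$. Since $D_n$ is flat over $\w{D}$ on the category $\C_{\w{D}}$, the functor $-\widetilde{\otimes}_{\w{D}} D_n$ sends the inclusion to an inclusion
\begin{equation*}
\N(X)\widetilde{\otimes}_{\w{D}} D_n \hookrightarrow \mathcal{E}xt^j_{\w{\D}_X}(\M,\w{\D}_X)(X)\widetilde{\otimes}_{\w{D}} D_n \cong \underline{\mathrm{Ext}}^j_{D_n}(\M_n, D_n),
\end{equation*}
where the isomorphism uses Proposition \ref{dualsofCcomplexes}.(iii) together with Lemma \ref{coadext}. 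Since $D_n$ is Auslander regular by Theorem \ref{MainThm} (and in particular satisfies the Auslander condition), every submodule $N$ of $\underline{\mathrm{Ext}}^j_{D_n}(\M_n,D_n)$ has $\underline{\mathrm{Ext}}^i_{D_n}(N,D_n)=0$ for all $i<j$. Applying this to $N=\N(X)\widetilde{\otimes}_{\w{D}}D_n$ and using Lemma \ref{coadext} once more yields
\begin{equation*}
\underline{\mathrm{Ext}}^i_{\w{D}}(\N(X),\w{D})\widetilde{\otimes}_{\w{D}} D_n \cong \underline{\mathrm{Ext}}^i_{D_n}(\N(X)\widetilde{\otimes}_{\w{D}}D_n,D_n)=0
\end{equation*}
for every $n$ and every $i<j$. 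The coadmissibility of $\underline{\mathrm{Ext}}^i_{\w{D}}(\N(X),\w{D})$ (Proposition \ref{dualsofCcomplexes}.(iii)) now forces it to vanish, since a coadmissible module whose base change to each $D_n$ is zero is itself zero.

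The main technical obstacle I anticipate is the careful verification that the two functors involved, namely $D_n\widetilde{\otimes}_{\w{D}}-$ on coadmissible $\w{D}$-modules and the formation of $\underline{\mathrm{Ext}}$ in the complete bornological setting, behave compatibly with submodules and with the isomorphism of Lemma \ref{coadext}. In the abstract setting these identifications are classical for Fr\'echet--Stein algebras, but the need to work with complete bornological tensor products and inner Hom requires Proposition \ref{dualsofCcomplexes}.(iii) and the explicit bornological version of Lemma \ref{coadext} to keep track of canonical topologies. Once these are in place, the auslander-condition transfer is immediate and the proof concludes.
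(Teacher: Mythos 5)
Your overall strategy is sound, and your argument for the final statement (about coadmissible submodules of $\mathcal{E}xt^j$) is correct and carefully justified — base-changing the inclusion to $D_n$, applying the Auslander condition there, and using Lemma \ref{coadext} together with the fact that a coadmissible module with trivial base changes to all $D_n$ vanishes. However, there is a genuine gap in your treatment of the vanishing of $\mathcal{E}xt^j$ for $j > d$. You state Bernstein's inequality as ``the grade $j(M)\leq d$ for nonzero coadmissible $M$'' and then assert that this, combined with Proposition \ref{dualsofCcomplexes}.(iii), ``yields immediately'' the vanishing. But the grade bound controls only the \emph{minimal} $i$ with $\mathrm{Ext}^i\neq 0$; it does not by itself forbid nonzero $\mathrm{Ext}^j$ for $j>d$. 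To close the gap, a standard route is: if $\underline{\mathrm{Ext}}^j_{\w{D}}(\M(X),\w{D})$ were nonzero for some $j>d$, the Auslander condition (which holds for each $D_n$ by Theorem \ref{MainThm} and is inherited by coadmissible modules over $\w{D}$ via Lemma \ref{coadext}, exactly as in your argument for the third claim) gives $j(\underline{\mathrm{Ext}}^j_{\w{D}}(\M(X),\w{D}))\geq j>d$, while Bernstein's inequality applied to this nonzero coadmissible right module gives $j(\underline{\mathrm{Ext}}^j_{\w{D}}(\M(X),\w{D}))\leq d$ — contradiction. This reasoning (or a direct citation of the full statement of Theorem A.(ii) of \cite{DcapThree}, which — as the proof of Corollary \ref{diminthelimit} confirms — already packages the vanishing) must be supplied.

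For context: the paper's own proof is much shorter. It observes that in the polydisc case Theorem \ref{regforpolydisc} gives $\mathrm{gl.dim.}\,D_n = d$ exactly (not merely the $\leq 2d+3$ bound that Theorem \ref{MainThm} provides), so the $j>d$ vanishing is automatic there, and then simply cites the reduction argument of \cite[subsection 6.2]{DcapThree}. Your route deliberately uses Theorem \ref{MainThm} on D-regular affinoids instead of the polydisc-specific global dimension bound; this is a perfectly viable alternative, but it is precisely because $\mathrm{gl.dim.}\,D_n$ is no longer known to equal $d$ that the extra Auslander-condition step above becomes necessary for the first claim.
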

\begin{proof}
	In the polydisc case, this follows from Theorem \ref{regforpolydisc}. We can thus repeat the same argument as in \cite[subsection 6.2]{DcapThree}, where the result is given for discretely valued fields.
\end{proof}

Given a morphism $f: X\to Y$ of smooth rigid analytic $K$-varieties, the duality functor allows us to define the inverse image functor $f^+$ by writing $f^+:=\mathbb{D}_Xf^!\mathbb{D}_Y$. In the same way, we have the extraordinary direct image functor 
\begin{equation*}
f_!:=\mathbb{D}_Yf_+\mathbb{D}_X.
\end{equation*}

Lastly, the derived tensor product $\widetilde{\otimes}^\mathbb{L}_{\O_X}$ gives rise to a bifunctor on $\mathrm{D}(\w{\D}_X)$ by \cite[subsection 7.1]{SixOp}. 

We also point out that there is the usual functor
\begin{equation*}
-\widetilde{\otimes}^\mathbb{L}_{\w{\D}_X}-: \mathrm{D}(\w{\D}_X^\mathrm{op})\times \mathrm{D}(\w{\D}_X)\to \mathrm{D}(\mathrm{Shv}(X, LH(\h{\B}c_K))),
\end{equation*}
compare \cite[Proposition 3.35]{SixOp}. There is the following relation between side-changing and tensor products:

\begin{lem}
\label{sidechangetensor}
Let $X$ be a smooth rigid analytic $K$-variety. There is a natural isomorphism
\begin{equation*}
(\Omega_X\widetilde{\otimes}_{\O_X} \M^\bullet)\widetilde{\otimes}^\mathbb{L}_{\w{\D}_X}\N^\bullet\cong \Omega_X\widetilde{\otimes}^\mathbb{L}_{\w{\D}_X}(\M^\bullet\widetilde{\otimes}^\mathbb{L}_{\O_X} \N^\bullet)
\end{equation*}
in $\mathrm{D}(\mathrm{Shv}(X, LH(\h{\B}c_K)))$ for $\M^\bullet, \N^\bullet\in \mathrm{D}(\w{\D}_X)$.
\end{lem}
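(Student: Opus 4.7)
The plan is to reduce the derived identity to an underived one on a suitable K-flat resolution, then transfer along side-changing. As a preliminary observation, $\Omega_X$ is an invertible $\O_X$-module, so the side-changing functor $\Omega_X \widetilde{\otimes}_{\O_X} -$ is an exact autoequivalence between left and right $\w{\D}_X$-modules that preserves K-flatness; in particular, the non-derived expression $\Omega_X \widetilde{\otimes}_{\O_X} \M^\bullet$ on the left-hand side agrees with its derived version.

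At the level of underived sheaves, the assignment $(\omega \otimes m) \otimes n \mapsto \omega \otimes (m \otimes n)$ defines a natural isomorphism
\[
(\Omega_X \widetilde{\otimes}_{\O_X} \M) \widetilde{\otimes}_{\w{\D}_X} \N \;\xrightarrow{\sim}\; \Omega_X \widetilde{\otimes}_{\w{\D}_X} (\M \widetilde{\otimes}_{\O_X} \N).
\]
This is a direct verification: the right $\w{\D}_X$-action on $\Omega_X \widetilde{\otimes}_{\O_X} \M$ from side-changing is $(\omega \otimes m)\cdot \xi = \omega \xi \otimes m - \omega \otimes \xi m$ for $\xi \in \T_X$, and the diagonal left $\w{\D}_X$-action on $\M \widetilde{\otimes}_{\O_X} \N$ is $\xi(m \otimes n) = \xi m \otimes n + m \otimes \xi n$, and both quotient relations on $\Omega_X \widetilde{\otimes}_{\O_X} \M \widetilde{\otimes}_{\O_X} \N$ translate to the same identity $\omega \xi \otimes m \otimes n = \omega \otimes \xi m \otimes n + \omega \otimes m \otimes \xi n$.

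To promote this to the derived level, I would choose a K-flat resolution $P^\bullet \to \M^\bullet$ in $\mathrm{Mod}_{\mathrm{Shv}(X, LH(\h{\B}c_K))}(\w{\D}_X)$. Since $\w{\D}_X$ is built from the universal enveloping algebra, which is flat over $\O_X$ by PBW (\cite[Theorem 3.1]{Rinehart}), the complex $P^\bullet$ is also K-flat over $\O_X$; hence $P^\bullet \widetilde{\otimes}_{\O_X} \N^\bullet$ represents $\M^\bullet \widetilde{\otimes}^{\mathbb{L}}_{\O_X} \N^\bullet$, and $\Omega_X \widetilde{\otimes}_{\O_X} P^\bullet$ is K-flat over $\w{\D}_X^{\mathrm{op}}$ by the preliminary observation and represents $\Omega_X \widetilde{\otimes}_{\O_X} \M^\bullet$. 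Applying the underived identity termwise in $P^\bullet$ yields
\[
(\Omega_X \widetilde{\otimes}_{\O_X} P^\bullet) \widetilde{\otimes}_{\w{\D}_X} \N^\bullet \cong \Omega_X \widetilde{\otimes}_{\w{\D}_X}(P^\bullet \widetilde{\otimes}_{\O_X} \N^\bullet),
\]
where the left-hand side manifestly represents $(\Omega_X \widetilde{\otimes}_{\O_X} \M^\bullet) \widetilde{\otimes}^{\mathbb{L}}_{\w{\D}_X} \N^\bullet$.

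The main obstacle is then to identify the right-hand side above with $\Omega_X \widetilde{\otimes}^{\mathbb{L}}_{\w{\D}_X} (\M^\bullet \widetilde{\otimes}^{\mathbb{L}}_{\O_X} \N^\bullet)$; equivalently, to verify that $P^\bullet \widetilde{\otimes}_{\O_X} \N^\bullet$, equipped with the diagonal $\w{\D}_X$-action, is K-flat over $\w{\D}_X$. Locally on a D-regular affinoid where $\T_X$ is trivialized, this reduces to a completed PBW-style intertwiner
\[
\w{\D}_X \widetilde{\otimes}_{\O_X} \N \;\cong\; \w{\D}_X \widetilde{\otimes}_{\O_X} \N^{\mathrm{tr}},
\]
identifying the diagonal left $\w{\D}_X$-action on the source with the left-factor-only action on the target (here $\N^{\mathrm{tr}}$ denotes $\N$ viewed only as an $\O_X$-module), induced by the Hopf algebroid comultiplication $\xi \mapsto \xi \otimes 1 + 1 \otimes \xi$ on $\T_X$. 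Passing this isomorphism through the Fr\'echet--Stein presentation $\w{\D}_X \cong \varprojlim \D_n$ (using the flatness already invoked in the previous paragraph), the K-flatness of the diagonal tensor product reduces to K-flatness of $\N^\bullet$ over $\O_X$, which can in turn be arranged by a further K-flat $\O_X$-resolution if necessary.
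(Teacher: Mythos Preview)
Your underived calculation and the overall plan (establish the identity on honest modules, then pass to the derived category via flat resolutions) match the paper exactly; the paper writes out the same maps $\alpha,\beta$ and the same two formulas $(d\otimes m)\xi=d\xi\otimes m-d\otimes \xi m$ and $\xi(m\otimes n)=\xi m\otimes n+m\otimes \xi n$. The divergence is entirely in how you handle what you call the ``main obstacle'', and there your argument has real gaps.

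First, the intertwiner $\w{\D}_X\widetilde{\otimes}_{\O_X}\N\cong \w{\D}_X\widetilde{\otimes}_{\O_X}\N^{\mathrm{tr}}$ you invoke rests on the Hopf algebroid comultiplication of $U_A(L)$, but you are working with the Fr\'echet completion $\w{\D}_X$, and you give no argument that the comultiplication (equivalently, the untwisting automorphism sending $\partial^k\otimes n$ to $\sum_j\binom{k}{j}\partial^j\otimes \partial^{k-j}n$) extends to $\w{\D}_X$ in the bornological sense; this is not in any of the references you cite. Second, even granting that intertwiner, you have only stated it for $P^i=\w{\D}_X$, whereas you began with an arbitrary K-flat $\w{\D}_X$-resolution $P^\bullet$; the reduction from general K-flat terms to free ones is not explained. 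Third, your closing clause ``which can in turn be arranged by a further K-flat $\O_X$-resolution if necessary'' does not work as stated: an $\O_X$-flat resolution of $\N^\bullet$ will not consist of $\w{\D}_X$-modules, so the diagonal action you need is lost. (A smaller point: your appeal to Rinehart's PBW for ``$\w{\D}_X$ flat over $\O_X$'' only covers the uncompleted $U_A(L)$; the completed statement needs the strong flatness of $K\{d\}$ from \cite[Corollary~5.36]{SixOp}.)

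The paper avoids all of this by never invoking a completed comultiplication. Instead it resolves \emph{both} $\M^\bullet$ and $\N^\bullet$ (after reducing to the bounded-above case via homotopy colimits) by complexes whose terms are of the explicit shape $\w{\D}_X\widetilde{\otimes}_K\F$ with $\F$ a flat sheaf built from the spaces $c_0(X_i)$. Such terms are sheafifications of complete bornological presheaves, so the underived identity applies to them directly, and they are visibly K-flat over both $\w{\D}_X$ and $\O_X$; with both sides represented by these resolutions, the termwise underived isomorphism gives the derived one without any untwisting.
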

 \begin{proof}
Suppose first that $\M$, $\N$ are the $LH(\h{\B}c_K)$-sheafifications of $\w{\D}_X$-modules in $\mathrm{Preshv}(X, \h{\B}c_K)$. Then the sheaf $(\Omega_X\widetilde{\otimes}_{\O_X}\M)\widetilde{\otimes}_{\w{\D}_X}\N$ is the cokernel of the natural map
\begin{align*}
\alpha: &(\Omega\widetilde{\otimes}_{\O_X} \M)\widetilde{\otimes}_{\O_X} \w{\D}_X \widetilde{\otimes}_{\O_X} \N\to (\Omega\widetilde{\otimes}_{\O_X} \M) \widetilde{\otimes}_{\O_X} \N\\
& (d\otimes m)\otimes P\otimes n\mapsto (d\otimes m)P\otimes n-(d\otimes m)\otimes Pn,
\end{align*}
while $\Omega\widetilde{\otimes}_{\w{\D}_X}(\M\widetilde{\otimes}_{\O_X} \N)$ is the cokernel of 
\begin{align*}
\beta: &\Omega\widetilde{\otimes}_{\O_X} \w{\D}_X \widetilde{\otimes}_{\O_X} (\M\widetilde{\otimes}_{\O_X} \N)\to \Omega\widetilde{\otimes}_{\O_X} (\M\widetilde{\otimes}_{\O_X} \N)\\
& d\otimes P \otimes (m\otimes n)\mapsto dP\otimes (m\otimes n)-d \otimes P(m\otimes n).
\end{align*}

Note that here we are using implicitly \cite[Lemma 3.20]{SixOp} (i.e. sheafification is strong symmetric monoidal on $\mathrm{Preshv}(X, LH(\h{\B}c_K))$) to describe the tensor products above as suitable sheafifications.

We now verify that the natural isomorphisms between these respective sheaves identify $\alpha$ and $\beta$. This is immediate from writing out explicitly the $\w{\D}_X$-module structures in each case, as in \cite[Proposition 1.2.9]{Hotta}: note that if $P\in \T_X$, then
\begin{equation*}
(d\otimes m)P=dP\otimes m-d\otimes Pm
\end{equation*}
and
\begin{equation*}
P(m\otimes n)=Pm\otimes n+m\otimes Pn
\end{equation*}
for $d\in \Omega_X$, $m\in \M$, $n\in \N$ by definition.

Note that any $\M\in \mathrm{Shv}(X, LH(\h{\B}c_K))$ admits an epimorphism from some flat object which is a sheafification as above: for any $U\subseteq X$, $\M(U)$ admits an epimorphism
\begin{equation*}
	\oplus_{i\in J_U} I(c_0(X_i))\to \M(U)
\end{equation*}
by \cite[Corollary 4.17]{SixOp}, where the Banach spaces $c_0(X_i)$ from \cite[Lemma 4.16]{SixOp} yield flat objects in $LH(\h{\B}c_K)$ thanks to \cite[Lemma 4.19.(v)]{SixOp}. Writing $\F_U^{\mathrm{pre}}$ for the presheaf
\begin{equation*}
	V\mapsto \begin{cases}
		\oplus_{i\in J_U} c_0(X_i)\ \text{if } V\subseteq U\\
		0 \ \text{otherwise,}
	\end{cases}
\end{equation*}
and $\F$ for the sheafification of $\oplus_U \F_U^{\mathrm{pre}}$, we obtain the desired epimorphism
\begin{equation*}
	\w{\D}_X\widetilde{\otimes}_K \F\to \M,
\end{equation*}
and $\w{\D}_X\widetilde{\otimes}_K \F$ is indeed the sheafification of $V\mapsto \oplus_U\w{\D}_X(V)\h{\otimes}_K \F_U^{\mathrm{pre}}(V)$ by \cite[Lemma 3.20, Proposition 4.25, Proposition 4.22.(v)]{SixOp}.

Since any $\M^\bullet\in \mathrm{D}(\mathrm{Shv}(X, LH(\h{\B}c_K)))$ is the homotopy colimit of its truncations $\tau^{\leq n}\M^\bullet$ and $\widetilde{\otimes}^{\mathbb{L}}$ commutes with homotopy colimits (because it commutes with direct sums), we can assume now that $\M^\bullet$ and $\N^\bullet$ are bounded above. 

Picking flat resolution of $\M^\bullet$ and $\N^\bullet$ with each term of the form $\w{\D}_X\widetilde{\otimes}_K \F$ for some flat object $\F\in \mathrm{Shv}(X, LH(\h{\B}c_K))$ such that $\w{\D}_X\widetilde{\otimes}_K \F$ is the sheafification of a complete bornological presheaf, the isomorphism follows from the case discussed above.
\end{proof}

In the following subsections, we will prove several statements about the behaviour of $\C$-complexes under the six operations. The proof strategy can be summarized as follows: Arguing locally, we work on a D-regular affinoid $X$ and prove analogous statements for $\mathrm{D}^b_{\mathrm{coh}}(\D_n)$, exploiting the fact that $\D_n(X)$ is Auslander regular. We then take the limit as $n$ goes to infinity. 

To make this approach work, we recall the following result from \cite{SixOp} -- we refer to \cite[subsection 3.2]{SixOp} for a discussion of the notion of a homotopy limit:

\begin{lem}[{\cite[Corollary 8.17]{SixOp}}]
	\label{Ccomplexesasholim}
	Let $X=\Sp A$ be an affinoid $K$-variety with free tangent sheaf. Let $\A$ be an admissible affine formal model and $\L\subseteq \T_X(X)$ a Lie lattice which is a free $\A$-module, yielding the sheaves $\D_n$ as above.
	
	If $\M^\bullet\in\mathrm{D}_\C(\w{\D}_X)$ is a $\C$-complex and $\M_n^\bullet:=\D_n\widetilde{\otimes}^\mathbb{L}_{\w{\D}_X}\M^\bullet$, then $\M^\bullet\cong \mathrm{holim} \M_n^\bullet$, i.e. the natural morphisms $\M^\bullet\to \M_n^\bullet$ yield a distinguished triangle
	\begin{equation*}
		\M^\bullet\to \prod_{n\geq m} \M_n^\bullet\to \prod_{n\geq m} \M_n^\bullet
	\end{equation*} 
	in $\mathrm{D}(\mathrm{Mod}_{\mathrm{Shv}(X_m, LH(\h{\B}c_K))}(\w{\D}_X|_{X_m}))$ for each $m$.
\end{lem}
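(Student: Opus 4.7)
The plan is to show that the natural morphism $\M^\bullet \to \mathrm{holim}_{n \geq m} \M_n^\bullet$ is a quasi-isomorphism in $\mathrm{D}(\w{\D}_X|_{X_m})$ by checking that it induces isomorphisms on every cohomology sheaf. Writing the homotopy limit via the standard defining distinguished triangle
\begin{equation*}
\mathrm{holim}\, \M_n^\bullet \to \prod_{n\geq m}\M_n^\bullet \xrightarrow{1-\sigma} \prod_{n \geq m} \M_n^\bullet,
\end{equation*}
with $\sigma$ induced by the transition maps $\M_{n+1}^\bullet \to \M_n^\bullet$, the long exact sequence in cohomology --- combined with the fact that products commute with the cohomology functor $\mathrm{H}^j$ in our setting (products being right adjoints, and exactness of products on the relevant complexes being one of the appendix results) --- yields a Milnor-type short exact sequence
\begin{equation*}
0 \to \mathrm{R}^1\varprojlim \mathrm{H}^{j-1}(\M_n^\bullet) \to \mathrm{H}^j(\mathrm{holim}\, \M_n^\bullet) \to \varprojlim \mathrm{H}^j(\M_n^\bullet) \to 0
\end{equation*}
in the abelian heart of the sheaf category.

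Next, I would argue that the $\mathrm{R}^1\varprojlim$ term vanishes. By the preceding Proposition 8.2, each cohomology sheaf $\mathrm{H}^j(\M^\bullet)$ is coadmissible, and there is a canonical identification $\D_n\widetilde{\otimes}_{\w{\D}_X}\mathrm{H}^j(\M^\bullet) \cong \mathrm{H}^j(\M_n^\bullet)$. Evaluating on any affinoid subdomain $U \subseteq X_m$ and invoking Theorem \ref{Bornproperties}.(ii), the inverse system $(\mathrm{H}^j(\M_n^\bullet)(U))_n$ is pre-nuclear, so its derived inverse limit vanishes in positive degrees. Sheafifying this sectionwise vanishing produces the desired vanishing of $\mathrm{R}^1\varprojlim \mathrm{H}^{j-1}(\M_n^\bullet)$ as a sheaf.

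Combining these two observations with the defining property of a $\C$-complex (namely that the natural map $\mathrm{H}^j(\M^\bullet) \to \varprojlim \mathrm{H}^j(\M_n^\bullet)$ is already an isomorphism), we see that the composition
\begin{equation*}
\mathrm{H}^j(\M^\bullet) \to \mathrm{H}^j(\mathrm{holim}\,\M_n^\bullet) \xrightarrow{\sim} \varprojlim \mathrm{H}^j(\M_n^\bullet)
\end{equation*}
is an isomorphism, and so both arrows are. Hence $\M^\bullet \to \mathrm{holim}\, \M_n^\bullet$ is a quasi-isomorphism, which is exactly the content of the distinguished triangle in the statement.

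The main obstacle I anticipate is justifying both of the sheaf-theoretic inputs carefully: that the product functor behaves well enough on the relevant complexes in $\mathrm{Shv}(X_m, LH(\h{\B}c_K))$ to produce the Milnor sequence, and that the sectionwise pre-nuclearity of Theorem \ref{Bornproperties}.(ii) genuinely sheafifies to kill $\mathrm{R}^1\varprojlim$ in the sheaf category. Both concerns are of the same flavour as the technical interplay between derived completed tensor products and direct products that the author isolates in the appendix, and so reduce to results proved there.
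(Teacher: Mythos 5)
The paper does not give its own proof of this lemma: it cites \cite[Corollary 8.17]{SixOp} immediately before stating it, so you are reconstructing the argument from that reference. Your outline is essentially the right one: realize the homotopy limit via the standard $(1-\sigma)$-triangle, produce a Milnor-type short exact sequence for $\mathrm{H}^j(\mathrm{holim}\,\M_n^\bullet)$, kill the $\mathrm{R}^1\varprojlim$ term sectionwise using pre-nuclearity (Theorem \ref{Bornproperties}.(ii)) together with the identification $\mathrm{H}^j(\M_n^\bullet)\cong\D_n\widetilde{\otimes}_{\w{\D}_X}\mathrm{H}^j(\M^\bullet)$ for the coadmissible module $\mathrm{H}^j(\M^\bullet)$, and conclude from the $\C$-complex axiom $\mathrm{H}^j(\M^\bullet)\cong\varprojlim\mathrm{H}^j(\M_n^\bullet)$.

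The one justification you should replace is ``products being right adjoints'' as the reason products commute with $\mathrm{H}^j$. Right adjointness has nothing to do with it: the derived product of sheaves is not in general computed termwise, and cohomology of a derived product of unbounded complexes need not agree with the product of the cohomologies. What makes the Milnor sequence legitimate here is a concrete boundedness input: each $\M_n^\bullet$ lies in $\mathrm{D}^b_{\mathrm{coh}}(\D_n)$, so it can be represented on $X_m$ by a \emph{bounded} complex of coherent $\D_n$-modules, which are locally acyclic. By Lemma \ref{belowacyclic} the derived product is then represented by the termwise product, and since products are exact in $LH(\h{\B}c_K)$ and the cohomology presheaves of these complexes are already sheaves, $\mathrm{H}^j$ commutes with the product. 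Note that the $\M_n^\bullet$ are bounded even when $\M^\bullet$ itself is not --- this is precisely where the boundedness condition in the definition of a $\C$-complex is used, and without it the argument would genuinely fail.
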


We also record the following result, the proof of which is given in the appendix, Corollary \ref{tensorlimitapp}:

\begin{thm}
	\label{tensorlimit}
	Let $X=\Sp A$ be an affinoid $K$-variety with free tangent sheaf. If $\M^\bullet\in \mathrm{D}^-_{\C}(\w{\D}_X^{\mathrm{op}})$, $\N^\bullet\in \mathrm{D}^-_{\C}(\w{\D}_X)$, let $\M_n^\bullet:=\M^\bullet\widetilde{\otimes}_{\w{\D}_X}^{\mathbb{L}}\D_n$, $\N_n^\bullet:=\D_n\widetilde{\otimes}_{\w{\D}_X}^{\mathbb{L}}\N^\bullet$. Then
	\begin{equation*}
		\M^\bullet\widetilde{\otimes}_{\w{\D}_X}^{\mathbb{L}}\N^\bullet\cong \mathrm{holim} (\M_n^\bullet\widetilde{\otimes}_{\w{\D}_X}^{\mathbb{L}}\N^\bullet)\cong \mathrm{holim}(\M_n^\bullet\widetilde{\otimes}_{\D_n}^{\mathbb{L}}\N_n^\bullet),
	\end{equation*}
	in the sense that for each $m$, the natural morphisms yield a distinguished triangle
	\begin{equation*}
		\M^\bullet\widetilde{\otimes}_{\w{\D}_X}^{\mathbb{L}}\N^\bullet\to \prod_{n\geq m} (\M_n^\bullet\widetilde{\otimes}_{\D_n}^{\mathbb{L}}\N_n^\bullet)\to \prod_{n\geq m} (\M_n^\bullet\widetilde{\otimes}_{\D_n}^{\mathbb{L}}\N_n^\bullet)
	\end{equation*} 
	in $\mathrm{D}(\mathrm{Shv}(X_m, LH(\h{\B}c_K)))$.
	
	The same applies if $\M^\bullet\in \mathrm{D}_{\C}^b(\w{\D}_X^{\mathrm{op}})$ and $\N^\bullet\in \mathrm{D}_{\C}(\w{\D}_X)$.
\end{thm}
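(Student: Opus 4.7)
The plan is to use Lemma \ref{Ccomplexesasholim} to express $\M^\bullet$ as a homotopy limit, tensor this expression with $\N^\bullet$, and then prove that the derived completed tensor product commutes with the specific infinite products $\prod_{n\geq m} \M_n^\bullet$ appearing in the distinguished triangle. Since both isomorphisms are local in nature, I would first reduce to the affinoid $X$ equipped with the Lie lattice $\L$ giving rise to the sheaves $\D_n$. The second isomorphism is the easier of the two: for each fixed $n$, the associativity of the derived tensor product together with the projection formula for base change give
\begin{equation*}
\M_n^\bullet \widetilde{\otimes}^{\mathbb{L}}_{\w{\D}_X} \N^\bullet \cong \M_n^\bullet \widetilde{\otimes}^{\mathbb{L}}_{\D_n}\bigl(\D_n \widetilde{\otimes}^{\mathbb{L}}_{\w{\D}_X} \N^\bullet\bigr) \cong \M_n^\bullet \widetilde{\otimes}^{\mathbb{L}}_{\D_n} \N_n^\bullet,
\end{equation*}
where we use that $\M_n^\bullet$ already carries a natural $\D_n$-module structure by its construction. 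This identification is compatible with the transition maps in $n$, so taking $\prod_{n\geq m}$ yields the same right-hand object in both of the asserted homotopy-limit descriptions.

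For the first isomorphism, I would start from the distinguished triangle furnished by Lemma \ref{Ccomplexesasholim},
\begin{equation*}
\M^\bullet \to \prod_{n\geq m} \M_n^\bullet \to \prod_{n\geq m} \M_n^\bullet,
\end{equation*}
and apply the exact functor $-\widetilde{\otimes}^{\mathbb{L}}_{\w{\D}_X} \N^\bullet$. This immediately produces a distinguished triangle with middle and third term $(\prod \M_n^\bullet) \widetilde{\otimes}^{\mathbb{L}}_{\w{\D}_X} \N^\bullet$. The heart of the argument then consists of identifying this with $\prod (\M_n^\bullet \widetilde{\otimes}^{\mathbb{L}}_{\w{\D}_X} \N^\bullet)$, i.e. showing that the derived tensor product commutes with the relevant infinite direct products.

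The hard part is precisely this commutation. In a general quasi-abelian or Grothendieck abelian context, $(-)\widetilde{\otimes}^{\mathbb{L}} Y$ does not commute with arbitrary products, so one cannot appeal to generalities. The strategy would be to exploit the very specific nature of the system $(\M_n^\bullet)$: each term is a bounded coherent $\D_n$-complex with its canonical bornological structure, and the transition system $(\M_n^\bullet)$ is pre-nuclear (in the sense of \cite[Definition 5.24]{SixOp}) over the Fr\'echet--Stein algebra $\w{\D}_X(X)$. Locally, one can pick a resolution of $\N^\bullet$ by flat objects of the form $\w{\D}_X \widetilde{\otimes}_K \F$ for suitable flat complete bornological sheaves $\F$ (as in the proof of Lemma \ref{sidechangetensor}), reducing the commutation claim to whether tensoring a flat projective-like object commutes with products of Banach/bornological modules. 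For these it suffices to verify that the natural comparison morphism is an isomorphism termwise, which should reduce to the fact (to be established in the appendix) that $\h{\otimes}_{\w{\D}_X}$ commutes with countable products of the form $\prod \M_n$ whenever the system $(\M_n)$ is a pre-nuclear system of coherent $\D_n$-modules.

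Once that technical commutation is available, combining steps gives the distinguished triangle
\begin{equation*}
\M^\bullet\widetilde{\otimes}^{\mathbb{L}}_{\w{\D}_X}\N^\bullet \to \prod_{n\geq m}\bigl(\M_n^\bullet\widetilde{\otimes}^{\mathbb{L}}_{\w{\D}_X}\N^\bullet\bigr) \to \prod_{n\geq m}\bigl(\M_n^\bullet\widetilde{\otimes}^{\mathbb{L}}_{\w{\D}_X}\N^\bullet\bigr),
\end{equation*}
which is the definition of a homotopy limit, and substituting the second isomorphism yields the claimed $\mathrm{holim}$ over $\M_n^\bullet \widetilde{\otimes}^{\mathbb{L}}_{\D_n} \N_n^\bullet$. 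Since all the intermediate objects are already sheaves on $X_m$, and both $\M^\bullet$ and $\N^\bullet$ are bounded above, the boundedness hypothesis $\mathrm{D}^-_\C$ ensures that the homotopy limits assemble sensibly, which is a final bookkeeping check rather than an obstacle.
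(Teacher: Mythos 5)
Your outline captures the high-level structure of the paper's argument -- start from the distinguished triangle of Lemma \ref{Ccomplexesasholim}, apply the derived tensor functor, and reduce to showing that $\widetilde{\otimes}^{\mathbb{L}}$ commutes with the relevant countable products -- but the sketch of the commutation step has a genuine gap, and that step is the whole content of the theorem.

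The difficulty you gloss over is that products in $\mathrm{D}(\mathrm{Shv}(X_m, LH(\h{\B}c_K)))$ are \emph{not} computed termwise: one must first replace each factor by a K-injective complex and only then take the termwise product. So before any tensoring happens, one must already understand the derived product $\prod_{n\geq m}\N_n^\bullet$ as a concrete chain complex. Your proposal to resolve $\N^\bullet$ by flat objects of the form $\w{\D}_X\widetilde{\otimes}_K\F$ (as in the proof of Lemma \ref{sidechangetensor}) and then argue termwise does not touch this issue, since the Bar-type resolutions so produced are unbounded, and unboundedness is precisely what breaks the termwise computation of products in the sheaf setting. The paper gets around this by invoking Auslander regularity of $\D_n(X)$: because $\D_n(X)$ has finite global dimension, each $\N_n^\bullet$ can be represented by a \emph{bounded} complex of localisations of finitely generated projective $\D_n(X)$-modules, and a lemma of the paper (Lemma \ref{belowacyclic}) shows that for bounded-below complexes of locally acyclic sheaves the derived product coincides with the naive termwise product. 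Auslander regularity is thus not background context but the essential mechanism, and your sketch nowhere identifies it; without it the argument for commutation does not go through. (The pseudo-nuclearity you do mention handles the second layer of the argument -- commuting tensor with products on the level of sections, via the Bar resolution as in the paper's Lemma \ref{flatprod} -- but that only becomes available once the sheaf-level products have been tamed.) Your ``easier'' second isomorphism, namely $\M_n^\bullet \widetilde{\otimes}^{\mathbb{L}}_{\w{\D}_X} \N^\bullet \cong \M_n^\bullet \widetilde{\otimes}^{\mathbb{L}}_{\D_n} \N_n^\bullet$, is fine termwise, but passing to $\prod_{n\geq m}$ requires the same control over derived products, so it cannot really be separated off as a harmless bookkeeping step.
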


\begin{rmk}
	It seems that Theorem \ref{tensorlimit} might fail in general for unbounded $\C$-complexes.
	
	If one is willing to work with $\infty$-categories, Lemma \ref{Ccomplexesasholim} suggests an alternative approach: Soor showed in \cite[Theorem 1.6.(iii)]{Soor} that $\mathrm{D}_{\C}(\w{\D}_X)\cong \varprojlim \mathrm{D}^b_{\mathrm{coh}}(\D_n)$ in $\mathrm{Cat}_{\infty}$. This is again consistent with our interpretation of coadmissible $\w{\D}_X$-modules as quantizations of coherent modules on the cotangent bundle.
	
	Following this approach, it would then be natural to define a (potentially different) tensor product $\mathrm{D}_{\C}(\w{\D}_X^{\mathrm{op}})\times \mathrm{D}_{\C}(\w{\D}_X)\to \mathrm{D}(\mathrm{Shv}(X, LH(\h{\B}c_K)))$ by glueing the tensor products over $\D_n$: this yields a tensor product for which Theorem \ref{tensorlimit} holds by definition, even for unbounded complexes.
	
	We content ourselves with bounded above $\C$-complexes here, as it allows us to argue purely with $1$-categories and use the sheaf-theoretic machinery already developed in \cite{SixOp}. This imposes some boundedness assumptions in some of our later statements, but working with $\D_n$-modules for as long as possible before taking the limit minimizes this issue.
\end{rmk}

\subsection{A projection formula for closed embeddings}
In this subsection and the next, we prove Theorem \ref{projandadj}.(i), a projection formula for bounded $\C$-complexes. We will consider separately the cases of a closed embedding and a projection morphism and then invoke composition results.

Consider the following setting:

Let $X=\Sp A$ be a smooth affinoid $K$-variety of Kiehl standard form. Let $\A$ be an admissible affine formal model and $\L\subseteq \T_X(X)$ a regular $(R, \A)$-Lie lattice. Moreover, let $\iota: X\to Y=X\times \mathbb{D}^l=\Sp A\langle z_1, \hdots, z_l\rangle$ be the closed embedding corresponding to the vanishing of all $z_i$. We will throughout use this embedding to view $X$ as a closed subvariety of $Y$. Then $\A'=\A\langle z\rangle$ is an affine formal model for $A\langle z\rangle$, and
\begin{equation*}
\L':=(\A'\otimes_\A \L)\oplus (\oplus_{i=1}^l \A'\cdot \frac{\mathrm{d}}{\mathrm{d}z_i})
\end{equation*}
is an $(R, \A')$-Lie lattice in $\T_Y(Y)$.

Note that $\mathrm{Sp} A\langle z\rangle$ is also of Kiehl standard form and hence D-regular, with $\L'$ as a regular Lie lattice by Theorem \ref{passagefromtube}.

With these choices of lattices, we have sites $Y_m$ of $\pi^m\L'$-accessible subdomains of $Y$ and $X_m$ of $\pi^m\L$-accessible subdomains of $X$, on which the sheaves $\D'_m:=\D_{Y_m}$, resp. $\D_m:=\D_{X_m}$ can be defined. 

Moreover, we set
\begin{equation*}
	\L'_{ m, n}=(\A'\otimes_\A \pi^m\L)\oplus (\oplus \A'\pi^n\frac{\mathrm{d}}{\mathrm{d}z_i})
\end{equation*}
for any $n\geq m\geq 1$. As Corollary \ref{Aregfortube} allows for the rescaling of the $\frac{\mathrm{d}}{\mathrm{d}z_i}$, we know that $\L'_{m, n}$ is still a regular Lie lattice for each $n$. Note that any $\pi^m\L'$-accessible subdomain is also $\L'_{m, n}$-accessible. Analogously to subsection 5.2, we define $\D'_{m, n}$ to be the sheaf of complete bornological $K$-algebras on $Y_m$ given by
\begin{equation*}
	\D'_{m,n}(V)=\h{U_\B(\B\otimes_{\A'}\L_{m,n})}_K
\end{equation*}
for any $V=\Sp B\in Y_n$ with suitable admissible affine formal model $\B$. For example, $\D'_{m,m}=\D'_m$.

We then set
\begin{equation*}
	\D'_{m, \infty}=\varprojlim_n \D'_{m,n},
\end{equation*}
again a sheaf of complete bornological $K$-algebras on $Y_m$.

In fact, $\D'_{m, \infty}(Y)=\varprojlim \D'_{m,n}(Y)$ is a Fr\'echet--Stein algebra, nuclear over the Banach algebra $\h{U_{\A'}(\A'\otimes_{\A}\pi^m\L)}_K$.

By construction, if $U\subseteq Y$ is open in $Y_m$, then $U\cap X$ is open in $X_m$. In particular, we can form the functor
\begin{align*}
\iota_+:&\mathrm{D}(\D_m^\mathrm{op})\to \mathrm{D}(\w{\D}_Y^{\mathrm{op}}|_{Y_m})\\
&\M_m^\bullet \mapsto \mathrm{R}\iota_*(\M_m^\bullet \widetilde{\otimes}^\mathbb{L}_{\w{\D}_X} (\O_X\widetilde{\otimes}^\mathbb{L}_{\iota^{-1}\O_Y} \iota^{-1}\w{\D}_Y)).
\end{align*}
Note that this is isomorphic to $\mathrm{R}\iota_*(\M_m^\bullet\widetilde{\otimes}_K K\{\mathrm{d}\})$ for $\mathrm{d}_i=\frac{\mathrm{d}}{\mathrm{d}z_i}$. 

It is then easy to see that for any $\M_m^\bullet\in \mathrm{D}(\D_m^\mathrm{op})$, 
\begin{equation*}
	\iota_+\M_m^\bullet\cong \mathrm{R}\iota_*(\M_m^\bullet\widetilde{\otimes}^\mathbb{L}_{\D_m} (\O_X\h{\otimes}^\mathbb{L}_{\iota^{-1}\O_Y} \iota^{-1}\D'_{m, \infty}))	
\end{equation*}
is in fact in $\mathrm{D}((\D'_{m, \infty})^\mathrm{op})$.

Moreover, if $\M_m$ is a coherent right $\D_m$-module, then $\iota_+\M_m$ is a coadmissible right $\D'_{m, \infty}$-module, by the same reasoning as in Lemma \ref{Kashiwaraequiv}.

In this way, if $\M_m^\bullet\in \mathrm{D}^b_{\mathrm{coh}}(\D_m)$, we can regard $\iota_+\M_m^\bullet$ as a $\C$-complex of right $\D'_{m, \infty}$-modules. Likewise, we claim that if $\N^\bullet\in \mathrm{D}^-_{\C}(\w{\D}_Y)$, then $\D'_{m, \infty}\widetilde{\otimes}_{\w{\D}_Y}^{\mathbb{L}}\N^\bullet\in \mathrm{D}^-_{\C}(\D'_{m, \infty})$. In fact,
\begin{equation*}
	\D'_{m,n}\widetilde{\otimes}_{\w{\D}_Y}^{\mathbb{L}}\N\cong \D'_{m,n}\widetilde{\otimes}_{\w{\D}_Y}\N
\end{equation*}
for any $\N\in \C_Y$ by \cite[Corollary 5.38]{SixOp} and then
\begin{align*}
	\D'_{m, \infty}\widetilde{\otimes}_{\w{\D}_Y}^{\mathbb{L}}\N&\cong \D'_{m, \infty}\widetilde{\otimes}_{\w{\D}_Y}\N \\
	&\cong \varprojlim \D'_{m,n}\widetilde{\otimes}_{\w{\D}_Y}\N
\end{align*}
by \cite[Proposition 5.33]{SixOp}.

Hence, if $\N^\bullet\in \mathrm{D}^-_{\C}(\w{\D}_Y)$, then
\begin{equation*}
	\mathrm{H}^i(\D'_{m, \infty}\widetilde{\otimes}_{\w{\D}_Y}^{\mathbb{L}}\N^\bullet)\cong \D'_{m, \infty}\widetilde{\otimes}_{\w{\D}_Y} \mathrm{H}^i(\N^\bullet)
\end{equation*}
is a coadmissible $\D'_{m, \infty}$-module with $\D'_{m,n}\widetilde{\otimes}_{\D'_{m, \infty}}\mathrm{H}^i(\D'_{m, \infty}\widetilde{\otimes}_{\w{\D}_Y}^{\mathbb{L}}\N^\bullet)\cong \D'_{m,n}\widetilde{\otimes}_{\w{\D}_Y}\mathrm{H}^i(\N^\bullet)$, so that $\D'_{m, \infty}\widetilde{\otimes}_{\w{\D}_Y}^{\mathbb{L}}\N^\bullet\in \mathrm{D}^-_{\C}(\D'_{m, \infty})$.

We can now formulate a natural analogue of Theorem \ref{tensorlimit} for $\C$-complexes of $\D'_{m, \infty}$-modules, since each $\D'_{m,n}(Y)$ is Auslander reguar.

\begin{lem}
	\label{tensorlimiota}
	Let $\M^\bullet\in \mathrm{D}^-_{\C}(\w{\D}_X^{\mathrm{op}})$ and $\N^\bullet\in \mathrm{D}^-_{\C}(\w{\D}_Y)$ and write $\M_m^\bullet=\M^\bullet\widetilde{\otimes}_{\w{\D}_X}^{\mathbb{L}}\D_m$. Then
	\begin{align*}
		\iota_+(\M^\bullet_m)\widetilde{\otimes}_{\w{\D}_Y}^{\mathbb{L}}\N^\bullet&\cong \iota_+(\M^\bullet_m)\widetilde{\otimes}_{\D'_{m, \infty}}^{\mathbb{L}}(\D'_{m, \infty}\widetilde{\otimes}_{\w{\D}_Y}^{\mathbb{L}}\N^\bullet)\\
		&\cong \mathrm{holim}_{n\geq m} (\iota_+(\M_m^\bullet)_{m,n} \widetilde{\otimes}_{\D'_{m,n}}^{\mathbb{L}} \N_{m,n}^\bullet) 
	\end{align*}
	in $\mathrm{D}(\mathrm{Shv}(Y_m, LH(\h{\B}c_K)))$, where
	\begin{equation*}
		\N_{m,n}^\bullet:=\D'_{m,n}\widetilde{\otimes}_{\w{\D}_Y}^{\mathbb{L}}\N^\bullet
	\end{equation*}
	and
	\begin{equation*}
		\iota_+(\M_m^\bullet)_{m,n}=\iota_+(\M_m^\bullet)\widetilde{\otimes}_{\D'_{m, \infty}}^{\mathbb{L}} \D'_{m,n}.
	\end{equation*}
	
	The same applies if $\M^\bullet\in \mathrm{D}_{\C}^b(\w{\D}_X^{\mathrm{op}})$ and $\N^\bullet\in \mathrm{D}_{\C}(\w{\D}_Y)$.
\end{lem}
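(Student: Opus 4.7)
The plan is to prove the two isomorphisms separately, treating the first as a change-of-rings statement and the second as an application of Theorem \ref{tensorlimit} transposed to the Fr\'echet--Stein setting of $\D'_{m,\infty}$.

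For the first isomorphism, observe that the inclusion $\pi^n\L' \subseteq \L'_{m,n}$ for each $n \geq m$ induces a ring map $\w{\D}_Y \to \D'_{m,\infty}$, and $\iota_+(\M_m^\bullet)$ is naturally a $\D'_{m,\infty}$-module by the discussion preceding the lemma. Associativity of derived tensor product then yields
\begin{equation*}
\iota_+(\M_m^\bullet) \widetilde{\otimes}^{\mathbb{L}}_{\w{\D}_Y} \N^\bullet \cong \iota_+(\M_m^\bullet) \widetilde{\otimes}^{\mathbb{L}}_{\D'_{m,\infty}} \bigl(\D'_{m,\infty} \widetilde{\otimes}^{\mathbb{L}}_{\w{\D}_Y} \N^\bullet\bigr).
\end{equation*}
In the bornological sheaf context, I would make this rigorous by resolving $\N^\bullet$ by sheaves of the form $\w{\D}_Y \widetilde{\otimes}_K \F$ with $\F$ strongly flat in $\mathrm{Shv}(Y, LH(\h{\B}c_K))$, as in the proof of Lemma \ref{sidechangetensor}. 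Base change along $\w{\D}_Y \to \D'_{m,\infty}$ then sends $\w{\D}_Y \widetilde{\otimes}_K \F$ to $\D'_{m,\infty} \widetilde{\otimes}_K \F$, which is again strongly flat over $\D'_{m,\infty}$, so that both sides of the asserted isomorphism may be computed from this common presentation.

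For the second isomorphism, I would apply the argument of Theorem \ref{tensorlimit} to the Fr\'echet--Stein algebra $\D'_{m,\infty}$ on the site $Y_m$ in place of $\w{\D}_X$. The two inputs are already in hand: both $\iota_+(\M_m^\bullet)$ and $\D'_{m,\infty}\widetilde{\otimes}^{\mathbb{L}}_{\w{\D}_Y}\N^\bullet$ lie in $\mathrm{D}^-_\C$ over $\D'_{m,\infty}$ by the discussion preceding the lemma, with base changes to $\D'_{m,n}$ given precisely by $\iota_+(\M_m^\bullet)_{m,n}$ and $\N_{m,n}^\bullet$; and the Auslander regularity of the Banach levels, which is the essential geometric input to the original argument, is here supplied by Corollary \ref{Aregfortube}, which ensures that $\D'_{m,n}(Y)$ is Auslander regular for all sufficiently large $n$.

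The main obstacle is verifying that the argument of Corollary \ref{tensorlimitapp} transfers to the Fr\'echet--Stein pair $(\D'_{m,\infty}; \{\D'_{m,n}\}_{n\geq m})$ in place of $(\w{\D}_X; \{\D_n\})$. Concretely, one must check that the Mittag-Leffler-type compatibility of Theorem \ref{Bornproperties} and the resulting vanishing of the $\mathrm{R}^1\varprojlim$-terms used in the appendix depend only on the abstract structure of a Fr\'echet--Stein algebra nuclear over a Noetherian Banach algebra---a role played here by $\h{U_{\A'}(\A'\otimes_\A \pi^m\L)}_K$ as observed in the discussion above---together with Auslander regularity of the Banach pieces. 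Once this is confirmed, the result follows mechanically by replaying the appendix proof in this slightly more general abstract setup.
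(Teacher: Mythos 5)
Your proposal is correct and follows the same route as the paper. The one substantive remark is that the ``main obstacle'' you raise at the end is already disposed of: the appendix develops Corollary \ref{tensorlimitapp} from the outset for an \emph{abstract} decreasing tower of regular Lie lattices $\L_n$ and the corresponding sheaves $\mathscr{U}_n$ on $X_m$, subject to the flatness and relative-nuclearity assumptions (i) and (ii) stated there, precisely so that it applies beyond $\mathscr{U}_n = \D_n$. The paper even flags this explicitly just before Lemma \ref{Barsheavesapp}: the generality is chosen ``so that they can also be applied to the sheaves $\mathscr{U}_n=\D'_{m, n}$ in subsection 6.2, where $\mathscr{U}=\D'_{m, \infty}$,'' with the verification of (i) and (ii) delegated to the references cited. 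Hence there is nothing to ``transfer'': once you observe that $\iota_+(\M_m^\bullet) \in \mathrm{D}^-_\C({\D'}_{m,\infty}^{\mathrm{op}})$ and that $\D'_{m,\infty}\widetilde{\otimes}^\mathbb{L}_{\w{\D}_Y}\N^\bullet \in \mathrm{D}^-_\C(\D'_{m,\infty})$ (both established in the discussion preceding the lemma), the result is literally an instance of Corollary \ref{tensorlimitapp} with $\mathscr{U}_n = \D'_{m,n}$, which is the paper's entire proof. Your flat-resolution argument for the first (change-of-rings) isomorphism is correct but is the kind of routine associativity the paper leaves unspoken, so it adds length rather than content.
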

\begin{proof}
	Since $\iota_+(\M_m^\bullet)\in \mathrm{D}^-_{\C}({\D'}_{m, \infty}^{\mathrm{op}})$, this is Corollary \ref{tensorlimitapp} with $\mathscr{U}_n=\D'_{m,n}$.
\end{proof}

\begin{lem}
	\label{njbasechange}
	Let $\M^\bullet\in \mathrm{D}_{\C}(\w{\D}_X^{\mathrm{op}})$. 
	\begin{enumerate}[(i)]
		\item There is a natural isomorphism
		\begin{equation*}
			\iota_+\M^{\bullet}\widetilde{\otimes}_{\w{\D}_Y}^{\mathbb{L}}\D'_{m, \infty}\cong \iota_+(\M^{\bullet}\widetilde{\otimes}_{\w{\D}_X}^{\mathbb{L}}\D_m)
		\end{equation*}
		in $\mathrm{D}(\mathrm{Mod}_{\mathrm{Shv}(Y_m, LH(\h{\B}c_K))}({\D'}^{\mathrm{op}}_m))$.
		
		In particular, there is a natural isomorphism
		\begin{equation*}
			\iota_+\M^\bullet\widetilde{\otimes}^{\mathbb{L}}_{\w{\D}_Y}\D'_{m,n}\cong \iota_+(\M_m^\bullet)\widetilde{\otimes}_{\D'_{m, \infty}}^{\mathbb{L}}\D'_{m,n}
		\end{equation*}
		in $\mathrm{D}(\mathrm{Mod}_{\mathrm{Shv}(Y_m, LH(\h{\B}c_K))}({\D'}_{m,n}^{\mathrm{op}}))$ for each $n\geq m\geq 1$.
		\item There is a natural isomorphism
		\begin{equation*}
			\iota_+\M^{\bullet}\widetilde{\otimes}_{\w{\D}_Y}^{\mathbb{L}}\D'_m\cong \mathrm{R}\iota_*(\M^{\bullet}\widetilde{\otimes}_{\w{\D}_X}^{\mathbb{L}} \iota^*\D'_m)\cong \mathrm{R}\iota_*(\M^\bullet\widetilde{\otimes}_{\w{\D}_X}^{\mathbb{L}}\D_m \widetilde{\otimes}_{\D_m}^{\mathbb{L}} \iota^*\D'_m)
		\end{equation*}
		in $\mathrm{D}(\mathrm{Mod}_{\mathrm{Shv}(Y_m, LH(\h{\B}c_K))}({\D'}^{\mathrm{op}}_m))$.
	\end{enumerate}
\end{lem}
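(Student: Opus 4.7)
The approach is to unwind the definition of $\iota_+$ as $\mathrm{R}\iota_*$ applied to a tensor product with the transfer bimodule, and then move the additional tensor factors inside $\mathrm{R}\iota_*$ via a projection formula that is particularly tractable because $\iota$ is a closed embedding.

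For part (i), I would start from
\begin{equation*}
\iota_+\M^\bullet \widetilde{\otimes}_{\w{\D}_Y}^\mathbb{L}\D'_{m,\infty}=\mathrm{R}\iota_*\bigl(\M^\bullet \widetilde{\otimes}_{\w{\D}_X}^\mathbb{L}\w{\D}_{X\to Y}\bigr)\widetilde{\otimes}_{\w{\D}_Y}^\mathbb{L}\D'_{m,\infty},
\end{equation*}
apply the projection formula to obtain
\begin{equation*}
\mathrm{R}\iota_*\bigl(\M^\bullet \widetilde{\otimes}_{\w{\D}_X}^\mathbb{L}\w{\D}_{X\to Y}\widetilde{\otimes}_{\iota^{-1}\w{\D}_Y}^\mathbb{L}\iota^{-1}\D'_{m,\infty}\bigr),
\end{equation*}
and then use associativity together with the identification $\w{\D}_{X\to Y} \widetilde{\otimes}^\mathbb{L}_{\iota^{-1}\w{\D}_Y}\iota^{-1}\D'_{m,\infty}\cong \O_X\widetilde{\otimes}^\mathbb{L}_{\iota^{-1}\O_Y}\iota^{-1}\D'_{m,\infty}$ to rewrite the inner term as the transfer bimodule defining $\iota_+(\M^\bullet \widetilde{\otimes}_{\w{\D}_X}^\mathbb{L}\D_m)$, after the standard base change $\M^\bullet \widetilde{\otimes}_{\w{\D}_X}^\mathbb{L}(-)\cong (\M^\bullet \widetilde{\otimes}_{\w{\D}_X}^\mathbb{L}\D_m)\widetilde{\otimes}_{\D_m}^\mathbb{L}(-)$ for $\D_m$-modules. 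The consequence for $\D'_{m,n}$ then follows immediately by applying $-\widetilde{\otimes}_{\D'_{m,\infty}}^\mathbb{L}\D'_{m,n}$ to both sides.

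Part (ii) runs along identical lines with $\D'_m$ replacing $\D'_{m,\infty}$: the projection formula yields the first isomorphism, and the second is a direct instance of base change along $\w{\D}_X\to \D_m$, valid because $\iota^*\D'_m$ inherits a natural left $\D_m$-module structure from $\w{\D}_X$ acting on $\O_X\widetilde{\otimes}^\mathbb{L}_{\iota^{-1}\O_Y}\iota^{-1}\D'_m$ (the vector fields in $\L$ being the residues of vector fields in $\L'$ tangent to $X$).

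The main obstacle is the projection formula
\begin{equation*}
\mathrm{R}\iota_*(\F)\widetilde{\otimes}_{\w{\D}_Y}^\mathbb{L}\G \cong \mathrm{R}\iota_*\bigl(\F\widetilde{\otimes}_{\iota^{-1}\w{\D}_Y}^\mathbb{L}\iota^{-1}\G\bigr)
\end{equation*}
in the derived category of complete bornological sheaves, together with the collapse of the transfer bimodule against $\iota^{-1}\D'_{m,\infty}$. For a closed embedding, $\iota_*$ is exact and $\iota^{-1}$ is its exact left adjoint, so the question reduces to verifying a compatibility between sheafification and the relative tensor products. This is the same kind of presheaf-level computation used in the proof of Lemma \ref{sidechangetensor}, carried out by first reducing to the bounded-above case via homotopy colimits and then replacing the factors with flat resolutions of the form $\w{\D}_Y\widetilde{\otimes}_K\F$ with $\F$ flat (assembled from the $c_0(X_i)$ of \cite[Lemma 4.16]{SixOp}), where the tensor products can be written as explicit cokernels.
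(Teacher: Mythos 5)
Your argument takes a genuinely different route from the paper. The paper proves part (i) by an explicit computation on global sections: for a single coadmissible right $\w{\D}_X(X)$-module $M$, it shows that $\iota_+M\widetilde{\otimes}^\mathbb{L}_{\w{\D}_Y(Y)}\D'_{m,\infty}(Y)$ is concentrated in degree zero and identifies with $\iota_+(M\widetilde{\otimes}_{\w{\D}_X(X)}\D_m(X))$ via the base change results of \cite[Propositions 5.33 and 5.37]{SixOp}, then passes to $\C$-complexes degree by degree using acyclicity of coadmissible modules for the functors involved (\cite[Lemma 9.3, Corollary 5.36]{SixOp}). You instead work directly on the sheaf level and key on a projection formula
\begin{equation*}
\mathrm{R}\iota_*(\F)\widetilde{\otimes}^\mathbb{L}_{\w{\D}_Y}\G\cong \mathrm{R}\iota_*\bigl(\F\widetilde{\otimes}^\mathbb{L}_{\iota^{-1}\w{\D}_Y}\iota^{-1}\G\bigr),
\end{equation*}
which conceptually mirrors the classical $\D$-module argument. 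Your reduction to finite level via base change along $\w{\D}_X\to\D_m$ is sound: the transfer bimodule $\O_X\widetilde{\otimes}^\mathbb{L}_{\iota^{-1}\O_Y}\iota^{-1}\D'_{m,\infty}$ does inherit a left $\D_m$-module structure through the choice of lattice $\L'_{m,n}$.

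The gap is that the projection formula you invoke is not established anywhere in this paper or the cited parts of \cite{SixOp} for the derived category $\mathrm{D}(\mathrm{Shv}(Y, LH(\h{\B}c_K)))$, and it is not obviously a formality. The factor $\G=\D'_{m,\infty}$ is a Fr\'echet--Stein sheaf of algebras, not a coherent $\w{\D}_Y$-module, so the coadmissible base change theorems do not apply directly to it; and the sheaf-theoretic tensor product $\widetilde{\otimes}^\mathbb{L}_{\w{\D}_Y}$ involves sheafified completed tensors in a left heart, where commuting $\mathrm{R}\iota_*$ past tensors requires checking that $\iota^{-1}$ is strong monoidal for these tensors and that the chosen resolutions are preserved. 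Your appeal to the style of argument in Lemma \ref{sidechangetensor} is reasonable but that lemma concerns sheaves on a single space; here there is a nontrivial $\mathrm{R}\iota_*$ between sites to account for. Notably, the paper itself uses a supported-on-$X$/projection-type step only later, in the proof of Proposition \ref{closedprojection}, and only after first reducing to $\M^\bullet_n=\D_n$ via Auslander regularity, so that the delicate factor is a free module; Lemma \ref{njbasechange} is deliberately proved first, by the more hands-on module-level route, precisely so that this generality can be avoided. As written, your proof substitutes the statement one wants to prove with a different unproved statement of comparable difficulty, and would need a careful proof of the projection formula in this bornological setting before it could be accepted.
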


\begin{proof}
	\begin{enumerate}[(i)]
		\item Let $I\subseteq A\langle z\rangle$ denote the ideal generated by the $z_i$. If $M$ is a coadmissible right $\w{\D}_X(X)$-module, then 
		\begin{equation*}
			M\widetilde{\otimes}_{\w{\D}_X(X)}^{\mathbb{L}}\frac{\w{\D}_Y(Y)}{I\w{\D}_Y(Y)}\cong M\widetilde{\otimes}_{\w{\D}_X(X)} \frac{\w{\D}_Y(Y)}{I\w{\D}_Y(Y)}
		\end{equation*}
		is a coadmissible right $\w{\D}_Y(Y)$-module by \cite[Proposition 5.33]{SixOp}. It thus follows from \cite[Proposition 5.37]{SixOp} that
		\begin{align*}
			(M\widetilde{\otimes}_{\w{\D}_X(X)}^{\mathbb{L}} \frac{\w{\D}_Y(Y)}{I\w{\D}_Y(Y)})\widetilde{\otimes}_{\w{\D}_Y(Y)}^{\mathbb{L}} \D'_{m, \infty}(Y)&\cong (M\widetilde{\otimes}_{\w{\D}_X(X)}\frac{\w{\D}_Y(Y)}{I\w{\D}_Y(Y)})\widetilde{\otimes}_{\w{\D}_Y(Y)} \D'_{m, \infty}(Y)\\
			&\cong M\widetilde{\otimes}_{\w{\D}_X(X)} \D_m(X)\widetilde{\otimes}_{\D_m(X)} \frac{\D'_{m, \infty}(Y)}{I\D'_{m, \infty}(Y)}.
		\end{align*} 
		In this way, $\iota_+\M\widetilde{\otimes}_{\w{\D}_Y}^{\mathbb{L}} \D'_{m, \infty}\cong \iota_+(\M\widetilde{\otimes}^{\mathbb{L}}_{\w{\D}_X} \D_m)$ for any coadmissible right $\w{\D}_X$-module $\M$.
		
		Since coadmissible modules are acyclic for all of the operations involved, this also proves the corresponding isomorphism for $\C$-complexes: explicitly, if $\M^\bullet\in \mathrm{D}_{\C}(\w{\D}_X^{\mathrm{op}})$, then \cite[Lemma 9.3]{SixOp} and \cite[Proposition 5.37]{SixOp} imply that
		\begin{equation*}
			\mathrm{H}^i(\iota_+\M^\bullet\widetilde{\otimes}_{\w{\D}_X}^{\mathbb{L}} \D'_{m, \infty})\cong \iota_+(\mathrm{H}^i(\M^\bullet))\widetilde{\otimes}_{\w{\D}_Y}\D'_{m, \infty},
		\end{equation*}
		and moreover
		\begin{align*}
			\mathrm{H}^i(\iota_+(\M^\bullet\widetilde{\otimes}_{\w{\D}_X}^{\mathbb{L}}\D_m))&\cong \mathrm{H}^i(\mathrm{R}\iota_*(\M^\bullet\widetilde{\otimes}_{\w{\D}_X}^{\mathbb{L}}\D_m\widetilde{\otimes}_K K\{\mathrm{d}\}))\\
			&\cong \mathrm{H}^i(\mathrm{R}\iota_*(\M^\bullet\widetilde{\otimes}_{\w{\D}_X}^{\mathbb{L}}\D_m))\widetilde{\otimes}_K K\{\mathrm{d}\}\\
			&\cong \iota_*(\mathrm{H}^i(\M^\bullet\widetilde{\otimes}^{\mathbb{L}}_{\w{\D}_X} \D_m))\widetilde{\otimes}_K K\{\mathrm{d}\}\\
			&\cong \iota_+(\mathrm{H}^i(\M^\bullet)\widetilde{\otimes}^{(\mathbb{L})}_{\w{\D}_X} \D_m)
		\end{align*}
		by \cite[Corollary 5.36]{SixOp}, the acyclicity of coherent $\D_m$-modules, and \cite[proof of Lemma 7.9]{SixOp}.
		
		The corresponding isomorphism over $\D'_{m,n}$ follows by applying $-\widetilde{\otimes}_{\D'_{m, \infty}}^{\mathbb{L}}\D'_{m,n}$.
		\item For any coadmissible right $\w{\D}_X(X)$-module $M$, there is a natural isomorphism
		\begin{align*}
			M\widetilde{\otimes}_{\w{\D}_X(X)}^{(\mathbb{L})} \frac{\w{\D}_Y(Y)}{I\w{\D}_Y(Y)}\widetilde{\otimes}_{\w{\D}_Y(Y)}^{(\mathbb{L})} \D'_m(Y)&\cong M\widetilde{\otimes}_{\w{\D}_X(X)}^{\mathbb{L}} \frac{\D'_m(Y)}{I\D'_m(Y)} \\
			&\cong M\widetilde{\otimes}_{\w{\D}_X(X)}^{(\mathbb{L})} \D_m(X)\widetilde{\otimes}_{\D_m(X)}^{(\mathbb{L})} \frac{\D'_m(Y)}{I\D'_m(Y)}
		\end{align*}
		of finitely generated right $\D'_m(Y)$-modules, equipped with their canonical Banach structures. The rest of the proof is analogous to (i).
	\end{enumerate}
\end{proof}

\begin{cor}
	\label{holimKashiwara}
	If $\M^\bullet\in \mathrm{D}^-_{\C}(\w{\D}_X^{\mathrm{op}})$ and $\N^\bullet\in \mathrm{D}^-_{\C}(\w{\D}_Y)$, then
	\begin{equation*}
		\iota_+\M^\bullet\widetilde{\otimes}_{\w{\D}_Y}^{\mathbb{L}}\N^\bullet\cong \mathrm{holim} (\iota_+(\M_m^\bullet)\widetilde{\otimes}_{\w{\D}_Y}^{\mathbb{L}}\N^\bullet).
	\end{equation*}
	The same applies if $\M^\bullet\in \mathrm{D}_{\C}^b(\w{\D}_X^{\mathrm{op}})$ and $\N^\bullet\in \mathrm{D}_{\C}(\w{\D}_Y)$.
\end{cor}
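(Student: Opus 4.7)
The plan is to apply Theorem \ref{tensorlimit} directly to the pair $(\iota_+\M^\bullet, \N^\bullet)$ on $\w{\D}_Y$, identify each $m$th term of the resulting homotopy limit with the $(m,m)$-diagonal of the doubly-indexed system in Lemma \ref{tensorlimiota}, and then compare with the right-hand side of the claim via a Fubini interchange plus cofinality of the diagonal.

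First I would verify that $\iota_+\M^\bullet \in \mathrm{D}^-_{\C}(\w{\D}_Y^{\mathrm{op}})$: since $\iota$ is a closed embedding (in particular projective), preservation of $\C$-complexes is Proposition \ref{Ccomplexstable}(ii), and boundedness above is preserved by the explicit description $\iota_+\M^\bullet \cong \iota_*(\M^\bullet \widetilde{\otimes}_K K\{\mathrm{d}\})$ using flatness of $K\{\mathrm{d}\}$ in $\h{\B}c_K$ from \cite[Corollary 5.36]{SixOp}. Theorem \ref{tensorlimit} then gives
\begin{equation*}
\iota_+\M^\bullet \widetilde{\otimes}^{\mathbb{L}}_{\w{\D}_Y}\N^\bullet \cong \mathrm{holim}_m\bigl((\iota_+\M^\bullet)_m \widetilde{\otimes}^{\mathbb{L}}_{\D'_m}\N_m^\bullet\bigr).
\end{equation*}
By Lemma \ref{njbasechange}(i), $\iota_+\M^\bullet \widetilde{\otimes}^{\mathbb{L}}_{\w{\D}_Y}\D'_{m,\infty} \cong \iota_+(\M_m^\bullet)$, and further base change to $\D'_m$ together with associativity of the derived tensor product rewrites each term in the above homotopy limit as $\iota_+(\M_m^\bullet)_{m,m} \widetilde{\otimes}^{\mathbb{L}}_{\D'_{m,m}}\N_{m,m}^\bullet$ in the notation of Lemma \ref{tensorlimiota}.

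On the other hand, Lemma \ref{tensorlimiota} expresses $\iota_+(\M_m^\bullet)\widetilde{\otimes}^{\mathbb{L}}_{\w{\D}_Y}\N^\bullet$ itself as $\mathrm{holim}_{n \geq m} \bigl(\iota_+(\M_m^\bullet)_{m,n}\widetilde{\otimes}^{\mathbb{L}}_{\D'_{m,n}}\N_{m,n}^\bullet\bigr)$, so the right-hand side of the claim becomes a double homotopy limit over $(m,n)$ with $m \leq n$. Interchanging the two homotopy limits and invoking cofinality of the diagonal $\{(k,k)\}$ in the product poset $\{(m,n) : m \leq n\}$ reduces this to the single homotopy limit $\mathrm{holim}_k \bigl(\iota_+(\M_k^\bullet)_{k,k}\widetilde{\otimes}^{\mathbb{L}}_{\D'_{k,k}}\N_{k,k}^\bullet\bigr)$ already produced above, completing the proof.

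The hard part will be the Fubini-plus-cofinality step for countable homotopy limits in the derived category of complete bornological sheaves. I expect this to follow from the Milnor/telescope description of homotopy limits together with the nuclearity properties of the double system -- each $\iota_+(\M_m^\bullet)$ is coadmissible over the Fr\'echet--Stein algebra $\D'_{m,\infty}$, which is nuclear over the Banach algebra $\h{U_{\A'}(\A'\otimes_\A \pi^m\L)}_K$, so the relevant $\mathrm{R}^1\varprojlim$ terms vanish. This is in the same spirit as the computations underlying Theorem \ref{tensorlimit} in the appendix, and I anticipate an analogous direct argument with products will close the gap.
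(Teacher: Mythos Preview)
Your proposal is correct and follows essentially the same route as the paper: apply Theorem \ref{tensorlimit} to $(\iota_+\M^\bullet, \N^\bullet)$, identify the resulting terms as the diagonal $(m,m)$ of the doubly-indexed system via Lemma \ref{njbasechange}, and pass between the single and double homotopy limits using cofinality of the diagonal together with Lemma \ref{tensorlimiota}. The paper is equally brief on the Fubini/cofinality step you flag as the hard part, simply asserting it by ``considering the diagonal copy of $\mathbb{N}$ inside $\mathbb{N}^2$ as a cofinal set''.
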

\begin{proof}
	We have
	\begin{align*}
		\iota_+(\M^\bullet)\widetilde{\otimes}_{\w{\D}_Y}^{\mathbb{L}}\N^\bullet&\cong \mathrm{holim}_m (\iota_+(\M^\bullet)_m\widetilde{\otimes}_{\D'_m}^{\mathbb{L}} \N_m^\bullet)\\
		&\cong \mathrm{holim}_{m,n} (\iota_+(\M^\bullet)_{m,n}\widetilde{\otimes}_{\D'_{m,n}}^{\mathbb{L}}\N_{m,n}^\bullet)
	\end{align*}
	by Theorem \ref{tensorlimit} and by considering the diagonal copy of $\mathbb{N}$ inside $\mathbb{N}^2$ as a cofinal set, noting that $\D'_{m,m}=\D'_m$.
	
	Now invoke Lemma \ref{njbasechange} and Lemma \ref{tensorlimiota} to deduce that
	\begin{equation*}
		\iota_+(\M^\bullet)\widetilde{\otimes}_{\w{\D}_Y}^{\mathbb{L}}\N^\bullet\cong \mathrm{holim}_m (\iota_+(\M_m^\bullet)\widetilde{\otimes}_{\w{\D}_Y}^{\mathbb{L}}\N^\bullet),
	\end{equation*}
	as required.
\end{proof}

\begin{prop}
	\label{unboundedprojectionclosed}
	Let $\iota: X\to Y$ be as above. If $\M^\bullet\in \mathrm{D}_{\C}(\w{\D}_X^{\mathrm{op}})$ and $\N^\bullet\in \mathrm{D}(\w{\D}_Y)$, then there is a natural isomorphism
	\begin{equation*}
		\iota_+\M_n^\bullet\widetilde{\otimes}_{\w{\D}_Y}^{\mathbb{L}}\N^\bullet\cong \mathrm{R}\iota_*(\M^\bullet_n\widetilde{\otimes}_{\w{\D}_X}^{\mathbb{L}}\mathbb{L}\iota^*\N^\bullet)
	\end{equation*}
	in $\mathrm{D}(\mathrm{Shv}(Y_m, LH(\h{\B}c_K)))$ for any $n\geq m$.
\end{prop}
\begin{proof}
	As $\D_n(X)$ is Auslander regular, we can reduce to the case $\M_n^\bullet=\D_n$, and we need to show that
	\begin{equation*}
		\iota_+(\D_n)\widetilde{\otimes}_{\w{\D}_Y}^{\mathbb{L}}\N^\bullet\cong \mathrm{R}\iota_*(\D_n\widetilde{\otimes}_{\w{\D}_X}^{\mathbb{L}}\mathbb{L}\iota^*\N^\bullet).
	\end{equation*}
	Since both sides are supported on $X$, it suffices to prove the isomorphism after applying $\iota^{-1}$. But since
	\begin{equation*}
		\D_n\widetilde{\otimes}_{\w{\D}_X}^{\mathbb{L}}\iota^*\w{\D}_Y \widetilde{\otimes}_{\iota^{-1}\w{\D}_Y}^{\mathbb{L}}\iota^{-1}\N^\bullet\cong \D_n\widetilde{\otimes}^{\mathbb{L}}_{\w{\D}_X}\mathbb{L}\iota^*\N^\bullet,
	\end{equation*}
	this finishes the proof.
\end{proof}

\begin{prop}
	\label{closedprojection}
	Let $\iota: X\to Y$ be as above. If $\M^\bullet\in \mathrm{D}^-_\C(\w{\D}_X^\mathrm{op})$ and $\N^\bullet\in \mathrm{D}^-_\C(\w{\D}_Y)$ such that $\iota^!\N^\bullet\in \mathrm{D}^-_\C(\w{\D}_X)$, then there is a natural isomorphism
	\begin{equation*}
		\iota_+\M^\bullet\widetilde{\otimes}^\mathbb{L}_{\w{\D}_Y} \N^\bullet \cong \mathrm{R}\iota_*(\M^\bullet\widetilde{\otimes}^\mathbb{L}_{\w{\D}_X} \iota^!\N^\bullet)[l]
	\end{equation*}
	in $\mathrm{D}(\mathrm{Shv}(Y, LH(\h{\B}c_K)))$.
	
	The same applies if $\M^\bullet\in \mathrm{D}_{\C}^b(\w{\D}_X^{\mathrm{op}})$ and $\N^\bullet\in \mathrm{D}_{\C}(\w{\D}_Y)$.
\end{prop}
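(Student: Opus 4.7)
The strategy is to reduce the statement to the Noetherian, Auslander-regular level (over $\D_m$ on $X$ and $\D'_{m,n}$ on $Y$) via the homotopy-limit comparisons built up in the preceding subsection, prove a finite-level projection formula there using Auslander regularity, and reassemble the identity by taking homotopy limits. There is a natural candidate morphism
\begin{equation*}
\mathrm{R}\iota_*(\M^\bullet \widetilde{\otimes}^{\mathbb{L}}_{\w{\D}_X} \iota^!\N^\bullet)[l] \to \iota_+\M^\bullet \widetilde{\otimes}^{\mathbb{L}}_{\w{\D}_Y}\N^\bullet
\end{equation*}
coming from unfolding the definitions $\iota_+ = \mathrm{R}\iota_*(- \widetilde{\otimes}^{\mathbb{L}}_{\w{\D}_X}\w{\D}_{X\to Y})$, $\iota^! = \w{\D}_{X\to Y} \widetilde{\otimes}^{\mathbb{L}}_{\iota^{-1}\w{\D}_Y}\iota^{-1}(-)[-l]$, tensor associativity, and the adjunction counit $\iota^{-1}\mathrm{R}\iota_* \to \mathrm{id}$ (in fact an isomorphism, since $\iota$ is a closed embedding); our task is to show this is an isomorphism.

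On the left, Corollary \ref{holimKashiwara} followed by Lemma \ref{tensorlimiota} rewrites
\begin{equation*}
\iota_+\M^\bullet \widetilde{\otimes}^{\mathbb{L}}_{\w{\D}_Y}\N^\bullet \cong \mathrm{holim}_m \mathrm{holim}_{n\geq m} \bigl(\iota_+(\M_m^\bullet)_{m,n} \widetilde{\otimes}^{\mathbb{L}}_{\D'_{m,n}} \N_{m,n}^\bullet\bigr).
\end{equation*}
On the right, since $\iota^!\N^\bullet \in \mathrm{D}^-_\C(\w{\D}_X)$ by assumption, Theorem \ref{tensorlimit} gives
\begin{equation*}
\M^\bullet \widetilde{\otimes}^{\mathbb{L}}_{\w{\D}_X}\iota^!\N^\bullet \cong \mathrm{holim}_m \bigl(\M_m^\bullet \widetilde{\otimes}^{\mathbb{L}}_{\D_m}(\iota^!\N^\bullet)_m\bigr),
\end{equation*}
and since $\iota$ is a closed embedding, $\mathrm{R}\iota_* = \iota_*$ is exact and commutes with the defining products of homotopy limits, yielding the analogous holim expression for the right-hand side. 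It thus suffices, for each fixed $m$, to produce a compatible isomorphism
\begin{equation*}
\mathrm{holim}_{n\geq m} \bigl(\iota_+(\M_m^\bullet)_{m,n} \widetilde{\otimes}^{\mathbb{L}}_{\D'_{m,n}} \N_{m,n}^\bullet\bigr) \cong \iota_*\bigl(\M_m^\bullet \widetilde{\otimes}^{\mathbb{L}}_{\D_m}(\iota^!\N^\bullet)_m\bigr)[l].
\end{equation*}

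At each finite level $(m,n)$, the algebras $\D_m(X)$ and $\D'_{m,n}(Y)$ are Auslander regular by Theorem \ref{passagefromtube} (recalling that $Y = X\times \mathbb{D}^l$ is again of Kiehl standard form with the $\L'_{m,n}$ regular), so every bounded coherent complex admits a finite resolution by finitely generated projectives. This reduces the finite-level identity to the standard sheaf-theoretic projection formula for the closed embedding $\iota$: using $\iota^{-1}\iota_* = \mathrm{id}$, tensor associativity, and the change-of-rings identity $\w{\D}_{X\to Y}|_n \widetilde{\otimes}^{\mathbb{L}}_{\iota^{-1}\D'_{m,n}}\iota^{-1}\N_{m,n}^\bullet \cong \iota^*_{m,n}\N_{m,n}^\bullet$ (defined without the degree shift). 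Finally, one identifies the homotopy limit $\mathrm{holim}_n \iota^*_{m,n}\N_{m,n}^\bullet$ with $(\iota^!\N^\bullet)_m[l]$ by a second application of Theorem \ref{tensorlimit} to the transfer bimodule $\w{\D}_{X\to Y}$, viewed as an $\iota^{-1}\D'_{m,\infty}$-module. The main obstacle is bookkeeping: namely, checking that the projection isomorphisms at level $(m,n)$ are compatible under the transition maps so as to glue across the double homotopy limit, and that $\iota_*$ commutes with the relevant holims on the appropriate restricted site $Y_m$ (which follows from exactness of $\iota_*$ and the stability of $\pi^m\L'$-accessibility under the closed embedding).
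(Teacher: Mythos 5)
Your strategy is close to the paper's in spirit (reduce via homotopy limits and use Auslander regularity at finite level), but there is a structural difference that creates a genuine gap in your argument. The paper's proof keeps $\N^\bullet$ at the \emph{infinite} level throughout the crucial reduction step: it uses Corollary \ref{holimKashiwara} to write the left side as $\mathrm{holim}_n\bigl(\iota_+(\M_n^\bullet)\widetilde{\otimes}^{\mathbb{L}}_{\w{\D}_Y}\N^\bullet\bigr)$ and Theorem \ref{tensorlimit} plus exactness of $\iota_*$ to write the right side as $\mathrm{holim}_n\bigl(\mathrm{R}\iota_*(\M_n^\bullet\widetilde{\otimes}^{\mathbb{L}}_{\w{\D}_X}\mathbb{L}\iota^*\N^\bullet)\bigr)$, with the same index $n$ on both sides and $\N^\bullet$ untouched. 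Then, fixing $n$, Auslander regularity of $\D_n(X)$ reduces $\M_n^\bullet$ to $\D_n$, and the remaining identity $\iota_+(\D_n)\widetilde{\otimes}^{\mathbb{L}}_{\w{\D}_Y}\N^\bullet\cong \mathrm{R}\iota_*(\D_n\widetilde{\otimes}^{\mathbb{L}}_{\w{\D}_X}\mathbb{L}\iota^*\N^\bullet)$ is a purely formal sheaf-theoretic fact: both sides are supported on $X$, so one checks after applying $\iota^{-1}$, where the associativity $\D_n\widetilde{\otimes}^{\mathbb{L}}_{\w{\D}_X}\iota^*\w{\D}_Y\widetilde{\otimes}^{\mathbb{L}}_{\iota^{-1}\w{\D}_Y}\iota^{-1}\N^\bullet\cong\D_n\widetilde{\otimes}^{\mathbb{L}}_{\w{\D}_X}\mathbb{L}\iota^*\N^\bullet$ concludes the argument.

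Your proposal instead drives $\N^\bullet$ down to finite level on both sides (a double holim on the left via Corollary \ref{holimKashiwara} and Lemma \ref{tensorlimiota}; a single holim on the right), and then asserts that ``one identifies $\mathrm{holim}_n \iota^*_{m,n}\N_{m,n}^\bullet$ with $(\iota^!\N^\bullet)_m[l]$ by a second application of Theorem \ref{tensorlimit}''. This is the gap: Theorem \ref{tensorlimit} concerns tensor products of $\C$-complexes over a single $\w{\D}$ on a single space, while what you need here is a base-change compatibility between $\iota^!$ on the one hand and localization to the sites $X_m$ and $Y_{m}$ on the other -- a statement about how the transfer bimodule interacts with the towers $\D_m$ and $\D'_{m,n}$ that the paper does not establish (Lemma \ref{njbasechange} gives such a statement for $\iota_+$, not for $\iota^!$). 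Additionally, passing the holim over $n$ inside $\M_m^\bullet\widetilde{\otimes}^{\mathbb{L}}_{\D_m}(-)$ would require the derived tensor to commute with homotopy limits, which is not automatic. The paper's ordering of the reductions is precisely what makes these issues disappear; you should adopt it rather than trying to match finite-level expressions on both $\M$- and $\N$-sides simultaneously.
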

\begin{proof}
	It suffices to show that the natural morphism
	\begin{equation*}
		\iota_+\M^\bullet \widetilde{\otimes}^\mathbb{L}_{\w{\D}_Y} \N^\bullet \to \mathrm{R}\iota_*(\M^\bullet \widetilde{\otimes}^\mathbb{L}_{\w{\D}_X} \iota^! \N^\bullet)[l]\cong \mathrm{R}\iota_*(\M^\bullet\widetilde{\otimes}^\mathbb{L}_{\w{\D}_X} \mathbb{L}\iota^*\N^\bullet)
	\end{equation*}
	is an isomorphism on $Y_m$ for each $m$. 
	
	By Corollary \ref{holimKashiwara}, $\iota_+\M^\bullet\widetilde{\otimes}_{\w{\D}_Y}^{\mathbb{L}}\N^\bullet$ is a homotopy limit of 
	\begin{equation*}
		\iota_+(\M_n^\bullet)\widetilde{\otimes}_{\w{\D}_Y}^{\mathbb{L}}\N^\bullet. 
	\end{equation*}

	Similarly, 
	\begin{equation*}
		\mathrm{R}\iota_*(\M^\bullet \widetilde{\otimes}^\mathbb{L}_{\w{\D}_X} \iota^!\N^\bullet)[l]
	\end{equation*}
	is a homotopy limit of $\mathrm{R}\iota_*(\M^\bullet_n\widetilde{\otimes}^\mathbb{L}_{\w{\D}_X} \mathbb{L}\iota^*\N^\bullet)$ by Theorem \ref{tensorlimit}, where we use the assumption that $\iota^!\N^\bullet$ is a $\C$-complex and the fact that $\mathrm{R}\iota_*$ commutes with products. It thus suffices to show that on $Y_m$, the natural morphism
	\begin{equation*}
		\mathrm{R}\iota_*(\M_n^{\bullet}\widetilde{\otimes}_{\w{\D}_X}^{\mathbb{L}}\iota^*\w{\D}_Y)\widetilde{\otimes}_{\w{\D}_Y}^{\mathbb{L}}\N^{\bullet}\to \mathrm{R}\iota_*(\M_n^{\bullet}\widetilde{\otimes}_{\w{\D}_X}^{\mathbb{L}}\mathbb{L}\iota^*\N^{\bullet})
	\end{equation*}
	is an isomorphism for each $n\geq m$. 
	
	We can thus apply Proposition \ref{unboundedprojectionclosed} to conclude.
\end{proof}

\begin{cor}
	\label{holimofpullback}
	Let $\N^\bullet\in \mathrm{D}_{\C}(\w{\D}_Y)$ such that $\iota^!\N^\bullet\in \mathrm{D}_{\C}(\w{\D}_X)$. Then $\iota^!\N^\bullet\cong \mathrm{holim} \mathbb{L}\iota^*(\N_n^\bullet)[-l]$.
\end{cor}
\begin{proof}
	As $\mathrm{R}\iota_*$ reflects isomorphisms and commutes with products, it suffices to show that
	\begin{equation*}
		\mathrm{R}\iota_*(\iota^!\N^\bullet)\cong \mathrm{holim}\mathrm{R}\iota_*(\mathbb{L}\iota^*\N_n^\bullet)[-l].
	\end{equation*}
	But now the left hand side is naturally isomorphic to $\iota_+\w{\D}_X\widetilde{\otimes}_{\w{\D}_Y}^{\mathbb{L}}\N^\bullet[-l]$ by Proposition \ref{closedprojection}, while
	\begin{equation*}
		\mathrm{R}\iota_*(\mathbb{L}\iota^*\N_n^\bullet)\cong \iota_+\w{\D}_X\widetilde{\otimes}_{\w{\D}_Y}^{\mathbb{L}}\N_n^\bullet
	\end{equation*}
	by Lemma \ref{njbasechange}.(ii). Invoking Theorem \ref{tensorlimit} proves the result.
\end{proof}

This provides us with a projection formula for closed embeddings.
\begin{cor}
	Let $\iota: X\to Y$ be a closed embedding of smooth rigid analytic $K$-varieties. If $\M^\bullet\in \mathrm{D}^-_\C(\w{\D}_X^\mathrm{op})$ and $\N^\bullet\in \mathrm{D}^-_\C(\w{\D}_Y)$ such that $\iota^!\N^\bullet\in \mathrm{D}^-_\C(\w{\D}_X)$, then there is a natural isomorphism
	\begin{equation*}
		\iota_+\M^\bullet\widetilde{\otimes}^\mathbb{L}_{\w{\D}_Y} \N^\bullet \cong \mathrm{R}\iota_*(\M^\bullet\widetilde{\otimes}^\mathbb{L}_{\w{\D}_X} \iota^!\N^\bullet)[\mathrm{dim}Y-\mathrm{dim}X]
	\end{equation*}
	in $\mathrm{D}(\mathrm{Shv}(Y, LH(\h{\B}c_K)))$.
	
	The same applies if $\M^\bullet\in \mathrm{D}_{\C}^b(\w{\D}_X^{\mathrm{op}})$ and $\N^\bullet\in \mathrm{D}_{\C}(\w{\D}_Y)$.
\end{cor}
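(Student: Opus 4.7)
The statement is local on $Y$, so I may assume $Y$ is affinoid. By Proposition \ref{localform} applied to $X$ and the smoothness of $\iota$, I claim one can produce an admissible covering by affinoids $X_i$ of Kiehl standard form together with admissible open affinoids $V_i\subseteq Y$ containing $X_i$ as a closed subvariety, and isomorphisms of pairs $(V_i, X_i)\cong (X_i\times \mathbb{D}^l, X_i\times\{0\})$ where $l=\dim Y-\dim X$. In codimension $l=1$ this is exactly Proposition \ref{tubular}: after shrinking, $X_i$ sits inside a polydisc $X_i'$ of one higher dimension via a Kiehl standard form presentation, and the tubular neighbourhood of $X_i$ in $X_i'$ is identified with $\Sp A_i\langle Z\rangle$. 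In higher codimension one iterates this on a chain $X_i=X_i^{(0)}\subset X_i^{(1)}\subset\cdots\subset X_i^{(l)}\subseteq Y$ of smooth closed subvarieties of successive codimension one, or, equivalently, locally chooses a regular sequence $z_1,\dots,z_l$ cutting out $X$ inside $Y$ and extends it to a full system of local coordinates using smoothness; either way, after replacing the cover by a common refinement, the embedding $X_i\hookrightarrow V_i$ has the form required by Proposition \ref{closedprojection}.

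Given this local model, Proposition \ref{closedprojection} applies verbatim on each $V_i$ to produce natural isomorphisms
\begin{equation*}
(\iota_+\M^\bullet \widetilde{\otimes}^{\mathbb{L}}_{\w{\D}_Y}\N^\bullet)|_{V_i}\cong \mathrm{R}\iota_*(\M^\bullet\widetilde{\otimes}^{\mathbb{L}}_{\w{\D}_X}\iota^!\N^\bullet)[l]|_{V_i}
\end{equation*}
in $\mathrm{D}(\mathrm{Shv}(V_i,LH(\h{\B}c_K)))$. Since $X$ is equidimensional, the integer $l$ is the global shift $\dim Y-\dim X$, so the shifts assemble correctly. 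The isomorphism provided by Proposition \ref{closedprojection} is built entirely from canonical maps (tensor–hom adjunction, the Kashiwara-type adjunction via $\iota_+$ and $\iota^!$ encoded in Lemma \ref{Kashiwaraequiv} and Corollary \ref{Kashiwaraadjunction}, and the homotopy-limit compatibility of Theorem \ref{tensorlimit} and Corollary \ref{holimKashiwara}), hence is natural in the inputs $\M^\bullet$ and $\N^\bullet$ and compatible with restriction to admissible opens. The local isomorphisms therefore agree on the overlaps $V_i\cap V_j$ and glue to a global isomorphism on $Y$.

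The main obstacle is the local tubular neighbourhood in codimension greater than one. Proposition \ref{tubular} gives it directly in codimension one, but iterating requires verifying that each intermediate smooth subvariety of the chain $X^{(\bullet)}$ can be brought into Kiehl standard form \emph{in the ambient coordinates already chosen}, so that the local coordinates on $V_i$ simultaneously trivialise all the relevant tubular neighbourhoods and assemble into a single product decomposition $V_i\cong X_i\times \mathbb{D}^l$. The alternative route via a local regular sequence relies on being able to extend it to a full system of \'etale coordinates on $Y$, which again is a smoothness-plus-Kiehl argument but avoids tracking the iterated standard form data. Everything past this step is essentially naturality and the already-proved codimension-$l$ case of Proposition \ref{closedprojection}.
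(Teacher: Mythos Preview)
Your approach is essentially the same as the paper's, but the paper's proof is considerably more streamlined on two points.

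First, the paper begins by noting that there is always a global natural morphism
\[
\iota_+\M^\bullet \widetilde{\otimes}^\mathbb{L}_{\w{\D}_Y} \N^\bullet \to \mathrm{R}\iota_*(\M^\bullet \widetilde{\otimes}^\mathbb{L}_{\w{\D}_X} \iota^! \N^\bullet)[\dim Y-\dim X],
\]
and then checks locally that it is an isomorphism. This sidesteps the gluing issue entirely: you do not need to argue that your local isomorphisms agree on overlaps, because you are merely verifying that a single globally defined map is an isomorphism. In a derived-category setting this matters, since gluing morphisms from local data is genuinely delicate (cones are not functorial), whereas checking that a given morphism is a quasi-isomorphism is purely local. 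Your naturality argument is plausible but is doing more work than necessary; the paper's route is both shorter and safer.

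Second, your ``main obstacle'' --- producing the tubular neighbourhood in codimension $l>1$ --- is not an obstacle at all. Kiehl's tubular neighbourhood theorem \cite[Theorem 1.18]{Kiehl} applies directly to any closed embedding of smooth affinoids in arbitrary codimension, giving locally an isomorphism of pairs with $(X_i\times\mathbb{D}^l, X_i\times\{0\})$; there is no need to iterate the codimension-one case or to track chains of intermediate subvarieties. The paper simply cites Theorem \ref{MainThm} (to make the $X_i$ of Kiehl standard form) together with Kiehl's theorem, and is then immediately in the setting of Proposition \ref{closedprojection}.
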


\begin{proof}
	We always have the natural morphism
	\begin{equation*}
		\iota_+\M^\bullet \widetilde{\otimes}^\mathbb{L}_{\w{\D}_Y} \N^\bullet \to \mathrm{R}\iota_*(\M^\bullet \widetilde{\otimes}^\mathbb{L}_{\w{\D}_X} \iota^! \N^\bullet)[\mathrm{dim}Y-\mathrm{dim}X],
	\end{equation*}
	so we argue locally on $Y$.
	
	Using Theorem \ref{MainThm} and the existence of tubular neighbourhoods (\cite[Theorem 1.18]{Kiehl}), we can reduce to the setting of Proposition \ref{closedprojection}.
\end{proof}

\subsection{A projection formula for projection maps}

We now also consider the case of a projection map.

Let $X, Y$ be smooth rigid analytic $K$-varieties. Suppose that $X$ is separated with countably many connected components, and $Y=\Sp A$ is an affinoid with free tangent sheaf. Let $\A$ be an admissible affine formal model for $A$ and $\L\subseteq \T_Y(Y)$ an $(R, \A)$-Lie lattice, and consider the corresponding sheaf of algebras $\D_n$ on the site $Y_n$ of $\pi^n\L$-accessible subdomains of $Y$.

We consider the projection $f: Z=X\times Y\to Y$. Let $Z_n=X\times Y_n$, where we consider $X$ with the weak Grothendieck topology. In particular, if $\N\in \mathrm{Mod}_{\mathrm{Shv}(Y_n, LH(\h{\B}c_K))}(\O_Y)$, we can consider its pullback $\mathbb{L}f^*\N=\O_Z\widetilde{\otimes}^{\mathbb{L}}_{f^{-1}\O_Y}f^{-1}\N$ to $Z_n$.

Note that $f^*\D_n$ is naturally a $(\w{\D}_Z, f^{-1}\D_n)$-bimodule on $Z_n$, which on any affinoid of the form $U\times V\subseteq X\times Y$ for $U$ resp. $V$ an affinoid subdomain of $X$ resp. $Y_n$, is given by
\begin{equation*}
	f^*\D_n(U\times V)\cong \O_X(U)\widetilde{\otimes}_K \D_n(V). 
\end{equation*}

\begin{lem}
	\label{nbasechangelem}
	Let $\M^\bullet\in \mathrm{D}^b_\C(\w{\D}_Z^\mathrm{op})$. Then the natural morphism
	\begin{equation*}
		\mathrm{R}\Gamma(X\times Y, \M^\bullet\widetilde{\otimes}^\mathbb{L}_{\w{\D}_Z}f^*\w{\D}_Y)\widetilde{\otimes}^\mathbb{L}_{\w{\D}_Y(Y)}\D_n(Y)\to \mathrm{R}\Gamma(X\times Y, \M^\bullet\widetilde{\otimes}^\mathbb{L}_{\w{\D}_Z}f^*\D_n)
	\end{equation*}
	is an isomorphism in $\mathrm{D}(\mathrm{Mod}_{LH(\h{\B}c_K)}(\D_n(Y)^\mathrm{op}))$. 
\end{lem}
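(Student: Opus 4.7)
The plan is to reduce the global base-change statement to a local check on affinoids and to control the passage to global sections via a Čech complex for a countable affinoid cover.

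The first step is to rewrite both sides using the identification
$$f^*\D_n\cong f^*\w{\D}_Y\,\widetilde{\otimes}^{\mathbb{L}}_{f^{-1}\w{\D}_Y}\,f^{-1}\D_n$$
on $Z_n$, where the derived tensor collapses to its underived version sectionwise on $\pi^n\L$-accessible affinoids by flatness of the Fréchet--Stein transition maps. Setting $\F^\bullet=\M^\bullet\widetilde{\otimes}^{\mathbb{L}}_{\w{\D}_Z}f^*\w{\D}_Y$, the morphism in the statement becomes the natural base change
$$\mathrm{R}\Gamma(Z,\F^\bullet)\,\widetilde{\otimes}^{\mathbb{L}}_{\w{\D}_Y(Y)}\D_n(Y)\;\longrightarrow\;\mathrm{R}\Gamma\bigl(Z_n,\F^\bullet\,\widetilde{\otimes}^{\mathbb{L}}_{f^{-1}\w{\D}_Y}f^{-1}\D_n\bigr).$$

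Next, using that $X$ is separated with countably many connected components, I would pick a countable admissible affinoid covering $\{U_i\}_{i\in\mathbb{N}}$ of $X$ with affinoid finite intersections, giving the admissible affinoid cover $\{U_i\times Y\}$ of $Z$, which also refines to a cover of $Z_n$. Both $\mathrm{R}\Gamma(Z,-)$ and $\mathrm{R}\Gamma(Z_n,-)$ can then be computed as the totalised product Čech complex for this cover. On each piece $W=(U_{i_0}\cap\dots\cap U_{i_p})\times Y$ the local base-change isomorphism should reduce to a statement purely about $\w{\D}_Y(Y)\to\D_n(Y)$: the cohomology groups of $\F^\bullet(W)$ are coadmissible, so Theorem \ref{Bornproperties} identifies derived and underived tensor with $\D_n(Y)$ over $\w{\D}_Y(Y)$, producing the required local iso.

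The hard part will be to commute the derived tensor product $-\widetilde{\otimes}^{\mathbb{L}}_{\w{\D}_Y(Y)}\D_n(Y)$ with the $\mathrm{R}\varprojlim$ appearing in the totalised Čech complex of the countable cover. This is precisely the interplay between derived completed tensor products and derived inverse limits treated in the appendix. I expect the pre-nuclearity of the resulting Čech system — inherited via the coadmissible local structure from Theorem \ref{Bornproperties}.(ii) — to be exactly what allows the arguments underlying Theorem \ref{tensorlimit} to be adapted to the present situation, finishing the proof.
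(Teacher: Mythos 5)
Your high-level strategy --- compute both sides as \v{C}ech complexes for a countable affinoid cover and then commute the derived tensor $-\widetilde{\otimes}^{\mathbb{L}}_{\w{\D}_Y(Y)}\D_n(Y)$ with the countable products in the \v{C}ech terms --- matches the paper's proof (Lemma~\ref{Cechbasechangeapp}), and you correctly identify the commutation as the delicate part. The tool that actually does this work, however, is Lemma~\ref{flatprod}, which shows via the Bar resolution that derived tensor over a Fr\'echet algebra commutes with countable products of pseudo-nuclear Fr\'echet modules of countable type; Theorem~\ref{Bornproperties}.(ii) and Theorem~\ref{tensorlimit} address related but distinct situations and do not directly apply here.

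There is a genuine gap in your local step. You assert that the cohomology groups of $\F^\bullet(W)$, for $W=U'\times Y$ an affinoid piece of the cover, are coadmissible over $\w{\D}_Y(Y)$. This is not true in general: $\F^\bullet(W)$ computes (up to a shift) the relative de~Rham cohomology of $\M$ along the affinoid $U'\subseteq X$, and since $U'$ is affinoid rather than projective, these cohomology groups have no reason to be coadmissible $\w{\D}_Y(Y)$-modules --- indeed, the very next lemma in the paper takes $f_+\M^\bullet\in\mathrm{D}^b_\C(\w{\D}_Y^{\mathrm{op}})$ as a \emph{hypothesis} precisely because it is not automatic. The paper avoids passing to cohomology altogether: it resolves $f^*\w{\D}_Y$ and $f^*\D_n$ by the Spencer complexes $\mathcal{S}^\bullet$ and $\mathcal{S}'^\bullet$ of $(\w{\D}_Z, f^{-1}\w{\D}_Y)$- resp.\ $(\w{\D}_Z, f^{-1}\D_n)$-bimodules (Lemma~\ref{Spenceracyclic}), forms the \v{C}ech--de~Rham double complex, and works termwise. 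Each term $\M\widetilde{\otimes}_{\w{\D}_Z}\mathcal{S}^{-i}(W)\cong\M(W)^{\binom{d}{i}}$ is coadmissible over $\w{\D}_Z(W)$ and hence pseudo-nuclear as a $\w{\D}_Y(Y)$-module; associativity of the tensor product together with the bimodule structure give $(\M(W)\widetilde{\otimes}_{\w{\D}_Z(W)}\mathcal{S}^{-i}(W))\widetilde{\otimes}^{\mathbb{L}}_{\w{\D}_Y(Y)}\D_n(Y)\cong\M(W)\widetilde{\otimes}^{\mathbb{L}}_{\w{\D}_Z(W)}\mathcal{S}'^{-i}(W)$ concentrated in degree zero, and Lemma~\ref{flatprod} then handles the \v{C}ech products. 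Reformulate your local step along these lines rather than via coadmissibility of the cohomology of $\F^\bullet(W)$.
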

\begin{proof}
	See Appendix, Lemma \ref{Cechbasechangeapp}.
\end{proof}

\begin{lem}
	\label{smoothprojforDn}
	Let $\M^\bullet\in \mathrm{D}^b_\C(\w{\D}_Z^\mathrm{op})$ and assume that $f_+\M^\bullet\in \mathrm{D}_\C(\w{\D}_Y^\mathrm{op})$. Then the natural map
	\begin{equation*}
		f_+\M^\bullet\widetilde{\otimes}^\mathbb{L}_{\w{\D}_Y} \D_n\to \mathrm{R}f_*(\M^\bullet\widetilde{\otimes}^\mathbb{L}_{\w{\D}_Z} f^*\D_n)
	\end{equation*}
	is an isomorphism in $\mathrm{D}(\mathrm{Mod}_{\mathrm{Shv}(Y_n, LH(\h{\B}c_K))}(\D_n^\mathrm{op}))$.
	
	The same holds for any $\M^\bullet \in \mathrm{D}_{\C}(\w{\D}_Z^{\mathrm{op}})$ with $f_+\M^\bullet\in\mathrm{D}_{\C}(\w{\D}_Y^{\mathrm{op}})$, provided that $f_*$ has finite cohomological dimension.
\end{lem}
\begin{proof}
	By assumption, $\mathrm{H}^j(f_+\M^\bullet)$ is a coadmissible right $\w{\D}_Y$-module, and 
	\begin{equation*}
		\mathrm{H}^j(f_+\M^\bullet\widetilde{\otimes}^\mathbb{L}_{\w{\D}_Y}\D_n)(Y)\cong \mathrm{H}^j(f_+\M^\bullet)(Y)\widetilde{\otimes}^{(\mathbb{L})}_{\w{\D}_Y(Y)}\D_n(Y)
	\end{equation*}
	by \cite[Proposition 8.2, proof of Lemma 6.6]{SixOp}.
	
	By the Lemma above, the latter is naturally isomorphic to
	\begin{equation*}
		\mathrm{H}^j(X\times Y, \M^\bullet\widetilde{\otimes}_{\w{\D}_Z}^{\mathbb{L}}f^*\D_n),
	\end{equation*}
	which by the acyclicity of coherent modules is
	\begin{equation*}
		\mathrm{H}^j(\mathrm{R}f_*(\M^\bullet\widetilde{\otimes}_{\w{\D}_Z}^{\mathbb{L}}f^*\D_n))(Y).
	\end{equation*}
	
	As Lemma \ref{nbasechangelem} applies in the same way to any $\pi^n\L$-accessible subdomain of $Y$, we thus have
	\begin{align*}
		\mathrm{H}^j(f_+\M^\bullet\widetilde{\otimes}_{\w{\D}_Y}^{\mathbb{L}}\D_n)&\cong \mathrm{H}^j(f_+\M^\bullet)\widetilde{\otimes}^{(\mathbb{L})}_{\w{\D}_Y} \D_n\\
		&\cong\mathrm{H}^j(\mathrm{R}f_*(\M^\bullet\widetilde{\otimes}_{\w{\D}_Z}^{\mathbb{L}}f^*\D_n)),
	\end{align*}
	so the natural morphism
	\begin{equation*}
		f_+\M^\bullet\widetilde{\otimes}^\mathbb{L}_{\w{\D}_Y}\D_n\to \mathrm{R}f_*(\M^\bullet\widetilde{\otimes}^\mathbb{L}_{\w{\D}_Z}f^*\D_n)
	\end{equation*}
	is a quasi-isomorphism.
	
	If $\M^\bullet$ is unbounded and $f_*$ has finite cohomological dimension, we note that in subsection A.2, both $f^*\w{\D}_Y$ and $f^*\D_n$ admit a finite resolution with which we can calculate the derived tensor product, so for each $j$ there exists $a, b$ such that
	\begin{equation*}
		\mathrm{H}^j(f_+\M^\bullet)\cong \mathrm{H}^j(f_+\tau^{\geq a}\tau^{\leq b}\M^\bullet)\in \C_Y
	\end{equation*}
	and
	\begin{equation*}
		\mathrm{H}^j(\mathrm{R}f_*(\M^\bullet\widetilde{\otimes}_{\w{\D}_Z}^{\mathbb{L}}f^*\D_n))\cong \mathrm{H}^j(\mathrm{R}f_*(\tau^{\geq a}\tau^{\leq b}\M^\bullet\widetilde{\otimes}_{\w{\D}_Z}^{\mathbb{L}}f^*\D_n))
	\end{equation*}
	thanks to \cite[tag 07K7]{stacksproj}. We can thus reduce to the bounded case.
\end{proof}

\begin{cor}
	\label{smoothprofforNn}
	Assume that $D_n(Y)$ is Auslander regular. Let $\M^\bullet\in \mathrm{D}^b_{\C}(\w{\D}_Z^{\mathrm{op}})$ such that $f_+\M^\bullet\in \mathrm{D}_{\C}(\w{\D}_Y^{\mathrm{op}})$ and let $\N^\bullet\in \mathrm{D}_{\C}(\w{\D}_Y)$. Then the natural map
	\begin{equation*}
		f_+\M^\bullet\widetilde{\otimes}_{\w{\D}_Y}^{\mathbb{L}}\N_n^\bullet\to \mathrm{R}f_*(\M^\bullet\widetilde{\otimes}_{\w{\D}_Z}^{\mathbb{L}}\mathbb{L}f^*\N_n^\bullet)
	\end{equation*}
	is an isomorphism in $\mathrm{D}(\mathrm{Shv}(Y_m, LH(\h{\B}c_K)))$ for any $n\geq m$.
	
	The same holds for any $\M^\bullet\in \mathrm{D}_{\C}(\w{\D}_Z^{\mathrm{op}})$ with $f_+\M^\bullet\in \mathrm{D}_{\C}(\w{\D}_Y^{\mathrm{op}})$, provided that $f_*$ has finite cohomological dimension.
\end{cor}
\begin{proof}
	We can reduce to $\N_n^\bullet=\D_n$ and apply the Lemma above.
\end{proof}

We also point out that if $\N^\bullet\in \mathrm{D}_{C}(\w{\D}_Y)$ as above, then 
\begin{equation*}
	\mathbb{L}f^*\N^\bullet\cong \mathrm{holim} \mathbb{L}f^*\N_n^\bullet,
\end{equation*}
by \cite[Corollary 5.21]{SixOp} and the same argument as in Lemma \ref{tensorasholimapp}.

Letting $U\subseteq X$ be an affinoid subspace with free tangent sheaf, we can form a suitable sheaf $\D_{U, n}$ on $U$, and $\D_{U\times Y, n}$ on $U\times Y$, with global sections
\begin{equation*}
	\D_{U\times Y, n}(Y)=\D_{U, n}(U)\widetilde{\otimes}_K \D_n(Y).
\end{equation*}
If $\N_n$ is a coherent $\D_n$-module, then $f^*\N_n|_{U\times Y}$ is a coherent $\D_{U\times Y, n}$-module, associated to the sections
\begin{equation*}
	f^*\N_n(U\times Y)\cong \O_X(U)\widetilde{\otimes}_K \N_n(Y).
\end{equation*}

In particular, for any $\C$-complex $\N^\bullet$ on $Y$, the description above shows that
\begin{equation*}
	\D_{U\times Y, n}\widetilde{\otimes}_{\w{\D}_Z}^{\mathbb{L}} f^!\N^\bullet|_{U\times Y}\cong \mathbb{L}f^*\N_n^\bullet|_{U\times Y}[\mathrm{dim}X].
\end{equation*}

Applying Theorem \ref{tensorlimit} locally, we can deduce that if $\N^\bullet\in \mathrm{D}_{\C}^-(\w{\D}_Y)$ and $\M^\bullet\in \mathrm{D}^-_{\C}(\w{\D}_Z^{\mathrm{op}})$, then
\begin{equation*}
	\M^\bullet\widetilde{\otimes}_{\w{\D}_Z}^{\mathbb{L}}f^!\N^\bullet\cong \mathrm{holim} \M^\bullet\widetilde{\otimes}_{\w{\D}_Z}^{\mathbb{L}} \mathbb{L}f^*(\N_n^\bullet)[\mathrm{dim}X].
\end{equation*}

\begin{lem}
Let $X$, $Y$ be smooth rigid analytic $K$-varieties. Assume that $X$ is separated with countably many connected components, and consider the natural projection $f:Z=X\times Y\to Y$. If $\M^\bullet\in \mathrm{D}^b_\C(\w{\D}_Z^\mathrm{op})$ such that $f_+\M^\bullet\in \mathrm{D}^b_\C(\w{\D}_Y^\mathrm{op})$, and $\N^\bullet\in \mathrm{D}^-_\C(\w{\D}_Y)$, then there is a natural isomorphism
\begin{equation*}
f_+\M^\bullet\widetilde{\otimes}^\mathbb{L}_{\w{\D}_Y} \N^\bullet \cong \mathrm{R}f_*(\M^\bullet \widetilde{\otimes}^\mathbb{L}_{\w{\D}_Z} f^!\N^\bullet)[-\mathrm{dim}X]
\end{equation*} 
in $\mathrm{D}(\mathrm{Shv}(Y, LH(\h{\B}c_K)))$.
\end{lem}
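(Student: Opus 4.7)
The statement is local on $Y$, so using Theorem \ref{MainThm} I may reduce to the case where $Y=\Sp A$ is a D-regular affinoid and fix an admissible affine formal model $\A\subseteq A$ together with a regular $(R,\A)$-Lie lattice $\L\subseteq \T_Y(Y)$ which is free over $\A$; this gives sheaves of Auslander regular Noetherian Banach $K$-algebras $\D_n$ on $Y_n$, and a $(\w{\D}_Z,f^{-1}\D_n)$-bimodule $f^*\D_n$ on $Z_n:=X\times Y_n$. The desired map from LHS to RHS is constructed via the $(f^{-1},\mathrm{R}f_*)$-adjunction coupled with the natural action $f^*\w{\D}_Y\widetilde{\otimes}^{\mathbb{L}}_{f^{-1}\w{\D}_Y}f^{-1}\N^\bullet\to \mathbb{L}f^*\N^\bullet$; after cancelling the shift by $-\mathrm{dim}X$ in the definition of $f^!$, checking it is an isomorphism amounts to verifying the projection formula
\[
\mathrm{R}f_*(\M^\bullet\widetilde{\otimes}^{\mathbb{L}}_{\w{\D}_Z}f^*\w{\D}_Y)\widetilde{\otimes}^{\mathbb{L}}_{\w{\D}_Y}\N^\bullet\;\cong\;\mathrm{R}f_*(\M^\bullet\widetilde{\otimes}^{\mathbb{L}}_{\w{\D}_Z}f^*\w{\D}_Y\widetilde{\otimes}^{\mathbb{L}}_{f^{-1}\w{\D}_Y}f^{-1}\N^\bullet).
\]

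Setting $(f_+\M^\bullet)_n:=f_+\M^\bullet\widetilde{\otimes}^{\mathbb{L}}_{\w{\D}_Y}\D_n$ and $\N_n^\bullet:=\D_n\widetilde{\otimes}^{\mathbb{L}}_{\w{\D}_Y}\N^\bullet$, Theorem \ref{tensorlimit} applied to the $\C$-complexes $f_+\M^\bullet\in \mathrm{D}^b_{\C}(\w{\D}_Y^{\mathrm{op}})$ and $\N^\bullet\in \mathrm{D}^-_{\C}(\w{\D}_Y)$ identifies the left hand side with $\mathrm{holim}_n\bigl((f_+\M^\bullet)_n\widetilde{\otimes}^{\mathbb{L}}_{\D_n}\N_n^\bullet\bigr)$. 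The preceding lemma (Lemma \ref{nbasechangelem} and the subsequent lemma) then identifies $(f_+\M^\bullet)_n$ with $\mathrm{R}f_*(\M^\bullet\widetilde{\otimes}^{\mathbb{L}}_{\w{\D}_Z}f^*\D_n)$, so the LHS becomes
\[
\mathrm{holim}_n\Bigl(\mathrm{R}f_*(\M^\bullet\widetilde{\otimes}^{\mathbb{L}}_{\w{\D}_Z}f^*\D_n)\widetilde{\otimes}^{\mathbb{L}}_{\D_n}\N_n^\bullet\Bigr).
\]

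For the right hand side, I note that $f$ is smooth so $f^!\N^\bullet\in \mathrm{D}^-_{\C}(\w{\D}_Z)$ by Proposition \ref{Ccomplexstable}; covering $Z$ by D-regular affinoids (using Theorem \ref{MainThm} again and the countability hypothesis on $f^{-1}V$ to obtain such a cover compatibly with computing $\mathrm{R}f_*$ via \v{C}ech cohomology) and applying Theorem \ref{tensorlimit} on $Z$ shows that $\M^\bullet\widetilde{\otimes}^{\mathbb{L}}_{\w{\D}_Z}\mathbb{L}f^*\N^\bullet$ is the homotopy limit of $\M^\bullet\widetilde{\otimes}^{\mathbb{L}}_{\w{\D}_Z}\mathbb{L}f^*\N_n^\bullet$. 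Since $\mathrm{R}f_*$ commutes with homotopy limits (as it commutes with products), both sides are homotopy limits of the terms $\mathrm{R}f_*\bigl(\M^\bullet\widetilde{\otimes}^{\mathbb{L}}_{\w{\D}_Z}f^*\D_n\widetilde{\otimes}^{\mathbb{L}}_{f^{-1}\D_n}f^{-1}\N_n^\bullet\bigr)$ and $\mathrm{R}f_*(\M^\bullet\widetilde{\otimes}^{\mathbb{L}}_{\w{\D}_Z}f^*\D_n)\widetilde{\otimes}^{\mathbb{L}}_{\D_n}\N_n^\bullet$ respectively, so it suffices to match these level by level.

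This final step, the classical projection formula
\[
\mathrm{R}f_*(\mathscr{F}_n^\bullet)\widetilde{\otimes}^{\mathbb{L}}_{\D_n}\N_n^\bullet\;\cong\;\mathrm{R}f_*\bigl(\mathscr{F}_n^\bullet\widetilde{\otimes}^{\mathbb{L}}_{f^{-1}\D_n}f^{-1}\N_n^\bullet\bigr),\qquad \mathscr{F}_n^\bullet:=\M^\bullet\widetilde{\otimes}^{\mathbb{L}}_{\w{\D}_Z}f^*\D_n,
\]
is the heart of the argument. By Theorem \ref{MainThm} the algebra $\D_n(V)$ is Auslander regular on every $\pi^n\L$-accessible affinoid $V\subseteq Y$, hence of finite global dimension; combined with $\N^\bullet\in\mathrm{D}^-_{\C}(\w{\D}_Y)$ this places $\N_n^\bullet$ in $\mathrm{D}^b_{\mathrm{coh}}(\D_n)$, and it is even locally perfect. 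A d\'evissage along finite resolutions of $\N_n^\bullet$ by finitely generated free $\D_n$-modules then reduces the projection formula to the tautological case $\N_n^\bullet=\D_n$, with the passage from the local to the global assertion controlled by a \v{C}ech computation along a countable affinoid cover of $f^{-1}V$. The main obstacle is precisely this last step: ensuring that the \v{C}ech double complex, which involves countable products of objects in $LH(\h{\B}c_K)$, commutes with the derived tensor product with $\N_n^\bullet$ and with $\mathrm{R}f_*$; this is exactly what the countability hypothesis on the components of $f^{-1}V$ is designed to secure, in the spirit of the technical lemmata deferred to the appendix.
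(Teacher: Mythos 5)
Your proof is essentially the paper's: localise to a D-regular affinoid, express both sides as homotopy limits over $n$ via Theorem \ref{tensorlimit}, and at each finite level reduce by d\'evissage to the case $\N_n^\bullet=\D_n$, where the identification $f_+\M^\bullet\widetilde{\otimes}^\mathbb{L}_{\w{\D}_Y}\D_n\cong\mathrm{R}f_*(\M^\bullet\widetilde{\otimes}^\mathbb{L}_{\w{\D}_Z}f^*\D_n)$ from the preceding lemma closes the argument. One small remark: the \v{C}ech and countable-product subtleties you emphasise in your final paragraph actually live in the proof of that preceding lemma (that is, in Lemma \ref{Cechbasechangeapp} of the appendix), not in the final d\'evissage, which is routine once that lemma and the Auslander regularity of $\D_n$ are in place.
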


\begin{proof}
To show that the natural morphism
\begin{equation*}
f_+\M^\bullet\widetilde{\otimes}^\mathbb{L}_{\w{\D}_Y} \N^\bullet \to \mathrm{R}f_*(\M^\bullet \widetilde{\otimes}^\mathbb{L}_{\w{\D}_Z} f^!\N^\bullet)[-\mathrm{dim}X]=\mathrm{R}f_*(\M^\bullet\widetilde{\otimes}^\mathbb{L}_{\w{\D}_Z} \mathbb{L}f^*\N^\bullet)
\end{equation*}
is an isomorphism, we can argue locally on $Y$ and suppose that $Y$ is a D-regular affinoid. Let $Y=\Sp A$ and choose an admissible affine formal model $\A$, a regular $(R, \A)$-Lie lattice $\L$, and the corresponding sheaf $\D_n$ on $Y_n$.

By Theorem \ref{tensorlimit}, for each $m$, $f_+\M^\bullet\widetilde{\otimes}^\mathbb{L}_{\w{\D}_Y} \N^\bullet$ is the homotopy limit of $f_+\M^\bullet \widetilde{\otimes}^\mathbb{L}_{\w{\D}_Y} \N^\bullet_n$ for $n\geq m$ on $Y_m$. As $f$ is smooth, $f^!\N^\bullet\in \mathrm{D}^-_{\C}(\w{\D}_Z)$ by Proposition \ref{Ccomplexstable}, and by the above, $\mathrm{R}f_*(\M^\bullet \widetilde{\otimes}^\mathbb{L}_{\w{\D}_Z} f^!\N^\bullet)$ is the homotopy limit of $\mathrm{R}f_*(\M^\bullet \widetilde{\otimes}^\mathbb{L}_{\w{\D}_Z} f^!\N^\bullet_n)$. It thus suffices to show that
\begin{equation*}
f_+\M^\bullet \widetilde{\otimes}^\mathbb{L}_{\w{\D}_Y} \N^\bullet_n\cong \mathrm{R}f_*(\M^\bullet\widetilde{\otimes}^\mathbb{L}_{\w{\D}_Z}f^!\N^\bullet_n)[-\mathrm{dim}X]
\end{equation*}
via the natural morphism. But this is precisely Corollary \ref{smoothprojforDn}.
\end{proof}

Combining with our earlier results, we obtain a projection formula for separated morphisms.

\begin{thm}
\label{projectionformula}
Let $f: X\to Y$ be a separated morphism of smooth rigid analytic $K$-varieties. Assume that $Y$ has an admissible covering by D-regular affinoid subdomains $(Y_i)$ such that for each $i$, $f^{-1}Y_i$ has countably many connected components. Let $\M^\bullet \in \mathrm{D}^b_\C(\w{\D}_X^\mathrm{op})$ such that $f_+\M^\bullet\in \mathrm{D}^b_\C(\w{\D}_Y^\mathrm{op})$, and let $\N^\bullet\in \mathrm{D}^-_\C(\w{\D}_Y)$ such that $f^!\N^\bullet\in \mathrm{D}^-_\C(\w{\D}_X)$. Then there is a natural isomorphism
\begin{equation*}
f_+\M^\bullet\widetilde{\otimes}^\mathbb{L}_{\w{\D}_Y} \N^\bullet \cong \mathrm{R}f_*(\M^\bullet\widetilde{\otimes}^\mathbb{L}_{\w{\D}_X} f^!\N^\bullet)[\mathrm{dim}Y-\mathrm{dim}X]
\end{equation*}
in $\mathrm{D}(\mathrm{Shv}(Y, LH(\h{\B}c_K)))$.
\end{thm}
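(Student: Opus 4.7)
The plan is to factor a general separated morphism $f$ as a closed embedding followed by a projection, and thereby reduce to the two special cases established in this section. Since the statement is local on $Y$, I replace $Y$ by a D-regular affinoid $Y_i$ from the given covering, so that $f^{-1}Y_i$ has countably many connected components. Separatedness of $f$ makes the graph morphism $\Gamma_f: X\to X\times Y$, $x\mapsto (x,f(x))$, a closed embedding of smooth rigid analytic $K$-varieties, and $f$ factors as $p\circ \Gamma_f$ with $p: X\times Y\to Y$ the projection. The six operations compose in the expected way: $f_+\cong p_+\circ\Gamma_{f,+}$, $f^!\cong \Gamma_f^!\circ p^!$ and $\mathrm{R}f_*\cong \mathrm{R}p_*\circ \mathrm{R}\Gamma_{f,*}$.

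First I would verify that the intermediate objects lie in the correct subcategories. Since $\Gamma_f$ is a closed embedding, hence projective, the right-module version of Proposition \ref{Ccomplexstable}.(ii) (obtained by side-changing) gives $\Gamma_{f,+}\M^\bullet\in \mathrm{D}^b_\C(\w{\D}_{X\times Y}^{\mathrm{op}})$; and $p_+(\Gamma_{f,+}\M^\bullet)\cong f_+\M^\bullet$ is a $\C$-complex by hypothesis. Smoothness of $p$ gives $p^!\N^\bullet\in \mathrm{D}^-_\C(\w{\D}_{X\times Y})$ by Proposition \ref{Ccomplexstable}.(i), while $\Gamma_f^!(p^!\N^\bullet)\cong f^!\N^\bullet$ is a $\C$-complex by hypothesis. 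Hence both the closed-embedding projection formula (the Corollary after Proposition \ref{closedprojection}), applied to $\M^\bullet$ and $p^!\N^\bullet$ along $\Gamma_f$, and the projection formula for projections (the Lemma immediately preceding this theorem), applied to $\Gamma_{f,+}\M^\bullet$ and $\N^\bullet$ along $p$, are available.

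Chaining the two isomorphisms would give
\begin{align*}
f_+\M^\bullet\widetilde{\otimes}^\mathbb{L}_{\w{\D}_Y}\N^\bullet
&\cong \mathrm{R}p_*\bigl(\Gamma_{f,+}\M^\bullet\widetilde{\otimes}^\mathbb{L}_{\w{\D}_{X\times Y}}p^!\N^\bullet\bigr)[-\mathrm{dim}X]\\
&\cong \mathrm{R}p_*\mathrm{R}\Gamma_{f,*}\bigl(\M^\bullet\widetilde{\otimes}^\mathbb{L}_{\w{\D}_X}f^!\N^\bullet\bigr)[\mathrm{dim}Y-\mathrm{dim}X]\\
&\cong \mathrm{R}f_*\bigl(\M^\bullet\widetilde{\otimes}^\mathbb{L}_{\w{\D}_X}f^!\N^\bullet\bigr)[\mathrm{dim}Y-\mathrm{dim}X],
\end{align*}
the dimension shifts combining via $\mathrm{dim}(X\times Y)-\mathrm{dim}X=\mathrm{dim}Y$.

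The main technical obstacle will be justifying the composition isomorphisms $f_+\cong p_+\circ \Gamma_{f,+}$ and $f^!\cong \Gamma_f^!\circ p^!$ naturally in the complete bornological framework, since both involve derived completed tensor products along differential operators on the intermediate variety $X\times Y$, and one must check that the natural map of the theorem is the one produced by the chain above. Unwinding the definitions, each reduces to a natural isomorphism of transfer bimodules $\w{\D}_{X\to Y}\cong \w{\D}_{X\to X\times Y}\widetilde{\otimes}^\mathbb{L}_{\Gamma_f^{-1}\w{\D}_{X\times Y}}\Gamma_f^{-1}\w{\D}_{X\times Y\to Y}$. The vanishing of the relevant higher Tors can be checked on Lie lattices: a Lie lattice on $X\times Y$ can be chosen to split as a direct sum of the pullback of a Lie lattice on $X$ and the fibre-directional derivations, after which the required flatness follows by arguments parallel to those of subsection 5.3 (compare Lemma \ref{ipluscommuteswithD} and Corollary \ref{primebasechange}).
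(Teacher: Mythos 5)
Your proof is correct and follows essentially the same approach as the paper: argue locally to replace $Y$ by a D-regular affinoid, factor $f$ through the graph embedding $\Gamma_f : X \to X\times Y$ (closed, since $f$ is separated) followed by the projection $p: X\times Y \to Y$, verify that $\Gamma_{f,+}$ and $p^!$ keep you in $\mathrm{D}_\C$, and chain the closed-embedding and projection cases of the projection formula. The only inessential divergence is at the end: you flag the composition isomorphisms $f_+\cong p_+\Gamma_{f,+}$ and $f^!\cong \Gamma_f^!p^!$ as the main obstacle and sketch a transfer-bimodule/Lie-lattice argument, but these are already established in \cite[Lemma 7.8, Proposition 7.5]{SixOp} (and the paper simply cites them), so no new work is needed there.
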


\begin{proof}
There is a natural morphism 
\begin{equation*}
f_+\M^\bullet\widetilde{\otimes}^\mathbb{L}_{\w{\D}_Y} \N^\bullet \to \mathrm{R}f_*(\M^\bullet\widetilde{\otimes}^\mathbb{L}_{\w{\D}_X} f^!\N^\bullet)[\mathrm{dim}Y-\mathrm{dim}X],
\end{equation*}
so that we can argue locally on $Y$. Assuming that $Y$ is a D-regular affinoid, we can assume in particular that the graph embedding $\iota: X\to X\times Y$ is a closed embedding, and $X=f^{-1}Y$ has countably many connected components. Hence writing $f$ as the composition of $\iota$ and the projection $\mathrm{pr}: X\times Y\to Y$ reduces to the cases discussed earlier, since $\iota_+$ and $\mathrm{pr}^!$ preserve $\C$-complexes (\cite[Theorem 1.3]{SixOp}), and $f_+\cong \mathrm{pr}_+\iota_+$, $f^!\cong \iota^!\mathrm{pr}^!$ by \cite[Lemma 7.8, Proposition 7.5]{SixOp}.
\end{proof}

\subsection{Adjunction formulas}

\begin{prop}
\label{homwithdual}
Let $X$ be a smooth rigid analytic $K$-variety. Let $\M^\bullet\in \mathrm{D}_\C^+(\w{\D}_X)$ and $\N^\bullet\in \mathrm{D}_\C^-(\w{\D}_X)$. Then there is a natural isomorphism
\begin{equation*}
\mathrm{R}\mathcal{H}om_{\w{\D}_X}(\M^\bullet, \N^\bullet)\cong \mathrm{R}\mathcal{H}om_{\w{\D}_X}(\M^\bullet, \w{\D}_X)\widetilde{\otimes}^\mathbb{L}_{\w{\D}_X}\N^\bullet.
\end{equation*}
\end{prop}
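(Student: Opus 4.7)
The plan is to construct the natural evaluation map and then check that it is an isomorphism by passing to the Noetherian Banach level, where it follows from Auslander regularity. The natural morphism
\[
\mathrm{R}\mathcal{H}om_{\w{\D}_X}(\M^\bullet, \w{\D}_X)\widetilde{\otimes}^{\mathbb{L}}_{\w{\D}_X}\N^\bullet \longrightarrow \mathrm{R}\mathcal{H}om_{\w{\D}_X}(\M^\bullet, \w{\D}_X\widetilde{\otimes}^{\mathbb{L}}_{\w{\D}_X}\N^\bullet)\cong \mathrm{R}\mathcal{H}om_{\w{\D}_X}(\M^\bullet, \N^\bullet)
\]
comes from the standard evaluation transformation in the closed symmetric monoidal derived category $\mathrm{D}(\w{\D}_X)$. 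Being an isomorphism of sheaves can be checked locally, so thanks to Theorem \ref{MainThm} we may assume $X=\Sp A$ is a D-regular affinoid with free tangent sheaf, admissible affine formal model $\A$, and regular Lie lattice $\L$, giving the sheaves $\D_n$ on $X_n$ with $\D_n(X)$ Auslander regular of finite global dimension.

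The strategy is to verify the isomorphism after base change to each $\D_n$ and reassemble via homotopy limits. Set $\M_n^\bullet=\D_n\widetilde{\otimes}^{\mathbb{L}}_{\w{\D}_X}\M^\bullet$ and $\N_n^\bullet=\D_n\widetilde{\otimes}^{\mathbb{L}}_{\w{\D}_X}\N^\bullet$. By Proposition \ref{dualsofCcomplexes}, $\mathrm{R}\mathcal{H}om_{\w{\D}_X}(\M^\bullet,\w{\D}_X)$ is a $\C$-complex of right $\w{\D}_X$-modules whose base change to $\D_n$ is $\mathrm{R}\mathcal{H}om_{\D_n}(\M_n^\bullet,\D_n)$; combining this with Theorem \ref{tensorlimit} identifies the left-hand side on each $X_m$ with the homotopy limit $\mathrm{holim}_{n\geq m}\,\mathrm{R}\mathcal{H}om_{\D_n}(\M_n^\bullet,\D_n)\widetilde{\otimes}^{\mathbb{L}}_{\D_n}\N_n^\bullet$. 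For the right-hand side, Lemma \ref{Ccomplexesasholim} presents $\N^\bullet|_{X_m}\cong\mathrm{holim}_{n\geq m}\N_n^\bullet$, and because each $\M_n^\bullet$ is a bounded coherent complex over the finite-global-dimension ring $\D_n$, hence perfect, the functor $\mathrm{R}\mathcal{H}om_{\D_n}(\M_n^\bullet,-)$ commutes with products; a limit-of-limits argument (paralleling the proofs of Corollary \ref{clAuslanderpullback} and Lemma \ref{tensorlimiota}) then yields
\[
\mathrm{R}\mathcal{H}om_{\w{\D}_X}(\M^\bullet,\N^\bullet)|_{X_m}\cong\mathrm{holim}_{n\geq m}\,\mathrm{R}\mathcal{H}om_{\D_n}(\M_n^\bullet,\N_n^\bullet).
\]

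It now suffices to produce on each Noetherian level an isomorphism
\[
\mathrm{R}\mathcal{H}om_{\D_n}(\M_n^\bullet,\D_n)\widetilde{\otimes}^{\mathbb{L}}_{\D_n}\N_n^\bullet \cong \mathrm{R}\mathcal{H}om_{\D_n}(\M_n^\bullet,\N_n^\bullet),
\]
compatible with the limit systems. Since $\D_n$ has finite global dimension and $\M_n^\bullet$ is a bounded-below complex of coherent $\D_n$-modules, it admits a bounded resolution $P^\bullet\to\M_n^\bullet$ by finitely generated free $\D_n$-modules (equipped with their canonical Banach structures); both sides are then computed termwise by reducing to the trivial case $P=\D_n^r$, where both yield $(\N_n^\bullet)^r$. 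Naturality of this identification in $n$ ensures compatibility under the transition maps, so taking homotopy limits gives the claim.

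The main obstacle is the homotopy-limit presentation of $\mathrm{R}\mathcal{H}om_{\w{\D}_X}(\M^\bullet,\N^\bullet)$: while $\N^\bullet\cong\mathrm{holim}\N_n^\bullet$ by Lemma \ref{Ccomplexesasholim}, it is not a priori clear that $\mathrm{R}\mathcal{H}om_{\w{\D}_X}(\M^\bullet,-)$ commutes with this homotopy limit, since $\M^\bullet$ is not perfect over $\w{\D}_X$. Perfectness is however recovered after base change (each $\M_n^\bullet$ is perfect over $\D_n$), and the required compatibility follows from a pre-nuclearity / topological Mittag-Leffler argument of the type already used in the appendix for Theorem \ref{tensorlimit}; this is the technical step that will need to be supplied, most naturally as a companion lemma in the appendix.
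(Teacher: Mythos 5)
Your overall plan matches the paper's: construct the natural evaluation morphism, reduce to a D-regular affinoid, express the left-hand side as a homotopy limit via Theorem \ref{tensorlimit} together with Proposition \ref{dualsofCcomplexes}, verify the isomorphism over each $\D_n$ using Auslander regularity, and pass to the limit. That is correct and is the route the paper takes.

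However, the ``main obstacle'' you identify at the end is not actually an obstacle, and the companion lemma you propose to supply in the appendix is unnecessary. You worry that $\mathrm{R}\mathcal{H}om_{\w{\D}_X}(\M^\bullet,-)$ may not commute with the homotopy limit $\N^\bullet\cong\mathrm{holim}\,\N_n^\bullet$ because $\M^\bullet$ is not perfect over $\w{\D}_X$. But perfectness of the \emph{first} argument is irrelevant here: for any fixed $\M^\bullet$, the functor $\mathrm{R}\mathcal{H}om(\M^\bullet,-)$ preserves products and hence homotopy limits in its \emph{second} variable (it is, up to shift, a right adjoint; concretely, applying it to a termwise product of K-injective complexes is computed termwise). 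Perfectness would only be needed if you wanted commutation with homotopy \emph{colimits}. Similarly, your invocation of perfectness of $\M_n^\bullet$ to justify that $\mathrm{R}\mathcal{H}om_{\D_n}(\M_n^\bullet,-)$ commutes with products is superfluous for the same reason, and the ``limit-of-limits argument'' you gesture at is not needed. The clean way to bridge the two sides is the one the paper uses: apply $\mathrm{R}\mathcal{H}om_{\w{\D}_X}(\M^\bullet,-)$ to the holim triangle for $\N^\bullet$, use automatic commutation with products, and then identify $\mathrm{R}\mathcal{H}om_{\w{\D}_X}(\M^\bullet,\N_n^\bullet)\cong\mathrm{R}\mathcal{H}om_{\D_n}(\M_n^\bullet,\N_n^\bullet)$ by tensor--hom adjunction (here $\D_n\widetilde{\otimes}^{\mathbb{L}}_{\w{\D}_X}-$ is left adjoint to restriction, and $\N_n^\bullet$ is a $\D_n$-complex). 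This produces the distinguished triangle for the right-hand side directly, and the two triangles are then compared via the natural morphisms induced at level $n$, where Auslander regularity gives the isomorphism $\mathrm{R}\mathcal{H}om_{\D_n}(\M_n^\bullet,\D_n)\widetilde{\otimes}^{\mathbb{L}}_{\D_n}\N_n^\bullet\cong\mathrm{R}\mathcal{H}om_{\D_n}(\M_n^\bullet,\N_n^\bullet)$ exactly as you say. So your argument is essentially sound; the concluding paragraph should simply be deleted and replaced by the adjunction observation.
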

\begin{proof}
To prove that the natural morphism 
\begin{equation*}
\mathrm{R}\mathcal{H}om_{\w{\D}_X}(\M^\bullet, \w{\D}_X)\widetilde{\otimes}^\mathbb{L}_{\w{\D}_X}\N^\bullet\to \mathrm{R}\mathcal{H}om_{\w{\D}_X}(\M^\bullet, \N^\bullet)
\end{equation*}
is an isomorphism, we can argue locally and reduce to the case when $X$ is a D-regular affinoid. Let $\A$, $\L$, $\D_n$ be defined as before, with $\D_n(X)$ Auslander regular for all $n$. Note that by Proposition \ref{dualsofCcomplexes} and \cite[Lemma 8.4]{ST}, $\mathrm{R}\mathcal{H}om_{\w{\D}_X}(\M^\bullet, \w{\D}_X)$ is in $\mathrm{D}^{-}_\C(\w{\D}_X^\mathrm{op})$, and
\begin{equation*}
\mathrm{R}\mathcal{H}om_{\w{\D}_X}(\M^\bullet, \w{\D}_X)\widetilde{\otimes}^\mathbb{L}_{\w{\D}_X}\D_n\cong \mathrm{R}\mathcal{H}om_{\D_n}(\M_n^\bullet, \D_n).
\end{equation*}
By Theorem \ref{tensorlimit}, we have a distinguished triangle
\begin{equation*}
\mathrm{R}\mathcal{H}om_{\w{\D}_X}(\M^\bullet, \w{\D}_X)\widetilde{\otimes}^\mathbb{L}_{\w{\D}_X}\N^\bullet \to \prod_{n\geq m} \mathrm{R}\mathcal{H}om_{\D_n}(\M_n^\bullet, \D_n)\widetilde{\otimes}^\mathbb{L}_{\D_n}\N_n^\bullet\to \prod_{n\geq m} \mathrm{R}\mathcal{H}om_{\D_n}(\M_n^\bullet, \D_n)\widetilde{\otimes}^\mathbb{L}_{\D_n}\N_n^\bullet
\end{equation*}
on $X_m$ for any $m\geq 0$.

Now since $\D_n(X)$ is Auslander regular, the natural morphism
\begin{equation*}
\mathrm{R}\mathcal{H}om_{\D_n}(\M_n^\bullet, \D_n)\widetilde{\otimes}^\mathbb{L}_{\D_n} \N^\bullet_n\to \mathrm{R}\mathcal{H}om_{\D_n}(\M^\bullet_n, \N^\bullet_n)
\end{equation*} 
is an isomorphism, as we can reduce to the case $\M_n^\bullet=\D_n$.

As 
\begin{equation*}
\mathrm{R}\mathcal{H}om_{\D_n}(\M_n^\bullet, \N_n^\bullet)\cong \mathrm{R}\mathcal{H}om_{\w{\D}_X}(\M^\bullet, \N_n^\bullet)
\end{equation*}
by tensor-hom adjunction, this shows that the natural morphisms induce an isomorphism of distinguished triangles
\begin{equation*}
\begin{xy}
\xymatrix{
\mathrm{R}\mathcal{H}om_{\w{\D}_X}(\M^\bullet, \w{\D}_X)\widetilde{\otimes}^\mathbb{L}_{\w{\D}_X}\N^\bullet\ar[r] \ar[d]& \prod \mathrm{R}\mathcal{H}om_{\D_n}(\M_n^\bullet, \D_n)\widetilde{\otimes}^\mathbb{L}_{\D_n} \N_n^\bullet \ar[r] \ar[d] &\prod \mathrm{R}\mathcal{H}om_{\D_n}(\M_n^\bullet, \D_n)\widetilde{\otimes}^\mathbb{L}_{\D_n} \N_n^\bullet\ar[d]\\
\mathrm{R}\mathcal{H}om_{\w{\D}_X}(\M^\bullet, \N^\bullet)\ar[r] & \prod \mathrm{R}\mathcal{H}om_{\D_n}(\M_n^\bullet, \N_n^\bullet)\ar[r] & \prod \mathrm{R}\mathcal{H}om_{\D_n}(\M_n^\bullet, \N_n^\bullet),
}
\end{xy}
\end{equation*}
and in particular,
\begin{equation*}
\mathrm{R}\mathcal{H}om_{\w{\D}_X}(\M^\bullet, \w{\D}_X)\widetilde{\otimes}^\mathbb{L}_{\w{\D}_X}\N^\bullet \cong \mathrm{R}\mathcal{H}om_{\w{\D}_X}(\M^\bullet, \N^\bullet)
\end{equation*}
via the natural morphism.
\end{proof}

\begin{rmk}
	Note that we can drop the boundedness assumption on $\M^\bullet$ if instead $\N^\bullet\in \mathrm{D}_{\C}^b(\w{\D}_X)$, by Corollary \ref{tensorlimitapp}.
	
	Similarly, if $X$ is a D-regular affinoid and $\M^\bullet, \N^\bullet\in \mathrm{D}_{\C}(\w{\D}_X)$, then there is a natural isomorphism
	\begin{align*}
		\mathrm{R}\mathcal{H}om_{\w{\D}_X}(\M^\bullet, \N_n^\bullet)&\cong \mathrm{R}\mathcal{H}om_{\D_n}(\M_n^\bullet, \N_n^\bullet)\\
		&\cong \mathrm{R}\mathcal{H}om_{\D_n}(\M_n^\bullet, \D_n)\widetilde{\otimes}_{\D_n}^{\mathbb{L}}\N_n^\bullet\\
		&\cong \mathrm{R}\mathcal{H}om_{\w{\D}_X}(\M^\bullet, \w{\D}_X)\widetilde{\otimes}_{\w{\D}_X}^{\mathbb{L}}\N_n^\bullet
	\end{align*}
	in $\mathrm{D}(\mathrm{Shv}(X_m, LH(\h{\B}c_K)))$ for any $n\geq m$.
\end{rmk}

\begin{cor}
	\label{projfortransfer}
	\leavevmode
	\begin{enumerate}[(i)]
		\item Let $\iota: X\to Y$ be a closed embedding of smooth rigid analytic $K$-varieties. If $\M^\bullet\in \mathrm{D}_{\C}(\w{\D}_X)$, then there is a natural isomorphism
		\begin{equation*}
			\mathrm{R}\mathcal{H}om_{\w{\D}_X}(\M^\bullet, \w{\D}_X)\widetilde{\otimes}_{\w{\D}_X}^{\mathbb{L}} \iota^!\w{\D}_Y\cong \mathrm{R}\mathcal{H}om_{\w{\D}_X}(\M^\bullet,\iota^!\w{\D}_Y).
		\end{equation*}
		\item Let $f: X\to Y$ be a smooth morphism of smooth rigid analytic $K$-varieties. If $\M^\bullet\in \mathrm{D}_{\C}(\w{\D}_X)$, then there is a natural isomorphism
		\begin{equation*}
			\mathrm{R}\mathcal{H}om_{\w{\D}_X}(\M^\bullet, \w{\D}_X)\widetilde{\otimes}_{\w{\D}_X}^{\mathbb{L}}f^!\w{\D}_Y\cong \mathrm{R}\mathcal{H}om_{\w{\D}_X}(\M^\bullet, f^!\w{\D}_Y).
		\end{equation*}
	\end{enumerate}
\end{cor}
\begin{proof}
	\begin{enumerate}[(i)]
		\item We can argue locally, reducing to the case where $X$ and $Y$ are D-regular affinoids admitting local coordinates, and $X$ is the vanishing locus of some of the coordinates ($x_{r+1}, \hdots, x_m$, say). But then 
		\begin{equation*}
			\iota^!\w{\D}_Y\cong \w{\D}_X\widetilde{\otimes}_K K\{ \partial_{r+1}, \hdots, \partial_m\}[\mathrm{dim}X-\mathrm{dim}Y].
		\end{equation*}
		 Since
		\begin{equation*}
			\w{\D}_X\widetilde{\otimes}_K K\{\partial_{r+1}, \hdots, \partial_m\}\cong \mathrm{holim}(\D_n\widetilde{\otimes}_K K\{\partial_{r+1}, \hdots, \partial_m\})
		\end{equation*}
		and
		\begin{equation*}
			\mathrm{R}\mathcal{H}om_{\w{\D}_X}(\M^\bullet, \w{\D}_X)\widetilde{\otimes}_K K\{\partial_{r+1}, \hdots, \partial_m\}\cong \mathrm{holim} \mathrm{R}\mathcal{H}om_{\w{\D}_X}(\M^\bullet, \D_n)\widetilde{\otimes}_K K\{\partial_{r+1}, \hdots, \partial_m\}
		\end{equation*}
		by \cite[Corollary 5.21]{SixOp}, the same argument as above yields the desired isomorphism.
		\item Since $f$ is smooth, $f^!\w{\D}_Y$ is a bounded $\C$-complex (in fact, a coadmissible module in degree $\mathrm{dim}Y-\mathrm{dim}X$). By Proposition \ref{homwithdual} and the remark following it, we have thus
		\begin{equation*}
			\mathrm{R}\mathcal{H}om_{\w{\D}_X}(\M^\bullet, \w{\D}_X)\widetilde{\otimes}_{\w{\D}_X}^{\mathbb{L}}f^!\w{\D}_Y\cong \mathrm{R}\mathcal{H}om_{\w{\D}_X}(\M^\bullet, f^!\w{\D}_Y),
		\end{equation*}
		as desired.
	\end{enumerate}
\end{proof}

\begin{cor}
\label{diminthelimit}
Let $X$ be a smooth rigid analytic $K$-variety, and let $\M$, $\N$ be coadmissible $\w{\D}_X$-modules. Then $\mathcal{E}xt^j_{\w{\D}_X}(\M, \N)=0$ for all $j>\mathrm{dim} X$.
\end{cor}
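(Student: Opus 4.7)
The plan is to derive this directly from Proposition \ref{homwithdual} combined with the Bernstein-type vanishing in Proposition \ref{Bernstein}. Since $\M$ and $\N$ are coadmissible and hence concentrated in degree zero, they can be regarded as objects of $\mathrm{D}^+_{\C}(\w{\D}_X)$ and $\mathrm{D}^-_{\C}(\w{\D}_X)$ respectively, so Proposition \ref{homwithdual} applies and yields the natural isomorphism
$$
\mathrm{R}\mathcal{H}om_{\w{\D}_X}(\M, \N) \cong \mathrm{R}\mathcal{H}om_{\w{\D}_X}(\M, \w{\D}_X) \widetilde{\otimes}^{\mathbb{L}}_{\w{\D}_X} \N.
$$
This reduces the question to bounding the cohomological amplitude of the right-hand side.

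By Proposition \ref{Bernstein}, $\mathcal{E}xt^j_{\w{\D}_X}(\M, \w{\D}_X) = 0$ for all $j > d$, so $\mathrm{R}\mathcal{H}om_{\w{\D}_X}(\M, \w{\D}_X)$ is a $\C$-complex whose cohomology is concentrated in degrees $[0, d]$ and, via $\tau^{\leq d}$, admits a representative supported in those degrees. Since $\N$ lives in degree zero, taking a flat resolution of $\N$ concentrated in non-positive degrees -- for instance built from sheafifications of the form $\w{\D}_X \widetilde{\otimes}_K \F$ with $\F$ a flat object in $\mathrm{Shv}(X, LH(\h{\B}c_K))$, as already used in the proof of Lemma \ref{sidechangetensor} -- the total complex computing the derived tensor product has no nonzero terms in cohomological degrees greater than $d$. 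Hence $\mathcal{E}xt^j_{\w{\D}_X}(\M, \N) = 0$ for all $j > d$, as required.

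The only step requiring a little attention is the right $t$-exactness of $\widetilde{\otimes}^{\mathbb{L}}_{\w{\D}_X}$ in the complete bornological sheaf setting, i.e.\ the availability of a flat resolution of $\N$ concentrated in non-positive degrees. This is not a serious obstacle: the ambient category $\mathrm{Shv}(X, LH(\h{\B}c_K))$ is Grothendieck abelian and has sufficiently many flat objects, exactly as invoked repeatedly in the paper. Note also that with this approach the entire work is done by Proposition \ref{homwithdual} together with Bernstein's inequality; one does not need to pass through $\D_n$ and take a homotopy limit at this stage, so the potentially delicate $\mathrm{R}^1\varprojlim$ contribution in degree $d+1$ never appears.
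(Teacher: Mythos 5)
Your proof follows the same route as the paper: apply Proposition \ref{homwithdual} to rewrite $\mathrm{R}\mathcal{H}om_{\w{\D}_X}(\M, \N)$ as $\mathrm{R}\mathcal{H}om_{\w{\D}_X}(\M, \w{\D}_X)\widetilde{\otimes}^{\mathbb{L}}_{\w{\D}_X}\N$, and then invoke the Bernstein-type vanishing from Proposition \ref{Bernstein} to bound the amplitude of the first factor. Your additional remarks about flat resolutions and right $t$-exactness of the derived tensor product merely make explicit what the paper's one-line proof leaves implicit.
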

\begin{proof}
By Proposition \ref{homwithdual}, 
\begin{equation*}
\mathrm{R}\mathcal{H}om_{\w{\D}_X}(\M, \N)\cong \mathrm{R}\mathcal{H}om_{\w{\D}_X}(\M, \w{\D}_X)\widetilde{\otimes}^\mathbb{L}_{\w{\D}_X} \N,
\end{equation*}
and $\mathrm{R}\mathcal{H}om(\M, \w{\D}_X)$ can be represented by a complex which is 0 in degrees larger than $\mathrm{dim}X$ by \cite[Theorem A.(ii)]{DcapThree} and Theorem \ref{Bernstein}.
\end{proof}

\begin{prop}
\label{homastensor}
Let $X$ be a smooth rigid analytic $K$-variety, and let $\M^\bullet \in \mathrm{D}^{+}_\C(\w{\D}_X)$, $\N^\bullet\in \mathrm{D}^{-}_\C(\w{\D}_X)$. We have natural isomorphisms 
\begin{align*}
\mathrm{R}\mathcal{H}om_{\w{\D}_X}(\M^\bullet, \N^\bullet)&\cong (\Omega_X \widetilde{\otimes}^\mathbb{L}_{\O_X} \mathbb{D}\M^\bullet)\widetilde{\otimes}^\mathbb{L}_{\w{\D}_X} \N^\bullet[-\mathrm{dim} X]\\
&\cong \Omega_X \widetilde{\otimes}^\mathbb{L}_{\w{\D}_X}(\mathbb{D}\M^\bullet\widetilde{\otimes}^\mathbb{L}_{\O_X} \N^\bullet)[-\mathrm{dim} X].
\end{align*}
\end{prop}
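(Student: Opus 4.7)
The plan is to reduce the statement to a direct application of Proposition \ref{homwithdual} together with the side-changing identity from Lemma \ref{sidechangetensor}.

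First I would unwind the definition of the duality functor to identify the dualizing complex $\mathrm{R}\mathcal{H}om_{\w{\D}_X}(\M^\bullet, \w{\D}_X)$ with the side-changed version of $\mathbb{D}\M^\bullet$. Explicitly, since
\begin{equation*}
\mathbb{D}\M^\bullet = \mathrm{R}\mathcal{H}om_{\w{\D}_X}(\M^\bullet, \w{\D}_X) \widetilde{\otimes}_{\O_X} \Omega_X^{\otimes -1}[\mathrm{dim} X],
\end{equation*}
and since $\Omega_X$ is an invertible $\O_X$-module (so that $\Omega_X \widetilde{\otimes}^\mathbb{L}_{\O_X} - \cong \Omega_X \widetilde{\otimes}_{\O_X} -$ on invertible-tensorings), I get
\begin{equation*}
\Omega_X \widetilde{\otimes}^\mathbb{L}_{\O_X} \mathbb{D}\M^\bullet[-\mathrm{dim} X] \cong \mathrm{R}\mathcal{H}om_{\w{\D}_X}(\M^\bullet, \w{\D}_X),
\end{equation*}
as right $\w{\D}_X$-modules (via the standard side-changing identification of right $\w{\D}_X$-modules).

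Next I would invoke Proposition \ref{homwithdual}, which is applicable since $\M^\bullet\in \mathrm{D}^+_\C(\w{\D}_X)$ and $\N^\bullet\in \mathrm{D}^-_\C(\w{\D}_X)$, to obtain
\begin{equation*}
\mathrm{R}\mathcal{H}om_{\w{\D}_X}(\M^\bullet, \N^\bullet) \cong \mathrm{R}\mathcal{H}om_{\w{\D}_X}(\M^\bullet, \w{\D}_X) \widetilde{\otimes}^\mathbb{L}_{\w{\D}_X} \N^\bullet.
\end{equation*}
Substituting the identification from the previous paragraph yields the first claimed isomorphism. For the second isomorphism, I would apply Lemma \ref{sidechangetensor} with the left $\w{\D}_X$-module $\mathbb{D}\M^\bullet$ in place of $\M^\bullet$ and $\N^\bullet$ in place of $\N^\bullet$, which gives
\begin{equation*}
(\Omega_X \widetilde{\otimes}^\mathbb{L}_{\O_X} \mathbb{D}\M^\bullet) \widetilde{\otimes}^\mathbb{L}_{\w{\D}_X} \N^\bullet \cong \Omega_X \widetilde{\otimes}^\mathbb{L}_{\w{\D}_X}(\mathbb{D}\M^\bullet \widetilde{\otimes}^\mathbb{L}_{\O_X} \N^\bullet),
\end{equation*}
and the degree shift $[-\mathrm{dim} X]$ is passed through formally.

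The only conceptual point to verify carefully is that the natural maps implicit in the three steps above — the side-changing identification, the tensor-Hom identity of Proposition \ref{homwithdual}, and the Lemma \ref{sidechangetensor} isomorphism — are compatible, i.e.\ each is functorial so that their composition defines a \emph{natural} isomorphism in $\M^\bullet$ and $\N^\bullet$; this is routine since at each step the morphism is the canonical one. The one subtlety worth flagging is that Proposition \ref{homwithdual} requires $\mathrm{R}\mathcal{H}om_{\w{\D}_X}(\M^\bullet, \w{\D}_X)$ to lie in $\mathrm{D}^-_\C(\w{\D}_X^{\mathrm{op}})$; this is guaranteed by Proposition \ref{dualsofCcomplexes} applied term-by-term and the finite cohomological amplitude of the duality functor (using Proposition \ref{Bernstein}). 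Beyond this bookkeeping there is no real obstacle — the statement is essentially a formal consequence of the earlier results.
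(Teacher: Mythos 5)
Your proof is correct and follows exactly the same route as the paper's: the first isomorphism is Proposition \ref{homwithdual} after unwinding the definition of $\mathbb{D}$, and the second is Lemma \ref{sidechangetensor}. The one caveat you flag — that $\mathrm{R}\mathcal{H}om_{\w{\D}_X}(\M^\bullet, \w{\D}_X)\in \mathrm{D}^-_\C(\w{\D}_X^{\mathrm{op}})$ — is already established inside the proof of Proposition \ref{homwithdual} itself and does not need to be re-checked here.
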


\begin{proof}
The first isomorphism is an immediate consequence of Proposition \ref{homwithdual}. The second isomorphism follows from Lemma \ref{sidechangetensor}.
\end{proof}

\begin{lem}
\label{homsofduals}
Let $X$ be a smooth rigid analytic $K$-variety. If $\M^\bullet, \N^\bullet\in \mathrm{D}_\C(\w{\D}_X)$, then there is a natural isomorphism
\begin{equation*}
\mathrm{R}\mathcal{H}om_{\w{\D}_X}(\M^\bullet, \N^\bullet)\cong \mathrm{R}\mathcal{H}om_{\w{\D}_X}(\mathbb{D}\N^\bullet, \mathbb{D}\M^\bullet).
\end{equation*}
\end{lem}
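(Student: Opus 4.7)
The plan is to rewrite both sides of the asserted isomorphism using Proposition \ref{homastensor} (which in turn rests on Proposition \ref{homwithdual}), and then to conclude by invoking the symmetry of $\widetilde{\otimes}^{\mathbb{L}}_{\O_X}$ together with the biduality isomorphism $\mathbb{D}\mathbb{D} \cong \mathrm{id}$ on $\mathrm{D}^b_\C(\w{\D}_X)$ from Proposition \ref{dualsofCcomplexes}.(ii).

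Concretely, first I would note that since $\M^\bullet, \N^\bullet \in \mathrm{D}^b_\C(\w{\D}_X)$, the duality functor yields $\mathbb{D}\M^\bullet, \mathbb{D}\N^\bullet \in \mathrm{D}^b_\C(\w{\D}_X)$: by Proposition \ref{dualsofCcomplexes}.(i), $\mathbb{D}$ preserves $\C$-complexes, and boundedness follows by working locally on a D-regular affinoid, where $\mathbb{D}\M^\bullet$ is computed via $\mathrm{R}\mathcal{H}om_{\D_n}(-, \D_n)$ on each Noetherian level, and these algebras have finite global dimension by Theorem \ref{MainThm}. Thus both Proposition \ref{homwithdual} and Proposition \ref{homastensor} apply with either pair of arguments.

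Applying Proposition \ref{homastensor} to the left-hand side gives
\begin{equation*}
\mathrm{R}\mathcal{H}om_{\w{\D}_X}(\M^\bullet, \N^\bullet) \cong \Omega_X \widetilde{\otimes}^\mathbb{L}_{\w{\D}_X}(\mathbb{D}\M^\bullet \widetilde{\otimes}^\mathbb{L}_{\O_X} \N^\bullet)[-\mathrm{dim} X],
\end{equation*}
and applying it to the right-hand side gives
\begin{equation*}
\mathrm{R}\mathcal{H}om_{\w{\D}_X}(\mathbb{D}\N^\bullet, \mathbb{D}\M^\bullet) \cong \Omega_X \widetilde{\otimes}^\mathbb{L}_{\w{\D}_X}(\mathbb{D}\mathbb{D}\N^\bullet \widetilde{\otimes}^\mathbb{L}_{\O_X} \mathbb{D}\M^\bullet)[-\mathrm{dim} X].
\end{equation*}
By Proposition \ref{dualsofCcomplexes}.(ii) the natural map $\N^\bullet \to \mathbb{D}\mathbb{D}\N^\bullet$ is an isomorphism, and by the symmetry of $\widetilde{\otimes}^{\mathbb{L}}_{\O_X}$ (as a bifunctor on $\mathrm{D}(\w{\D}_X)$, see \cite[subsection 7.1]{SixOp}), the two expressions above agree.

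The only step requiring a little care is the naturality of the comparison: one needs to check that the chain of isomorphisms arises from a canonical morphism between the two $\mathrm{R}\mathcal{H}om$-complexes. This is built from the natural morphisms in Proposition \ref{homwithdual}, the commutativity constraint of $\widetilde{\otimes}^\mathbb{L}_{\O_X}$ and the biduality unit, each of which is natural; the verification amounts to unwinding definitions, and is the (mild) main technical point. Having done so, the isomorphism becomes natural in both $\M^\bullet$ and $\N^\bullet$.
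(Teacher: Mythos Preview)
Your proposal is correct and follows essentially the same route as the paper: apply Proposition \ref{homastensor} to both sides, then use the symmetry of $\widetilde{\otimes}^\mathbb{L}_{\O_X}$ together with biduality $\N^\bullet\cong \mathbb{D}\mathbb{D}\N^\bullet$ from Proposition \ref{dualsofCcomplexes}.(ii). The paper presents this as a three-line chain of isomorphisms without elaborating on the boundedness of $\mathbb{D}\M^\bullet$, $\mathbb{D}\N^\bullet$ or on naturality, so your additional remarks are welcome but not strictly needed.
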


\begin{proof}
For bounded $\C$-complexes, this follows from Proposition \ref{homastensor}, since $\N^\bullet\cong \mathbb{D}\mathbb{D}\N^\bullet$. In general, tensor-hom adjunction induces a natural morphism
\begin{equation*}
	\M^\bullet\widetilde{\otimes}_K^{\mathbb{L}} \mathrm{R}\mathcal{H}om_{\w{\D}_X}(\M^\bullet, \N^\bullet)\to \N^\bullet,
\end{equation*} 
which in turn induces a morphism
\begin{equation*}
	\mathrm{R}\mathcal{H}om_{\w{\D}_X}(\N^\bullet, \w{\D}_X)\widetilde{\otimes}_{\w{\D}_X}^{\mathbb{L}}\M^\bullet\widetilde{\otimes}_K^{\mathbb{L}} \mathrm{R}\mathcal{H}om_{\w{\D}_X}(\M^\bullet, \N^\bullet)\to \w{\D}_X,
\end{equation*}
yielding by adjunction the natural morphism
\begin{equation*}
	\mathrm{R}\mathcal{H}om_{\w{\D}_X}(\M^\bullet, \N^\bullet)\to \mathrm{R}\mathcal{H}om_{\w{\D}_X}(\mathrm{R}\mathcal{H}om_{\w{\D}_X}(\N^\bullet, \w{\D}_X), \mathrm{R}\mathcal{H}om_{\w{\D}_X}(\M^\bullet, \w{\D}_X)).
\end{equation*}
As the degree shifts in the definition of $\mathbb{D}$ cancel each other out for $\mathbb{D}\M^\bullet$ and $\mathbb{D}\N^\bullet$, side-changing yields the desired natural morphism.

To show that this is an isomorphism, we can argue locally and assume that $X$ is a D-regular affinoid. Now
\begin{align*}
	\mathrm{R}\mathcal{H}om_{\w{\D}_X}(\M^\bullet, \N^\bullet)&\cong \mathrm{holim}\mathrm{R}\mathcal{H}om_{\w{\D}_X}(\M^\bullet, \N_n^\bullet)\\
	&\cong \mathrm{holim}\mathrm{R}\mathcal{H}om_{\D_n}(\M_n^\bullet, \N_n^\bullet),
\end{align*}
analogously for the dual terms.

Since the natural morphism
\begin{equation*}
	\mathrm{R}\mathcal{H}om_{\D_n}(\M_n^\bullet, \N_n^\bullet)\to \mathrm{R}\mathcal{H}om_{\D_n}(\mathrm{R}\mathcal{H}om_{\D_n}(\N_n^\bullet, \D_n), \mathrm{R}\mathcal{H}om_{\D_n}(\M_n^\bullet, \D_n))
\end{equation*}
is an isomorphism for each $n$, as we can reduce to the case $\N_n^\bullet\cong \D_n$,
this proves the result.
\end{proof}

We now prove adjunction statements between our direct and inverse image operations. To produce the desired natural morphisms, we consider separately the case of closed embeddings and projection morphisms, as before.

Let $\iota: X\to Y$ be a closed embedding of smooth rigid analytic $K$-varieties, let $\M^\bullet\in \mathrm{D}_{\C}(\w{\D}_X)$ and $\N^\bullet\in \mathrm{D}_{\C}(\w{\D}_Y)$ such that $\iota^!\N^\bullet\in \mathrm{D}_{\C}(\w{\D}_X)$.

By Lemma \ref{homsofduals} and a standard argument using tensor-hom adjunction and adjunction for inverse and direct image, there are natural morphisms
\begin{align*}
	\mathrm{R}\iota_*\mathrm{R}\mathcal{H}om_{\w{\D}_X}(\M^\bullet, \iota^!\N^\bullet)\cong \mathrm{R}\iota_*\mathrm{R}\mathcal{H}om_{\w{\D}_X}(\mathbb{D}\iota^!\N^\bullet, \mathbb{D}\M^\bullet)\\
	\to \mathrm{R}\iota_*\mathrm{R}\mathcal{H}om_{\iota^{-1}\w{\D}_Y}(\mathrm{R}\mathcal{H}om_{\w{\D}_X}(\iota^!\N^\bullet, \w{\D}_X)\widetilde{\otimes}_{\w{\D}_X}\iota^*\w{\D}_Y, \mathrm{R}\mathcal{H}om_{\w{\D}_X}(\M^\bullet, \w{\D}_X)\widetilde{\otimes}_{\w{\D}_X}\iota^*\w{\D}_Y)\\
	\to \mathrm{R}\iota_*\mathrm{R}\mathcal{H}om_{\iota^{-1}\w{\D}_Y}(\iota^{-1}\mathrm{R}\iota_*(\mathrm{R}\mathcal{H}om_{\w{\D}_X}(\iota^!\N^\bullet, \w{\D}_X)\widetilde{\otimes}_{\w{\D}_X}^{\mathbb{L}}\iota^*\w{\D}_Y), \mathrm{R}\mathcal{H}om_{\w{\D}_X}(\M^\bullet, \w{\D}_X)\widetilde{\otimes}^{\mathbb{L}}_{\w{\D}_X}\iota^*\w{\D}_Y)\\
	\cong \mathrm{R}\iota_*\mathrm{R}\mathcal{H}om_{\iota^{-1}\w{\D}_Y}(\mathrm{R}\iota_*(\mathrm{R}\mathcal{H}om_{\w{\D}_X}(\iota^!\N^\bullet, \w{\D}_X)\widetilde{\otimes}_{\w{\D}_X}^{\mathbb{L}}\iota^*\w{\D}_Y), \mathrm{R}\iota_*(\mathrm{R}\mathcal{H}om_{\w{\D}_X}(\M^\bullet, \w{\D}_X)\widetilde{\otimes}_{\w{\D}_X}^{\mathbb{L}}\iota^*\w{\D}_Y))\\
	\cong \mathrm{R}\mathcal{H}om_{\w{\D}_Y}(\iota_+(\mathbb{D}\iota^!\N^\bullet), \iota_+(\mathbb{D}\M^\bullet))\\
	\cong \mathrm{R}\mathcal{H}om_{\w{\D}_Y}(\iota_!\M^\bullet, \iota_!\iota^!\N^\bullet).
\end{align*}

We now produce a natural morphism $\iota_!\iota^!\N^\bullet\to \N^\bullet$, yielding thus a natural morphism
\begin{equation*}
	\mathrm{R}\iota_*\mathrm{R}\mathcal{H}om_{\w{\D}_X}(\M^\bullet, \iota^!\N^\bullet)\to \mathrm{R}\mathcal{H}om_{\w{\D}_Y}(\iota_!\M^\bullet, \N^\bullet).
\end{equation*}

The adjunctions yield a natural morphism
\begin{equation*}
	\mathrm{R}\mathcal{H}om_{\w{\D}_Y}(\N^\bullet, \w{\D}_Y)\to \mathrm{R}\iota_*\mathrm{R}\mathcal{H}om_{\w{\D}_X}(\mathbb{L}\iota^*\N^\bullet, \iota^*\w{\D}_Y).
\end{equation*}
Applying Corollary \ref{projfortransfer}.(i), the latter is isomorphic to
\begin{equation*}
	\mathrm{R}\iota_*(\mathrm{R}\mathcal{H}om_{\w{\D}_X}(\iota^!\N^\bullet, \w{\D}_X)\widetilde{\otimes}_{\w{\D}_X}^{\mathbb{L}}\iota^*\w{\D}_Y)[\mathrm{dim}Y-\mathrm{dim}X],
\end{equation*}
so that we have produced a natural morphism
\begin{equation*}
	\mathbb{D}\N^\bullet\to \iota_+\mathbb{D}\iota^!\N^\bullet.
\end{equation*}
Applying duality again produces the desired morphism.

Similarly, suppose that $f: X\to Y$ is a smooth morphism of smooth rigid analytic $K$-varieties. Let $\M^\bullet\in \mathrm{D}_{\C}(\w{\D}_X)$ such that $f_!\M^\bullet\in \mathrm{D}_{\C}(\w{\D}_Y)$ and let $\N^\bullet\in \mathrm{D}_{\C}(\w{\D}_Y)$. The same strategy as before yields a natural morphism
\begin{equation*}
	\mathrm{R}f_*\mathcal{H}om_{\w{\D}_X}(\M^\bullet, f^!\N^\bullet)\to \mathrm{R}\mathcal{H}om_{\w{\D}_Y}(f_!\M^\bullet, \N^\bullet),
\end{equation*}
this time using Corollary \ref{projfortransfer}.(ii) for the isomorphism
\begin{equation*}
	\mathrm{R}\mathcal{H}om_{\w{\D}_X}(f^!\N^\bullet, \w{\D}_X)\widetilde{\otimes}_{\w{\D}_X}^{\mathbb{L}}f^*\w{\D}_Y\cong \mathrm{R}\mathcal{H}om_{\w{\D}_X}(f^!\N^\bullet, f^*\w{\D}_Y).\qedhere
\end{equation*}

\begin{thm}
\label{adjunctiontheorem}
Let $f: X\to Y$ be a separated morphism between smooth rigid analytic $K$-varieties. Assume that $Y$ has an admissible covering by D-regular affinoid subdomains $Y_i$ such that for each $i$, the following is satisfied:
\begin{enumerate}[(i)]
	\item $f^{-1}Y_i$ has countably many connected components.
	\item the direct image functor ${p_i}_*$ for the natural projection map $p_i: f^{-1}Y_i\times Y_i\to Y_i$ has finite cohomological dimension.
\end{enumerate}
Let $\M^\bullet \in \mathrm{D}_\C(\w{\D}_X)$ such that $f_!\M^\bullet \in \mathrm{D}_\C(\w{\D}_Y)$, and let $\N^\bullet \in \mathrm{D}_\C(\w{\D}_Y)$ such that $f^!\N^\bullet \in \mathrm{D}_\C(\w{\D}_X)$. Then we have a natural isomorphism
\begin{equation*}
\mathrm{R}\mathcal{H}om_{\w{\D}_Y}(f_!\M^\bullet, \N^\bullet)\cong \mathrm{R}f_*\mathrm{R}\mathcal{H}om_{\w{\D}_X}(\M^\bullet, f^!\N^\bullet).
\end{equation*}
Similarly, if $f_+\M^\bullet \in \mathrm{D}_\C(\w{\D}_Y)$ and $f^+\N^\bullet \in \mathrm{D}_\C(\w{\D}_X)$, then 
\begin{equation*}
\mathrm{R}f_*\mathrm{R}\mathcal{H}om_{\w{\D}_X}(f^+ \N^\bullet, \M^\bullet)\cong \mathrm{R}\mathcal{H}om_{\w{\D}_Y}(\N^\bullet, f_+\M^\bullet).
\end{equation*}
\end{thm}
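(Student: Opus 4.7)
The plan is to deduce both statements from the projection formula (Theorem \ref{projectionformula}) by converting $\mathrm{R}\mathcal{H}om$ into a tensor product via Proposition \ref{homastensor}, and then to obtain (iii) from (ii) by dualizing.

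For (ii), apply Proposition \ref{homastensor} on $Y$ to rewrite
\begin{equation*}
\mathrm{R}\mathcal{H}om_{\w{\D}_Y}(f_!\M^\bullet,\N^\bullet)\cong (\Omega_Y\widetilde{\otimes}^\mathbb{L}_{\O_Y}\mathbb{D}f_!\M^\bullet)\widetilde{\otimes}^\mathbb{L}_{\w{\D}_Y}\N^\bullet[-\dim Y].
\end{equation*}
Since $f_!=\mathbb{D}_Y f_+\mathbb{D}_X$ and since $\mathbb{D}\M^\bullet$ is a bounded $\C$-complex (Proposition \ref{dualsofCcomplexes}), biduality gives $\mathbb{D}f_!\M^\bullet\cong f_+\mathbb{D}\M^\bullet$; combined with the compatibility of $f_+$ with side-changing, the left factor becomes $f_+^r(\Omega_X\widetilde{\otimes}^\mathbb{L}_{\O_X}\mathbb{D}\M^\bullet)$. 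Setting $\mathcal{F}^\bullet=\Omega_X\widetilde{\otimes}^\mathbb{L}_{\O_X}\mathbb{D}\M^\bullet$, one has $f_+^r\mathcal{F}^\bullet\in\mathrm{D}^b_\C(\w{\D}_Y^{\mathrm{op}})$ (this uses the hypothesis $f_!\M^\bullet\in\mathrm{D}^b_\C$ and side-changing), and $f^!\N^\bullet\in\mathrm{D}^b_\C(\w{\D}_X)$ by hypothesis, so Theorem \ref{projectionformula} applies and yields
\begin{equation*}
f_+^r\mathcal{F}^\bullet\widetilde{\otimes}^\mathbb{L}_{\w{\D}_Y}\N^\bullet\cong \mathrm{R}f_*\bigl(\mathcal{F}^\bullet\widetilde{\otimes}^\mathbb{L}_{\w{\D}_X}f^!\N^\bullet\bigr)[\dim Y-\dim X].
\end{equation*}
A second application of Proposition \ref{homastensor}, this time on $X$, identifies the inner tensor product with $\mathrm{R}\mathcal{H}om_{\w{\D}_X}(\M^\bullet,f^!\N^\bullet)[\dim X]$; tracking the shifts gives exactly the desired isomorphism.

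For (iii), combine Lemma \ref{homsofduals} with the identities $\mathbb{D}f^+=f^!\mathbb{D}$ and $\mathbb{D}f_+=f_!\mathbb{D}$ to reduce to (ii). Explicitly,
\begin{equation*}
\mathrm{R}\mathcal{H}om_{\w{\D}_X}(f^+\N^\bullet,\M^\bullet)\cong \mathrm{R}\mathcal{H}om_{\w{\D}_X}(\mathbb{D}\M^\bullet,\mathbb{D}f^+\N^\bullet)\cong \mathrm{R}\mathcal{H}om_{\w{\D}_X}(\mathbb{D}\M^\bullet,f^!\mathbb{D}\N^\bullet),
\end{equation*}
and Proposition \ref{dualsofCcomplexes} together with the hypotheses ensures $\mathbb{D}\M^\bullet$, $\mathbb{D}\N^\bullet$, $f_!\mathbb{D}\M^\bullet=\mathbb{D}f_+\M^\bullet$ and $f^!\mathbb{D}\N^\bullet=\mathbb{D}f^+\N^\bullet$ all lie in $\mathrm{D}^b_\C$, so (ii) applied to the pair $(\mathbb{D}\M^\bullet,\mathbb{D}\N^\bullet)$ gives
\begin{equation*}
\mathrm{R}f_*\mathrm{R}\mathcal{H}om_{\w{\D}_X}(\mathbb{D}\M^\bullet,f^!\mathbb{D}\N^\bullet)\cong \mathrm{R}\mathcal{H}om_{\w{\D}_Y}(f_!\mathbb{D}\M^\bullet,\mathbb{D}\N^\bullet)\cong \mathrm{R}\mathcal{H}om_{\w{\D}_Y}(\mathbb{D}f_+\M^\bullet,\mathbb{D}\N^\bullet).
\end{equation*}
A final application of Lemma \ref{homsofduals} and biduality rewrites the right-hand side as $\mathrm{R}\mathcal{H}om_{\w{\D}_Y}(\N^\bullet,f_+\M^\bullet)$, finishing the argument.

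The main technical point, and the one I expect to require some care, is the compatibility of the pushforward functor with the side-changing equivalence $\M\mapsto \Omega\widetilde{\otimes}^\mathbb{L}_\O\M$; this is built into the construction of $f_+$ as $f_+^r$ composed with side-changing, but checking that the isomorphism $\Omega_Y\widetilde{\otimes}^\mathbb{L}_{\O_Y}f_+(-)\cong f_+^r(\Omega_X\widetilde{\otimes}^\mathbb{L}_{\O_X}(-))$ holds naturally on the derived level requires invoking Lemma \ref{sidechangetensor} together with the construction of the transfer bimodule in \cite{SixOp}. Once this bookkeeping of shifts and side-changing is set up, both statements fall out of Theorem \ref{projectionformula} and the involutivity of $\mathbb{D}$ on $\C$-complexes.
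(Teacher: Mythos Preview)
Your proposal is correct and follows essentially the same route as the paper: both arguments convert $\mathrm{R}\mathcal{H}om$ into a tensor via Proposition \ref{homastensor}, invoke the projection formula Theorem \ref{projectionformula} with $\Omega_X\widetilde{\otimes}_{\O_X}\mathbb{D}\M^\bullet$ in the role of the right $\C$-complex, and convert back, then derive the second adjunction from the first by dualizing and applying Lemma \ref{homsofduals} twice. The only cosmetic difference is that the paper starts from $\mathrm{R}f_*\mathrm{R}\mathcal{H}om_{\w{\D}_X}(\M^\bullet,f^!\N^\bullet)$ and works towards the $Y$-side, whereas you start on $Y$ and work towards $X$; the side-changing compatibility you flag is exactly what the paper uses implicitly when it passes from $\mathrm{R}f_*(\Omega_X\widetilde{\otimes}_{\O_X}\mathbb{D}\M^\bullet\widetilde{\otimes}^\mathbb{L}_{\w{\D}_X}f^!\N^\bullet)$ to $\Omega_Y\widetilde{\otimes}_{\O_Y}f_+\mathbb{D}\M^\bullet\widetilde{\otimes}^\mathbb{L}_{\w{\D}_Y}\N^\bullet$.
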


\begin{proof}
Let $\iota: X\to X\times Y$ denote the graph embedding, which is a closed embedding, since $f$ is separated, and let $\mathrm{pr}: X\times Y\to Y$ be the (smooth) projection.

Since $\mathrm{pr}$ is smooth, $\mathrm{pr}^!\N^\bullet$ is a $\C$-complex such that $\iota^!\mathrm{pr}^!\N^\bullet\cong f^!\N^\bullet$ is a $\C$-complex on $X$. Likewise, $\iota_+\M^\bullet\in \mathrm{D}_{\C}(\w{\D}_{X\times Y})$ such that $\mathrm{pr}_+\iota_+\M^\bullet$ is a $\C$-complex.

Thanks to the construction above, we have a natural morphism
\begin{align*}
	\mathrm{R}f_*\mathrm{R}\mathcal{H}om_{\w{\D}_X}(\M^\bullet, f^!\N^\bullet)&\cong \mathrm{R}\mathrm{pr}_*\mathrm{R}\iota_*\mathrm{R}\mathcal{H}om_{\w{\D}_X}(\M^\bullet, \iota^!\mathrm{pr}^!\N^\bullet)\\
	&\to \mathrm{R}\mathrm{pr}_*\mathrm{R}\mathcal{H}om_{\w{\D}_{X\times Y}}(\iota_!\M^\bullet, \mathrm{pr}^!\N^\bullet)\\
	&\to \mathrm{R}\mathcal{H}om_{\w{\D}_Y}(f_!\M^\bullet, \N^\bullet).
\end{align*}

By construction of the map, we can now treat separately the case of a closed embedding and that of a suitable projection morphism, arguing locally in each case.

Let $\iota: X\to Y$ be a closed embedding of smooth affinoid. Assume $Y$ is D-regular, and $X$ is given as the vanishing set of some local coordinates. If $\N^\bullet\in \mathrm{D}_{\C}(\w{\D}_Y)$ such that $\iota^!\N^\bullet\in \mathrm{D}_{\C}(\w{\D}_X)$, then Corollary \ref{holimofpullback} yields
\begin{equation*}
	\iota^!\N^\bullet\cong \mathrm{holim} \iota^!(\N_n^\bullet).
\end{equation*}
Thus we have
\begin{equation*}
	\mathrm{R}\mathcal{H}om_{\w{\D}_Y}(\iota_!\M^\bullet, \N^\bullet)\cong \mathrm{holim}\mathrm{R}\mathcal{H}om_{\w{\D}_Y}(\iota_!\M^\bullet, \N_n^\bullet)
\end{equation*}
and
\begin{equation*}
	\mathrm{R}\iota_*\mathrm{R}\mathcal{H}om_{\w{\D}_X}(\M^\bullet, \iota^!\N^\bullet)\cong\mathrm{holim} \mathrm{R}\iota_*\mathrm{R}\mathcal{H}om_{\w{\D}_X}(\M^\bullet, \iota^!\N_n^\bullet).
\end{equation*}

By the same argument as in remark following Proposition \ref{homwithdual}, one obtains an isomorphism
\begin{equation*}
	\mathrm{R}\mathcal{H}om_{\w{\D}_X}(\M^\bullet, \iota^!\N_n^\bullet)\cong \mathrm{R}\mathcal{H}om_{\w{\D}_X}(\M^\bullet, \w{\D}_X)\widetilde{\otimes}_{\w{\D}_X}^{\mathbb{L}}\iota^!\N_n^\bullet.
\end{equation*}

Applying Lemma \ref{njbasechange}.(ii), we thus have the natural isomorphism
\begin{align*}
	\mathrm{R}\iota_*\mathrm{R}\mathcal{H}om_{\w{\D}_X}(\M^\bullet, \mathbb{L}\iota^*\N_n^\bullet)&\cong \mathrm{R}\iota_*(\mathrm{R}\mathcal{H}om_{\w{\D}_X}(\M^\bullet, \w{\D}_X)\widetilde{\otimes}_{\w{\D}_X}^{\mathbb{L}}\mathbb{L}\iota^*\N_n^\bullet)\\
	&\cong \iota_+\mathrm{R}\mathcal{H}om_{\w{\D}_X}(\M^\bullet, \w{\D}_X)\widetilde{\otimes}_{\w{\D}_Y}^{\mathbb{L}}\N_n^\bullet\\
	&\cong \mathrm{R}\mathcal{H}om_{\w{\D}_Y}(\iota_!\M^\bullet, \N_n^\bullet)[\mathrm{dim}Y-\mathrm{dim}X],
\end{align*}
as required.

Similarly, if $Y$ is a D-regular affinoid, $X$ has countably many connected components and the functor $f_*$ for $f: Z=X\times Y\to Y$ has finite cohomological dimension, we have again
\begin{equation*}
	\mathrm{R}\mathcal{H}om_{\w{\D}_Y}(f_!\M^\bullet, \N^\bullet)\cong \mathrm{holim}\mathrm{R}\mathcal{H}om_{\w{\D}_Y}(f_!\M^\bullet, \N_n^\bullet)
\end{equation*}
and
\begin{equation*}
	\mathrm{R}f_*\mathrm{R}\mathcal{H}om_{\w{\D}_Z}(\M^\bullet, f^!\N^\bullet)\cong \mathrm{holim}\mathrm{R}f_*\mathrm{R}\mathcal{H}om_{\w{\D}_Z}(\M^\bullet, f^!\N_n^\bullet).
\end{equation*}

Applying the remark following Proposition \ref{homwithdual} and Corollary \ref{smoothprofforNn} yields again
\begin{equation*}
	\mathrm{R}f_*\mathrm{R}\mathcal{H}om_{\w{\D}_Z}(\M^\bullet, f^!\N_n)\cong \mathrm{R}\mathcal{H}om_{\w{\D}_Y}(f_!\M^\bullet, \N_n^\bullet)
\end{equation*}
by the same argument as before. This proves the first adjunction.

The second adjunction follows by applying the first adjunction to the complexes $\mathbb{D}\M^\bullet $ and $\mathbb{D}\N^\bullet$ and invoking Lemma \ref{homsofduals} twice:
\begin{align*}
\mathrm{R}\mathcal{H}om_{\w{\D}_Y}(\N^\bullet, f_+\M^\bullet)&\cong \mathrm{R}\mathcal{H}om_{\w{\D}_Y}(f_!\mathbb{D}\M^\bullet, \mathbb{D}\N^\bullet)\\
&\cong \mathrm{R}f_*\mathrm{R}\mathcal{H}om_{\w{\D}_X}(\mathbb{D}\M^\bullet, f^!\mathbb{D}\N^\bullet)\\
&\cong \mathrm{R}f_*\mathrm{R}\mathcal{H}om_{\w{\D}_X}(f^+\N^\bullet, \M^\bullet).\qedhere
\end{align*}

\end{proof}

\appendix
\section{Further results on tensor products}
In this appendix, we provide the somewhat technical proofs of Theorem \ref{tensorlimit}, Lemma \ref{tensorlimiota}, and Lemma \ref{nbasechangelem}.
\subsection{Tensor products of $\C$-complexes}
In order to exploit the Auslander regularity results from section 5, we need to investigate the link between a $\C$-complex $\M^\bullet$ and its base change $\M_n^\bullet\in \mathrm{D}^b_{\mathrm{coh}}(\D_n)$ to `finite level'.

In the case when $\M^\bullet=\M$ is a coadmissible $\w{\D}_X$-module, we have $\M\cong \varprojlim \M_n$, so that we can infer properties of coadmissible $\w{\D}_X$-modules from the study of coherent $\D_n$-modules simply by taking the inverse limit.

In \cite{SixOp}, a derived analogue of this statement was formulated. As we have recalled in Lemma \ref{Ccomplexesasholim}, any $\C$-complex $\M^\bullet$ can locally be realized as a homotopy limit of the corresponding localisations $\M_n^\bullet:=\D_n\widetilde{\otimes}^{\mathbb{L}}_{\w{\D}_X}\M^\bullet$.

In what follows, it will therefore be crucial to understand how products (and thus, homotopy limits) behave under various functors, in particular under derived tensor products.

To motivate our argument, we first consider the case of modules (rather than sheaves of modules).

Recall from \cite[Definition 5.16]{SixOp} that we call a metrisable locally convex $K$-vector space $V$ \textbf{pseudo-nuclear} if $V\cong \varprojlim V_n$, where the $V_n$ are semi-normed spaces with transition maps $\rho_n: V_{n+1}\to V_n$ such that for each $n$, the following holds:
\begin{enumerate}[(i)]
	\item $\rho_n$ has dense image.
	\item if $B\subseteq V_{n+1}$ is bounded and $U\subseteq V_n$ is an open $R$-submodule, then there exists a bounded subset $B'\subseteq V$ such that 
	\begin{equation*}
		\rho_n(B)\subseteq U+p_n(B'),
	\end{equation*}
	where $p_n: V\to V_n$ is the natural morphism.
\end{enumerate}

We point out that any Banach space is pseudo-nuclear, and if $A$ is a Noetherian Banach $K$-algebra, then any Fr\'echet $A$-module which is nuclear relative to $A$ is pseudo-nuclear (\cite[Lemma 5.7.(ii)]{SixOp}).

One of the crucial properties of pseudo-nuclearity is the following:

\begin{lem}[{\cite[Corollary 5.23]{SixOp}}]
	\label{pnprod}
	Let $V_n$ be a countable collection of pseudo-nuclear Fr\'echet $K$-vector spaces of countable type, regarded as complete bornological spaces. If $W$ is another pseudo-nuclear Fr\'echet $K$-vector space of countable type, then
	\begin{equation*}
		(\prod_nV_n)\widetilde{\otimes}_K W\cong \prod_n(V_n\widetilde{\otimes}_K W)
	\end{equation*}
	in $LH(\h{\B}c_K)$ via the natural morphism.
\end{lem}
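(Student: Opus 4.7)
The plan is to reduce the isomorphism to a statement about Banach spaces by unwinding the pseudo-nuclear presentations, and then verify the Banach-level statement by a direct completion argument. First, I would fix pseudo-nuclear presentations $V_n \cong \varprojlim_k V_{n,k}$ and $W \cong \varprojlim_k W_k$ by Banach spaces of countable type, which exist since all our spaces are of countable type. Using condition (ii) of pseudo-nuclearity componentwise, together with the fact that bounded subsets of $\prod_n V_n$ are precisely products of bounded subsets of the $V_n$, one checks that $\prod_n V_n$ is itself a pseudo-nuclear Fr\'echet space of countable type, with a natural presentation of the shape $\varprojlim_k \bigl(\prod_{n \leq k} V_{n,k}\bigr) \times \bigl(\prod_{n > k} V_n\bigr)$ (rescaling the semi-norms if necessary).

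Next, I would invoke the compatibility of $\widetilde{\otimes}_K$ with inverse limits of pseudo-nuclear Fr\'echet systems of countable type: for two such systems, the natural morphism
\[
(\varprojlim_k A_k) \,\widetilde{\otimes}_K\, (\varprojlim_k B_k) \;\longrightarrow\; \varprojlim_k\bigl(A_k \,\widetilde{\otimes}_K\, B_k\bigr)
\]
is an isomorphism in $LH(\h{\B}c_K)$. This is the heart of the functional-analytic setup of \cite{SixOp}: the approximation property of pseudo-nuclearity is exactly what guarantees that the natural morphism is strict with dense image, hence becomes an isomorphism in the left heart. Applying this to both sides of the target isomorphism, the claim reduces to showing
\[
\bigl(\prod_n V_{n,k}\bigr) \,\h{\otimes}_K\, W_k \;\cong\; \prod_n \bigl(V_{n,k} \,\h{\otimes}_K\, W_k\bigr)
\]
for each fixed $k$, i.e.\ to a statement about countable products of Banach spaces of countable type.

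At this Banach level, injectivity of the natural morphism is clear by chasing elementary tensors, and surjectivity together with matching of bornologies follows by working with unit balls: any family of elements in $\prod_n (V_{n,k} \,\h{\otimes}_K\, W_k)$ with a common bound can, thanks to the countable type of $W_k$, be written using a common countable Schauder basis of $W_k$ and then reassembled into a single convergent expansion in $(\prod_n V_{n,k}) \,\h{\otimes}_K\, W_k$. The main obstacle is precisely this reassembly: the natural morphism is \emph{not} a topological isomorphism for general (non-nuclear) Banach spaces, and one genuinely needs both the countability of the index set and the countable type of $W_k$ to identify the two $\pi$-adic completions --- which is also why the countability assumptions in the statement are essential and cannot be dropped.
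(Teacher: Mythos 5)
The paper does not give a proof of this lemma: it is imported verbatim from \cite[Corollary 5.23]{SixOp} and used as a black box, so there is no in-paper argument to compare against. Assessing your sketch on its own merits, there are two genuine gaps.

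First, the proposed pseudo-nuclear presentation of $\prod_n V_n$ is not admissible as stated. The definition requires a presentation $\varprojlim_k V_k$ with each $V_k$ \emph{semi-normed}, but the tail factor $\prod_{n>k} V_n$ appearing in your level-$k$ term is a Fr\'echet space, not a semi-normed one. The only repair is to give this tail the trivial semi-norm, in which case the Hausdorff quotient of the level-$k$ term is the \emph{finite} product $\prod_{n\leq k}V_{n,k}$. If you then apply the cited commutation of $\widetilde{\otimes}_K$ with projective limits along pseudo-nuclear systems, what you actually land on is
\begin{equation*}
(\prod_n V_n)\widetilde{\otimes}_K W \cong \varprojlim_k \Bigl(\bigl(\prod_{n\leq k}V_{n,k}\bigr)\h{\otimes}_K W_k\Bigr) \cong \varprojlim_k \prod_{n\leq k}\bigl(V_{n,k}\h{\otimes}_K W_k\bigr),
\end{equation*}
where each level involves a \emph{finite} product; no infinite product of Banach spaces ever needs to be tensored. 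The remaining step is then a routine interchange of $\prod_n$ with $\varprojlim_k$ over the diagonal, not the statement you reduce to.

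Second, and more seriously, the ``Banach-level reduction'' you write down,
\begin{equation*}
\bigl(\prod_n V_{n,k}\bigr)\h{\otimes}_K W_k \cong \prod_n\bigl(V_{n,k}\h{\otimes}_K W_k\bigr),
\end{equation*}
is not a statement about Banach spaces: $\prod_n V_{n,k}$ is an infinite countable product of Banach spaces, hence Fr\'echet, and this display is precisely the lemma being proved, specialized to the case where the $V_n$ and $W$ are themselves Banach. To justify it you would need the commutation of $\widetilde{\otimes}_K$ with the (non-semi-normed) limit $\prod_n V_n \cong \varprojlim_k (\prod_n V_{n,k})$, which is not covered by the pseudo-nuclear commutation result you invoke. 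Your argument is therefore circular: the reduction reproduces the problem rather than simplifying it. The closing paragraph concedes that ``the main obstacle is precisely this reassembly'' and then asserts the reassembly without an argument. The genuine content of the cited result --- the decaying-weight trick converting a countable family of pointwise null conditions into a single uniform one, using both the countability of the index set and the orthogonal bases afforded by countable type --- is exactly what your sketch leaves out, and it is the entire point of the proposition.
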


We now record the following observation.

\begin{lem}
	\label{flatprod}
	Let $A$ be a Fr\'echet $K$-algebra. For each $n\in \mathbb{N}$, let $N_n$ be a Fr\'echet left $A$-module. Let $M$ be a Fr\'echet right $A$-module. Assume that $A, M$ and $N_n$ are pseudo-nuclear of countable type for each $n$. 
	
	If $M\widetilde{\otimes}_A^{\mathbb{L}}N_n\cong M\widetilde{\otimes}_A N_n$ for each $n$, then
	\begin{equation*}
		M\widetilde{\otimes}_A^{\mathbb{L}}\prod_n N_n\cong \prod_n (M\widetilde{\otimes}_A^{(\mathbb{L})}N_n)
	\end{equation*}
	in $\mathrm{D}(LH(\h{\B}c_K))$.
	
	In particular, $\prod_n N_n\in LH(\h{\B}c_K)$ is $M\widetilde{\otimes}_A$-acyclic.
\end{lem}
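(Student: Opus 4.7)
The strategy is to compute $M\widetilde{\otimes}_A^{\mathbb{L}}\prod_n N_n$ via an explicit bar-type resolution of $M$ and then commute the resulting $K$-tensor factors past the countable product using Lemma~\ref{pnprod}.

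First I would consider the (augmented) bar resolution $B_\bullet(M,A)\to M$ in the category of right Fr\'echet $A$-modules, with $B_k(M,A) = M\widetilde{\otimes}_K A^{\widetilde{\otimes} k}\widetilde{\otimes}_K A$ and standard simplicial differentials. The $K$-linear map $x\mapsto x\otimes 1_A$ supplies a contracting homotopy, so the augmented complex is strictly exact in $\h{\B}c_K$; since $M$ and $A$ are pseudo-nuclear Fr\'echet of countable type, so is each $B_k(M,A)$. The terms are induced right $A$-modules, so $B_k(M,A)\widetilde{\otimes}_A N \cong M\widetilde{\otimes}_K A^{\widetilde{\otimes} k}\widetilde{\otimes}_K N$ naturally, and under our pseudo-nuclearity hypotheses (by the flatness arguments in \cite[Section~5]{SixOp}) the complex $B_\bullet(M,A)\widetilde{\otimes}_A N$ computes $M\widetilde{\otimes}_A^{\mathbb{L}} N$ for any Fr\'echet $A$-module $N$ pseudo-nuclear of countable type.

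Applying Lemma~\ref{pnprod} termwise with $W = M\widetilde{\otimes}_K A^{\widetilde{\otimes} k}$ (itself pseudo-nuclear Fr\'echet of countable type) produces natural isomorphisms
\begin{equation*}
  B_k(M,A)\widetilde{\otimes}_A \prod_n N_n \;\cong\; \prod_n \bigl(B_k(M,A)\widetilde{\otimes}_A N_n\bigr)
\end{equation*}
compatible with the bar differentials. Assembling these gives an isomorphism of complexes $B_\bullet(M,A)\widetilde{\otimes}_A \prod_n N_n \cong \prod_n(B_\bullet(M,A)\widetilde{\otimes}_A N_n)$ in $\h{\B}c_K$, hence also in $LH(\h{\B}c_K)$.

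Finally I would pass to cohomology. By hypothesis each complex $B_\bullet(M,A)\widetilde{\otimes}_A N_n$ has cohomology concentrated in degree zero and equal there to $M\widetilde{\otimes}_A N_n$. Countable products of Fr\'echet spaces preserve strict exact sequences, and this AB4$^*$-type property lifts to $LH(\h{\B}c_K)$ on the systems at hand, so countable products commute with cohomology. The resulting computation yields the main isomorphism
\begin{equation*}
  M\widetilde{\otimes}_A^{\mathbb{L}}\prod_n N_n \;\cong\; \prod_n\bigl(M\widetilde{\otimes}_A N_n\bigr),
\end{equation*}
concentrated in degree zero, from which acyclicity of $\prod_n N_n$ is immediate. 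The delicate points are (i) verifying that the bar terms $V\widetilde{\otimes}_K A$ are flat enough to compute the derived tensor $-\widetilde{\otimes}_A^{\mathbb{L}}-$ in $LH(\h{\B}c_K)$, and (ii) the exactness of countable products on the relevant pseudo-nuclear Fr\'echet systems; both reduce to the techniques developed in \cite[Section~5]{SixOp}.
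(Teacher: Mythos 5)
Your proof is essentially the paper's, up to the symmetric choice of resolving $M$ (rather than $\prod_n N_n$) via the bar complex: after tensoring over $A$, the resulting complex $M\widetilde{\otimes}_K A^{\widetilde{\otimes}\bullet}\widetilde{\otimes}_K\prod_n N_n$, the termwise application of Lemma~\ref{pnprod}, and the use of exactness of countable products in $LH(\h{\B}c_K)$ all coincide with the paper's argument. One minor wording slip: Lemma~\ref{pnprod} yields isomorphisms in $LH(\h{\B}c_K)$ directly, not in $\h{\B}c_K$, so the phrase ``in $\h{\B}c_K$, hence also in $LH(\h{\B}c_K)$'' has the implication backwards, though this does not affect the argument.
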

\begin{proof}
	By \cite[Lemma 3.13]{SixOp}, we have
	\begin{equation*}
		M\widetilde{\otimes}_A^{\mathbb{L}}\prod N_n\cong M\overset{\rightarrow}{\otimes}_A^{\mathbb{L}}\prod N_n.
	\end{equation*}
	
	Since $\overset{\rightarrow}{\otimes}_K$ is exact, it follows from \cite[Lemma 2.9]{BamStein} and \cite[Proposition 4.25]{SixOp} that an $M\widetilde{\otimes}_A$-acyclic resolution of $\prod_n N_n$ is given by the Bar resolution
	\begin{equation*}
		\hdots\to A^{\widetilde{\otimes}r}\widetilde{\otimes}_K \prod_n N_n\to A^{\widetilde{\otimes}r-1}\widetilde{\otimes}_K \prod_n N_n\to \hdots \to A\widetilde{\otimes}_K \prod_n N_n\to 0.
	\end{equation*}
	Tensoring with $M$, each term of the complex is of the form 
	\begin{equation*}
		M\widetilde{\otimes}_K A^{\widetilde{\otimes}r-1}\widetilde{\otimes}_K \prod N_n,
	\end{equation*}
	and invoking Lemma \ref{pnprod} together with the exactness of products in $LH(\h{\B}c_K)$ (\cite[Proposition 2.1.15]{Schneiders}) yields
	\begin{equation*}
		M\widetilde{\otimes}_A^{\mathbb{L}}\prod_n N_n\cong \prod_n M\widetilde{\otimes}_A^{\mathbb{L}}N_n.
	\end{equation*}
	By assumption, each $N_n$ is $M\widetilde{\otimes}_A$-acyclic, so it then follows from the exactness of products in $LH(\h{\B}c_K)$ that the augmented complex
	\begin{equation*}
		\hdots \to M\widetilde{\otimes}_K A^{\widetilde{\otimes}r-1}\widetilde{\otimes}_K \prod_n N_n\to\hdots \to M\widetilde{\otimes}_K \prod_n N_n\to M\widetilde{\otimes}_A \prod_n N_n\to 0
	\end{equation*}
	is exact, and $\prod_nN_n$ is $M\widetilde{\otimes}_A$-acyclic.
\end{proof}

Let $U\cong \varprojlim U_n$ be a Fr\'echet--Stein $K$-algebra, nuclear over some Noetherian Banach algebra $A$ with (almost) Noetherian unit ball. Assume that $A$ and $U_n$ are of countable type.

Recall from \cite[Definition 8.10]{SixOp} that $M^\bullet\in \mathrm{D}(\mathrm{Mod}_{LH(\h{\B}c_K)}(U))$ is called a $\C$-complex if $M_n^\bullet:=U_n\widetilde{\otimes}_{U}M^\bullet\in \mathrm{D}^b_{\mathrm{f.g.}}(U_n)$ and the natural morphism $\mathrm{H}^j(M^\bullet)\to \varprojlim \mathrm{H}^j(M_n^\bullet)$ is an isomorphism. We denote the full subcategory of $\C$-complexes of $U$-modules by $\mathrm{D}_{\C}(U)$.

\begin{lem}
	\label{tensorholimmodule}
	Let $U\cong \varprojlim U_n$ be a Fr\'echet--Stein algebra as above.
	
	Let $M^\bullet\in \mathrm{D}_{\C}^-(U^{\mathrm{op}})$, $N^\bullet\in \mathrm{D}_{\C}^-(U)$. Then 
	\begin{equation*}
		M^\bullet\widetilde{\otimes}_U^{\mathbb{L}}N^\bullet\cong \mathrm{holim}_n M_n^\bullet\widetilde{\otimes}_{U_n}^{\mathbb{L}}N_n^\bullet
	\end{equation*}
	in $\mathrm{D}(LH(\h{\B}c_K))$, where $M_n^\bullet=M^\bullet\widetilde{\otimes}^{\mathbb{L}}_U U_n$, $N_n^\bullet=U_n\widetilde{\otimes}_U^{\mathbb{L}}N^\bullet$.
\end{lem}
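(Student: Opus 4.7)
The plan is to use that $N^\bullet$ is a homotopy limit of its localisations $N_n^\bullet$ and then transport this presentation through the tensor product, with the key technical point being the interchange of $M^\bullet\widetilde{\otimes}_U^{\mathbb{L}}-$ with the product over $n$. A complex-level analogue of Theorem \ref{Bornproperties}(ii) -- which follows from bounded-above dévissage together with the pre-nuclearity of the inverse systems $(\mathrm{H}^j(N_n^\bullet))_n$ of coadmissible cohomology groups -- yields a distinguished triangle
\begin{equation*}
    N^\bullet \to \prod_n N_n^\bullet \to \prod_n N_n^\bullet \to N^\bullet[1].
\end{equation*}
Applying $M^\bullet\widetilde{\otimes}_U^{\mathbb{L}}-$ to this triangle, and using change of rings $M^\bullet\widetilde{\otimes}_U^{\mathbb{L}} N_n^\bullet \cong M_n^\bullet\widetilde{\otimes}_{U_n}^{\mathbb{L}} N_n^\bullet$ (which comes from $N_n^\bullet\cong U_n\widetilde{\otimes}_{U_n}^{\mathbb{L}} N_n^\bullet$), the theorem reduces to the interchange isomorphism
\begin{equation*}
    M^\bullet\widetilde{\otimes}_U^{\mathbb{L}}\prod_n N_n^\bullet \;\cong\; \prod_n\bigl(M^\bullet\widetilde{\otimes}_U^{\mathbb{L}} N_n^\bullet\bigr).
\end{equation*}

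For this interchange I first treat the case $M^\bullet = M$ a single coadmissible right $U$-module. Since each $U_n$ is Noetherian, $N_n^\bullet$ admits a bounded-above resolution $P_n^\bullet$ by finitely generated free $U_n$-modules; each $P_n^j$ is then a Banach $K$-space of countable type, coadmissible as a $U$-module via pushforward. Theorem \ref{Bornproperties}(i) applied to $M$ gives $M\widetilde{\otimes}_U^{\mathbb{L}} P_n^j \cong M\widetilde{\otimes}_U P_n^j$, and Lemma \ref{flatprod} then yields
\begin{equation*}
    M\widetilde{\otimes}_U^{\mathbb{L}}\prod_n P_n^j \cong \prod_n\bigl(M\widetilde{\otimes}_U P_n^j\bigr),
\end{equation*}
showing in particular that $\prod_n P_n^j$ is $M\widetilde{\otimes}_U{-}$-acyclic in every degree. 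Because products are exact in $LH(\h{\B}c_K)$, we have $\prod_n P_n^\bullet \cong \prod_n N_n^\bullet$ in the derived category, and the degreewise acyclicities combined with the termwise interchange above give
\begin{equation*}
    M\widetilde{\otimes}_U^{\mathbb{L}}\prod_n N_n^\bullet \cong M\widetilde{\otimes}_U\prod_n P_n^\bullet \cong \prod_n\bigl(M\widetilde{\otimes}_U P_n^\bullet\bigr) \cong \prod_n\bigl(M\widetilde{\otimes}_U^{\mathbb{L}} N_n^\bullet\bigr).
\end{equation*}

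To pass from a single coadmissible module to a bounded-above complex $M^\bullet$, I would first replace $M^\bullet$ by a quasi-isomorphic bounded-above complex of coadmissible right $U$-modules -- built inductively using that the cohomology of $M^\bullet$ is coadmissible and that $\C_{U}$ restricted to right modules is abelian -- and then run a Cartan--Eilenberg-type double complex argument: for each fixed cohomological degree only a bounded range of terms of $M^\bullet$ contributes, and the termwise interchange established above propagates to the totalisation. The main obstacle throughout is precisely this commutation of $\widetilde{\otimes}_U^{\mathbb{L}}$ with infinite products, which generally fails; what makes the argument go through is the pseudo-nuclear Fréchet structure of coadmissible modules together with the fact that the $P_n^j$ are Banach of countable type, so that Lemmas \ref{pnprod} and \ref{flatprod} are available at every stage. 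Substituting the established commutation back into the triangle from $N^\bullet\cong \mathrm{holim}_n N_n^\bullet$ produces exactly the defining triangle for $\mathrm{holim}_n\bigl(M_n^\bullet\widetilde{\otimes}_{U_n}^{\mathbb{L}} N_n^\bullet\bigr)$, completing the proof.
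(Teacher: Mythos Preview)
Your proposal is correct and follows essentially the same strategy as the paper: reduce to the interchange of $M^\bullet\widetilde{\otimes}_U^{\mathbb{L}}-$ with $\prod_n$ via the holim triangle for $N^\bullet$, treat the case of a single coadmissible $M$ by resolving each $N_n^\bullet$ by finitely generated projectives and invoking Lemma~\ref{flatprod}, then extend to complexes.

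One small divergence worth noting: for the passage from a single module to $M^\bullet\in\mathrm{D}^-_\C$, the paper does \emph{not} replace $M^\bullet$ by a complex of coadmissible modules. Instead it argues cohomology-degree by cohomology-degree using truncation: for fixed $j$ one has $\mathrm{H}^j(M^\bullet\widetilde{\otimes}_U^{\mathbb{L}}\prod N_n^\bullet)\cong \mathrm{H}^j(\tau^{\geq j}M^\bullet\widetilde{\otimes}_U^{\mathbb{L}}\prod N_n^\bullet)$ (since the $N_n^\bullet$ sit in non-positive degrees), and $\tau^{\geq j}M^\bullet$ is bounded, so the bounded case applies directly. This is exactly your parenthetical remark that ``only a bounded range of terms of $M^\bullet$ contributes'', and it makes your proposed step of building a representative by coadmissible modules unnecessary---which is just as well, since producing such a representative inside $\mathrm{D}(\mathrm{Mod}_{LH(\h{\B}c_K)}(U))$ is not obviously straightforward.
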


\begin{proof}
	Note that by \cite[Lemma 8.11]{SixOp} $N^\bullet\cong \mathrm{holim} N_n^\bullet$, i.e. there is a distinguished triangle
	\begin{equation*}
		N^\bullet\to \prod_n N_n\to\prod_n N_n.
	\end{equation*}
	
	It thus suffices to show that the natural morphism
	\begin{equation*}
		M^\bullet\widetilde{\otimes}_U^{\mathbb{L}}\prod_n N_n^\bullet\to\prod_n (M^\bullet\widetilde{\otimes}_U^{\mathbb{L}}N_n^\bullet)
	\end{equation*}
	is an isomorphism.
	
	First, suppose that $M^\bullet=M$ is a coadmissible right $U$-module. Without loss of generality, $N^\bullet\in \mathrm{D}^{\leq 0}_{\C}(U)$, so by assumption, each $N_n^\bullet\in \mathrm{D}^{\leq 0}_{\mathrm{f.g.}}(U_n)$ can be represented by a complex $P_n^\bullet$ concentrated in non-positive degrees, with each $P_n^i$ a finitely generated projective left $U_n$-module.
		
	By exactness of products in $LH(\h{\B}c_K)$, $\prod_n P_n^\bullet$ is quasi-isomorphic to $\prod_n N_n^\bullet$.	
	
	Note further that $M\widetilde{\otimes}_U^{\mathbb{L}}U_n\cong M\widetilde{\otimes}_U U_n$ (compare \cite[Corollary 5.38]{SixOp}). Since $P_n^i$ is a direct summand of $U_n^{\oplus r}$ for some $r$, it follows from Lemma \ref{flatprod} that $\prod_n P_n^i$ is $M\widetilde{\otimes}_U$-acyclic for each $i$, and thus
	\begin{equation*}
		M\widetilde{\otimes}_U^{\mathbb{L}}\prod_n N_n^\bullet\cong M\widetilde{\otimes}_U \prod_n P_n^\bullet.
	\end{equation*}	
	
	But then Lemma \ref{flatprod} also implies that
	\begin{equation*}
		M\widetilde{\otimes}_U \prod_n P_n^i\cong \prod_n (M\widetilde{\otimes}_U P_n^i)
	\end{equation*}
	for each $i$. As $M\widetilde{\otimes}_U P_n^\bullet\cong M\widetilde{\otimes}_U^{\mathbb{L}}N_n^\bullet$ and products are exact, we can thus deduce that
	\begin{align*}
		M\widetilde{\otimes}_U^{\mathbb{L}}\prod_n N_n^\bullet&\cong M\widetilde{\otimes}_U \prod_n P_n^\bullet\\
		&\cong \prod_n M\widetilde{\otimes}_U P_n^\bullet\\
		&\cong \prod_n M\widetilde{\otimes}_U^{\mathbb{L}} N_n^\bullet, 
	\end{align*}
	as desired, proving the Lemma in the case where $M^\bullet$ is concentrated in one degree.
	
	We immediately obtain the same isomorphism for $M^\bullet\in \mathrm{D}^b_{\C}(U^{\mathrm{op}})$. Now let $M^\bullet\in \mathrm{D}^-_{\C}(U^{\mathrm{op}})$. Assuming without loss of generality that $N^\bullet$ is in $\mathrm{D}^{\leq 0}_{\C}(U)$, we have
	\begin{align*}
		\mathrm{H}^j(M^\bullet\widetilde{\otimes}_U^{\mathbb{L}}\prod N_n^\bullet)&\cong \mathrm{H}^j(\tau^{\geq j}M^\bullet\widetilde{\otimes}_U^{\mathbb{L}}\prod N_n^\bullet)\\
		&\cong \mathrm{H}^j(\prod \tau^{\geq j}M^\bullet\widetilde{\otimes}_U^{\mathbb{L}}N_n^\bullet)\\
		&\cong \prod \mathrm{H}^j(\tau^{\geq j}M^\bullet\widetilde{\otimes}_U^{\mathbb{L}}N_n^\bullet)\\
		&\cong \prod \mathrm{H}^j(M^\bullet\widetilde{\otimes}_U^{\mathbb{L}}N_n^\bullet)\\
		&\cong \mathrm{H}^j(\prod M^\bullet\widetilde{\otimes}_U^{\mathbb{L}}N_n^\bullet)
	\end{align*}
	for each $j$, proving the result.
\end{proof}

Our goal is now to carefully prove the analogous statement for sheaves on some rigid analytic $K$-variety $X$. The overall proof strategy is exactly the same as above, but this requires some extra care, as products are in general not exact in $\mathrm{Shv}(X, LH(\h{\B}c_K))$.

Note that products in the homotopy category $K(\mathrm{Shv}(X, LH(\h{\B}c_K)))$ exist and can be calculated termwise (see \cite[Lemma 1.4.2]{Schneiders}).

Moreover, products exist in $\mathrm{D}(\mathrm{Shv}(X, LH(\h{\B}c_K)))$, but they need to be calculated in the following way (see \cite[tag 07D9]{stacksproj}): if $\F_n^\bullet$ is a chain complex of sheaves for each $n\geq 0$, take a quasi-isomorphism $\F_n^\bullet\to \I_n^\bullet$ to some K-injective complex $\I_n^\bullet$. Then the product of the $\F_n^\bullet$ in $\mathrm{D}(\mathrm{Shv}(X, LH(\h{\B}c_K)))$ can be represented by the chain complex which is the termwise product of the $\I_n^\bullet$.

To avoid ambiguity, we let $\prod$ denote the direct product in the derived category and use $\prod^K$ whenever we take products in the homotopy category (i.e., termwise products).

In this way, the statement above reads: $\prod\F_n^\bullet$ is represented by $\prod^K \I_n^\bullet$.  

We will not make this distinction when working with chain complexes in $LH(\h{\B}c_K)$, where products are exact.

We will call $\F\in \mathrm{Shv}(X, LH(\h{\B}c_K))$ \textbf{locally acyclic} if $\mathrm{H}^j(U, \F)=0$ for all $j>0$ and all affinoid subdomains $U\subseteq X$.

\begin{lem}
	\label{belowacyclic}
	For each $n\geq 0$, let $\F_n^\bullet\in K(\mathrm{Shv}(X, LH(\h{\B}c_K)))$ be a bounded below chain complex consisting of locally acyclic sheaves. Then $\prod \F_n^\bullet$ is represented by $\prod^K \F_n^\bullet$.
\end{lem}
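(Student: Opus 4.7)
The plan is to reduce the statement to a computation of cohomology sheaves, where we can exploit exactness of products in $LH(\h{\B}c_K)$ (\cite[Proposition 2.1.15]{Schneiders}) together with the local acyclicity hypothesis.

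First, I would choose K-injective resolutions $\F_n^\bullet \to \I_n^\bullet$ for each $n$ (these exist as $\mathrm{Shv}(X, LH(\h{\B}c_K))$ is the category of sheaves valued in a Grothendieck abelian category). Since a termwise product of K-injective complexes is K-injective and each $\F_n^\bullet \to \I_n^\bullet$ is a quasi-isomorphism, the product $\prod \F_n^\bullet$ in $\mathrm{D}(\mathrm{Shv}(X, LH(\h{\B}c_K)))$ is represented by $\prod^K \I_n^\bullet$. Thus the claim reduces to showing that the natural map
\begin{equation*}
	\prod\nolimits^K \F_n^\bullet \to \prod\nolimits^K \I_n^\bullet
\end{equation*}
is a quasi-isomorphism of complexes of sheaves.

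Next, I would check this on cohomology sheaves. Since affinoid subdomains form a basis of the site, it suffices to show that the induced map on the presheaves $U \mapsto H^j(\Gamma(U, -))$ becomes an isomorphism after sheafification, so it is enough to prove it on sections over every affinoid $U \subseteq X$. Here the key observation is that the termwise product of sheaves commutes with sections, giving
\begin{equation*}
	\Gamma\bigl(U, \prod\nolimits^K \F_n^\bullet\bigr) = \prod\nolimits^K \Gamma(U, \F_n^\bullet),
\end{equation*}
and similarly for the $\I_n^\bullet$. Since products in $LH(\h{\B}c_K)$ are exact, cohomology commutes with the product, so on both sides cohomology in degree $j$ equals $\prod_n H^j(\Gamma(U, \F_n^\bullet))$ resp.\ $\prod_n H^j(\Gamma(U, \I_n^\bullet))$.

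Finally, for each individual $n$, local acyclicity of the terms $\F_n^i$ together with the fact that $\F_n^\bullet$ is bounded below means that $\F_n^\bullet$ computes $\mathrm{R}\Gamma(U,-)$ (a standard Cartan--Eilenberg/acyclic-resolution argument, which goes through unchanged in the Grothendieck abelian setting). Hence $\Gamma(U, \F_n^\bullet) \to \Gamma(U, \I_n^\bullet)$ is a quasi-isomorphism for every $n$, and passing to products finishes the proof. The main obstacle, if any, is the bookkeeping between products in the homotopy category and products in the derived category; but the boundedness-below assumption together with exactness of products in $LH(\h{\B}c_K)$ makes the comparison routine.
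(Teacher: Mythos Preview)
Your proposal is correct and follows essentially the same approach as the paper: choose injective resolutions, reduce the quasi-isomorphism check to sections over affinoid subdomains, use that a bounded below complex of locally acyclic sheaves computes $\mathrm{R}\Gamma(U,-)$, and then invoke exactness of products in $LH(\h{\B}c_K)$. The only cosmetic difference is that the paper phrases the argument via the cone $C_n^\bullet$ of $\F_n^\bullet\to\I_n^\bullet$ (showing $\prod^K C_n^\bullet$ is exact by checking $\Gamma(U,C_n^\bullet)$ is exact for each affinoid $U$), whereas you check the map directly; both arguments rest on the same ingredients.
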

\begin{proof}
	For each $n$, let $\F_n^\bullet\to \mathcal{I}_n^\bullet$ be a quasi-isomorphism to a bounded below complex of injective sheaves, so that $\prod \F_n^\bullet$ is represented by $\prod^K \I_n^\bullet$.
	
	Let $C_n^\bullet$ denote the cone of the quasi-isomorphism $\F_n^\bullet\to \mathcal{I}_n^\bullet$ in $K(\mathrm{Shv}(X, LH(\h{\B}c_K)))$. Note that this is an exact bounded below complex, and for each $i$, $\C_n^i$ is locally acyclic, as the same is true for $\F_n^\bullet$ and $\mathcal{I}_n^\bullet$ (recall that injective sheaves are acyclic).
	
	The morphisms above induce a natural morphism $\prod^K \F_n^\bullet\to \prod^K \mathcal{I}_n^\bullet$ in the homotopy category $K(\mathrm{Shv}(X, LH(\h{\B}c_K)))$, whose cone is precisely $\prod^K \C_n^\bullet$. To show that $\prod^K \F_n^\bullet\to \prod^K \mathcal{I}_n^\bullet$ is a quasi-isomorphism, we thus only need to verify that $\prod^K \C_n^\bullet$ is exact.
	
	Since $\C_n^\bullet$ is exact and bounded below, one can show inductively that the kernel of each differential in the complex is also locally acyclic, and thus $\Gamma(U, \C_n^\bullet)$ is an exact complex for each $n$ and each affinoid subdomain $U$. As products in $LH(\h{\B}c_K)$ are exact, we deduce that $\prod_n \Gamma(U, \C_n^\bullet)=\Gamma(U, \prod^K \C_n^\bullet)$ is an exact complex for each affinoid $U$, so that $\prod^K \C_n^\bullet$ is an exact complex, as required.    
\end{proof}

Combining the Lemmas above, it is easy to guess what our strategy will be: as in Lemma \ref{tensorholimmodule}, we wish to consider products of projective resolutions, and by Lemma \ref{belowacyclic}, these are easy to understand if each projective resolutions is bounded below, i.e. finite. In this way, Auslander regularity (in particular, finite global dimension) becomes the crucial assumption which allows us to generalize Lemma \ref{tensorholimmodule} to the sheaf setting. 

We now consider the following setting: let $X=\Sp A$ be an affinoid $K$-variety with free tangent sheaf. Let $\A\subseteq A$ be an admissible affine formal model and let $\L\subseteq \T_X(X)$ be a free $(R, \A)$-Lie lattice. Fix a positive integer $m$, so that we can work on the subsite $X_m=X(\pi^m\L)$ of $\pi^m$-accessible subdomains.

We write $\L_m=\pi^m\L$. For each $n> m$, let $\L_n\subsetneq \L_{n-1}$ be a free $(R, \A)$-Lie lattice of $\T_X(X)$.

By construction, each $\L_n$-accessible affinoid subdomain is $\L_m$-accessible, so we obtain monoids $\mathscr{U}_n$ in $\mathrm{Shv}(X_m, LH(\h{\B}c_K))$ with
\begin{equation*}
	\mathscr{U}_n(X)=\h{U_{\A}(\L_n)}_K,
\end{equation*}
and
\begin{equation*}
	\mathscr{U}_n(U)=\h{U_{\B}(\B\otimes_{\A}\L_n)}_K
\end{equation*}
for any $U=\Sp B$ in $X_m$ with suitable admissible affine formal model $\B$.

We make the following assumptions:
\begin{enumerate}[(i)]
	\item the algebras $\mathscr{U}_n(X)$ are Auslander regular for each $n$, and there exists a $C$ such that $\mathrm{gl.dim.}\mathscr{U}_n(X)<C$ for each $n$.
	\item for any $U\in X_m$, the natural morphism $\mathscr{U}_n(U)\to \mathscr{U}_{n-1}(U)$ is flat.
	\item for any $U\in X_m$, there exists some Noetherian Banach $K$-algebra $A_U$ of countable type, with (almost) Noetherian unit ball, such that the natural morphism $\mathscr{U}_n(U)\to \mathscr{U}_{n-1}(U)$ is strictly completely continuous relative to $A_U$.
\end{enumerate}

We write $\mathscr{U}=\varprojlim \mathscr{U}_n$. Our assumptions imply that $\mathscr{U}(U)$ is a Fr\'echet--Stein $K$-algebra nuclear over $A_U$. Moreover $\mathscr{U}_n(U)$ is of countable type for each $U$ and each $n$.

In the case when $\L_n=\pi^n\L$, this simply recovers $\mathscr{U}_n=\D_n|_{X_m}$ and $\mathscr{U}=\w{\D}_X|_{X_m}$, but it is convenient to formulate our results in this generality, so that they can also be applied to the sheaves $\mathscr{U}_n=\D'_{m, n}$ in subsection 6.2, where $\mathscr{U}=\D'_{m, \infty}$ (see \cite[proof of Theorem 4.1.11]{Ardakov} and \cite[Theorem 4.9]{DcapOne} for the verification of (ii) and (iii) in that case).

Notions of coherent $\mathscr{U}_n$-modules, coadmissible $\mathscr{U}$-modules and $\C$-complexes in $\mathrm{D}(\mathrm{Mod}_{\mathrm{Shv}(X_m, LH(\h{\B}c_K))}(\mathscr{U}))$ can be developed in exactly the same way as for $\w{\D}_X$-modules.

We first note that coadmissible $\mathscr{U}$-modules admit a variant of the Bar resolution:

\begin{lem}
	\label{Barsheavesapp}
	Let $\N$ be a coadmissible $\mathscr{U}$-module, and let
	\begin{equation*}
		F^\bullet:=(\hdots\to \mathscr{U}(X)^{\widetilde{\otimes}r}\widetilde{\otimes}_K \N(X)\to \mathscr{U}(X)^{\widetilde{\otimes}r-1}\widetilde{\otimes}_K \N(X)\to \hdots \mathscr{U}(X)\widetilde{\otimes}_K \N(X)\to 0 )
	\end{equation*}
	 be the Bar resolution of $\N(X)$ as a $\mathscr{U}(X)$-module.
	\begin{enumerate}[(i)]
		\item The complex $\mathscr{U}\widetilde{\otimes}_{\mathscr{U}(X)} F^\bullet$ defines a resolution of $\N$.
		\item If $\M\in \mathrm{Mod}_{\mathrm{Shv}(X, LH(\h{\B}c_K))}(\mathscr{U}^{\mathrm{op}})$ such that $\M(U)\in \mathrm{Ind}(\mathrm{Ban}_K)$ for any affinoid subdomain $U\subseteq X$, then
		\begin{equation*}
			\M\widetilde{\otimes}_{\mathscr{U}}^{\mathbb{L}}\N\cong \M\widetilde{\otimes}_{\mathscr{U}} (\mathscr{U}\widetilde{\otimes}_{\mathscr{U}(X)} F^\bullet).
		\end{equation*}
	\end{enumerate}
\end{lem}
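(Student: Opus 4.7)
The plan is to establish (i) by exhibiting the Bar resolution as split exact at the level of the underlying complete bornological vector spaces, then transporting this across the base change $\mathscr{U}\widetilde{\otimes}_{\mathscr{U}(X)}-$, and to establish (ii) by reducing $\M\widetilde{\otimes}_\mathscr{U}(\mathscr{U}\widetilde{\otimes}_K V)$ to $\M\widetilde{\otimes}_K V$ and invoking the exactness of $-\overset{\rightarrow}{\otimes}_K V$ on $\mathrm{Ind}(\mathrm{Ban}_K)$-valued sheaves.

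For (i), recall that for a monoid $A$ and a left $A$-module $N$ in a closed symmetric monoidal category with coequalizers, the Bar resolution admits a canonical \emph{extra degeneracy}: the maps $s_r: A^{\widetilde{\otimes}r}\widetilde{\otimes}_K N \to A^{\widetilde{\otimes}(r+1)}\widetilde{\otimes}_K N$ given by inserting the unit $K\to A$ in the leftmost slot provide a $K$-linear contracting homotopy of the augmented complex $F^\bullet\to \N(X)$. In particular, $F^\bullet \to \N(X)$ is split exact when regarded in $LH(\h{\B}c_K)$. Each term $F^r$ has the form $\mathscr{U}(X)\widetilde{\otimes}_K V_r$ with $V_r=\mathscr{U}(X)^{\widetilde{\otimes}(r-1)}\widetilde{\otimes}_K \N(X)$, so the base change $\mathscr{U}\widetilde{\otimes}_{\mathscr{U}(X)}F^r$ is canonically isomorphic to $\mathscr{U}\widetilde{\otimes}_K V_r$. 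Since $\mathscr{U}\widetilde{\otimes}_K-$ is a $K$-linear functor, it preserves the $K$-linear contracting homotopy, so the localized augmented complex remains split exact in $\mathrm{Shv}(X_m, LH(\h{\B}c_K))$. The augmentation target $\mathscr{U}\widetilde{\otimes}_{\mathscr{U}(X)}\N(X)$ is canonically isomorphic to $\N$: indeed, on sections over any $U\in X_m$ this reduces to $\mathscr{U}(U)\widetilde{\otimes}_{\mathscr{U}(X)}\N(X)\cong \N(U)$, which is the defining property of $\N$ being coadmissible (cf.\ \cite[Corollary 5.38]{SixOp} applied to the Fr\'echet--Stein algebra $\mathscr{U}(U)$).

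For (ii), I would show that every term $\mathscr{U}\widetilde{\otimes}_K V_r$ of the resolution in (i) is $\M\widetilde{\otimes}_\mathscr{U}-$acyclic, which by Lemma \ref{Barsheavesapp}(i) and standard homological algebra gives $\M\widetilde{\otimes}_\mathscr{U}^{\mathbb{L}}\N\cong \M\widetilde{\otimes}_\mathscr{U}(\mathscr{U}\widetilde{\otimes}_{\mathscr{U}(X)}F^\bullet)$. The key calculation is the base change identity $\M\widetilde{\otimes}_\mathscr{U}(\mathscr{U}\widetilde{\otimes}_K V)\cong \M\widetilde{\otimes}_K V$, which holds by writing both sides as coequalizers and using associativity and the unit property of the monoidal structure. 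To upgrade this to acyclicity, I pass to $\mathrm{Ind}(\mathrm{Ban}_K)$-valued sheaves via the dissection functor: under our hypotheses $\M$ (and $V$) already lie in this subcategory, where $-\overset{\rightarrow}{\otimes}_K V$ is exact by \cite[Lemma 3.13]{SixOp} together with the exactness of $\overset{\rightarrow}{\otimes}_K$. A $-\overset{\rightarrow}{\otimes}_\mathscr{U}$-flat resolution of $\M$ therefore stays exact after applying $-\overset{\rightarrow}{\otimes}_K V$, and recombining with $\mathscr{U}$ via the base change identity shows that higher $\mathrm{Tor}$-groups vanish. The main obstacle is exactly this last step: one must carefully track that the derived tensor product $\M\widetilde{\otimes}_\mathscr{U}^{\mathbb{L}}-$ computed in $LH(\h{\B}c_K)$-valued sheaves agrees with $\M\overset{\rightarrow}{\otimes}_\mathscr{U}^{\mathbb{L}}-$ computed in $\mathrm{Ind}(\mathrm{Ban}_K)$-valued sheaves under our $\mathrm{Ind}(\mathrm{Ban}_K)$-valued assumption, and that sheafification does not introduce higher cohomology obstructions on the $\pi^m\L$-accessible site (which follows since finite affinoid coverings are admissible in $X_m$ and the terms $\mathscr{U}\widetilde{\otimes}_K V_r$ are locally acyclic by the same \v Cech-type computation as in the proof of Theorem \ref{tensorlimit}).
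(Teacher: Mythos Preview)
Your argument for (ii) is essentially the paper's: both reduce to showing each term $\mathscr{U}\widetilde{\otimes}_K V_r$ is $\M\widetilde{\otimes}_{\mathscr{U}}$-acyclic via the identification $\M\widetilde{\otimes}_{\mathscr{U}}^{\mathbb{L}}(\mathscr{U}\widetilde{\otimes}_K V)\cong \M\widetilde{\otimes}_K^{\mathbb{L}} V$ and the exactness of $\overset{\rightarrow}{\otimes}_K$ on $\mathrm{Ind}(\mathrm{Ban}_K)$-valued sheaves. Your closing concerns about sheafification are handled in the paper by the observation that $(\mathscr{U}\widetilde{\otimes}_{\mathscr{U}(X)}F^{-i})(U)=\mathscr{U}(U)\widetilde{\otimes}_K\mathscr{U}(X)^{\widetilde{\otimes}i}\widetilde{\otimes}_K\N(X)$ already as presheaves, so nothing extra is needed.

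Your argument for (i), however, has a genuine gap. The extra degeneracy $s_r$ of the Bar complex inserts the unit in the \emph{leftmost} slot, so it is only $K$-linear, not left $\mathscr{U}(X)$-linear: for $a\in\mathscr{U}(X)$ one has $s_r(a\cdot x)=1\otimes a\otimes x\neq a\otimes 1\otimes x=a\cdot s_r(x)$. Hence $s_r$ does \emph{not} descend along the base change $\mathscr{U}\widetilde{\otimes}_{\mathscr{U}(X)}-$. Your attempt to fix this by invoking the $K$-linear functor $\mathscr{U}\widetilde{\otimes}_K-$ conflates two different functors: under the termwise identification $\mathscr{U}\widetilde{\otimes}_{\mathscr{U}(X)}F^r\cong\mathscr{U}\widetilde{\otimes}_K V_r$, neither the differentials nor the homotopies of $F^\bullet$ are of the form $\mathrm{id}_{\mathscr{U}(X)}\widetilde{\otimes}_K(\text{map of }V_\bullet)$, so there is no complex on the $V_r$'s to which you could apply $\mathscr{U}\widetilde{\otimes}_K-$. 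Indeed, the base-changed complex is the two-sided Bar complex $\mathrm{Bar}(\mathscr{U},\mathscr{U}(X),\N(X))$, which in general computes $\mathrm{Tor}^{\mathscr{U}(X)}_*(\mathscr{U},\N(X))$ and is \emph{not} contractible.

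What is actually required is the flatness input from the Fr\'echet--Stein theory: for each affinoid $U$, one has $\mathscr{U}(U)\widetilde{\otimes}^{\mathbb{L}}_{\mathscr{U}(X)}\N(X)\cong\mathscr{U}(U)\widetilde{\otimes}_{\mathscr{U}(X)}\N(X)\cong\N(U)$ concentrated in degree zero (this is \cite[Proposition 3.52]{SixOp} together with \cite[Proposition 4.6]{DcapOne}). The paper argues exactly this way: the Bar resolution $F^\bullet$ consists of $\mathscr{U}(U)\widetilde{\otimes}_{\mathscr{U}(X)}$-acyclic modules (being of the form $\mathscr{U}(X)\widetilde{\otimes}_K V$), hence $\mathscr{U}(U)\widetilde{\otimes}_{\mathscr{U}(X)}F^\bullet$ computes this derived tensor product, which is $\N(U)$. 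That flatness step is the missing idea in your proof.
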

\begin{proof}
	Let $U$ be an affinoid subdomain of $X$. By the argument in Lemma \ref{flatprod}, $\mathscr{U}(U)\widetilde{\otimes}^\mathbb{L}_{\mathscr{U}(X)} \N(X)$ can be computed via the complex $\mathscr{U}(U)\widetilde{\otimes}_{\mathscr{U}(X)} F^\bullet$. But by \cite[Proposition 3.52]{SixOp} and \cite[Proposition 4.6]{DcapOne},
	\begin{equation*}
		\mathscr{U}(U)\widetilde{\otimes}^\mathbb{L}_{\mathscr{U}(X)} \N(X)\cong \mathscr{U}(U)\widetilde{\otimes}_{\mathscr{U}(X)} \N(X)\cong \N(U),
	\end{equation*}
	so $\mathscr{U}(U)\widetilde{\otimes}_{\mathscr{U}(X)} F^\bullet$ does indeed describe a resolution of $\N(U)$. 
	
	Note that 
	\begin{equation*}
		(\mathscr{U}\widetilde{\otimes}_{\mathscr{U}(X)} F^{-i})(U)=\mathscr{U}(U)\widetilde{\otimes}_K \mathscr{U}(X)^{\widetilde{\otimes}i} \widetilde{\otimes}_K \N(X),
	\end{equation*}
	i.e. no sheafification is necessary, compare \cite[Lemma 7.10]{SixOp}. In particular, $\mathscr{U}\widetilde{\otimes}_{\mathscr{U}(X)} F^\bullet$ is a resolution of $\N$, proving (i).
	
	For (ii), note that 
	\begin{equation*}
		\M\widetilde{\otimes}_{\mathscr{U}}^{\mathbb{L}} (\mathscr{U}\widetilde{\otimes}_{\mathscr{U}(X)} F^{-i})\cong \M\widetilde{\otimes}_{\mathscr{U}}^{\mathbb{L}}(\mathscr{U}\widetilde{\otimes}_K \mathscr{U}(X)^{\widetilde{\otimes}i}\widetilde{\otimes}_K \N(X)).
	\end{equation*}
	By \cite[Lemma 3.32, Proposition 4.21]{SixOp}, if $\F, \G\in \mathrm{Shv}(X, \mathrm{Ind}(\mathrm{Ban}_K))$, then
	\begin{equation*}
		\F\widetilde{\otimes}_K^{\mathbb{L}}\G\cong \F\widetilde{\otimes}_K \G\cong \F\overset{\rightarrow}{\otimes}_K \G,
	\end{equation*}
	so
	\begin{equation*}
		\M\widetilde{\otimes}_{\mathscr{U}}^{\mathbb{L}} (\mathscr{U}\widetilde{\otimes}_{\mathscr{U}(X)} F^{-i})\cong \M\widetilde{\otimes}_K \mathscr{U}(X)^{\widetilde{\otimes}i}\widetilde{\otimes}_K \N(X)
	\end{equation*}
	is concentrated in a single degree. Hence $\mathscr{U}\widetilde{\otimes}_{\mathscr{U}(X)} F^{-i}$ is $\M\widetilde{\otimes}_{\mathscr{U}}$-acyclic for each $i$, proving the result.
\end{proof}

Note that if $\N^\bullet\in \mathrm{D}_\C(\mathscr{U})$ is a $\C$-complex, then $\N_n^\bullet=\mathscr{U}_n\widetilde{\otimes}_{\mathscr{U}} \N^\bullet\in \mathrm{D}^b_{\mathrm{coh}}(\mathscr{U}_n)$ can be represented by a finite chain complex $\mathcal{P}_n^\bullet$ consisting of coherent $\mathscr{U}_n$-modules, thanks to the assumption that $\L_n$ is regular, so that $\mathscr{U}_n(X)$ is Auslander regular. In fact, we can assume that each $\mathcal{P}_n^i$ is the localisation of a finitely generated projective $\mathscr{U}_n(X)$-module. As coherent $\mathscr{U}_n$-modules are locally acyclic on $X_m$ (see \cite[Proposition 5.1]{DcapOne}), it follows from Lemma \ref{belowacyclic} above that $\prod_{n\geq m}\N_n^\bullet$ is represented by $\prod_{n\geq m}^K \mathcal{P}_n^\bullet$ on $X_m$. 

This allows to give an analogue of Lemma \ref{flatprod}:

\begin{lem}
	\label{acyclicproduct}
	Let $\M$ be a right coadmissible $\mathscr{U}$-module. For each $n\geq m$, let $\mathcal{P}_n$ denote the localisation of a finitely generated projective $\mathscr{U}_n(X)$-module. Then there is a natural isomorphism
	\begin{equation*}
		\M\widetilde{\otimes}^\mathbb{L}_{\mathscr{U}}\prod_{n\geq m} \mathcal{P}_n\cong \M\widetilde{\otimes}_{\mathscr{U}} \prod_{n\geq m} \mathcal{P}_n\cong \prod_{n\geq m} \M\widetilde{\otimes}_{\mathscr{U}}\mathcal{P}_n
	\end{equation*}
	in $\mathrm{D}(\mathrm{Shv}(X_m, LH(\h{\B}c_K)))$.
\end{lem}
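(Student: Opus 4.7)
The plan is to establish the two isomorphisms separately: first acyclicity, namely that $\M\widetilde{\otimes}^\mathbb{L}_{\mathscr{U}}\prod_{n\geq m}\mathcal{P}_n\cong \M\widetilde{\otimes}_{\mathscr{U}}\prod_{n\geq m}\mathcal{P}_n$, and second the commutation of the product past the tensor product. The starting observation is that each $\mathcal{P}_n$ is coherent, hence locally acyclic on $X_m$ by \cite[Proposition 5.1]{DcapOne}, so $(\prod\mathcal{P}_n)(U)=\prod\mathcal{P}_n(U)$ for any $U\in X_m$, and, by flatness of $\mathscr{U}_n(X)\to \mathscr{U}_n(U)$, each $\mathcal{P}_n(U)$ is a finitely generated projective $\mathscr{U}_n(U)$-module. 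In particular the sheaves in question take values in Banach spaces (hence in $\mathrm{Ind}(\mathrm{Ban}_K)$) over any $U\in X_m$.

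For the acyclicity, I would apply the right-module analogue of Lemma~\ref{Barsheavesapp} to the coadmissible right $\mathscr{U}$-module $\M$: the Bar resolution $F^\bullet$ of $\M(X)$ as a right $\mathscr{U}(X)$-module sheafifies to a resolution $F^\bullet\widetilde{\otimes}_{\mathscr{U}(X)}\mathscr{U}\to\M$ whose terms have the form $\M(X)\widetilde{\otimes}_K\mathscr{U}(X)^{\widetilde{\otimes}r}\widetilde{\otimes}_K\mathscr{U}$ and require no sheafification, and part (ii) of that lemma (applied with the roles of left and right swapped) shows these terms are $-\widetilde{\otimes}_{\mathscr{U}}\prod\mathcal{P}_n$-acyclic. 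This reduces the acyclicity statement to showing, on sections over each $U\in X_m$, that $\M(U)\widetilde{\otimes}^\mathbb{L}_{\mathscr{U}(U)}\prod\mathcal{P}_n(U)\cong \M(U)\widetilde{\otimes}_{\mathscr{U}(U)}\prod\mathcal{P}_n(U)$, which is exactly the conclusion of Lemma~\ref{flatprod} applied to the Fr\'echet--Stein algebra $\mathscr{U}(U)$, with $M=\M(U)$ and $N_n=\mathcal{P}_n(U)$. The pseudo-nuclearity and countable-type hypotheses required by Lemma~\ref{pnprod}/\ref{flatprod} are ensured by assumption (ii) on the tower (nuclearity over $A_U$ giving pseudo-nuclearity via \cite[Lemma~5.7]{SixOp}) and the hypothesis that each $\mathscr{U}_n(U)$ is of countable type.

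For the commutation, I would evaluate on $U\in X_m$ and once again invoke Lemma~\ref{flatprod}: since each $\mathcal{P}_n(U)$ is a finitely generated projective $\mathscr{U}_n(U)$-module, it is a direct summand of $\mathscr{U}_n(U)^{r_n}$, and in particular flat and pseudo-nuclear of countable type, so $\M(U)\widetilde{\otimes}^\mathbb{L}_{\mathscr{U}(U)}\mathcal{P}_n(U)\cong\M(U)\widetilde{\otimes}_{\mathscr{U}(U)}\mathcal{P}_n(U)$. The lemma then yields an isomorphism $\M(U)\widetilde{\otimes}_{\mathscr{U}(U)}\prod\mathcal{P}_n(U)\cong\prod\M(U)\widetilde{\otimes}_{\mathscr{U}(U)}\mathcal{P}_n(U)$, and sheafifying back up gives the desired identification.

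The main obstacle, as expected, is the bookkeeping required to go back and forth between sheaves and sections: one must check that both $\M\widetilde{\otimes}_{\mathscr{U}}\prod\mathcal{P}_n$ and $\prod\M\widetilde{\otimes}_{\mathscr{U}}\mathcal{P}_n$ are correctly computed on affinoid sections and that $\prod$ in $\mathrm{D}(\mathrm{Shv}(X_m,LH(\h{\B}c_K)))$ agrees with the termwise product $\prod^K$ here. The first holds because the Bar-type resolution above requires no sheafification; the second follows from Lemma~\ref{belowacyclic} together with local acyclicity of each $\M\widetilde{\otimes}_{\mathscr{U}}\mathcal{P}_n$ (which, being a coherent $\mathscr{U}_n$-module, is locally acyclic on $X_m$). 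Combining these two observations with the sectionwise identifications from the previous paragraphs completes the argument.
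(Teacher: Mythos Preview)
Your proposal is correct and follows essentially the same approach as the paper: both use the sheafified Bar resolution of $\M$ from Lemma~\ref{Barsheavesapp} (right-module version), reduce to sections over affinoids $U\in X_m$, and invoke Lemma~\ref{pnprod}/\ref{flatprod} together with Lemma~\ref{belowacyclic} to handle the products. The only minor difference is bookkeeping: the paper works throughout with the Bar resolution over $\mathscr{U}(X)$ (so the sectionwise complex computes $\M(X)\widetilde{\otimes}^{\mathbb{L}}_{\mathscr{U}(X)}\prod\mathcal{P}_n(U)$) and then separately identifies $\M(X)\widetilde{\otimes}_{\mathscr{U}(X)}\mathcal{P}_n(U)\cong \M(U)\widetilde{\otimes}_{\mathscr{U}(U)}\mathcal{P}_n(U)$ via coadmissibility, whereas you phrase the sectionwise computation directly over $\mathscr{U}(U)$---this implicit change of base ring is justified by exactly that coadmissibility identification, which you should make explicit.
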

\begin{proof}
	Note that $\prod \mathcal{P}_n$ is represented by $\prod^K \mathcal{P}_n$ and $\prod \M\widetilde{\otimes}_{\mathscr{U}}\mathcal{P}_n$ is represented by $\prod^K \M\widetilde{\otimes}_{\mathscr{U}} \mathcal{P}_n$ by the above.
	
	Let $F^\bullet$ be the Bar resolution of $\M(X)$ as a right $\mathscr{U}(X)$-module, i.e.
	\begin{equation*}
		F^{-i}=\M(X)\widetilde{\otimes}_K \mathscr{U}(X)^{\widetilde{\otimes}i+1}.
	\end{equation*}
	By the right-module version of Lemma \ref{Barsheavesapp}, 
	\begin{equation*}
		\F^\bullet:=F^\bullet\widetilde{\otimes}_{\mathscr{U}(X)} \mathscr{U}
	\end{equation*}
	is a $-\widetilde{\otimes}_{\mathscr{U}}\prod \mathcal{P}_n$-acyclic resolution of $\M$.
	
	Thus $\M\widetilde{\otimes}^\mathbb{L}_{\mathscr{U}} \prod \mathcal{P}_n$ can be represented by the complex
	\begin{equation*}
		\F^\bullet\widetilde{\otimes}_{\mathscr{U}}\prod \mathcal{P}_n.
	\end{equation*}
	
	Let $U$ be an affinoid subdomain in $X_m$. Then
	\begin{equation*}
		(\F^{-i}\widetilde{\otimes}_{\mathscr{U}}\prod \mathcal{P}_n)(U)=\M(X)\widetilde{\otimes}_K\mathscr{U}(X)^{\widetilde{\otimes}i}\widetilde{\otimes}_K \prod (\mathcal{P}_n(U)).
	\end{equation*}
	Note that no sheafification is necessary, since $U$ is quasi-compact and $\overset{\rightarrow}{\otimes}_K$ is exact.
	
	Now by Lemma \ref{pnprod} we have
	\begin{align*}
		(\F^{-i}\widetilde{\otimes}_{\mathscr{U}}\prod \mathcal{P}_n)(U)&\cong\M(X)\widetilde{\otimes}_K \mathscr{U}(X)^{\widetilde{\otimes}i}\widetilde{\otimes}_K \prod (\mathcal{P}_n(U))\\
		&\cong \prod \M(X)\widetilde{\otimes}_K \mathscr{U}(X)^{\widetilde{\otimes}i}\widetilde{\otimes}_K \mathcal{P}_n(U)\\
		&\cong (\prod {}^K \F^{-i}\widetilde{\otimes}_{\w{\D}_X}\mathcal{P}_n)(U)
	\end{align*}
	and thus
	\begin{equation*}
		\F^{-i}\widetilde{\otimes}_{\mathscr{U}} \prod_{n\geq m} \mathcal{P}_n\cong \prod_{n\geq m}{}^K \F^{-i}\widetilde{\otimes}_{\mathscr{U}}\mathcal{P}_n
	\end{equation*}
	via the natural morphism.
	
	Thus $\M\widetilde{\otimes}^\mathbb{L}_{\mathscr{U}} \prod \mathcal{P}_n$ is represented by the complex $\prod^K \F^\bullet\widetilde{\otimes}_{\mathscr{U}}\mathcal{P}_n$, and it suffices to show that
	\begin{equation*}
		\prod{}^K\F^\bullet\widetilde{\otimes}_{\mathscr{U}}\mathcal{P}_n\to \prod{}^K\M\widetilde{\otimes}_{\mathscr{U}} \mathcal{P}_n\to 0
	\end{equation*}
	is exact in $\mathrm{Shv}(X_m, LH(\h{\B}c_K))$.
	
	It suffices to check for exactness after applying $\Gamma(U, -)$ for an arbitrary affinoid subdomain $U\in X_m$. But by the above, this yields the complex
	\begin{equation*}
		\prod F^\bullet\widetilde{\otimes}_{\mathscr{U}(X)} \mathcal{P}_n(U)\to \prod \M(U)\widetilde{\otimes}_{\mathscr{U}(U)}\mathcal{P}_n(U)\to 0,
	\end{equation*}
	where $(\M\widetilde{\otimes}_{\mathscr{U}} \mathcal{P}_n)(U)=\M(U)\widetilde{\otimes}_{\mathscr{U}(U)} \mathcal{P}_n(U)$ follows from the case where $\mathcal{P}_n=\mathscr{U}_n$.
	
	Now $F^\bullet\widetilde{\otimes}_{\mathscr{U}(X)} \mathcal{P}_n(U)$ represents $\M(X)\widetilde{\otimes}^{\mathbb{L}}_{\mathscr{U}(X)} \mathcal{P}_n(U)$, and it follows from the coadmissibility of $\M$ that
	\begin{equation*}
		\M(U)\widetilde{\otimes}_{\mathscr{U}(U)} \mathcal{P}_n(U)\cong (\M(X)\widetilde{\otimes}_{\mathscr{U}(X)} \mathscr{U}(U))\widetilde{\otimes}_{\mathscr{U}(U)}\mathcal{P}_n(U)\cong \M(X)\widetilde{\otimes}_{\mathscr{U}(X)} \mathcal{P}_n(U).
	\end{equation*}
	Since \cite[Corollary 5.38, Lemma 5.32]{SixOp} implies that
	\begin{equation*}
		\M(X)\widetilde{\otimes}^\mathbb{L}_{\mathscr{U}(X)} \mathcal{P}_n(U)\cong \M(X)\widetilde{\otimes}^{\mathbb{L}}_{\mathscr{U}(X)} \mathscr{U}_n(X)\widetilde{\otimes}^{\mathbb{L}}_{\mathscr{U}_n(X)}\mathcal{P}_n(U)\cong \M(X)\widetilde{\otimes}_{\mathscr{U}(X)} \mathcal{P}_n(U),
	\end{equation*}
	the complex
	\begin{equation*}
		F^\bullet\widetilde{\otimes}_{\mathscr{U}(X)} \mathcal{P}_n(U)\to \M(X)\widetilde{\otimes}_{\mathscr{U}(X)}\mathcal{P}_n(U)\to 0
	\end{equation*}
	is exact, and as products are exact in $LH(\h{\B}c_K)$, the result follows.
\end{proof}

We are now in a position to prove Theorem \ref{tensorlimit} and Lemma \ref{tensorlimiota} from subsection 6.1 and 6.2, respectively.

\begin{lem}
	\label{tensorasholimapp}
	If $\M^\bullet\in \mathrm{D}_\C^-(\mathscr{U}^\mathrm{op})$, $\N^\bullet\in \mathrm{D}^-_\C(\mathscr{U})$, write $\M^\bullet_n=\M^\bullet \widetilde{\otimes}^\mathbb{L}_{\mathscr{U}} \mathscr{U}_n$, similarly for $\N^\bullet$. Then there are natural isomorphisms
	\begin{equation*}
		\M^\bullet \widetilde{\otimes}^\mathbb{L}_{\mathscr{U}} (\prod_{n\geq m} \N^\bullet_n) \cong \prod_{n\geq m} (\M^\bullet\widetilde{\otimes}^\mathbb{L}_{\mathscr{U}} \N_n^\bullet)\cong (\prod_{n\geq m} \M^\bullet_n)\widetilde{\otimes}^\mathbb{L}_{\w{\D}_X} \N^\bullet
	\end{equation*}
	in $\mathrm{D}(\mathrm{Shv}(X_m, LH(\h{\B}c_K)))$.
\end{lem}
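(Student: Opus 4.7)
The strategy mirrors that of Lemma \ref{tensorholimmodule}, transported to the sheaf setting on $X_m$. By symmetry it suffices to establish the first isomorphism; that is, to show that the natural map
\begin{equation*}
\M^\bullet \widetilde{\otimes}^\mathbb{L}_{\mathscr{U}} \prod_{n\geq m} \N_n^\bullet \to \prod_{n\geq m} (\M^\bullet \widetilde{\otimes}^\mathbb{L}_{\mathscr{U}} \N_n^\bullet)
\end{equation*}
is an isomorphism (the argument for the other side is formally identical after swapping left and right modules).

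First I would treat the case where $\M^\bullet=\M$ is a single coadmissible right $\mathscr{U}$-module. Because each $\L_n$ is a regular Lie lattice, $\mathscr{U}_n(X)$ has finite global dimension, so each $\N_n^\bullet\in \mathrm{D}^b_{\mathrm{coh}}(\mathscr{U}_n)$ is represented by a \emph{bounded} complex $\mathcal{P}_n^\bullet$ whose terms are localisations of finitely generated projective $\mathscr{U}_n(X)$-modules. Coherent $\mathscr{U}_n$-modules are locally acyclic on $X_m$, so Lemma \ref{belowacyclic} identifies $\prod_{n\geq m} \N_n^\bullet$ with the termwise product $\prod^K_{n\geq m} \mathcal{P}_n^\bullet$. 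Applying Lemma \ref{Barsheavesapp} to $\M$ yields a resolution $\F^\bullet \to \M$, each term of which is $(-\widetilde{\otimes}_{\mathscr{U}} \prod^K_n \mathcal{P}_n^i)$-acyclic by Lemma \ref{acyclicproduct}; that same lemma also gives the natural isomorphism
\begin{equation*}
\F^{-r} \widetilde{\otimes}_{\mathscr{U}} \prod{}^K_n \mathcal{P}_n^i \;\cong\; \prod{}^K_n (\F^{-r} \widetilde{\otimes}_{\mathscr{U}} \mathcal{P}_n^i).
\end{equation*}
Forming the total complex, using boundedness of each $\mathcal{P}_n^\bullet$ and exactness of products in $LH(\h{\B}c_K)$, I obtain $\M \widetilde{\otimes}^\mathbb{L}_{\mathscr{U}} \prod \N_n^\bullet \cong \prod (\M \widetilde{\otimes}^\mathbb{L}_{\mathscr{U}} \N_n^\bullet)$, as required.

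The extension to $\M^\bullet \in \mathrm{D}^b_\C(\mathscr{U}^{\mathrm{op}})$ is a standard induction on amplitude, cutting $\M^\bullet$ by truncation triangles, applying the single-module case to each cohomology, and then using the five lemma (equivalently, the long exact sequence of the distinguished triangles and the exactness of products). For unbounded $\M^\bullet\in \mathrm{D}^-_\C(\mathscr{U}^{\mathrm{op}})$, fix a cohomological degree $j$; since $\N^\bullet$ (hence each $\N_n^\bullet$) is bounded above by some $N$ independent of $n$, truncation in degree $\geq j-N$ does not affect $\mathrm{H}^j$ on either side, reducing to the bounded case.

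The main technical obstacle is controlling $\prod_{n\geq m} \N_n^\bullet$ in the derived category of sheaves, where products are not exact in general. This is precisely where Auslander regularity of each $\mathscr{U}_n(X)$ enters in an essential way: it produces the finite projective representative $\mathcal{P}_n^\bullet$ that makes Lemma \ref{belowacyclic} applicable, after which the argument becomes a direct sheaf-theoretic adaptation of Lemma \ref{tensorholimmodule} via the Bar resolution of Lemma \ref{Barsheavesapp}.
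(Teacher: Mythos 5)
Your argument tracks the paper's proof through the single-module and bounded cases, and those parts are sound. The gap lies in the reduction of the unbounded case to the bounded one: you assert that, for a fixed $j$, replacing $\M^\bullet$ by a truncation $\tau^{\geq a}\M^\bullet$ (for suitable $a$) ``does not affect $\mathrm{H}^j$ on either side.'' On the left-hand side $\M^\bullet\widetilde{\otimes}^\mathbb{L}_{\mathscr{U}}\prod_n\N_n^\bullet$ this is standard, because $\prod_n\N_n^\bullet\in\mathrm{D}^{\leq 0}$ (via Lemma \ref{belowacyclic} applied to the finite projective representatives $\mathcal{P}_n^\bullet$), so the truncated-away piece $\tau^{<a}\M^\bullet\widetilde{\otimes}^\mathbb{L}_{\mathscr{U}}\prod_n\N_n^\bullet$ lies in $\mathrm{D}^{\leq a-1}$. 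On the right-hand side $\prod_n(\M^\bullet\widetilde{\otimes}^\mathbb{L}_{\mathscr{U}}\N_n^\bullet)$, however, this is exactly the nontrivial point: products are \emph{not} exact in $\mathrm{Shv}(X_m, LH(\h{\B}c_K))$, so the derived product $\prod_n(\tau^{<a}\M^\bullet\widetilde{\otimes}^\mathbb{L}\N_n^\bullet)$ can a priori have cohomology in arbitrarily high degree even though each factor lies in $\mathrm{D}^{\leq a-1}$, and the resulting long exact sequence gives no control on $\mathrm{H}^j$. Asserting that truncation leaves $\mathrm{H}^j$ of the product untouched is not a formal triangulated-category argument here.

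The paper closes this by passing to explicit chain-level representatives on the right-hand side: since each $\M_n^\bullet$ is represented by a bounded complex $\mathcal{Q}_n^\bullet$ of coherent $\mathscr{U}_n$-modules (by Auslander regularity), and coherent $\mathscr{U}_n$-modules form an abelian category, the good truncation $\tau^{\geq j}\mathcal{Q}_n^\bullet$ is again a bounded complex of coherent (hence locally acyclic) modules. Lemma \ref{belowacyclic} then identifies both $\prod_n(\M^\bullet\widetilde{\otimes}^\mathbb{L}\N_n^\bullet)$ and $\prod_n(\tau^{\geq j}\M^\bullet\widetilde{\otimes}^\mathbb{L}\N_n^\bullet)$ with the termwise products $\prod^K\mathrm{Tot}(\mathcal{Q}_n^\bullet\widetilde{\otimes}_{\mathscr{U}_n}\mathcal{P}_n^\bullet)$ and $\prod^K\mathrm{Tot}(\tau^{\geq j}\mathcal{Q}_n^\bullet\widetilde{\otimes}_{\mathscr{U}_n}\mathcal{P}_n^\bullet)$, and only at this termwise level, using the exactness of products in $LH(\h{\B}c_K)$ on sections over affinoids, can the degree-$j$ cohomologies be compared. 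Your proposal needs to supply this control over the derived product on the right-hand side; as written, the final reduction step is unjustified.
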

\begin{proof}
	Suppose that $\M^\bullet=\M$ is concentrated in a single degree, i.e. is a coadmissible right $\mathscr{U}$-module. Let without loss of generality $\N^\bullet\in \mathrm{D}^{\leq 0}_\C(\mathscr{U})$, and note that due to Auslander regularity, $\N_n^\bullet=\mathscr{U}_n\widetilde{\otimes}^\mathbb{L}_{\mathscr{U}}\N^\bullet$ can be represented by a finite complex $\mathcal{P}_n^\bullet$, concentrated in non-positive degrees, where each term is the localisation of a finitely generated projective $\mathscr{U}_n(X)$-module. By Lemma \ref{belowacyclic} and Lemma \ref{acyclicproduct}, $\prod^K \mathcal{P}_n^\bullet$ is an $\M\widetilde{\otimes}-$acyclic resolution of $\prod \N_n^\bullet$, so that
	\begin{equation*}
		\M\widetilde{\otimes}^\mathbb{L}_{\mathscr{U}} \prod \N_n^\bullet
	\end{equation*}
	can then be represented by the complex $\M\widetilde{\otimes}_{\mathscr{U}} \prod^K \mathcal{P}_n^\bullet$, and Lemma \ref{acyclicproduct} also implies that this is isomorphic as a chain complex to $\prod^K \left(\M\widetilde{\otimes}_{\mathscr{U}} \mathcal{P}_n^\bullet\right)$.
	
	But by Lemma \ref{belowacyclic}, this complex represents $\prod (\M\widetilde{\otimes}^\mathbb{L}_{\mathscr{U}} \N_n^\bullet)$, since for each $i$, $\M\widetilde{\otimes}_{\mathscr{U}}\mathcal{P}_n^i$ is a direct summand of $\M\widetilde{\otimes}_{\mathscr{U}}\mathscr{U}_n^r$ for some $r$, which is locally acyclic by \cite[Proposition 5.1]{DcapOne}.
	
	This proves the first isomorphism in the Lemma in the case when $\M^\bullet$ is concentrated in a single degree.
	
	An induction on cohomological length now immediately yields the same result for the case when $\M^\bullet\in \mathrm{D}^b_\C(\mathscr{U}^\mathrm{op})$. If $\M^\bullet$ is only bounded above, suppose that $\N^\bullet$ is exact above degree $0$, then $\N_n^\bullet$ can be represented by a finite complex $\mathcal{P}_n^\bullet$ as above which is additionally $0$ in positive degrees. By Lemma \ref{belowacyclic}, $\prod \N_n^\bullet$ is represented by $\prod^K \mathcal{P}_n^\bullet$, which is also zero in positive degrees. 
	
	Note that in the same way, $\M_n^\bullet\in  \mathrm{D}^b_{\mathrm{coh}}(\mathscr{U}_n)$ can be represented by a bounded complex $\mathcal{Q}_n^\bullet$, where each term is a coherent $\mathscr{U}_n$-module. Moreover, as coherent $\mathscr{U}_n$-modules form an abelian category, we know that the complex $\tau^{\geq j}(\Q_n^\bullet)$, which represents $\tau^{\geq j}(\M_n^\bullet)$, also consists of coherent $\mathscr{U}_n$-modules.
	
	Then for any $j$,
	\begin{align*}
		\mathrm{H}^j(\M^\bullet\widetilde{\otimes}^\mathbb{L}_{\mathscr{U}}(\prod\N_n^\bullet))&\cong \mathrm{H}^j((\tau^{\geq j}\M^\bullet)\widetilde{\otimes}^\mathbb{L}_{\mathscr{U}}(\prod \N_n^\bullet))\\
		&\cong \mathrm{H}^j(\prod( \tau^{\geq j} \M^\bullet\widetilde{\otimes}^\mathbb{L}_{\mathscr{U}}\N_n^\bullet))
	\end{align*}
	by the argument above.
	
	Note that $\tau^{\geq j}\M^\bullet\widetilde{\otimes}^\mathbb{L}_{\mathscr{U}}\N_n^\bullet=\tau^{\geq j}(\M_n^\bullet)\widetilde{\otimes}^\mathbb{L}_{\mathscr{U}_n}\N_n^\bullet$ by \cite[Corollary 5.38]{SixOp}, and this complex is represented by the total complex $\mathrm{Tot}(\tau^{\geq j}\mathcal{Q}_n^\bullet\widetilde{\otimes}_{\mathscr{U}_n} \mathcal{P}_n^\bullet)$. Since each term in $\tau^{\geq j}\mathcal{Q}_n^\bullet$ is coherent and $\mathcal{P}_n^i$ is a direct summand of some free $\mathscr{U}_n$-module of finite rank, it follows that each term in $\mathrm{Tot}(\tau^{\geq j}\mathcal{Q}_n^\bullet\widetilde{\otimes}_{\D_n}\mathcal{P}_n^\bullet)$ is locally acyclic, so that
	\begin{equation*}
		\prod \tau^{\geq j}\M^\bullet\widetilde{\otimes}^\mathbb{L}_{\mathscr{U}}\N_n^\bullet\cong \prod{}^K \mathrm{Tot}(\tau^{\geq j}\mathcal{Q}_n^\bullet\widetilde{\otimes}_{\mathscr{U}_n} \mathcal{P}_n^\bullet).
	\end{equation*}
	Hence
	\begin{align*}
		\mathrm{H}^j(\M^\bullet\widetilde{\otimes}^\mathbb{L}_{\mathscr{U}} \prod \N_n^\bullet)&\cong \mathrm{H}^j(\prod{}^K\mathrm{Tot}(\tau^{\geq j}\mathcal{Q}_n^\bullet\widetilde{\otimes}_{\mathscr{U}_n}\mathcal{P}_n^\bullet))\\
		&\cong \mathrm{H}^j(\prod{}^K \mathrm{Tot}(\mathcal{Q}_n^\bullet\widetilde{\otimes}_{\mathscr{U}_n}\mathcal{P}_n^\bullet))\\
		&\cong \mathrm{H}^j(\prod (\M_n^\bullet\widetilde{\otimes}^\mathbb{L}_{\mathscr{U}_n}\N_n^\bullet))\\
		&\cong \mathrm{H}^j(\prod \M^\bullet\widetilde{\otimes}^\mathbb{L}_{\mathscr{U}}\N_n^\bullet),
	\end{align*}
	and the result follows.
	The same argument, swapping the roles of $\M^\bullet$ and $\N^\bullet$, proves the second isomorphism, noting that 
	\begin{equation*}
		\M^\bullet_n\widetilde{\otimes}^\mathbb{L}_{\mathscr{U}} \N^\bullet\cong \M^\bullet \widetilde{\otimes}^\mathbb{L}_{\mathscr{U}} \mathscr{U}_n\widetilde{\otimes}^\mathbb{L}_{\mathscr{U}}\N^\bullet \cong \M^\bullet \widetilde{\otimes}^\mathbb{L}_{\mathscr{U}} \N^\bullet_n.\qedhere
	\end{equation*}
\end{proof}

\begin{cor}
	\label{tensorlimitapp}
	If $\M^\bullet\in \mathrm{D}^-_\C(\mathscr{U}^\mathrm{op})$, $\N^\bullet\in \mathrm{D}^-_\C(\mathscr{U})$, then 
	\begin{equation*}
		\M^\bullet\widetilde{\otimes}^\mathbb{L}_{\mathscr{U}}\N^\bullet\cong \mathrm{holim} (\M_n^\bullet\widetilde{\otimes}^\mathbb{L}_{\mathscr{U}} \N^\bullet)\cong \mathrm{holim} (\M_n^\bullet\widetilde{\otimes}^\mathbb{L}_{\mathscr{U}_n} \N_n^\bullet)
	\end{equation*}
	in the sense that for each $m$, the natural morphisms yield a distinguished triangle
	\begin{equation*}
		\M^\bullet\widetilde{\otimes}^\mathbb{L}_{\mathscr{U}}\N^\bullet\to \prod_{n\geq m} \M_n^\bullet\widetilde{\otimes}^\mathbb{L}_{\mathscr{U}_n}\N_n^\bullet\to \prod_{n\geq m} \M_n^\bullet\widetilde{\otimes}^\mathbb{L}_{\mathscr{U}_n} \N_n^\bullet
	\end{equation*}
	in $\mathrm{D}(\mathrm{Shv}(X_m, LH(\h{\B}c_K)))$.
	
	The same holds for $\M^\bullet \in \mathrm{D}_{\C}^b(\mathscr{U}^{\mathrm{op}})$, $\N^\bullet\in \mathrm{D}_{\C}(\mathscr{U})$, and vice versa.
\end{cor}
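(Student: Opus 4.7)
The plan is to combine Lemma \ref{tensorasholimapp} with the homotopy-limit presentation of $\C$-complexes (the $\mathscr{U}$-module analogue of Lemma \ref{Ccomplexesasholim}) in a completely formal way. First I would invoke the fact that, for $\M^\bullet\in \mathrm{D}^-_\C(\mathscr{U}^{\mathrm{op}})$, the canonical morphisms $\M^\bullet\to \M_n^\bullet$ assemble for each $m$ into a distinguished triangle
\begin{equation*}
\M^\bullet\to \prod_{n\geq m}\M_n^\bullet\to \prod_{n\geq m}\M_n^\bullet
\end{equation*}
in $\mathrm{D}(\mathrm{Mod}_{\mathrm{Shv}(X_m, LH(\h{\B}c_K))}(\mathscr{U}))$. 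This is exactly the content of Lemma \ref{Ccomplexesasholim} transplanted to the setting of the sheaf of algebras $\mathscr{U}$; the proof given there (relying only on the pre-nuclearity of the transition system for coadmissible modules, see Theorem \ref{Bornproperties}.(ii), and on the fact that taking cohomology commutes with products in $LH(\h{\B}c_K)$) goes through verbatim under our assumptions (i)--(ii) on $\mathscr{U}_n\to \mathscr{U}_{n-1}$.

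Next I would apply the exact functor $-\widetilde{\otimes}_{\mathscr{U}}^\mathbb{L}\N^\bullet\colon \mathrm{D}(\mathscr{U})\to \mathrm{D}(\mathrm{Shv}(X_m, LH(\h{\B}c_K)))$ to this triangle, yielding a distinguished triangle
\begin{equation*}
\M^\bullet\widetilde{\otimes}^\mathbb{L}_{\mathscr{U}}\N^\bullet\to \Bigl(\prod_{n\geq m}\M_n^\bullet\Bigr)\widetilde{\otimes}^\mathbb{L}_{\mathscr{U}}\N^\bullet\to \Bigl(\prod_{n\geq m}\M_n^\bullet\Bigr)\widetilde{\otimes}^\mathbb{L}_{\mathscr{U}}\N^\bullet.
\end{equation*}
The second and third terms are then rewritten using the second isomorphism of Lemma \ref{tensorasholimapp}, which gives
\begin{equation*}
\Bigl(\prod_{n\geq m}\M_n^\bullet\Bigr)\widetilde{\otimes}^\mathbb{L}_{\mathscr{U}}\N^\bullet\cong \prod_{n\geq m}\bigl(\M_n^\bullet\widetilde{\otimes}^\mathbb{L}_{\mathscr{U}}\N^\bullet\bigr),
\end{equation*}
and each factor is identified with $\M_n^\bullet\widetilde{\otimes}^\mathbb{L}_{\mathscr{U}_n}\N_n^\bullet$ using base change (\cite[Corollary 5.38]{SixOp}): since $\M_n^\bullet=\M^\bullet\widetilde{\otimes}^\mathbb{L}_{\mathscr{U}}\mathscr{U}_n$, associativity of the derived tensor product gives $\M_n^\bullet\widetilde{\otimes}^\mathbb{L}_{\mathscr{U}}\N^\bullet\cong \M_n^\bullet\widetilde{\otimes}^\mathbb{L}_{\mathscr{U}_n}(\mathscr{U}_n\widetilde{\otimes}^\mathbb{L}_{\mathscr{U}}\N^\bullet)\cong \M_n^\bullet\widetilde{\otimes}^\mathbb{L}_{\mathscr{U}_n}\N_n^\bullet$. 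Combining these produces the claimed distinguished triangle.

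The only non-trivial input is Lemma \ref{tensorasholimapp}, which was proved just above. The naturality of the morphisms is automatic because at every step we have only applied standard triangulated functors to the canonical triangle; no choices beyond those already encoded in the $\mathrm{holim}$ presentation are involved. The main point of caution--which is what made Lemma \ref{tensorasholimapp} delicate--was that products in $\mathrm{Shv}(X_m,LH(\h{\B}c_K))$ are not exact in general, so passing $\widetilde{\otimes}^\mathbb{L}_{\mathscr{U}}$ through an infinite product requires the regularity hypothesis on the Lie lattices. Here, however, this obstacle has already been overcome, and all that remains is the formal assembly described above.
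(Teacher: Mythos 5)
Your argument is correct and is, up to a harmless symmetry, the proof given in the paper: the paper applies the homotopy-limit presentation of Lemma \ref{Ccomplexesasholim} to $\N^\bullet$ and tensors on the left with $\M^\bullet$, then invokes the first isomorphism of Lemma \ref{tensorasholimapp}, whereas you apply it to $\M^\bullet$ and tensor on the right with $\N^\bullet$, then invoke the second isomorphism of that same lemma -- since Lemma \ref{tensorasholimapp} supplies both identifications, the two routes are interchangeable. Your remark that Lemma \ref{Ccomplexesasholim} carries over to the $\mathscr{U}$-setting under hypotheses (i)--(ii) is a point the paper itself uses tacitly, so making it explicit is a small but welcome clarification.
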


\begin{proof}
	By Lemma \ref{Ccomplexesasholim} and the fact that derived functors are triangulated, we have a distinguished triangle
	\begin{equation*}
		\M^\bullet\widetilde{\otimes}^\mathbb{L}_{\mathscr{U}} \N^\bullet\to \M^\bullet\widetilde{\otimes}^\mathbb{L}_{\mathscr{U}} \prod_{n\geq m} \N_n^\bullet\to \M^\bullet\widetilde{\otimes}^\mathbb{L}_{\mathscr{U}} \prod_{n\geq m} \N_n^\bullet
	\end{equation*}
	in $\mathrm{D}(\mathrm{Shv}(X_m, LH(\h{\B}c_K)))$. Now apply Lemma \ref{tensorasholimapp}.
	
	If $\N^\bullet$ is bounded, then the uniform bound on the global dimension of $\mathscr{U}_n(X)$ means that $\prod \N_n^\bullet$ can be represented by a finite complex, where each term is of the form $\prod \mathcal{P}_n$, $\mathcal{P}_n$ the localisation of a finitely generated projective $\mathscr{U}_n(X)$-module. We can then perform the same argument as before even for unbounded $\M^\bullet\in \mathrm{D}_{\C}(\mathscr{U}^{\mathrm{op}})$.
\end{proof}

\subsection{Relative de Rham cohomology on the finite level}

Let $f: Z=X\times Y\to Y$ be the projection morphism as an in subsection 6.3. We now prove Lemma \ref{nbasechangelem}.

We first discuss suitable resolutions for $f^*\w{\D}_Y$ and $f^*\D_n$.

Note that if $U\subseteq X$, $V\subseteq Y$ are affinoid subdomains, then
\begin{equation*}
	f^*\w{\D}_Y(U\times V)=(\O_Z\widetilde{\otimes}_{f^{-1}\O_Y} f^{-1}\w{\D}_Y)(U\times V)=\O_X(U)\widetilde{\otimes}_K \w{\D}_Y(V)
\end{equation*}
and
\begin{equation*}
	f^*\D_n(U\times V)=(\O_Z\widetilde{\otimes}_{f{-1}\O_Y}f^{-1}\D_n)(U\times V)=\O_X(U)\widetilde{\otimes}_K \D_n(V).
\end{equation*}

Using the Spencer resolution from \cite[Theorem 6.12]{SixOp}, there exists a resolution of $f^*\w{\D}_Y$ as a coadmissible $\w{\D}_Z$-module by locally free modules of finite rank. Explicitly, there is a finite complex $\mathcal{S}^\bullet$ quasi-isomorphic to $f^*\w{\D}_Y$ with
\begin{equation*}
	\mathcal{S}^{-i}(U\times V)=(\w{\D}_X(U)\widetilde{\otimes}_{\O_X(U)} \wedge^i \T_X(U))\widetilde{\otimes}_K \w{\D}_Y(V)
\end{equation*}
for $i\geq 0$, and $\mathcal{S}^i=0$ for $i>0$.

It is straightforward to see that this is even a resolution of $f^*\w{\D}_Y$ as a $(\w{\D}_Z, f^{-1}\w{\D}_Y)$-bimodule.

In an analogous way, $f^*\D_n$ can be represented by a bounded complex of $(\w{\D}_Z, f^{-1}\D_n)$-bimodules $\mathcal{S}'^\bullet$ which is of the form
\begin{equation*}
	\mathcal{S}'^{-i}(U\times V)=(\w{\D}_X(U)\widetilde{\otimes}_{\O_X(U)}\wedge^i\T_X(U))\widetilde{\otimes}_K \D_n(V).
\end{equation*}

\begin{lem}
	\label{Spenceracyclic}
	Let $\M$ be a coadmissible right $\w{\D}_Z$-module. For each $i$, the left $\w{\D}_Z$-module $\mathcal{S}'^{-i}$ is $\M\widetilde{\otimes}_{\w{\D}_Z}$-acyclic.
\end{lem}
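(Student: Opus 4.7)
The plan is to argue locally on an admissible affinoid covering, reduce the acyclicity to a base-change statement along the flat morphism $\w{\D}_Y \to \D_n$, and then invoke flatness results already proved for the Fr\'echet--Stein structure. Throughout we exploit that the Spencer-type term $\mathcal{S}'^{-i}$ is, locally in the $X$-variable, a finite free $\w{\D}_X$-module, base-changed along the $Y$-variable from $\w{\D}_Y$ to $\D_n$.

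First, I would cover $Z=X\times Y$ by affinoid products $U\times V$ with $U\subseteq X$, $V\subseteq Y$ affinoid subdomains on which $\T_X|_U$ is free of rank $d=\dim X$. Choosing a trivialization $\wedge^i\T_X(U)\cong \O_X(U)^{\binom{d}{i}}$ gives an isomorphism of left $\w{\D}_Z|_{U\times V}$-modules
\begin{equation*}
\mathcal{S}'^{-i}|_{U\times V}\;\cong\;\bigl(\w{\D}_X|_U\,\widetilde{\otimes}_K\,\D_n|_V\bigr)^{\binom{d}{i}}.
\end{equation*}
Hence it is enough to show that $\w{\D}_X|_U\widetilde{\otimes}_K\D_n|_V$ is $\M|_{U\times V}\widetilde{\otimes}_{\w{\D}_Z|_{U\times V}}$-acyclic.

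The second step is to identify this sheaf with the base change of $f^*\w{\D}_Y$ along $\w{\D}_Y\to\D_n$. The degree-zero Spencer term for $f^*\w{\D}_Y$ is $\mathcal{S}^0|_{U\times V}\cong \w{\D}_X|_U\,\widetilde{\otimes}_K\,\w{\D}_Y|_V$, and precisely the same formal argument (or a direct inspection of the $\w{\D}_Z$-action, in the spirit of \cite[Lemma 7.10]{SixOp}) produces a natural $\w{\D}_Z|_{U\times V}$-linear isomorphism
\begin{equation*}
\w{\D}_X|_U\,\widetilde{\otimes}_K\,\D_n|_V\;\cong\;\mathcal{S}^0|_{U\times V}\,\widetilde{\otimes}_{f^{-1}\w{\D}_Y}\,f^{-1}\D_n|_V.
\end{equation*}
Since $\mathcal{S}^0|_{U\times V}$ is locally $\w{\D}_Z$-free, it is certainly $\M\widetilde{\otimes}_{\w{\D}_Z}$-acyclic, so the problem reduces (via change-of-rings) to showing that the natural map
\begin{equation*}
\M|_{U\times V}\,\widetilde{\otimes}^{\mathbb{L}}_{f^{-1}\w{\D}_Y|_V}\,f^{-1}\D_n|_V\;\longrightarrow\;\M|_{U\times V}\,\widetilde{\otimes}_{f^{-1}\w{\D}_Y|_V}\,f^{-1}\D_n|_V
\end{equation*}
is an isomorphism. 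This is exactly a flatness assertion for the transition map $\w{\D}_Y(V)\to \D_n(V)$ applied to the coadmissible right $\w{\D}_Z$-module $\M|_{U\times V}$. The algebraic flatness is \cite[Theorem 4.9]{DcapOne}, and its complete bornological upgrade for coadmissible modules is \cite[Corollary 5.38]{SixOp}.

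The main obstacle I expect is in making the identification of step two genuinely $\w{\D}_Z$-linear, rather than only $\w{\D}_X\widetilde{\otimes}_K\w{\D}_Y$-linear: one must check that the $\O_Z$-structure and both families of derivations (from $\T_X$ and from $\T_Y$) match under the Spencer construction, and be careful about sheafification since $\w{\D}_Z(U\times V)$ is in general only a completion of $\w{\D}_X(U)\widetilde{\otimes}_K\w{\D}_Y(V)$. A convenient way to finesse this is to resolve $\M$ locally by the Bar resolution of Lemma \ref{Barsheavesapp}, reducing the entire computation to the level of complete bornological modules over $\w{\D}_Z(U\times V)$, where the required base-change isomorphism becomes \cite[Lemma 5.32, Corollary 5.38]{SixOp} applied to the chain $\w{\D}_Y(V)\to\w{\D}_Z(U\times V)\to\D_n$-modules.
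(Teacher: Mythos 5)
Your local identification of $\mathcal{S}'^{-i}$ and the reduction to a base-change question are the right first moves, but the final acyclicity step as you state it has a genuine gap. After observing that $\M\widetilde{\otimes}^{\mathbb{L}}_{\w{\D}_Z}\mathcal{S}^{-i}$ is concentrated in degree zero, what you actually need is that $\M|_{U\times V}\widetilde{\otimes}^{\mathbb{L}}_{f^{-1}\w{\D}_Y|_V}f^{-1}\D_n|_V$ is concentrated in degree zero. You justify this by appealing to the flatness of $\w{\D}_Y(V)\to\D_n(V)$ from \cite[Theorem 4.9]{DcapOne} together with \cite[Corollary 5.38]{SixOp}. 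But Corollary 5.38 applies to modules that are \emph{coadmissible over the source algebra} of the base change, i.e.\ over $\w{\D}_Y(V)$ itself. Your $\M(U\times V)$ is coadmissible over $\w{\D}_Z(U\times V)$, and its restriction of scalars along $\w{\D}_Y(V)\to\w{\D}_Z(U\times V)$ is not coadmissible (not even finitely generated), so that result says nothing about $\M(U\times V)\widetilde{\otimes}^{\mathbb{L}}_{\w{\D}_Y(V)}\D_n(V)$. Purely algebraic flatness of $\w{\D}_Y(V)\to\D_n(V)$ likewise does not give you vanishing of higher Tor in the completed bornological setting for an arbitrary complete bornological module. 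Your final paragraph gestures at resolving $\M$ by the Bar resolution over $\w{\D}_Z(U\times V)$ and invoking a ``chain'' $\w{\D}_Y\to\w{\D}_Z\to\D_n$, but this is left too vague to be checked and, taken literally, does not fix the mismatch between which algebra $\M$ is coadmissible over and which algebra you are tensoring over.

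The way the paper closes exactly this gap is worth noting, because it is essentially the missing ingredient in your argument. Rather than trying to base change over $f^{-1}\w{\D}_Y$ directly, one rewrites $\M(U\times V)\widetilde{\otimes}^{\mathbb{L}}_{\w{\D}_Y(V)}\D_n(V)$ by change of rings as $\M(U\times V)\widetilde{\otimes}^{\mathbb{L}}_{\w{\D}_Z(U\times V)}\bigl(\w{\D}_Z(U\times V)\widetilde{\otimes}^{\mathbb{L}}_{\w{\D}_Y(V)}\D_n(V)\bigr)$, and identifies the inner object with a new sheaf of Fr\'echet--Stein algebras $\D_{\infty,n}$ defined on $U\times Y$ by varying the convergence only in the $X$-direction: $\D_{\infty,n}(U\times Y)\cong\w{\D}_X(U)\widetilde{\otimes}_K\D_n(Y)$, which is Fr\'echet--Stein nuclear over a Noetherian Banach algebra built from mixed lattices $\L_{m,n}$. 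Crucially, $\w{\D}_{U\times Y}\to\D_{\infty,n}$ is then a morphism to a Fr\'echet--Stein algebra of the type handled by \cite[Proposition 5.37]{SixOp}, and that proposition (applied to the coadmissible $\w{\D}_{U\times Y}$-module $\M|_{U\times Y}$) gives precisely the acyclicity $\M|_{U\times Y}\widetilde{\otimes}^{\mathbb{L}}_{\w{\D}_{U\times Y}}\D_{\infty,n}\cong\M|_{U\times Y}\widetilde{\otimes}_{\w{\D}_{U\times Y}}\D_{\infty,n}$. In short: the correct citation is \cite[Proposition 5.37]{SixOp} applied over $\w{\D}_Z$, not \cite[Corollary 5.38]{SixOp} applied over $\w{\D}_Y$, and to use it you must first package $\mathcal{S}'^{-i}$ as a finite direct sum of copies of a suitable intermediate Fr\'echet--Stein algebra over $\w{\D}_Z$. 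Your overall reduction is compatible with this, so with that repair your argument would become essentially the paper's proof.
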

\begin{proof}
	Let $U=\Sp B\subseteq X$ be an affinoid subdomain with free tangent sheaf. Let $\B\subseteq B$ be an admissible affine formal model and let $\L'$ be an $(R, \B)$-Lie lattice in $\T_U(U)$. Then the closure $\C$ of $\B\otimes_R \A$ in $B\widetilde{\otimes}_K A$ is an admissible affine formal model for $U\times Y$ and
	\begin{equation*}
		\L_{m, n}:=(\C\otimes_{\B} \pi^m\L')\oplus(\C\otimes_{\A}\pi^n\L)
	\end{equation*} 
	is an $(R, \C)$-Lie lattice in $\T_Z(U\times Y)$ for any $m\geq 0$, as the derivations induced by $\L'$ commute with those induced by $\L$.
	
	Let $\D_{m, n}$ denote the monoid in $\mathrm{Shv}(U_m\times Y_n, LH(\h{\B}c_K))$ defined by this choice of lattice, so that
	\begin{equation*}
		\D_{m, n}(U\times Y)=\h{U_{\C}(\L_{m, n})}_K,
	\end{equation*}
	and let $\D_{\infty, n}=\varprojlim \D_{m, n}$, a monoid in $\mathrm{Shv}(U\times Y_n, LH(\h{\B}c_K))$ such that
	\begin{equation*}
		\D_{\infty, n}(U\times Y)=\varprojlim \D_{m, n}(U\times Y)
	\end{equation*}
	is a Fr\'echet--Stein $K$-algebra.
	
	Moreover,
	\begin{equation*}
		\D_{\infty, n}(U\times Y)\cong \w{\D}_X(U)\widetilde{\otimes}_K \D_n(Y) 
	\end{equation*}
	by \cite[Corollary 5.23]{SixOp}.
	
	The natural morphisms $\w{\D}_{U\times Y}\to \D_{m, n}$ induce a morphism $\w{\D}_{U\times Y}\to \D_{\infty, n}$.
	
	Note that as a left $\w{\D}_{U\times Y}$-module, $\mathcal{S}'^{-i}|_{U\times Y}$ is isomorphic to $\D_{\infty, n}^{\binom{d}{i}}$, where $d$ denotes the dimension of $X$.
	
	It thus suffices to show that
	\begin{equation*}
		\M|_{U\times Y}\widetilde{\otimes}^{\mathbb{L}}_{\w{\D}_{U\times Y}} \D_{\infty, n}\cong \M|_{U\times Y}\widetilde{\otimes}_{\w{\D}_{U\times Y}} \D_{\infty, n}.
	\end{equation*}
	But this follows directly from \cite[Proposition 5.37]{SixOp}.
\end{proof}

\begin{lem}
	\label{cohomvan}
	Let $\M$ be a coadmissible right $\w{\D}_Z$-module. Let $U\subseteq X$ be an affinoid subdomain with free tangent sheaf. For any $i$ and any $j>0$, we have
	\begin{equation*}
		\mathrm{H}^j(U\times Y, \M\widetilde{\otimes}_{\w{\D}_Z}\mathcal{S}^{-i})=0
	\end{equation*}
	and
	\begin{equation*}
		\mathrm{H}^j(U\times Y, \M\widetilde{\otimes}_{\w{\D}_Z} \mathcal{S}'^{-i})=0.
	\end{equation*}
\end{lem}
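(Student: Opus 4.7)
The plan is to exploit the local structure of the Spencer-like resolutions together with the acyclicity of coadmissible modules on affinoids. The assumption that $U\subseteq X$ has free tangent sheaf will allow us to trivialize the relevant exterior powers.

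First I would identify $\mathcal{S}^{-i}|_{U\times Y}$ as a locally free left $\w{\D}_{U\times Y}$-module of rank $\binom{d}{i}$, where $d=\dim X$. Indeed, using the defining description
\begin{equation*}
\mathcal{S}^{-i}(U\times V)=(\w{\D}_X(U)\widetilde{\otimes}_{\O_X(U)}\wedge^i\T_X(U))\widetilde{\otimes}_K\w{\D}_Y(V),
\end{equation*}
freeness of $\T_X(U)$ gives $\w{\D}_X(U)\widetilde{\otimes}_{\O_X(U)}\wedge^i\T_X(U)\cong \w{\D}_X(U)^{\binom{d}{i}}$, and together with the natural isomorphism $\w{\D}_{U\times Y}(U\times V)\cong \w{\D}_X(U)\widetilde{\otimes}_K\w{\D}_Y(V)$ from \cite[Corollary 5.23]{SixOp} this yields $\mathcal{S}^{-i}|_{U\times Y}\cong \w{\D}_{U\times Y}^{\binom{d}{i}}$ as sheaves of left $\w{\D}_{U\times Y}$-modules. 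Consequently
\begin{equation*}
\M\widetilde{\otimes}_{\w{\D}_Z}\mathcal{S}^{-i}|_{U\times Y}\cong(\M|_{U\times Y})^{\binom{d}{i}},
\end{equation*}
which is a coadmissible right $\w{\D}_{U\times Y}$-module (as finite direct sums preserve coadmissibility). Since $U\times Y$ is affinoid, the higher cohomology of any coadmissible $\w{\D}_{U\times Y}$-module on it vanishes (analogue of Theorem B, see the argument in \cite[Proposition 8.2]{SixOp} or the coadmissible module version of \cite[Theorem B]{ST} applied sheaf-theoretically via acyclicity of the members of the inverse system $\D_{m,m}$), giving the first vanishing.

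For $\mathcal{S}'^{-i}$, I would proceed analogously but use the intermediate Fr\'echet--Stein algebra $\D_{\infty,n}$ appearing in the proof of Lemma \ref{Spenceracyclic}. The same trivialization argument (freeness of $\T_X(U)$) together with \cite[Corollary 5.23]{SixOp} shows that $\mathcal{S}'^{-i}|_{U\times Y}\cong \D_{\infty,n}^{\binom{d}{i}}$ as sheaves of left $\w{\D}_{U\times Y}$-modules. Thus
\begin{equation*}
\M\widetilde{\otimes}_{\w{\D}_Z}\mathcal{S}'^{-i}|_{U\times Y}\cong(\M|_{U\times Y}\widetilde{\otimes}_{\w{\D}_{U\times Y}}\D_{\infty,n})^{\binom{d}{i}}.
\end{equation*}
Using \cite[Proposition 5.37, Proposition 5.33]{SixOp}, the base change $\M|_{U\times Y}\widetilde{\otimes}_{\w{\D}_{U\times Y}}\D_{\infty,n}$ is a coadmissible right $\D_{\infty,n}(U\times Y)$-module on the site of $\L_{\infty,n}$-accessible subdomains of $U\times Y$, so it is acyclic on $U\times Y$ by the same Theorem B-style argument, yielding the second vanishing.

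The main obstacle is the verification of the last step: we must be careful that the cohomology computation on $U\times Y$ is compatible with the sheaf $\D_{\infty,n}$ being defined a priori only on the subsite $(U\times Y)_n$ of $\pi^n\L$-accessible subdomains. This is handled by observing that $(U\times Y)_n$ contains a basis for the topology of the affinoid $U\times Y$, that any finite covering of $U\times Y$ by affinoids is admissible in $(U\times Y)_{n'}$ for $n'$ sufficiently large, and that $\D_{\infty,n}=\varprojlim_{n'} \D_{n',n}$ is Fr\'echet--Stein-nuclear over a Noetherian Banach algebra so that the coadmissible module vanishing result from Theorem \ref{Bornproperties} and the limit argument of \cite[Theorem 8.4]{DcapOne} apply verbatim.
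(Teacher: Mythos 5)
Your proof is correct and follows essentially the same route as the paper's: identify $\mathcal{S}^{-i}|_{U\times Y}$ (resp.\ $\mathcal{S}'^{-i}|_{U\times Y}$) as a free $\w{\D}_{U\times Y}$-module (resp.\ free $\D_{\infty,n}$-module) of rank $\binom{d}{i}$ using freeness of $\T_X(U)$, so that the tensor product is a coadmissible module, and then invoke acyclicity of coadmissible modules on affinoids. Your extra paragraph on the subsite issue is a harmless elaboration of a point the paper treats implicitly; otherwise the two arguments coincide.
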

\begin{proof}
	By construction, 
	\begin{equation*}
		\M\widetilde{\otimes}_{\w{\D}_Z}\mathcal{S}^{-i}|_{U\times Y}\cong \M^{\binom{d}{i}}|_{U\times Y}
	\end{equation*}
	in $\mathrm{Shv}(U\times Y, LH(\h{\B}c_K))$. Since coadmissible modules have vanishing higher cohomology on affinoids, it follows that
	\begin{equation*}
		\mathrm{H}^j(U\times Y, \M\widetilde{\otimes}_{\w{\D}_Z} \mathcal{S}^{-i})=0
	\end{equation*}
	for all $j>0$.
	
	Similarly, we have seen above that
	\begin{equation*}
		\M\widetilde{\otimes}_{\w{\D}_Z}\mathcal{S}'^{-i}|_{U\times Z}\cong \M|_{U\times Y}\widetilde{\otimes}_{\w{\D}_{U\times Y}}\D_{\infty, n}^{\binom{d}{i}}
	\end{equation*}
	in $\mathrm{Shv}(U\times Y, LH(\h{\B}c_K))$. As
	\begin{equation*}
		\M|_{U\times Y}\widetilde{\otimes}_{\w{\D}_{U\times Y}} \D_{\infty, n}
	\end{equation*}
	is a coadmissible $\D_{\infty, n}$-module (compare \cite[Proposition 5.33]{SixOp}), it has again vanishing higher cohomology on affinoids.
\end{proof}

\begin{lem}
	\label{Cechbasechangeapp}
	Let $\M^\bullet\in \mathrm{D}^b_\C(\w{\D}_Z^\mathrm{op})$. Then the natural morphism
	\begin{equation*}
		\mathrm{R}\Gamma(X\times Y, \M^\bullet\widetilde{\otimes}^\mathbb{L}_{\w{\D}_Z}f^*\w{\D}_Y)\widetilde{\otimes}^\mathbb{L}_{\w{\D}_Y(Y)}\D_n(Y)\to \mathrm{R}\Gamma(X\times Y, \M^\bullet\widetilde{\otimes}^\mathbb{L}_{\w{\D}_Z}f^*\D_n)
	\end{equation*}
	is an isomorphism in $\mathrm{D}(\mathrm{Mod}_{Y_n, LH(\h{\B}c_K)}(\D_n(Y)^\mathrm{op}))$.
\end{lem}
\begin{proof}
	We first consider the case where $\M^\bullet=\M$ is a right coadmissible $\w{\D}_Z$-module. We compute both sides in terms of a Cech-de Rham double complex.
	
	By Lemma \ref{Spenceracyclic}, we have
	\begin{equation*}
		\M\widetilde{\otimes}_{\w{\D}_Z}^{\mathbb{L}}f^*\w{\D}_Y\cong \M\widetilde{\otimes}_{\w{\D}_Z}\mathcal{S}^\bullet
	\end{equation*}
	and
	\begin{equation*}
		\M\widetilde{\otimes}_{\w{\D}_Z}^{\mathbb{L}}f^*\D_n\cong \M\widetilde{\otimes}_{\w{\D}_Z}\mathcal{S}'^\bullet.
	\end{equation*}
	
	Let $\mathfrak{U}$ be a countable affinoid covering of $X$ such that $\T_X|_U$ is a free $\O_U$-module for each $U\in \mathfrak{U}$ (see \cite[p.212]{Boschlectures} for an argument how quasi-paracompactness implies a countable admissible covering for any connected component). By Lemma \ref{cohomvan}, we can invoke \cite[tag 0FLH]{stacksproj} to deduce that 
	\begin{equation*}
		\mathrm{R}\Gamma(X\times Y, \M\widetilde{\otimes}^\mathbb{L}_{\w{\D}_Z}f^*\w{\D}_Y)\cong \mathrm{Tot}(\check{C}^\bullet(\mathfrak{U}, \M\widetilde{\otimes}_{\w{\D}_Z} \mathcal{S}^\bullet))
	\end{equation*}
	and
	\begin{equation*}
		\mathrm{R}\Gamma(X\times Y, \M\widetilde{\otimes}_{\w{\D}_Z}^{\mathbb{L}}f^*\D_n)\cong \mathrm{Tot}(\check{C}^\bullet(\mathfrak{U}, \M\widetilde{\otimes}_{\w{\D}_Z}\mathcal{S}'^\bullet)).
	\end{equation*}
	
	We now wish to show that
	\begin{equation*}
		\mathrm{Tot}(\check{C}^\bullet(\mathfrak{U}, \M\widetilde{\otimes}_{\w{\D}_Z}\mathcal{S}^\bullet))\widetilde{\otimes}^\mathbb{L}_{\w{\D}_Y(Y)} \D_n(Y)\cong \mathrm{Tot}(\check{C}^\bullet(\mathfrak{U}, \M\widetilde{\otimes}_{\w{\D}_Z} \mathcal{S}'^\bullet)).
	\end{equation*}
	First note that if $U\subseteq X$ is a finite intersection of elements of $\mathfrak{U}$, then $U$ is an affinoid, as $X$ is assumed to be separated, and then
	\begin{equation*}
		(\M(U\times Y)\widetilde{\otimes}^{(\mathbb{L})}_{\w{\D}_Z(U\times Y)} \mathcal{S}^{-i}(U\times Y))\widetilde{\otimes}^\mathbb{L}_{\w{\D}_Y(Y)} \D_n(Y)
	\end{equation*}
	is isomorphic to 
	\begin{equation*}
		\M(U\times Y)\widetilde{\otimes}^\mathbb{L}_{\w{\D}_Z(U\times Y)}(\w{\D}_X(U)\widetilde{\otimes}_{\O_X(U)} \wedge^i\T_X(U)\widetilde{\otimes}^{(\mathbb{L})}_K \D_n(Y)),
	\end{equation*}
	which is isomorphic to
	\begin{equation*}
		\M(U\times Y)\widetilde{\otimes}^\mathbb{L}_{\w{\D}_Z(U\times Y)}  \D_{\infty, n}(U\times Y)^{\binom{d}{i}}\\
		\cong \M(U\times Y)\widetilde{\otimes}_{\w{\D}_Z(U\times Y)}^{\mathbb{L}} \mathcal{S}'^{-i}(U\times Y),
	\end{equation*}
	which we have already seen to be concentrated in degree $0$. Thus by Lemma \ref{flatprod}, $\check{C}^j(\mathfrak{U}, \M\widetilde{\otimes}_{\w{\D}_Z}\mathcal{S}^k)$ is $-\widetilde{\otimes}_{\w{\D}_Y(Y)}\D_n(Y)$-acyclic for any $j$ and $k$, so that 
	\begin{equation*}
		\mathrm{Tot}(\check{C}^\bullet(\mathfrak{U}, \M\widetilde{\otimes}_{\w{\D}_Z} \mathcal{S}^\bullet))\widetilde{\otimes}^\mathbb{L}_{\w{\D}_Y(Y)} \D_n(Y)\cong \mathrm{Tot}(\check{C}^\bullet(\mathfrak{U}, \M\widetilde{\otimes}_{\w{\D}_Z} \mathcal{S}^\bullet))\widetilde{\otimes}_{\w{\D}_Y(Y)} \D_n(Y).
	\end{equation*} 
	
	But Lemma \ref{flatprod} together with the calculation above implies then that 
	\begin{equation*}
		\check{C}^j(\mathfrak{U}, \M\widetilde{\otimes}_{\w{\D}_Z}\mathcal{S}^k)\widetilde{\otimes}_{\w{\D}_Y(Y)} \D_n(Y)\cong \check{C}^j(\mathfrak{U}, \M\widetilde{\otimes}_{\w{\D}_Z} \mathcal{S}'^k),
	\end{equation*}
	and the result follows.
	
	The isomorphism immediately generalizes to the case where $\M^\bullet$ is a bounded $\C$-complex.
\end{proof}

\end{document}